\newif\ifpersonal
\newcommand\wh[1]{\hstretch{2}{\hat{\hstretch{.5}{#1}}}}
\numberwithin{equation}{subsection} % This line resets equation numbering when starting a new subsection.
\def\newlistingtheorem#1#2#3{%
\newtheorem{#1@outer}[equation]{#2}
\newlist{#1@inner}{enumerate}{1}  % For Prop, thm, Defi, rems, exas containing only a list of statements
\setlist[#1@inner]{topsep=2pt, beginpenalty=10000, label={\rm(\alph*)},  ref={\csname the#1@outer\endcsname\rm(\alph*)}}
\crefname{#1@inneri}{#3}{#2}
\Crefname{#1@inneri}{#3}{#2}
\newenvironment{#1}{\begin{#1@outer}\leavevmode\begin{#1@inner}}{\end{#1@inner}\end{#1@outer}}
}
\theoremstyle{plain}
\newtheorem{thm-intro}{Theorem}
\newtheorem{thm}[equation]{Theorem}%[section]
\newtheorem*{thm*}{Theorem}
\newtheorem{lem}[equation]{Lemma}
\newtheorem{sublem}[equation]{Sublemma}
\newtheorem{prop}[equation]{Proposition}
\newtheorem{cor}[equation]{Corollary}
\theoremstyle{definition}
\newtheorem{defin}[equation]{Definition}
\newtheorem{eg}[equation]{Example}
\theoremstyle{remark}
\newtheorem{rem}[equation]{Remark}
\newtheorem{construction}[equation]{\textbf{Construction}}
\newcommand*{\personal}[1]{\textcolor[rgb]{0.6,0.6,1}{(Personal: #1)}}
\newenvironment{newversion}{\color{purple}}{}
\newcommand*{\personal}[1]{\ignorespaces}
\newcommand{\R}{\mathbb R}
\newcommand{\rB}{\mathrm B}
\newcommand{\rH}{\mathrm H}
\newcommand{\cC}{\mathcal C}
\newcommand{\cF}{\mathcal F}
\newcommand{\cO}{\mathcal O}
\DeclareFontFamily{U}{BOONDOX-calo}{\skewchar\font=45 }
\DeclareFontShape{U}{BOONDOX-calo}{m}{n}{<-> s*[1.05] BOONDOX-r-calo}{}
\DeclareFontShape{U}{BOONDOX-calo}{b}{n}{<-> s*[1.05] BOONDOX-b-calo}{}
\DeclareMathAlphabet{\mathcalboondox}{U}{BOONDOX-calo}{m}{n}
\let\save@mathaccent\mathaccent
\newcommand*\if@single[3]{%
	\setbox0\hbox{${\mathaccent"0362{#1}}^H$}%
	\setbox2\hbox{${\mathaccent"0362{\kern0pt#1}}^H$}%
	\ifdim\ht0=\ht2 #3\else #2\fi
}
\newcommand*\rel@kern[1]{\kern#1\dimexpr\macc@kerna}
\newcommand*\widebar[1]{\@ifnextchar^{{\wide@bar{#1}{0}}}{\wide@bar{#1}{1}}}
\newcommand*\wide@bar[2]{\if@single{#1}{\wide@bar@{#1}{#2}{1}}{\wide@bar@{#1}{#2}{2}}}
\newcommand*\wide@bar@[3]{%
	\begingroup
	\def\mathaccent##1##2{%
		%Enable nesting of accents:
		\let\mathaccent\save@mathaccent
		%If there's more than a single symbol, use the first character instead (see below):
		\if#32 \let\macc@nucleus\first@char \fi
		%Determine the italic correction:
		\setbox\z@\hbox{$\macc@style{\macc@nucleus}_{}$}%
		\setbox\tw@\hbox{$\macc@style{\macc@nucleus}{}_{}$}%
		\dimen@\wd\tw@
		\advance\dimen@-\wd\z@
		%Now \dimen@ is the italic correction of the symbol.
		\divide\dimen@ 3
		\@tempdima\wd\tw@
		\advance\@tempdima-\scriptspace
		%Now \@tempdima is the width of the symbol.
		\divide\@tempdima 10
		\advance\dimen@-\@tempdima
		%Now \dimen@ = (italic correction / 3) - (Breite / 10)
		\ifdim\dimen@>\z@ \dimen@0pt\fi
		%The bar will be shortened in the case \dimen@<0 !
		\rel@kern{0.6}\kern-\dimen@
		\if#31
		\overline{\rel@kern{-0.6}\kern\dimen@\macc@nucleus\rel@kern{0.4}\kern\dimen@}%
		\advance\dimen@0.4\dimexpr\macc@kerna
		%Place the combined final kern (-\dimen@) if it is >0 or if a superscript follows:
		\let\final@kern#2%
		\ifdim\dimen@<\z@ \let\final@kern1\fi
		\if\final@kern1 \kern-\dimen@\fi
		\else
		\overline{\rel@kern{-0.6}\kern\dimen@#1}%
		\fi
	}%
	\macc@depth\@ne
	\let\math@bgroup\@empty \let\math@egroup\macc@set@skewchar
	\mathsurround\z@ \frozen@everymath{\mathgroup\macc@group\relax}%
	\macc@set@skewchar\relax
	\let\mathaccentV\macc@nested@a
	%The following initialises \macc@kerna and calls \mathaccent:
	\if#31
	\macc@nested@a\relax111{#1}%
	\else
	%If the argument consists of more than one symbol, and if the first token is
	%a letter, use that letter for the computations:
	\def\gobble@till@marker##1\endmarker{}%
	\futurelet\first@char\gobble@till@marker#1\endmarker
	\ifcat\noexpand\first@char A\else
	\def\first@char{}%
	\fi
	\macc@nested@a\relax111{\first@char}%
	\fi
	\endgroup
}
\newcommand{\hA}{\widehat A}
\newcommand{\hZ}{{\widehat Z}}
\newcommand{\Coh}{\mathrm{Coh}}
\newcommand{\Tor}{\mathrm{Tor}}
\newcommand{\trunc}{\mathrm{t}_0}
\newcommand{\CAlg}{\mathrm{CAlg}}
\newcommand{\St}{\mathrm{St}}
\newcommand{\dSt}{\mathsf{dSt}}
\newcommand{\QCoh}{\mathrm{QCoh}}
\newcommand{\Perf}{\mathrm{Perf}}
\newcommand{\Cat}{\mathrm{Cat}}
\newcommand{\bfCoh}{\mathbf{Coh}}
\newcommand{\bfPerf}{\mathbf{Perf}}
\newcommand{\bfQCoh}{\mathbf{QCoh}}
\newcommand{\Catstmon}{\Cat_\infty^{\mathrm{Ex},\otimes}}
\newcommand{\Bun}{\mathrm{Bun}}
\newcommand{\id}{\mathrm{id}}
\newcommand{\red}{\mathrm{red}}
\newcommand{\op}{^\mathrm{op}}
\newcommand{\Hilb}{\operatorname{Hilb}}
\tikzset{
  closed/.style = {decoration = {markings, mark = at position 0.5 with { \node[transform shape, xscale = .8, yscale=.4] {/}; } }, postaction = {decorate} },
  open/.style = {decoration = {markings, mark = at position 0.5 with { \node[transform shape, scale = .7] {$\circ$}; } }, postaction = {decorate} }
}
\DeclareMathOperator{\Fun}{Fun}
\DeclareMathOperator{\Hom}{Hom}
\DeclareMathOperator{\Map}{Map}
\DeclareMathOperator{\Spec}{Spec}
\DeclareMathOperator{\Spf}{Spf}
\DeclareMathOperator*{\colim}{colim}
\DeclareMathOperator*{\holim}{holim}
\let\originalleft\left
\let\originalright\right
\renewcommand{\left}{\mathopen{}\mathclose\bgroup\originalleft}
\renewcommand{\right}{\aftergroup\egroup\originalright}
\def\DeclareMathBinOp{\@ifstar{\declaremathbinop@star}{\declaremathbinop@nostar}}
\def\declaremathbinop@star#1#2{\def#1{\test@subnexp@star#2}}
\def\test@subnexp@star#1{\@ifnextchar_{\isol@subnexp@star#1}{\test@exp@star#1}}
\def\test@exp@star#1{\@ifnextchar^{\isol@expnsub@star#1}{\mathbin{#1}}}
\def\isol@subnexp@star#1_#2{\@ifnextchar^{\eval@subnexp@star#1_#2}{\mathbin{\operatorname*{#1}_{#2}}}}
\def\eval@subnexp@star#1_#2^#3{\mathbin{\operatorname*{#1}_{#2}^{#3}}}
\def\isol@expnsub@star#1^#2{\@ifnextchar_{\eval@expnsub@star#1^#2}{\mathbin{\operatorname*{#1}^{#2}}}}
\def\eval@expnsub@star#1^#2_#3{\mathbin{\operatorname*{#1}_{#3}^{#2}}}
\def\declaremathbinop@nostar#1#2{\def#1{\test@subnexp@nostar{#2}}}
\def\test@subnexp@nostar#1{\@ifnextchar_{\isol@subnexp@nostar#1}{\test@exp@nostar#1}}
\def\test@exp@nostar#1{\@ifnextchar^{\isol@expnsub@nostar#1}{\mathbin{#1}}}
\def\isol@subnexp@nostar#1_#2{\@ifnextchar^{\eval@subnexp@nostar#1_{{#2}}}{\mathbin{\underset{#2}{#1}}}}
\def\eval@subnexp@nostar#1_#2^#3{\mathbin{\overset{#3}{\underset{#2}{#1}}}}
\def\isol@expnsub@nostar#1^#2{\@ifnextchar_{\eval@expnsub@nostar#1^{#2}}{\mathbin{\overset{#2}{#1}}}}
\def\eval@expnsub@nostar#1^#2_#3{\mathbin{\overset{#2}{\underset{#3}{#1}}}}
\def\declaremathbinop@toto#1#2{\def#1{\testoo@subnexp@toto{#2}}}
\def\testoo@subnexp@toto#1{\@ifnextchar_{\isoloo@subnexp@toto#1}{\testoo@exp@toto#1}}
\def\testoo@exp@toto#1{\@ifnextchar^{\isoloo@expnsub@toto#1}{\mathbin{#1}}}
\def\isoloo@subnexp@toto#1_#2{\@ifnextchar^{\eval@subnexp@toto#1_{{#2}}}{\mathbin{\underset{#2}{#1}}}}
\def\eval@subnexp@toto#1_#2^#3{\mathbin{\overset{#3}{\underset{#2}{#1}}}}
\def\isoloo@expnsub@toto#1^#2{\@ifnextchar_{\eval@expnsub@toto#1^{#2}}{\mathbin{\overset{#2}{#1}}}}
\def\eval@expnsub@toto#1^#2_#3{\mathbin{\overset{#2}{\underset{#3}{#1}}}}
\DeclareMathBinOp*{\newtimes}{\times}
\DeclareMathBinOp*{\newamalg}{\amalg}
\DeclareMathBinOp*{\newotimes}{\otimes}
\newlist{assumptions}{enumerate}{10}
\setlist[assumptions]{label*={\upshape{(\alph*)}}}
\crefname{assumptionsi}{assumption}{assumptions}
\Crefname{assumptionsi}{Assumption}{Assumptions}
\newcommand{\flags}{\operatorname{\underline{\mathsf{Fl}}}}
\newcommand{\affinize}{\mathrm{aff}}
\newcommand{\doubletimes}[5]{%
#1 \hspace{1.5em} \underset{\makebox[0pt][r]{$\scriptstyle #3$} ~\underset{\makebox[0pt][c]{$\scriptscriptstyle #5$}}{\hspace{0.5em}\vphantom{\int}\times\hspace{0.5em}}~ \makebox[0pt][l]{$\scriptstyle #4$}}{\times} \hspace{1.5em} #2%
}
\newcommand{\fibf}{\mathfrak{F}}
\newcommand{\dAff}{\mathsf{dAff}}
\newcommand{\dAffover}[1]{\dAff_{/#1}}
\newcommand{\dAffX}{\dAffover{X}}
\newcommand{\Aff}{\mathsf{Aff}}
\newcommand{\FibF}{\mathsf{FibF}}
\newcommand{\dFibF}{\mathsf{dFibF}}
\newcommand{\inftyGpd}{\mathrm{Gpd}_\infty}
\newcommand{\inftyCat}{\mathrm{Cat}_\infty}
\newcommand{\sCAlg}{\mathbf{sCAlg}}
\newcommand{\sCAlgNoeth}{\sCAlg^\mathrm{Noeth}}
\newcommand{\CohX}{\mathbf{Coh}^-}
\newcommand{\PerfX}{\mathbf{Perf}}
\newcommand{\BunGX}{\mathbf{Bun}^{\mathbf{G}}}
\newcommand{\BunG}{\mathrm{Bun}^{\mathbf{G}}}
\newcommand{\Gr}{\underline{\mathsf{Gr}}}
\newcommand{\GrX}{\mathbf{Gr}}
\newcommand{\fibGr}{\mathcal{G}r}
\newcommand{\loc}{\mathrm{loc}}
\newcommand{\BG}{\mathrm{B}\mathbf{G}}
\newcommand{\dR}{\mathrm{dR}}
\newcommand{\stackofcatstacks}{\underline{\mathsf{dSt}}^{\inftyCat}}
\newcommand{\stackofstacks}{\underline{\mathsf{dSt}}}
\newcommand{\closedpairs}{\underline{\mathsf{Cl}}}
\newcommand{\namecell}[2]{\ar[phantom,start anchor=center, end anchor=center]{#1}[description]{#2}}
\begin{document}

\title{A flag version of Beilinson-Drinfeld Grassmannian for surfaces}

\author{Benjamin HENNION}
\address{Benjamin HENNION, Universit\'e Paris-Saclay, Paris, France}
\email{benjamin.hennion@universite-paris-saclay.fr }

\author{Valerio MELANI}
\address{Valerio MELANI, DIMAI, Firenze, Italy}
\email{valerio.melani@unifi.it}

\author{Gabriele VEZZOSI}
\address{Gabriele VEZZOSI, DIMAI, Firenze, Italy}
\email{gabriele.vezzosi@unifi.it}
\date{\today}

\begin{abstract}
	In this paper we define and study a generalization of the Belinson-Drinfeld Grassmannian to the case where the curve is replaced by a smooth projective surface $X$, and the trivialization data are given on loci suitably associated to a nonlinear flag of closed subschemes. In order to do this, we first establish some general formal gluing results for moduli of almost perfect complexes, perfect complexes and torsors. We then construct a simplicial object $\flags_X$ of flags of closed subschemes of a smooth projective surface $X$, naturally associated to the operation of taking union of flags. We prove that this simplicial object has the $2$-Segal property. For an affine complex algebraic group $G$, we finally define a derived, flag analog $\fibGr_X$ of the Beilinson-Drinfeld Grassmannian of $G$-bundles on the surface $X$, and show that most of the properties of the Beilinson-Drinfeld Grassmannian for curves can be extended to our flag generalization. In particular, we prove a factorization formula, the existence of a canonical flat connection, and define a chiral product on suitable sheaves on $\flags_X$ and on $\fibGr_X$. We also sketch the  construction of actions of flags analogs of the loop group and of the positive loop group on $\fibGr_X$. To fixed ``large'' flags on $X$, we associate ``exotic'' derived structures on the classical stack of $G$-bundles on $X$.  Analogs of the flag Grassmannian for other Perf-local stacks (replacing the stack of $G$-bundles) are briefly considered, and flag factorization is proved for them, too.

%	Given a flag of closed subschemes of a smooth projective surface $X$ and an affine complex algebraic group $\mathbf{G}$, we define a flag analog of the Beilinson-Drinfeld Grassmannian of bundles on the surface $X$. We show that many properties of the Beilinson-Drinfeld Grassmannian for curves can be extended to our flag generalization: for instance, we prove a factorization result, construct actions of the loop group and of the positive loop group on the flag Grassmannian, and define a fusion product on sheaves on the Grassmannian.
\end{abstract}

\maketitle

\tableofcontents

\ifpersonal
% \makeatletter
% \providecommand\@dotsep{5}
% \makeatother
% \listoftodos\relax
\presetkeys{todonotes}{inline}{}
\fi

\section{Introduction}
Given a connected smooth complex projective curve $C$ and $\mathbf{G}$ a smooth affine $\mathbb{C}$-group scheme, the corresponding (global) \emph{affine Grassmannian} is the functor $$\underline{\mathrm{Gr}}_{C,\mathbf{G}}: (\Aff_{\mathbb{C}})\op \to \mathbf{Sets} \, : \, S \longmapsto \left\{ (x, \mathcal{E}, \varphi) \, |\, x \in C(S), \mathcal{E}\in \mathrm{Bun}^{\mathbf{G}}(C \times S), \varphi: \mathcal{E}_{| (C\times S)\smallsetminus \Gamma_{x}} \simeq \mathcal{E}_0  \right\}$$ where $\mathrm{Bun}^{\mathbf{G}}(C \times S)$ is the groupoid of $\mathbf{G}$-torsors on $C \times S$, $\Gamma_x \subset C \times S$ denotes the graph of $x \in C(S)$, and $\mathcal{E}_0$ denotes the trivial $\mathbf{G}$-bundle (see, e.g. \cite[1.4]{Zhu2017}). A local version of $\underline{\mathrm{Gr}}_{C,\mathbf{G}}$ can also be defined using $\mathbf{G}$-bundles only defined on a formal neighbourhood of $\Gamma_x$ and trivialisations defined on the corresponding affine punctured neighbourhood (\cite[(1.2.1)]{Zhu2017}), and Beauville-Laszlo Theorem (\cite{Beauville-Laszlo}) ensures that such a local version of the affine Grassmannian is in fact equivalent to the global version recalled above (see, e.g. \cite[Theorem 1.4.2]{Zhu2017}).\\

A brilliant idea of A. Beilinson and V. Drinfeld was to generalise the construction of the affine Grassmannian by allowing an arbitrary, finite number of points $x$. This can be done by first introducing the so called \emph{Ran space} of the curve $C$ as the presheaf $$\mathsf{Ran}_C: (\Aff_{\mathbb{C}})\op \to \mathbf{Sets} \, : \, S \longmapsto \left\{ \textrm{finite non-empty subsets of } C(S)\right\},$$ and then defining
$$\underline{\mathrm{Gr}}^{\mathrm{BD}}_{C,\mathbf{G}}: (\Aff_{\mathbb{C}})\op \to \mathbf{Sets} \, : \, S \longmapsto \left\{ (\underline{x}, \mathcal{E}, \varphi) \, |\, \underline{x} \in \mathsf{Ran}_C (S), \mathcal{E}\in \mathrm{Bun}^{\mathbf{G}}(C \times S), \varphi: \mathcal{E}_{| (C\times S)\smallsetminus \Gamma_{\underline{x}}} \simeq \mathcal{E}_0  \right\}$$ where $\Gamma_{\underline{x}} \subset C \times S$ denotes the joint graph of $\underline{x} \in \mathsf{Ran}_C(S)$. The functor $\underline{\mathrm{Gr}}^{\mathrm{BD}}_{C,\mathbf{G}}$ is called the (Ran version of the) \emph{Beilinson-Drinfeld affine Grassmannian} of the pair $(C, \mathbf{G})$. There is an obvious morphism of presheaves $p: \underline{\mathrm{Gr}}^{\mathrm{BD}}_{C,\mathbf{G}} \to \mathsf{Ran}_C$, and the trivial $\mathbf{G}$-bundle endows it with a canonical section $\mathsf{triv}: \mathsf{Ran}_C \to \underline{\mathrm{Gr}}^{\mathrm{BD}}_{C,\mathbf{G}}$. Though $\mathsf{Ran}_C$ is not representable\footnote{Actually, $\mathsf{Ran}_C$ is not even an \'etale sheaf.}, not even by an ind-scheme, the map $p$ is itself ind-representable and it is even ind-proper when $\mathbf{G}$ is reductive (\cite[3.3]{Zhu2017}). The presheaf $\underline{\mathrm{Gr}}^{\mathrm{BD}}_{C,\mathbf{G}}$ is not represented by an ind-scheme but it is the presheaf (non-filtered) colimit of ind-schemes under proper maps (i.e. a pseudo-indscheme as in \cite[1.2]{Gaitsgory_contract}).  \\ Though passing from $\underline{\mathrm{Gr}}_{C,\mathbf{G}}$ to $\underline{\mathrm{Gr}}^{\mathrm{BD}}_{C,\mathbf{G}}$ might seem as an easy and obvious step, it does make it possible for $\underline{\mathrm{Gr}}^{\mathrm{BD}}_{C,\mathbf{G}}$ to exhibit an important property called \emph{factorization}. First of all notice that $\mathsf{Ran}_C$ carries a (non-unital)  semigroup structure given by the union $$\cup: \mathsf{Ran}_C \times \mathsf{Ran}_C \to \mathsf{Ran}_C: (\{\underline{x}\}, \{\underline{y}\}) \mapsto \{\underline{x}, \underline{y}\}.$$ Let $(\mathsf{Ran}_C \times \mathsf{Ran}_C)_{\mathrm{disj}} \subset \mathsf{Ran}_C \times \mathsf{Ran}_C $ is defined by those pairs $(\{\underline{x}\}, \{\underline{y}\})$ such that $\{\underline{x}\} \cap \{\underline{y}\}= \emptyset$. We define similarly $(\mathsf{Ran}_C \times \mathsf{Ran}_C \times \mathsf{Ran}_C )_{\mathrm{disj}} \subset \mathsf{Ran}_C \times \mathsf{Ran}_C \times \mathsf{Ran}_C $. Then, the factorization property of $\underline{\mathrm{Gr}}^{\mathrm{BD}}_{C,\mathbf{G}}$ can be stated as the existence of a natural isomorphism (called \emph{factorization isomorphism}) between the pullbacks of the two diagrams 
\begin{equation}\label{BDfactorization} \xymatrix{  \underline{\mathrm{Gr}}^{\mathrm{BD}}_{C,\mathbf{G}} \times \underline{\mathrm{Gr}}^{\mathrm{BD}}_{C,\mathbf{G}} \ar[d]_-{p\times p} &  & \underline{\mathrm{Gr}}^{\mathrm{BD}}_{C,\mathbf{G}} \ar[d]^-{p} \\ \mathsf{Ran}_C \times \mathsf{Ran}_C & (\mathsf{Ran}_C \times \mathsf{Ran}_C)_{\mathrm{disj}} \ar[r]^-{\cup} \ar[l]_-{\supset} & \mathsf{Ran}_C} \end{equation} which is compatible, in an obvious way, with the map $\mathsf{triv}: \mathsf{Ran}_C \to \underline{\mathrm{Gr}}^{\mathrm{BD}}_{C,\mathbf{G}}$ and satisfies a natural cocycle condition on $(\mathsf{Ran}_C \times \mathsf{Ran}_C \times \mathsf{Ran}_C )_{\mathrm{disj}}$ (\cite[Theorem 3.3.3]{Zhu2017}).
We may denote $\mathsf{Gr}_{C,\mathbf{G}; 2}^{BD}$ the common (up to canonical isomorphism) pullback of these two diagrams. The map $p:\mathsf{Gr}_{C,\mathbf{G}; 2}^{BD} \to \mathsf{Ran}_C $ carries a flat connection (\cite[Theorem 3.1.20]{Zhu2017}, \cite[2.8]{BD_Hitchin}) and actions of suitable Ran-versions $\mathrm{L}_{\mathsf{Ran}}\mathbf{G}$ and $\mathrm{L}^+_{\mathsf{Ran}}\mathbf{G}$ of the loop group $\mathrm{L}\mathbf{G}$ and of the positive loop group $\mathrm{L}^+\mathbf{G}$, respectively (\cite[1.1.2]{Gaitsgory2022} and \cite[Section 5, (5.5)]{MV_2007}).\\

In this paper we propose and study an analog of BD-Grassmannian, called the \emph{flag Grassmannian}, where the curve $C$ is replaced by a smooth complex projective \emph{surface} $X$, and points in the Ran space of $C$ are replaced with (families) of \emph{flags of subschemes} in $X$. Our investigation on the flag Grassmannian has its origins in some conversations of the third author with Mauro Porta\footnote{M. Porta, together with the first and third authors, is also responsible for our first version of formal and flag glueing results (see arXiv:1607.04503).} triggered by \S \, 3.4.6 of \cite{Beilinson_Drinfeld_Chiral_2004} where an equivalent approach to factorization algebras sheaves  is based on effective Cartier divisors (possibly considered up to reduced equivalence) on a curve, rather than on the Ran space of the curve. This remark immediately gave us the idea of a possible replacement of the Ran space in the higher dimensional case of a surface, with the further possibility of including \emph{arbitrary flags} consisting of an effective Cartier divisor together with a codimension 1 subscheme in this divisor.\\ Let us summarize the results of this paper.\\

\noindent\textbf{Formal glueing.} In proving several properties of our flag Grassmannian, we will need a higher dimensional version of Beauville-Laszlo gluing, therefore we devote the first three sections to establish a general version of \emph{formal gluing}. Let $k$ be field. For a $k$-scheme $Y$ and a closed subscheme $Z$, with open complement $U$, roughly speaking, \emph{formal gluing} (whenever it exists) is a way of describing geometric objects of some kind defined on $Y$, as geometric objects of the same kind defined on $U$ and on the formal completion $\hZ$ of $Z$ in $Y$, that are suitably \emph{compatible}. The notion of compatibility here is a delicate one. Morally, guided by a purely topological intuition, one would like to say that a geometric object $\mathcal{E}_U$ and a geometric object $\mathcal{E}_{\hZ}$ are compatible if they restrict to the same object (or to an equivalent one) on the intersection $U \cap \hZ= \hZ \smallsetminus Z$.
However, strictly speaking, in algebraic geometry this intersection is empty, so the notion of compatibility cannot be the naive one, at least if we want to remain within the realm of algebraic geometry, i.e. we want to keep viewing $Y$, $U$ and $\hZ$ as algebro-geometric objects. 
Many formal glueing results have been proved, and we give here only a probably non exhaustive list: \cite{Weil_Adeles_1982}, \cite{ArtinII}, \cite{Ferrand-Raynaud}, \cite{Beauville-Laszlo}, \cite{MB}, \cite{Ben-Bassat_Temkin_Tubular_2013}, \cite{Bhatt_algebraization_2014}, \cite{Schappi_descent_2015}, \cite{Hall_Rydh_2016}, \cite{mathew2019faithfully}. In the first sections, based on some of these previous works (mainly \cite{Bhatt_algebraization_2014}, \cite{mathew2019faithfully}) , we address the \emph{general formal glueing problem} when the geometric objects above are
\begin{itemize}
	\item almost perfect modules (also known as pseudo-coherent complexes) on $Y$, whose derived moduli stack is denoted by $\CohX_Y$ in the paper,
	\item perfect modules on $Y$, whose derived moduli stack is denoted by $\PerfX_Y$ in the paper,
	\item $\mathbf{G}$-bundles on $Y$, for $\mathbf{G}$ an affine $k$-group scheme, whose derived moduli stack is denoted by $\BunGX_Y$ in the paper,
\end{itemize}
and $Y$ is a locally Noetherian stack over $k$. If $\mathbf{F} = \CohX, \PerfX$ or $\BunGX$, and $Z\subset Y$ is a closed substack, restriction induces an equivalence of derived stacks over $Y$ (i.e. defined on the category $\dAff_{Y}$ of derived affine Noetherian schemes $\mathrm{Spec}\, R$ with a map $\mathrm{Spec}\, R \to Y$)
\begin{equation}\label{fglintro}
\begin{tikzcd}
\mathbf{F}_Y \ar{r}{\sim} & \displaystyle \mathbf{F}_{Y \smallsetminus Z} \newtimes_{\mathbf{F}_{\hZ^{\affinize} \smallsetminus Z}}  \mathbf{F}_{\hZ} \rlap{.}
\end{tikzcd}
\end{equation}
This is proved in Proposition \ref{prop:localformalglueing} and Theorem \ref{thm:formalglueing}.
In this formula, $\hZ$ denotes the formal completion of $Z$ inside $Y$, while $\hZ^\affinize \smallsetminus Z$ is defined using the \emph{fiber functor\footnote{This notion of fiber functor should not be confused with the one in the theory of Tannakian categories.} of affinization} (see definitions \ref{defin:fibf} and \ref{def-intornobucato}). In particular, when $R$ is an underived commutative $k$-algebra together with a map $\mathrm{Spec}\, R \to Y$, the value of $\hZ^\affinize \smallsetminus Z$ on $(\mathrm{Spec}\, R \to Y)$ is the expected one
$$(\hZ^\affinize \smallsetminus Z)(\mathrm{Spec}\, R \to Y) = \mathrm{Spec}(\widehat{R}_{\mathrm{I}_Z})\smallsetminus \mathrm{Spec}(R/\mathrm{I}_Z),$$ where $\mathrm{I}_Z\subset R$ denotes the ideal of the pullback of $Z$ to $\mathrm{Spec}\, R$, and $\widehat{R}_{\mathrm{I}_Z}$ is the ring completion of $R$ at the ideal $\mathrm{I}_Z$. Note that, by \emph{algebraisation} (Proposition \ref{prop:algebraization}), the canonical map $\mathbf{F}_{\hZ} \to \mathbf{F}_{\hZ^\affinize}$ is an equivalence of derived stacks over $Y$ (where $\hZ^\affinize(\mathrm{Spec}\, R \to Y)= \mathrm{Spec}(\widehat{R}_{\mathrm{I}_Z})$, when $\mathrm{Spec}\, R$ is a Noetherian underived affine $k$-scheme endowed with a map to $Y$).
In the presence of a longer \emph{non-linear flag} $(Z_k\subset Z_{k-1} \subset \cdots \subset Z_1 \subset Y)$ of closed substacks in $Y$, the formal glueing formula (\ref{fglintro}) can be iterated yielding what we call a \emph{flag decomposition} of $\mathbf{F}$ (see \S $\,$ \ref{sectionflagdecomp}).\\ Using the map $Y \to \mathrm{Spec}\, k$, and the induced functor 
\begin{equation}\label{functortoabsolutestacks}\dSt_Y \to \dSt_k : \mathbf{F} \mapsto \underline{\mathsf{F}},\end{equation} from derived stacks defined over $\dAff_{Y}$ to derived stacks defined over $\dAff_{k}$, formula (\ref{fglintro}) translates into the corresponding obvious formal glueing equivalence between derived stacks over $k$ \begin{equation}\label{fglintrounderline}\xymatrix{\underline{\mathsf{F}}_Y \ar[r]^-{\sim} & \underline{\mathsf{F}}_{Y \smallsetminus Z} \times_{\underline{\mathsf{F}}_{\hZ^\affinize \smallsetminus Z}}      \underline{\mathsf{F}}_{\hZ}  }\end{equation}
(see Remark \ref{rem:abs}).\\ In our analysis, the case of $G$-bundles follows from the case of perfect modules (since $\BunGX_Y$ is \emph{Perf-local} in the variable $Y$, see Lemma \ref{lem:perflocal}), while the case of almost perfect modules is similar to that of perfect modules. Therefore we have decided to present these three formal gluing results together, though in the rest of the paper we will mainly concentrate on the study of $G$-bundles. 
Moreover, for the sake of completeness, and since this does not require an excessive effort, we actually establish formal gluing and flag decomposition results, in the more general case where $Y$ a locally Noetherian \emph{derived} stack over $k$, $Z \subset Y$ is a derived closed substack.\\ Section \ref{sec_nonderived} explains how to deduce from the previous results formal gluing for the \emph{underived} versions of $\mathbf{F} = \CohX, \PerfX$ or $\BunGX$.  \\

\noindent \textbf{Nonlinear flags.} Starting with Section \ref{SECTIONflags}, we will let $k=\mathbb{C}$, and fix a smooth complex projective surface $X$. In Section \ref{SECTIONflags} we introduce the simplicial object $\underline{\mathsf{Fl}}_{X,\bullet}$ of families of nonlinear flags on $X$. Here is the basic idea of the construction of  $\underline{\mathsf{Fl}}_{X,\bullet}$. We first define  $\flags_{X,1}$ (which we also denote simply by $\flags_X$) as the presheaf of posets on $\Aff_{\mathbb{C}}$ assigning to $S$ the poset whose elements are pairs $(D, Z)$ consisting of a relative effective Cartier divisor $D$ on $X \times S/ S$, and an element $Z$ in the Hilbert scheme of $0$-dimensional subschemes of $X \times S /S$, subject to the nesting condition $Z \subset D$. The partial order on $\flags_{X,1}(S)$ is defined as follows: we say that $(D,Z) \leq (D',Z')$ (also denoted as a morphism $(D,Z) \to (D',Z')$) if there exists a commutative square of closed subschemes of $X \times S$
$$\xymatrix{D \ar[r] & D' \\ Z \ar[u] \ar[r] & Z' \ar[u]}$$ such that the canonical induced map $Z \to D\times_{D'} Z'$ yields an isomorphism on the induced reduced scheme structures. Note that by forgetting the partial order structure, $\underline{\mathsf{Fl}}_{X,1}$ is just a variant of the so-called \emph{flag Hilbert scheme} of $X$ (see e.g. \cite[4.5]{sernesi}). The main step in the construction of $\underline{\mathsf{Fl}}_{X,\bullet}$ is to define $\underline{\mathsf{Fl}}_{X,2}$ in such a way that it supports a union map $\bigcup: \underline{\mathsf{Fl}}_{X,2} \to \underline{\mathsf{Fl}}_{X,1}$ of the form $(D_1, Z_1) \cup (D_2, Z_2)= (D_1 +D_2, Z_1 \cup Z_2)$. In order to ensure that such a map is well-defined (e.g. that $Z_1 \cup Z_2$ is flat over the test scheme $S$), we define $\underline{\mathsf{Fl}}_{X,2}(S)$ as the sub-poset of pairs $((D_1, Z_1), (D_2, Z_2))$  in $\underline{\mathsf{Fl}}_{X,1}(S) \times \underline{\mathsf{Fl}}_{X,1}(S)$ satisfying the following \emph{good position} (or transversality) condition: $D_1 \cap D_2$ is a relative effective Cartier divisor inside both $D_1 /S$ and $D_2 / S$, and $Z_1 \cap Z_2 = D_1 \cap D_2$ (Definition \ref{defin:goodpair}). Note that, if $X$ is a \emph{curve}, then $\underline{\mathsf{Fl}}_{X,2}$ classifies pairs of \emph{disjoint} Cartier divisors. Finally, we build our full simplicial object $\underline{\mathsf{Fl}}_{X,\bullet}$ as the classifying simplicial object associated to the partially defined unital monoid structure $\cup$ on $\underline{\mathsf{Fl}}_{X,1}$ (see Section \ref{section:2SegalFlags}). Note that $\flags_{X, 0}= \left\{ * \right\}$, $\partial_0, \partial_2: \flags_{X,2} \to \flags_{X,1}$ are, respectively, the projection on the second and first flag, while $\partial_1: \flags_{X,2} \to \flags_{X,1}$ is the union map. The main result of Section \ref{SECTIONflags} (Theorem \ref{thm:2SegGood}) is that the simplicial object $\underline{\mathsf{Fl}}_{X,\bullet}$ has the so-called $2$\emph{-Segal} property (\cite{KapDyc}), i.e. the diagram $$\xymatrix{ \underline{\mathsf{Fl}}_{X,1} \times \underline{\mathsf{Fl}}_{X,1} & \underline{\mathsf{Fl}}_{X,2} \ar[l]_-{\supset} \ar[r]^-{\cup} & \underline{\mathsf{Fl}}_{X,1} }$$ defines an associative algebra structure in correspondences. This will induce, later in the paper, the same property on the \emph{simplicial flag Grassmannian} $\fibGr_{X, \bullet}$. \\

\noindent\textbf{Flag Grassmannian.} Fix, once and for all, a smooth affine group scheme $\mathbf{G}$ over $\mathbb{C}$, that will be most often omitted from the notations. In sections \ref{flagGrassfirst appearance} and \ref{section:Grass2Segal} we build the \emph{simplicial flag Grassmannian} $\fibGr_{X,\bullet}$ for $\mathbf{G}$-bundles, together with a simplicial map $\fibGr_{X,\bullet} \to \underline{\mathsf{Fl}}_{X,\bullet}$. We sketch here, in a slightly simplified way, the basic ideas of the constructions, where furthermore the schemes parametrising families of flags will be underived. The actual rigorous constructions in the main body of the paper follow a slightly different and more technical path (mainly in order to take care of higher coherences and derived structures), and giving here all such details would obscure the intuition. Moreover, in this Introduction, we will focus on the flag Grassmannian for $\mathbf{G}$-bundles; variations involving perfect complexes are considered in the main text.\\ For any affine (underived) Noetherian test scheme $S$, and any flag $(D,Z)\in \underline{\mathsf{Fl}}_{X,1}(S)$, so that $(D,Z)$ is a flag inside $X \times S$, we consider (Definition \ref{def:flagGrass}) the derived stack over $X\times S$ (notations being the one introduced in the previous paragraph) \begin{equation}\label{boldGrfixedflag} \GrX_X(S)(D,Z) := \mathbf{Bun}_{X \times S} \times_{\mathbf{Bun}_{(X \times S) \smallsetminus D} \times_{\mathbf{Bun}_{(X \times S)\smallsetminus D \times_{X\times S} \hZ^\affinize}} \mathbf{Bun}_{\hZ}} \left\{ * \right\} \,\,\, \in \, \dSt_{X \times S}\, ,
 \end{equation}  where the map from $\left\{ * \right\}$ represents the trivial $\mathbf{G}$-bundle $\mathcal{E}_0$. This derived stack over $X \times S$ is called the \emph{flag Grassmannian of} $X\times S \,/ S$ \emph{at the fixed flag} $(D,Z)$, and it classifies triples $(\mathcal{E}, \varphi, \psi)$ where $\mathcal{E}$ is a family of $\mathbf{G}$-bundles on $X \times S$, $\varphi$ a family of trivialisations of $\mathcal{E}$ on $(X\times S)\smallsetminus D$, 
 $\psi$ a family of trivialisations of $\mathcal{E}$ on $\hZ$, such that $\varphi$ and $\psi$ are compatible on the ``intersection'' $(X \times S)\smallsetminus D \times_{X\times S} \hZ^\affinize$. Like in the classical case where $X$ is a curve, there is also a corresponding \emph{local version} (Definition \ref{defin:localGr}) \begin{equation}\label{boldGrlocalfixedflag} \GrX^{\loc}_X(S)(D,Z) := \mathbf{Bun}_{\widehat{D}} \times_{\mathbf{Bun}_{\widehat{D} \smallsetminus D} \times_{\mathbf{Bun}_{\widehat{D}\smallsetminus D \times_{\widehat{D}} \hZ^\affinize}} \mathbf{Bun}_{\hZ}} \left\{ * \right\} \,\,\, \in \, \dSt_{X \times S}\, ,
 \end{equation} and formal gluing implies that the obvious restriction $$\GrX_X(S)(D,Z) \longrightarrow \GrX^{\loc}_X(S)(D,Z)$$ is an equivalence of derived stacks over $X \times S$ (Lemma \ref{lem:benjexo}). Translating these constructions into absolute derived stacks (i.e. stacks over $k=\mathbb{C}$) by using the functor (\ref{functortoabsolutestacks}) defined above, we get derived stacks $\Gr_X(S)(D,Z)$  and $\Gr^{\loc}_X(S)(D,Z)$ over $k$, together with a restriction equivalence $\Gr_X(S)(D,Z) \simeq \Gr^{\loc}_X(S)(D,Z)$. Since $D$ is a relative effective Cartier divisor in $X \times S \, / S$, one can prove that the truncation $\mathrm{t}_0(\Gr_X(S)(D,Z))$, hence $\mathrm{t}_0(\Gr^{\loc}_X(S)(D,Z))$, is actually a \emph{sheaf of sets} (Lemma \ref{GrIsSetsValued}). In particular the global sections $\mathrm{Gr}_X(S)(D,Z)$ of both $\Gr_X(S)(D,Z)$ and $\Gr^{\loc}_X(S)(D,Z)$, form a set.  Since $(X\times S)\smallsetminus D$, $\widehat{D}$, $\widehat{D}^\affinize$, $\hZ$, $\hZ^\affinize$, and $\widehat{D}\smallsetminus D$ are all insensitive to the scheme structure on $D$ and $Z$, $\GrX_X(S)(D,Z)$, $ \GrX^{\loc}_X(S)(D,Z)$, $\Gr_X(S)(D,Z)$, and  $\Gr^{\loc}_X(S)(D,Z)$ are \emph{topologically invariant} (\S $\,$ \ref{topinv}).  With the test scheme $S$ fixed, by using again formal gluing, one can prove that the assignment 
 $(D,Z) \mapsto \mathrm{gr}_X(S)(D,Z)$ defines a functor $\mathrm{Gr}_X(S): \flags_{X}(S) \to \mathbf{Sets}, $ that are suitably natural in the argument $S$ (see \S $\,$ \ref{funtorialitaGr}). \\ The Grothendieck construction applied to $\mathrm{Gr}_X(S): \flags_{X}(S) \to \mathbf{Sets} $ defines $p_1(S): \fibGr_X(S) \to \flags_X(S)$, and the fuctoriality in $S$ of $\mathrm{Gr}_X(S)$ (established in \S $\,$ \ref{funtorialitaGr}), builds for us $\fibGr_X$ as a functor $\dAff\op \to \inftyCat$, together with a morphism of functors\footnote{Here we use the same symbol both for $\flags_X$ and for its canonical derived extension (i.e. its Kan extension to derived affine schemes). }
\begin{equation}\label{intromaptoflags} p_1: \fibGr_X \longrightarrow \flags_X.\end{equation}
In other words, when $S$ is an underived test affine scheme, the objects of $\fibGr_X(S)$ consist of pairs $(F, \underline{\mathcal{E}})$ where $F$ is a flag in $X \times S$, and $\underline{\mathcal{E}} \in \mathrm{Gr}_X(S)$.  
Note that the trivial bundle induces an obvious section $\mathsf{triv}: \flags_X \to \fibGr_X$ of $p_1$.\\ 
One can show that the map $p_1: \fibGr_X \to \flags_X$ carries a canonical \emph{flat connection} (\S $\,$ \ref{funtorialitaGr}). This is conveniently and compactly rephrased by asserting the existence of a map of derived stacks $\fibGr'_X \to \flags^{DR}_X$, to the de Rham stack of $\flags_X$, whose pullback along the canonical map $\flags_X \to \flags^{DR}_X$ recovers the morphism $p_1$. This should be seen as the analog for surfaces of the existence of a canonical flat connection on Beilinson-Drinfeld Grassmannian (see \cite[Prop. 3.1.20]{Zhu2017}). 
A further important fact (Theorem \ref{thmRepresentabilityfibGr}) is that $\fibGr_X \to \flags^{DR}_X$ (and also $p_1: \fibGr_X \to \flags_X$) is, in an appropriate sense, represented by a derived \emph{ind-scheme} (not of finite type\footnote{A stronger finiteness is obtained by restricting to the sub-functor $\underline{\mathsf{Car}}_{X} \hookrightarrow \flags_{X}$ of relative effective Cartier divisors on $X$ (i.e. restricting to flags of type $(D, \emptyset)$): such a restriction is, in fact, represented by an \emph{ind-quasi-projective} ind-scheme, hence of finite type.}).\\
We are now able to construct the simplicial flag Grassmannian $\fibGr_{X, \bullet}$ together with a simplicial map $p_{\bullet}: \fibGr_{X, \bullet} \to \flags_{X, \bullet}$ (see \S $\,$ \ref{subsec:2segalflaggrass}). 
First of all we put $\fibGr_{X, 0}= \left\{ * \right \}$, $\fibGr_{X, 1}=\fibGr_X$, and define $\fibGr_{X, 2}$ via the following pull-back diagram \begin{equation}\label{introGr2} \xymatrix{\fibGr_{X,2} \ar[d]_-{p_2 :=} \ar[r]^-{=:\partial_1} & \fibGr_{X} \ar[d]^-{p_1} \\
\flags_{X, 2} \ar[r]_-{\partial_1 = \cup} & \flags_{X}.}\end{equation} 
A non-trivial result (see Section \ref{mappediproiezione}), obtained through formal gluing inside punctured formal neighbourhoods, allows us to define\footnote{For simplicity, we only state here the results for the global version $\mathrm{Gr}_X(S)$. } \emph{projection maps} 
\begin{equation}\label{introproj} q_i (S; F_1, F_2): \mathrm{Gr}_X(S)(F_1 \cup F_2) \longrightarrow \mathrm{Gr}_X(S)(F_i) \,, \, i=1,2,\end{equation} for arbitrary flags $(F_1, F_2) \in \flags_{X,2}(S)$ that are suitably functorial both with respect to maps $(F_1, F_2) \to (F'_1, F'_2)$ in $\flags_{X,2}(S)$, and with respect to maps $S' \to S$ between test schemes. Thus, the projection maps of (\ref{introproj}) induce morphisms $\partial_0, \partial_2: \fibGr_{X,2} \to \fibGr_{X,1}$ such that the diagram $$\xymatrix{\fibGr_{X,2} \ar[d]_-{p_2} \ar[r]^-{\partial_j} & \fibGr_{X,1} \ar[d]^-{p_1} \\ \flags_{X, 2} \ar[r]_-{\partial_j} & \flags_{X,1}} $$ commutes, for $j=0, 2$. Using the empty flag to build degeneracy maps, we get a 2-truncated simplicial object $\fibGr_{X,\leq 2}$.  Now, via a general extension result (\S $\,$  \ref{BenjExtension}), we complete $\fibGr_{X,\leq 2}$ to a full simplicial object $\fibGr_{X,\bullet}$, called the \emph{simplicial flag Grassmannian} of the pair $(X, \mathbf{G})$, equipped with a canonical simplicial map $p_{\bullet}: \fibGr_{X, \bullet} \to \flags_{X, \bullet}$, extending $p_1$ and $p_2$. The $2$-Segal property of $\flags_{X, \bullet}$, and the properties of $p_{\bullet}$, imply that the simplicial flag Grassmannian $\fibGr_{X, \bullet}$ is again $2$\emph{-Segal}. One can furthermore prove that the simplicial map $p_{\bullet}$ is endowed with a canonical \emph{flat connection} (extending the one already established on $p_1$), or equivalently, that the previous construction yields in fact a map of simplicial objects $\fibGr'_{X, \bullet} \to \flags^{DR}_{X, \bullet}$ inducing $p_{\bullet}: \fibGr_{X, \bullet} \to \flags_{X, \bullet}$ via pullback along the canonical map  $\flags_{X, \bullet} \to \flags^{DR}_{X, \bullet}$.\\

\noindent \textbf{Flag-Factorization.} In Section \ref{sectionfactorization} we prove the \emph{flag factorization property} for the flag Grassmannian $\fibGr_X$. More precisely, and in complete analogy with (\ref{BDfactorization}), we prove (see Remark \ref{remfattorizzazione}) that the map $(\partial_0, \partial_2): \fibGr_{X,2} \to \fibGr_{X, 1} \times \fibGr_{X,1}$ establishes an isomorphism\footnote{This isomorphism is obviously compatible with the section $\mathsf{triv}: \flags_X \to \fibGr_X$ of $p_1$, introduced above. Moreover, Remark \ref{Grtildeandisos} gives the analog of the natural cocycle condition on $(\mathsf{Ran}_C \times \mathsf{Ran}_C \times \mathsf{Ran}_C )_{\mathrm{disj}}$ satisfied by the factorization isomorphism for Beilinson-Drinfeld Grassmannian (\cite[Theorem 3.3.3]{Zhu2017}).} (called the \emph{factorization isomorphism}) between the pullbacks of the diagrams 
\begin{equation}\label{introfactorizationforflagGr} \xymatrix{\fibGr_{X, 1} \times \fibGr_{X,1} \ar[d]_-{p_1 \times p_1} & & \fibGr_{X,1} \ar[d]^-{p_1} \\ \flags_{X, 1} \times \flags_{X,1}  & \flags_{X,2} \ar[l]_-{\supset} \ar[r]^-{\partial_1} & \flags_{X, 1}  } \end{equation}
The proof boils down to showing that the projection maps (\ref{introproj}) induce an isomorphism 
\begin{equation}\label{introprojmapsareisos} \xymatrix{(q_1 (S; F_1, F_2), q_2 (S; F_1, F_2)) : \mathrm{Gr}_X(S)(F_1 \cup F_2) \ar[r]^-{\sim} & \mathrm{Gr}_X(S)(F_1) \times \mathrm{Gr}_X(S)(F_2)}\end{equation}
for any pair $(F_1,F_2) \in \flags_{X, 2}(S)$, and for any $S$ (Theorem \ref{thm:factorization}). One important and non-trivial ingredient of the proof (Lemma \ref{lem:glueperfformal}) consists in the equivalence  \[
\xymatrix{  \Perf_{\widehat{T_1 \cup T_2}} \ar[r]^-{\sim} & \Perf_{\widehat T_1} \newtimes_{\Perf_{\widehat{T_{1} \cap T_2}}} \Perf_{\widehat T_2}}
 \] for arbitrary (derived) closed subschemes $T_1, T_2$ of an arbitrary affine (derived) scheme. \\

\noindent \textbf{Flag-dependent exotic structures on the stack of $\mathbf{G}$-bundles.} For a (underived) Noetherian scheme $S$ and a smooth morphism $Y \to S$ of relative dimension $\geq 2$, we say that a flag $(D,Z)$ in $Y/S$ is \emph{large} if $Z_s$ meets any irreducible component of $D_s$, for any $s \in S$. As A. Beilinson observed (private communication), the set of global sections $$\mathrm{Gr}(Y; D,Z)= \mathrm{Bun}^{\mathbf{G}}(Y) \times_{\mathrm{Bun}^{\mathrm{G}}(Y\setminus D) \times_{\mathrm{Bun}^{\mathbf{G}}(\widehat{Z} \setminus D)} \mathrm{Bun}^{\mathbf{G}}(\widehat{Z})    } \left\{ \overline{\mathcal{E}} \right\}$$ is a singleton\footnote{Note that this result is clearly false for the Perf-version of the flag Grassmannian.}, for any fixed $\mathbf{G}$-bundle  $\overline{\mathcal{E}}$ on $Y$ (Proposition \ref{beilinson}). This observation allows us, in Section \ref{sessioneesotica}, to associate to any large flag on a smooth projective surface $X$ a derived structure on the classical stack of $\mathbf{G}$-bundles on $X$ (Proposition \ref{lemexotic}). These derived structures (given by the corresponding \emph{derived Hecke stacks}, see Section \ref{esotiche}) are, in general, different from the usual one, i.e. $\underline{\mathsf{Bun}}_{X}$ (see Examples \ref{P2O3} and \ref{P2TP2}), and therefore we call them \emph{exotic} derived structures on the truncation  $\mathrm{t}_0(\underline{\mathsf{Bun}}_{X})$, i.e. on the classical stack of $\mathbf{G}$-bundles on $X$. We remark that different choices of large flags in $X$ yield different exotic structures.\\

\noindent \textbf{Flag chiral structures.} In Section \ref{sectionfusion} we define flag-analogs of chiral algebras and chiral categories. In the classical case, i.e. when $C$ is a smooth projective curve, one can define the so called \emph{chiral tensor product} on appropriate categories $\mathsf{Shv}(\mathsf{Ran}_C)$ of sheaves on the Ran space of $C$, essentially by performing the pull-push construction (of the exterior tensor product of such sheaves) along the following diagram
\begin{equation}\label{ranchiral}
\xymatrix{& (\mathsf{Ran}_C \times \mathsf{Ran}_C)_{\mathrm{disj}} \ar[rd]^-{p:=\cup} \ar[ld]_-{q} & \\ \mathsf{Ran}_C \times \mathsf{Ran}_C  && \mathsf{Ran}_C}
\end{equation}
where $q$ is the obvious inclusion (see \cite{FG} for details). When $X$ is a smooth projective surface, in order to get a flag-analog of this chiral product, one might naively perform the same construction by replacing (\ref{ranchiral}) by the following diagram 
\begin{equation}\label{flagchirale} \xymatrix{& \flags_{X,2} \ar[rd]^-{p:=\partial_1} \ar[ld]_-{q:=(\partial_2, \partial_0)} & \\ \flags_X \times \flags_X && \flags_X}
\end{equation} where recall that $\partial_1$ is the union map of good flags. This can certainly be done if we forget maps between flags (i.e. considering $\flags_{X}$ and $\flags_{X,2}$ as Sets-valued rather than Posets-valued), see $\S$ $\,$ \ref{warmup}. In order to deal with the general case, we use the tools developed by S. Raskin\footnote{Note that in \cite[Rem. 6.19.5]{samchiral} Raskin proves the equivalence of his approach with Francis-Gaitsgory's one.} (\cite{samchiral}), together with Beilinson-Drinfeld's idea in \cite[3.4.6]{Beilinson_Drinfeld_Chiral_2004}, to get a definition of \emph{flag-chiral category} and of \emph{quasi-coherent flag-factorization algebra}, together with their \emph{crystal} and \emph{$\mathcal{D}$-Modules} counterparts (Definition \ref{flagchiralcatsandalgs}). Some examples of these structures are given.\\

\noindent \textbf{Leftovers and future directions.} 
We list here some possible further lines of research naturally suggested by this paper.\\

\textsf{1) Actions.} In Appendix \ref{sectionactions} we sketch the construction of flag versions of the \emph{loop} and \emph{positive loop group} (both fibered over $\flags_X$), well known in the curve case, together with their actions on the flag Grassmannian. This construction should be rigorously enhanced to the derived setting, and one should then be able to make sense, at least for $\mathbf{Shv}$ denoting $\ell$-adic sheaves or $\mathcal{D}$-Modules, of the equivariant $\infty$-category  $\mathbf{Shv}^{\mathcal{L}^{+}G_X}(\fibGr_{X})$ of $\mathcal{L}^{+}G_X$-equivariant sheaves on $\fibGr_{X}$, endowed with an appropriate convolution monoidal structure. This should also be extended to the other falg Grassmannians introduced in 2) and 3) below.\\

\textsf{2) Mapping down and other flag Grassmannians.} Fix a smooth divisor $D_0$ in a smooth projective surface $X$. For each test scheme $S$, and any flag on $X\times S /S$ of the form $(D_0 \times S, Z)$, we may consider
$$\underline{\mathsf{Gr}}_{D_0}(S;Z) := \underline{\mathsf{Bun}}_{D_0 \times S} \times_{\underline{\mathsf{Bun}}_{(D_0 \times S)\setminus Z}} \{ \mathrm{triv} \}$$ i.e. the affine Grassmannian of the curve $D_0$ at the ``points'' $Z$. We may try to construct a map $\underline{\mathsf{Gr}}(X\times S)(D_0 \times S, Z) \to \underline{\mathsf{Gr}}_{D_0}(S;Z)$, and varying both $S$ and $Z \subset D_0 \times S$, a map \begin{equation}\label{mappingdown}\fibGr_{X;D_0} \longrightarrow \fibGr_{D_0}\end{equation} where $\fibGr_{D_0}$ is (a Cartier-divisors version of) Beilinson-Drinfeld's Ran Grassmannian of the curve $D_0 \subset X$.
 A drawback of the definition of flag Grassmannian in this paper is that this map is necessarily trivial (i.e. factors through the trivial bundle). This is essentially due to the compatibility required between the two trivializations in (\ref{boldGrfixedflag}). Thus, in order to get a non-trivial map, one may modify our definition of the flag Grassmannian by replacing (\ref{boldGrfixedflag}) with either of the following\footnote{We have written here derived stacks over $k$, using notation (\ref{functortoabsolutestacks}).  }
 $$\underline{\mathsf{Gr}}_X^{\mathrm{glob}}(S)(D,Z):= \underline{\mathsf{Bun}}_{X \times S} \times_{\underline{\mathsf{Bun}}_{\widehat{D}\setminus Z} \times \underline{\mathsf{Bun}}_{\widehat{Z}}} \{ \mathrm{triv} \} \,\, ,\qquad 
\underline{\mathsf{Gr}}^{\mathrm{glob, res}}_{X}(S)(D,Z):= \underline{\mathsf{Bun}}_{X \times S} \times_{\underline{\mathsf{Bun}}_{\widehat{D}\setminus Z}} \{ \mathrm{triv} \}$$
$$\underline{\mathsf{Gr}}_X^{\mathrm{fat}}(S)(D,Z):= \underline{\mathsf{Bun}}_{\widehat{D}} \times_{\underline{\mathsf{Bun}}_{\widehat{D}\setminus Z} \times \underline{\mathsf{Bun}}_{\widehat{Z}}} \{ \mathrm{triv} \} \,\, ,\qquad \underline{\mathsf{Gr}}^{\mathrm{fat, res}}_{X}(S)(D,Z):= \underline{\mathsf{Bun}}_{\widehat{D}} \times_{\underline{\mathsf{Bun}}_{\widehat{D}\setminus Z}} \{ \mathrm{triv} \}.$$
As done with the flag Grassmannian of this paper, one can show (\cite{hmv2}) that there are corresponding $2$-Segal simplicial objects $\fibGr^{\mathrm{glob}}_{X, \bullet}$, $\fibGr^{\mathrm{glob, res}}_{X, \bullet}$  in $\dSt^{\mathrm{Cat}_{\infty}}/\underline{\mathsf{Bun}}_{X}$ (with $\fibGr^{\mathrm{glob}}_{X, 0}= \fibGr^{\mathrm{glob, res}}_{X, 0} := \underline{\mathsf{Bun}}_{X}$)\footnote{Hence we have corresponding algebras in correspondences in $\dSt/\underline{\mathsf{Bun}}_{X}$.}, and $2$-Segal objects $\fibGr^{\mathrm{fat}}_{X, \bullet}$, $\fibGr^{\mathrm{fat, res}}_{X, \bullet}$ in $\dSt^{\mathrm{Cat}_{\infty}}$, together with simplicial maps $\fibGr^{\mathrm{glob}}_{X, \bullet} \to  \flags^{DR, \mathrm{op}}_{X, \bullet} \times \underline{\mathsf{Bun}}_{X}$, $\fibGr^{\mathrm{glob, res}}_{X, \bullet} \to \flags^{DR, \mathrm{op}}_{X, \bullet} \times \underline{\mathsf{Bun}}_{X}$, $\fibGr^{\mathrm{fat}}_{X, \bullet} \to \flags^{DR, \mathrm{op}}_{X, \bullet} $ and $\fibGr^{\mathrm{fat, res}}_{X, \bullet} \to \flags^{DR, \mathrm{op}}_{X, \bullet}$. Moreover, one can show that $\fibGr^{\mathrm{glob}}_{X, \bullet}$, $\fibGr^{\mathrm{glob, res}}_{X, \bullet}$, $\fibGr^{\mathrm{fat}}_{X, \bullet}$ and $\fibGr^{\mathrm{fat, res}}_{X, \bullet}$ all satisfy an appropriate flag-factorization property\footnote{And the same is true for the obvious corresponding Hecke stacks.}. For this modified flag Grassmannians, the corresponding maps (\ref{mappingdown}) are non-trivial.  This also suggests the possibility that suitably defined Hecke operators on $X$ induce Hecke operators on the curve $D_0$, perhaps generating interesting subalgebras of Hecke operators on $D_0$. Investigations in this directions are ongoing (\cite{hmv2}).\\

\textsf{3) More local flag Grassmannians and the double affine Grassmannian.}  By considering even more local Grassmannians with respect to those considered above in 2), one can try to give an algebro-geometrical interpretation of the affine Grassmannian, as discussed\footnote{At the very end of Section 3, the authors write ``It is not clear how to think about the affine Grassmannian of $\mathbf{G}_{\mathrm{aff}}$ in terms of algebraic geometry''.} in \cite[Section 3]{braverman2010pursuing}.  Indeed one can run the flag Grassmannian construction of this paper by replacing (\ref{boldGrfixedflag}) with $$\underline{\mathsf{Gr}}_X^{\mathrm{double}}(S)(D,Z)=\underline{\mathsf{Bun}}_{(\widehat{Z}^{\widehat{D}})^{\mathrm{aff}} \setminus D} \times_{\underline{\mathsf{Bun}}_{(\widehat{Z}^{\mathrm{aff, \widehat{D}}}\setminus Z)^{\mathrm{aff}} \setminus D}} \{ \mathrm{triv} \}$$ where notations need some explanation. First of all, $(\widehat{Z}^{\widehat{D}})^{\mathrm{aff}}$ is the affinization of the ind-scheme $\widehat{Z}^{\widehat{D}}= \{ \widehat{Z}^{D_n}\}_{n\geq 0}$, where $\widehat{Z}^{D_n}$ is the formal completion of $Z$ inside the $n$-th formal neighborhood $D_n$ of $D$ inside the ambient scheme $X \times S$. Secondly,  $\widehat{Z}^{\mathrm{aff, \widehat{D}}}$ is the ind-scheme $\{ \widehat{Z}^{\mathrm{aff}, D_n}\}_{n\geq 0} $ (where $\widehat{Z}^{\mathrm{aff}, D_n}$ is the affinization of the completion of $Z$ inside $D_n$); then we subtract $Z$ from this ind-scheme, and we affinize it before finally subtracting $D$ : this is the meaning\footnote{Another interesting way of interpreting $(\widehat{Z}^{\mathrm{aff, \widehat{D}}}\setminus Z)^{\mathrm{aff}} \setminus D$ is as the (suitably defined) triple intersection of the three ``basic adelic'' opens associated to the flag $D,Z$, i.e. $(X\times S)\setminus D$, $\widehat{D} \setminus Z$, and $\widehat{Z}$.} of $(\widehat{Z}^{\mathrm{aff, \widehat{D}}}\setminus Z)^{\mathrm{aff}} \setminus D$. Note that, with this definition, for $S= \mathrm{Spec}\, k$, $X= \mathbb{A}_k^2$, $D=\{ y=0 \}$, and $Z=\{ x=y=0 \}$, we get $(\widehat{Z}^{\widehat{D}})^{\mathrm{aff}} \setminus D = \mathrm{Spec} (k[[x]]((y)))$, and $(\widehat{Z}^{\mathrm{aff, \widehat{D}}}\setminus Z)^{\mathrm{aff}} \setminus D = \mathrm{Spec} (k((x))((y)))$, and this explains the relation with 
\cite[Section 3]{braverman2010pursuing}. Further relations between \cite{braverman2010pursuing, BRAVERMAN2012414, BraKaz} and this very local version of the flag Grassmannian are being investigated (\cite{MMV}).\\

\textsf{4) Replacing flags.} One could run the construction of this paper (also for the flag Grassmannians listed in 2) and 3) above) by replacing $\flags_{X,\bullet}$ with either the Ran space of nested Cartier divisors in $X$, the Ran space of $\flags_X$, or even the Ran space of families of nested closed \emph{subsets} of $X$. This approach might also indicate how to extend our constructions to smooth projective varieties $X$ of dimension $> 2$. This is currently  being investigated in \cite{hmv2}.\\

\textsf{5) Open questions.} Finally, here are a few questions (inspired by \cite{Gaitsgory_contract}, and \cite{MV_2007}) we are considering but have, so far, few clues about.\\
Let $\mathcal{G}r_X^{\mathrm{any}}$ be either $\mathcal{G}r_X$ or $\mathcal{G}r^{\mathrm{glob}}$ or $\mathcal{Gr}^{\mathrm{glob, res}}$, and consider the obvious map $\mathcal{G}r^{\mathrm{any}}_X \to \underline{\mathsf{Bun}}_X$. Are the fibers  of this map de Rham-contractible? If not, does this map induce at least an interesting relation between the de Rham cohomology of $\mathcal{G}r^{\mathrm{any}}_X$ and the de Rham cohomology of  $\underline{\mathsf{Bun}}_X$?\\ Using 1) above, given any flag Grassmannian $\mathcal{G}r_X$ (i.e. the one in the paper or the ones listed in 2) and 3) above), can we interpret in a Tannakian-like way the category\footnote{Note that even the definition of this category is not straightforward.} of $\mathcal{L}^{+}G_X$-equivariant perverse sheaves on $\mathcal{G}r_X$? To start with, one might fix a flag, and pose the same question for the flag Grassmannian at this fixed flag. \\

\noindent\textbf{Related works.} It is very much unclear what a topic named ``Geometric Langlands program for surfaces'' should precisely mean or whether it exists at all, at least in the broad structured sense it has in the case of curves. With no claim of exhaustivity, we will merely list, among the ones we are aware of, those sources that actually inspired and motivated our research project. The paper \cite {Ginzburg_Kapranov_Langlands_reciprocity_1995} was an inspiring and pioneering work in this direction but, at least to our knowledge, its line of research have not been pursued since. The article \cite{Kapranov1995} also contains very interesting suggestions about the possible existence of a Langlands program for higher dimensional varieties, though here the emphasis is more on higher local fields and formal analogies with topological quantum field theories. As we already mentioned, A. Beilinson and V. Drinfeld's  \cite{Beilinson_Drinfeld_Chiral_2004} (especially \S \, 3.4.6), though dealing with the curve case, was the actual starting point of our project.\\ Another question about a possible Geometric Langlands program for surfaces, passed on to the third author by M. Kapranov, is whether it should concern $\mathbf{G}$-bundles or $\mathbf{G}$-gerbes. In any case, the setup of formal gluing, nonlinear flags and the flag Grassmannian investigated in our paper for $\mathbf{G}$-bundles makes sense, mutatis mutandis, for $\mathbf{G}$-gerbes, too. However algebraization is likely to be false for $\mathbf{G}$-gerbes\footnote{Betrand To\"en, private communication.}, and this is a problem in developing this direction of research. \\
While finishing the paper, S. Raskin informed us that, in a 2010 conversation with him, I. Mirkovic proposed to use flags of the sort we consider here, in order to extend Belinson-Drinfeld's Grassmannian to higher dimensional varieties. We are not aware of the specific definition of the flag Grassmannian he proposed nor of subsequent publicly available work along these lines.\\ On the other hand, all of the interesting literature on the \emph{double affine Grassmannian} (e.g. \cite{braverman2010pursuing, BRAVERMAN2012414, BraKaz} to list a few papers) seems to go in a different but related direction with respect to our paper. An ongoing collaboration between the second and third authors and Andrea Maffei (Pisa) is aiming at establishing a link between the two approaches (\cite{MMV}).\\

\noindent\textbf{Acknowledgments.} Several people helped in various ways the realization of this paper. M. Porta started this project together with the first and third author, and he contributed some important ideas that shaped the current paper; in particular, sections 2 and 3 should be considered as joint work with him. M. Kapranov shared with us some of his stimulating ideas and visions on the subject of a Geometric Langlands program on higher dimensional varieties. D. Beraldo was great in carefully and patiently explaining us some tricky details in the geometric Langlands program. G. Nocera followed closely the recent evolution of this paper, and generously shared his ideas about the curve case. A. F. Herrero made very interesting comments, and suggested Lemma \ref{GrIsSetsValued}. S. Raskin gave us extremely useful and knowledgeable feedbacks: we have taken into account some of them in the current version (e.g. about the chiral tensor product in \S \ref{sectionfusion}), the rest will be addressed in a subsequent paper. A. Beilinson sent us a detailed proof of Proposition \ref{beilinson}: we thank him a lot both for this and for his interest in our work. N. Rozenblyum and L. Ramero helped us in the proof of a preliminary underived version of Lemma \ref{lem:fiberprodKoszul}. We owe a lot to D. Beraldo's and D. Gaitsgory's brilliant papers on the Geometric Langlands program. C. Barwick, R. Donagi, E. Elmanto, T. Pantev, M. Pippi, and B. To\"en offered valuable comments and encouragement. We sincerely thank all of them.\\

\noindent \textbf{Notations.} Our base field will be $k=\mathbb{C}$. All our schemes will, unless otherwise stated, be $k$-schemes. For a scheme $T$, $\mathbf{Sh}_T$ will denote the category of flat sheaves of sets on $T$. Analogously, $\dSt_T$ will denote the $(\infty,1)$-category of flat derived stacks in $\infty$-groupoids on $T$. We write $\mathbf{Sh}$ and $\dSt$ when $T=\Spec \, k$. We will denote by $\mathbf{PoSets}$ the $1$-category of (small) posetal categories, i.e. (small) categories whose $\Hom$-sets are either empty or a singleton. In a 1-category $C$, if $: u: Z\to X$, and $u': Z'\to X$ are monos, and there exists an isomorphism $u \simeq u'$ in $C/X$, then this isomorphism is unique, and we will sometimes simply write $u=u'$. The other relevant notations will be introduced and explained in the main text.\\

\section{Sheaves and bundles on the punctured formal neighbourhood}\label{section:sheaves_on_PFN}

In this first section, we define and construct the stacks of bundles (or (pseudo-)perfect complexes) on punctured formal neighborhoods in a very general setting. This will allow us to state and prove, in the next section, a statement à la Beauville--Laszlo for general (derived) stacks.

\subsection{Recollection on derived stacks}

We start by fixing a handful of notations from derived algebraic geometry.
For a comprehensive review of the subject, we refer the reader to \cite{Toen_Derived_2014}.

\begin{defin} Let us recall a few definitions, and fix our notations.
\begin{itemize}
\item We will denote by $\sCAlg_k$ the $\infty$-category of simplicial commutative $k$-algebras up to homotopy equivalence.

\item For any $A \in \sCAlg_k$, we will denote by $\pi_i A$ its $i^\mathrm{th}$ homotopy group. Recall that $\pi_0 A$ is endowed with a commutative $k$-algebra structure, and that for any $i$, the abelian group $\pi_i A$ is endowed with a structure of $\pi_0 A$-module.

\item A simplicial commutative algebra $A$ is Noetherian if $\pi_0 A$ is Noetherian and $\pi_i A$ is of finite type over $\pi_0 A$.
We denote by $\sCAlgNoeth_k$ the $\infty$-category of Noetherian simplicial commutative algebras.

\item A (locally Noetherian) derived prestack is a functor $\sCAlgNoeth_k \to \inftyGpd$ with values in the category of $\infty$-groupoids. A derived stack is a derived prestack satisfying the étale hyperdescent condition. We will denote by $\dSt_k$ the $\infty$-category of derived stacks. For $X \in \dSt_k$, we denote by $\dSt_X := \dSt_k/X$ the category of stacks over $X$.

\item We denote by $\dAff_k \subset \dSt_k$ the full subcategory of (Noetherian) derived affine schemes (i.e. functors represented by some $A \in \sCAlgNoeth_k$).

\item For any derived stack $X$, we will denote by $\trunc X$ its restriction to (discrete) commutative $k$-algebras. The functor $\trunc X$ is then a (non-derived) stack. It comes with a canonical morphism $\trunc X \to X$. We say that $X$ is a derived enhancement of $\trunc X$.
\end{itemize}
\end{defin}

Given a derived affine scheme $X$ and a closed subset $Z$ of the scheme $\trunc X$, the open complement $\trunc X \smallsetminus Z$ has a canonical derived enhancement as a derived subscheme of $X$. We call it the open complement of $Z$ in $X$ and denote it by $X \smallsetminus Z$.

\begin{defin}
 Let $X \in \dSt_k$.
 \begin{itemize}
 \item A closed substack $Z \subset X$ is the datum of a non-derived stack $Z$ and a map $Z \to X$ such that for any $U \in \Aff_k$ and any $U \to X$, the projection $\trunc(Z \times_{ X} U) \to U$ is a closed immersion of (affine) schemes.
 \item Given a closed substack $Z \subset X$, its open complement is the derived stack
 \[
  X  \smallsetminus Z := \colim_{U \in \dAff /X} U \smallsetminus Z_U,
 \]
where $Z_U$ stands for the pullback $Z \times_X U$.
 \end{itemize}
\end{defin}

\begin{defin}\label{defin:dR}
Let $X$ be a derived stack. We define its de Rham stack $X_\mathrm{dR}$ as the derived prestack whose $A$-points are:
\[
X_\mathrm{dR}(A) = X((\pi_0 A)_\red).
\]
It is endowed with a canonical morphism $X \to X_\mathrm{dR}$.
\end{defin}

\begin{defin}\label{fcompl}
Let $Z \to X$ be a map of derived stacks. The (\emph{derived}) \emph{formal completion} of $Z$ in $X$ (or of $X$ along $Z$) is defined as the pullback
\[
\hZ = X \newtimes^{\mathrm{d}}_{X_\mathrm{dR}} Z_\mathrm{dR} \in \dSt_k.
\]
\end{defin}

\begin{rem}
The notation $\hZ$ is ambiguous as the formal completion of $Z$ in $X$ strongly depends on the map $Z\to X$.
On the other hand, the de Rham stack associated to $Z$ only depends on its truncation $\trunc Z$. In particular, the formal completion $\hZ$ does not depend on the derived structure on $Z$.
\end{rem}

\begin{lem}\label{lem:formalcompletion:basechange}
 Let $Z \to X \leftarrow Y \in \dSt_k$ and let $Z_Y$ be the fiber product $Z_Y = Z \times^\mathrm{d}_X Y$. Let $\hZ$ be the formal completion of $Z$ in $X$ and $\hZ_Y$ be the formal completion of $Z_Y$ in $Y$ (i.e. the formal completion along the projection map $Z_Y\to Y$). The canonical morphism
 \[
  \hZ_Y \to Y \newtimes_X \hZ
 \]
 is an equivalence.
\end{lem}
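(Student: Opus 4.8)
The plan is to unwind both formal completions via their definition as fiber products against de Rham stacks (\cref{fcompl}) and reduce the claimed equivalence to two general facts: that formation of de Rham stacks commutes with the relevant base changes, and that pullbacks in $\dSt_k$ compose and can be rearranged freely. Concretely, by \cref{fcompl} we have $\hZ = X \newtimes^{\mathrm d}_{X_\mathrm{dR}} Z_\mathrm{dR}$ and $\hZ_Y = Y \newtimes^{\mathrm d}_{Y_\mathrm{dR}} (Z_Y)_\mathrm{dR}$, where $Z_Y = Z\newtimes^{\mathrm d}_X Y$. The first step is therefore to understand $(Z_Y)_\mathrm{dR}$. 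The de Rham construction $(-)_\mathrm{dR}$ is a colimit over nilpotent ideals of an evaluation functor, and in particular it preserves finite products and, more importantly, sends the reduction $\pi_0(-)_\mathrm{red}$-type invariance into play. I would first show that $(-)_\mathrm{dR}$ preserves pullbacks along maps whose target is itself of the form $W_\mathrm{dR}$; more precisely, that the natural map
\[
 (Z\newtimes^{\mathrm d}_X Y)_\mathrm{dR} \longrightarrow Z_\mathrm{dR}\newtimes^{\mathrm d}_{X_\mathrm{dR}} Y_\mathrm{dR}
\]
is an equivalence. This is the key input, and it holds because on $A$-points both sides are computed by the colimit over nilpotent ideals $I\subset\pi_0 A$ of the corresponding fiber products of sets of $(\pi_0 A/I)$-points: the functor $A\mapsto \pi_0 A/I$ lands in discrete rings, evaluation of a stack commutes with finite limits, and a filtered colimit of $\infty$-groupoids commutes with finite limits. (Equivalently: $(-)_\mathrm{dR}$ is right adjoint-like enough, being a filtered colimit of limit-preserving functors, to preserve the finite limits in question.)

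Granting that, the rest is a diagram chase. Starting from
\[
 \hZ_Y = Y \newtimes^{\mathrm d}_{Y_\mathrm{dR}} (Z_Y)_\mathrm{dR}
        \simeq Y \newtimes^{\mathrm d}_{Y_\mathrm{dR}} \bigl( Z_\mathrm{dR}\newtimes^{\mathrm d}_{X_\mathrm{dR}} Y_\mathrm{dR} \bigr),
\]
I would use associativity and cancellation of fiber products to rewrite the right-hand side. The iterated pullback $Y \newtimes^{\mathrm d}_{Y_\mathrm{dR}} ( Z_\mathrm{dR}\newtimes^{\mathrm d}_{X_\mathrm{dR}} Y_\mathrm{dR})$ is, by the pasting lemma for pullback squares, equivalent to $Z_\mathrm{dR}\newtimes^{\mathrm d}_{X_\mathrm{dR}} Y$ (the map $Y\to X_\mathrm{dR}$ being the composite $Y\to Y_\mathrm{dR}\to X_\mathrm{dR}$). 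On the other hand, $Y\newtimes^{\mathrm d}_X \hZ = Y\newtimes^{\mathrm d}_X (X\newtimes^{\mathrm d}_{X_\mathrm{dR}} Z_\mathrm{dR})$ collapses, again by pasting, to $Y\newtimes^{\mathrm d}_{X_\mathrm{dR}} Z_\mathrm{dR}$, using the composite $Y\to X\to X_\mathrm{dR}$. These two descriptions visibly agree, and one checks that the resulting equivalence is compatible with the canonical comparison map in the statement (which is induced by the universal properties of the two formal completions), so the canonical morphism $\hZ_Y\to Y\newtimes_X\hZ$ is the asserted equivalence.

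The main obstacle is the first step — verifying that $(-)_\mathrm{dR}$ commutes with the relevant fiber products. Two points need care: first, the fiber product $Z\newtimes^{\mathrm d}_X Y$ is a \emph{derived} fiber product, but since $(-)_\mathrm{dR}$ only depends on $\pi_0$ and in fact only on nilpotent thickenings, the derived structure is invisible to it (this is exactly the content of the remark following \cref{fcompl} and is why the statement asserts independence of the derived structure on $Z$); so I must make sure the comparison map is built correctly and that $\pi_0$ of a derived fiber product is the ordinary fiber product of the $\pi_0$'s — which it is. Second, one must check that everything in sight is a stack (satisfies étale hyperdescent) or at worst argue at the level of prestacks and sheafify compatibly; since de Rham stacks and formal completions are honest objects of $\dSt_k$ by construction here, and the forgetful functor to prestacks preserves limits, working with the explicit $A$-point formulas is legitimate. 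Once these bookkeeping issues are dispatched, the equivalence is formal.
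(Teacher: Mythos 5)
Your proposal is correct and takes essentially the same approach as the paper: the paper's proof consists of the single remark that $(-)_\mathrm{dR}$ preserves fiber products, declaring the rest immediate, and your argument supplies exactly the justification (filtered colimits commute with finite limits, evaluation is limit-preserving) and the pullback-pasting diagram chase that the paper leaves implicit.
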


\begin{proof}
 This is immediate, since the functor $X \mapsto X_\mathrm{dR}$ preserves fiber products.
\end{proof}

The following statement justifies the above definition of the derived formal completion:
\begin{thm} \label{thm:dag_completion_truncation}
Let $X$ be a derived scheme and $Z \subset X$ a closed substack (subscheme). The following assertions hold.
\begin{enumerate}
\item (Gaitsgory-Rozenblyum) The formal completion $\hZ$ is a stack and is representable by a derived ind-scheme.\label{formalcompletion:indscheme}
\item The truncation $\trunc(\hZ)$ is canonically isomorphic to the (usual) formal completion of $\trunc(X)$ along $\trunc(Z)$. \label{formalcompletion:truncation}
\end{enumerate}
\end{thm}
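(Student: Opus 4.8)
The plan is to establish the two assertions essentially separately, the first by citation and the second by a direct computation of the truncation functor applied to the defining pullback.

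\textbf{Assertion (\ref{formalcompletion:indscheme}).} Here I would simply invoke the theory developed by Gaitsgory--Rozenblyum. The point is that $X_\mathrm{dR}$ and $Z_\mathrm{dR}$ are de Rham prestacks, the map $Z_\mathrm{dR}\to X_\mathrm{dR}$ is an inclusion of a closed subprestack (this uses that $Z\subset X$ is closed), and the formal completion $\hZ = X\newtimes^{\mathrm d}_{X_\mathrm{dR}}Z_\mathrm{dR}$ is then what Gaitsgory--Rozenblyum call the formal completion of $X$ along $Z$. For $X$ a derived scheme (or more generally locally almost of finite type, but here $X$ is a scheme), their results show that this prestack is in fact a stack, is (ind-)schematic over $X$, and is representable by a derived ind-scheme; the key inputs are that a closed immersion of derived schemes is a nil-isomorphism onto its formal completion and that formal completions are built as filtered colimits of the derived infinitesimal neighbourhoods $Z^{(n)} = Z\times_X\cdots$ (equivalently of the closed subschemes cut out by powers of the ideal sheaf in the locally Noetherian case). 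I would cite the relevant chapter of Gaitsgory--Rozenblyum on formal geometry for this and not reprove it.

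\textbf{Assertion (\ref{formalcompletion:truncation}).} The truncation functor $\trunc$ is a right adjoint (restriction along $\CAlg_k^\heartsuit\hookrightarrow\sCAlg_k$), hence preserves limits; in particular it preserves the pullback defining $\hZ$. So $\trunc(\hZ)\simeq \trunc(X)\times_{\trunc(X_\mathrm{dR})}\trunc(Z_\mathrm{dR})$, and the problem reduces to identifying $\trunc(X_\mathrm{dR})$ and $\trunc(Z_\mathrm{dR})$. The key observation is that, for any derived stack $W$, the de Rham prestack $W_\mathrm{dR}$ is already insensitive to the derived and non-reduced structure: from \cref{defin:dR} one has $W_\mathrm{dR}(A) = \colim_{I}W(\pi_0A/I)$ over nilpotent ideals $I\subset\pi_0A$, and when $A$ is discrete this is exactly $(\trunc W)_\mathrm{dR}(A)$ since $W(\pi_0 A/I)=(\trunc W)(\pi_0A/I)$. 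Therefore $\trunc(X_\mathrm{dR}) = (\trunc X)_\mathrm{dR}$ and $\trunc(Z_\mathrm{dR}) = (\trunc Z)_\mathrm{dR} = Z_\mathrm{dR}$ (the latter since $Z$ is already non-derived). Consequently
\[
\trunc(\hZ)\;\simeq\;\trunc(X)\newtimes_{(\trunc X)_\mathrm{dR}}Z_\mathrm{dR},
\]
which is, tautologically, the (classical, derived-free) formal completion of $\trunc(X)$ along $\trunc(Z)$ in the sense of Definition \ref{fcompl} applied to the non-derived stack $\trunc X$. To finish, I would check that this agrees with the \emph{usual} formal completion, i.e. that on affines $\Spec R$ mapping to $\trunc X$ with ideal $I_Z$, the classical formal completion functor sends $\Spec R\mapsto \Spf(\widehat R_{I_Z})$; this is a standard comparison between the ``functor of points'' description of the formal completion (test rings on which $I_Z$ acts nilpotently) and the $I_Z$-adic completion, valid since $R$ is Noetherian, so that $\Spf(\widehat R_{I_Z})(A)=\colim_n \Spec(R/I_Z^n)(A)$ for $A$ discrete, and every map $\Spec A\to\trunc X$ factoring through the formal completion factors through some infinitesimal neighbourhood.

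\textbf{Main obstacle.} The genuinely substantive content is assertion (\ref{formalcompletion:indscheme}), which rests entirely on the Gaitsgory--Rozenblyum machinery for formal moduli and ind-schemes; I would treat it as a black box. The only mild subtlety on the truncation side is the comparison of the functor-of-points formal completion with the $I_Z$-adic one, which needs the locally Noetherian hypothesis to ensure the infinitesimal neighbourhoods are cofinal among nilpotent thickenings and that $\widehat R_{I_Z}$ is itself Noetherian; the \emph{Remark} following \cref{defin:dR} flagging the role of \cref{defin:dR} precisely here is the author's hint that the two candidate definitions of $X_\mathrm{dR}$ could differ without a finiteness hypothesis, so some care is warranted in stating what ``usual formal completion'' means.
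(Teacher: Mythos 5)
Your proof is correct and mirrors the paper's: both use that the truncation functor preserves fiber products, reduce to the affine case (the paper via \cref{lem:formalcompletion:basechange}, you via the equivalent observation that $\trunc(W_{\mathrm{dR}})=(\trunc W)_{\mathrm{dR}}$), and then identify $\trunc(\hZ)$ with the ind-scheme $\colim_p \Spec(A/I^p)$ by observing that a map $f\colon A\to B$ with $f(I)$ nilpotent in $B$ is the same as a map factoring through some $A/I^p$. One small remark: the Noetherian hypothesis you flag in your final paragraph is not in fact needed for the cofinality of the infinitesimal neighbourhoods $A/I^p$ among nilpotent thickenings (that equivalence holds for arbitrary ideals), and the paper accordingly imposes no finiteness hypothesis on $X$ in this theorem.
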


\begin{proof}
Assertion (\ref{formalcompletion:indscheme}) is \cite[Proposition 6.3.1]{Gaitsgory-Rozenblyum:dgindschemes}.
Let us prove assertion (\ref{formalcompletion:truncation}). Using \cref{lem:formalcompletion:basechange}, we can assume that $X$ is affine. Let $A$ denote the algebra of functions on $\trunc X$ and let $I \subset A$ be an ideal defining $\trunc Z$ in $\trunc X$.

The truncation functor $\trunc$ preserves fiber products. It follows that a $B$-point of $\trunc(\hZ)$ is a commutative diagram
\[
 \begin{tikzcd}
  A \ar{r} \ar{d} & B \ar{d} \\ A/I \ar{r} & B/\sqrt{B},
 \end{tikzcd}
\]
where $\sqrt{B} \subset B$ is the nilradical of $B$.
Since $B$ is Noetherian, it amounts to a map $f \colon A \to B$ such that $f(I)$ is nilpotent in $B$. The functor $\trunc(\hZ)$ is therefore represented by the ind-scheme $\colim \Spec(A/I^p)$.
\end{proof}

\subsection{Categories of modules}
\begin{defin}
	Let $A \in \sCAlg_k$ be a simplicial commutative algebra.
	Let $\Perf(A)$ denote the full stable $\infty$-subcategory of $\QCoh(A)$ spanned by its compact objects. 
	Let $\Coh^-(A)$ denote the full stable $\infty$-subcategory of $\QCoh(A)$ spanned by almost perfect $A$-modules (\cite[Def. 7.2.4.10]{Lurie_Higher_algebra})
\end{defin}

\begin{rem}
 If $A \in \CAlg_k$, then a complex $\cF$ belongs to $\Coh^-(A)$ if and only if 
	\begin{enumerate}
		\item for every integer $i \in \mathbb Z$, the cohomology group $\rH^i(\cF)$ is coherent over $A$ and
		\item $\rH^i(\cF) = 0$ for $i \gg 0$.
	\end{enumerate}
\end{rem}

The assignment $\Spec(A) \mapsto \Coh^-(A)$ can be promoted to an $\infty$-functor
\[ \bfCoh^{\otimes,-} \colon \dAff_k\op \to \Catstmon , \]
using the $(-)^*$ functoriality. It comes with a pointwise fully faithful natural transformation $\bfCoh^{\otimes,-} \to \bfQCoh^\otimes$.

Since $\bfQCoh^{\otimes}$ is an hypercomplete sheaf for the \'etale topology, and since being coherent is a local property, the functor $\bfCoh^{\otimes,-}$ is an hypercomplete sheaf for the \'etale topology.
In particular, both $\bfQCoh^\otimes$ and $\bfCoh^{\otimes,-}$ extend uniquely into limit-preserving functors
\[
\bfQCoh^{\otimes},\,\bfCoh^{\otimes,-} \colon \dSt_k\op \to \Catstmon.
\]

\subsection{Fiber functors and punctured neighbourhoods}
The first issue arising in the study of punctured formal neighbourhoods is that whenever $Z$ is a closed subset of a scheme $X$, the formal scheme $\hZ\smallsetminus Z$ is empty. To circumvent this, we will see that any (formal) scheme is determined by what we call its fiber functor, and that the punctured formal neighbourhood admits a somewhat natural (and non-trivial) fiber functor (which is not representable).

\begin{defin}\label{defin:fibf}
 We fix $X\in \dSt_k$. Recall that $\dAffX$ is the category of Noetherian affine derived $k$-schemes equipped with a morphism to $X$. Denote by $\fibf_X$ the forgetful functor $\dAffX \to \dSt_k$ mapping $S \to X$ to $S$.
 A \emph{fiber functor over $X$} is a functor
 \[
  \fibf \colon \dAffX \to \dSt_k
 \]
 together with a natural transformation $\fibf \to \fibf_X$.
 We denote by $\dFibF_X = \Fun(\dAffX, \dSt_k)/\fibf_X$ the $\infty$-category of fiber functors over $X$.
 
 For any stack $Y$ over $X$, we denote by $\fibf_Y$ the functor
 \[
 \fibf_Y \colon 
  \begin{aligned}
   \dAffX & \to \dSt_k \\
   U & \mapsto U \newtimes_{X} Y
  \end{aligned}
 \]
 This construction being functorial, any $Y \in \dSt_k$ yields a fiber functor $\fibf_Y \to \fibf_X$.
 We call it the \emph{fiber functor of $Y$} (over $X$).
\end{defin}

\begin{rem}
 All in all, a fiber functor associates to every $U \to X$ a stack over $U$, with only weak compatibilities. They can be thought as ``non-quasi-coherent stacks'' over $X$.
 
 Remark also that the terminal fiber functor $\fibf_X$ maps to the constant functor with value $X$, so that fiber functors can be seen as having values in $\dSt_X$.
\end{rem}

\begin{eg}
 Let $X \in \dSt_k$ and $Z \subset X$ a closed substack. The formal completion $\hZ := \widehat X_Z$ (Definition \ref{fcompl}) yields a fiber functor $\fibf_\hZ$. \cref{lem:formalcompletion:basechange} implies that for any $U \in \dAffX$, the stack $\fibf_\hZ(U)$ is canonically equivalent to the completion $\hZ_U$ of $Z_U := Z \times_X U$ inside $U$.
\end{eg}

\begin{lem}\label{lem:fibf-ff}
Assume $X$ is locally Noetherian.
 The functor $\fibf_\bullet \colon \dSt_X \to \dFibF_X$ mapping $Y \in \dSt_X$ to $\fibf_Y$ is fully faithful. Its essential image consists of fiber functors $\fibf \to \fibf_X$ such that for any $U \to V \in \dAffX$, the induced $\fibf(U) \to \fibf(V) \times_V U$ is an equivalence.
\end{lem}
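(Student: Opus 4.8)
The plan is to prove the two assertions of \cref{lem:fibf-ff} separately: first that $\fibf_\bullet$ is fully faithful, and then that its essential image is exactly the subcategory of fiber functors satisfying the stated base-change condition.

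For full faithfulness, I would first observe that since $X$ is locally Noetherian, the identification $\dSt_X^\lN \simeq \Fun((\dAffX)\op, \Gpd)$ (étale hypersheaves) reduces matters to a statement about functors on $\dAffX$. The key point is that $\fibf_\bullet$ is, essentially by construction, the restricted Yoneda-type embedding: for $Y \in \dSt_X$ and $U \in \dAffX$, the value $\fibf_Y(U) = U \newtimes_X Y$ together with its map to $\fibf_X(U) = U$ records precisely the fiber of $Y$ over each Noetherian affine test scheme. I would show that the mapping space $\Map_{\dFibF_X}(\fibf_Y, \fibf_{Y'})$ is computed as a limit over $U \in \dAffX$ of $\Map_{\dSt_U}(U \newtimes_X Y, U \newtimes_X Y')$, and then use adjunction (base change along $U \to X$) together with the fact that a locally Noetherian stack over $X$ is the colimit of its Noetherian affine test schemes to rewrite this limit as $\Map_{\dSt_X}(Y, Y')$. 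Concretely: $Y \simeq \colim_{(U \to Y) \in \dAffX/Y} U$ when $Y$ is locally Noetherian over $X$, and mapping out of a colimit turns into the relevant limit; one checks the two limits match. I should be slightly careful here about whether arbitrary $Y \in \dSt_X$ (not just locally Noetherian ones) are in play, but the statement of the lemma and its intended use concern locally Noetherian stacks, so I would either restrict to $\dSt_X^\lN$ or note that $\fibf_Y$ only sees the locally-Noetherian-ization of $Y$ and argue accordingly.

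For the characterization of the essential image, the ``only if'' direction is immediate from \cref{lem:formalcompletion:basechange}-style reasoning: for $\fibf = \fibf_Y$ and a map $U \to V$ in $\dAffX$, we have $\fibf_Y(U) = U \newtimes_X Y = U \newtimes_V (V \newtimes_X Y) = U \newtimes_V \fibf_Y(V)$ by associativity of fiber products, so the base-change condition holds automatically. For the ``if'' direction, given a fiber functor $\fibf \to \fibf_X$ satisfying the condition, I would define $Y := \colim_{U \in \dAffX} \fibf(U)$, where the colimit is taken in $\dSt_X$ over the (large, but that is harmless) category $\dAffX$ — or more precisely, I would assemble $\fibf$ into an object of $\dSt_X^\lN \simeq \Fun((\dAffX)\op, \Gpd)$ by taking, for each $U \in \dAffX$, the groupoid of sections of $\fibf(U) \to U$, i.e. $Y(U) := \Map_{\dSt_U}(U, \fibf(U))$. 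Then I would verify that $\fibf_Y \simeq \fibf$: unwinding, $\fibf_Y(V) = V \newtimes_X Y$ whose $U$-points (for $U \to V$) are pairs consisting of a map $U \to V$ and a section of $\fibf(U) \to U$ agreeing with it, and the base-change hypothesis $\fibf(U) \simeq \fibf(V) \times_V U$ is exactly what is needed to identify this with $\Map_V(U, \fibf(V))$, hence with $(\fibf_Y$ as a sheaf$)$. This last identification is where the hypothesis is used in an essential way, and it is the main obstacle: I need the base-change condition to promote the ``pointwise'' data $\fibf(U)$ into a bona fide stack whose own fiber functor reproduces $\fibf$, and to check this compatibly with the maps to $\fibf_X$.

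The step I expect to be the real work is this reconstruction $\fibf \rightsquigarrow Y$ and the verification that $\fibf_Y \simeq \fibf$ as objects of $\dFibF_X$ (over $\fibf_X$), because it requires carefully tracking the étale (hyper)descent needed to know that $U \mapsto \Map_{\dSt_U}(U, \fibf(U))$ is actually a stack and not merely a prestack, and because the coherence of all the base-change equivalences must be handled functorially in $U \to V$ rather than just objectwise. The full faithfulness, by contrast, is a fairly formal colimit/limit manipulation once the Yoneda-style description of $\Map_{\dFibF_X}$ is in place. Everything else — the associativity computation for the ``only if'' direction, the reduction to the locally Noetherian case — is routine.
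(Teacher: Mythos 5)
Your plan diverges from the paper's proof in both halves, and the divergence in the full-faithfulness part creates a real gap that you yourself sense but do not resolve correctly. Where you write $Y \simeq \colim_{(U \to Y) \in \dAffX/Y} U$ and then worry that this requires $Y$ locally Noetherian, you have reached for the wrong colimit formula. The paper's argument does not use that $Y$ is a colimit of Noetherian affines \emph{mapping to $Y$}; it uses that $X$ (the base, which is assumed locally Noetherian) is a colimit $X \simeq \colim_{U \in \dAffX} U$, and then invokes universality of colimits in the $\infty$-topos $\dSt_X$ to conclude
\[
 \colim_{U \in \dAffX} \bigl(U \newtimes_X Y\bigr) \;\simeq\; \Bigl(\colim_{U \in \dAffX} U\Bigr) \newtimes_X Y \;\simeq\; X \newtimes_X Y \;\simeq\; Y,
\]
which holds for \emph{every} $Y \in \dSt_X$, not just locally Noetherian ones. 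This exhibits the colimit functor as a retraction of $\fibf_\bullet$, hence the counit of the adjunction (colimit $\dashv$ $\fibf_\bullet$) is an equivalence and $\fibf_\bullet$ is fully faithful. Your fallback suggestion that ``$\fibf_Y$ only sees the locally-Noetherian-ization of $Y$'' would, if pursued, actually \emph{contradict} full faithfulness on all of $\dSt_X$, so it is not a viable escape route; the hypothesis you need and have available is that $X$, not $Y$, is locally Noetherian.

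For the essential image, your direct reconstruction ($Y(U) := \Map_{\dSt_U}(U, \fibf(U))$, then verify $\fibf_Y \simeq \fibf$) is morally reasonable and can be made to work, but the coherence and descent issues you flag as ``the real work'' are genuine and are left open in your proposal. The paper avoids them entirely by a different device: it identifies fiber functors with sections over $\dAffX$ of the target Cartesian fibration $t \colon \Fun(\Delta^1, \dSt_X) \to \dSt_X$ classifying $U \mapsto \dSt_U$, observes that the base-change condition is precisely the condition that the section be Cartesian, and then invokes the equivalence between Cartesian sections and the limit $\lim_{U \in \dAffX} \dSt_U$, which in turn equals $\dSt_X$ by descent for the stack of stacks. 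This packages all the coherence data you would otherwise have to track by hand into the straightening/unstraightening machinery. Your approach is not wrong in spirit, but as written it stops short of the proof exactly where the paper's change of formalism does the heavy lifting.
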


\begin{proof}
 The functor $\fibf_\bullet$ admits a left adjoint computing the colimit of the underlying functor $\dAffX \to \dSt_k$. Since colimits in $\dSt_k$ are universal, the adjunction morphism 
  \[
   \colim_{U \in \dAffX} U \newtimes_X Y \to Y
  \]
  is an equivalence. In particular $\fibf_\bullet$ is fully faithful.
  
  To compute its essential image, we interpret the notion of fiber functors in terms of Grothendieck constructions. Let $t \colon \dSt_k^{\Delta^1} := \Fun(\Delta^1, \dSt_k) \to \dSt_k$ be the target functor (i.e. evaluation at $1$). It is a Cartesian fibration classifying the functor $U \mapsto \dSt_U$ sending maps to pullbacks (see \cite[Cor. 2.4.7.12 and 5.2.2.5]{HTT}). A fiber functor $\fibf$ over $X$ is tantamount to a section $s_\fibf$ of $t$ over $\dAffX$:
 \[
  \begin{tikzcd}
   & \dSt_k^{\Delta^1} \ar{d}{t} \\ \dAffX \ar{r}[swap]{\fibf_X} \ar[dashed]{ur}{s_\fibf} & \dSt_k.
  \end{tikzcd}
 \]
 Given a fiber functor $\fibf$, the condition that for any $U \to V \in \dAffX$, the map $\fibf(U) \to \fibf(V) \times_V U$ is an equivalence is equivalent to the condition that the section $s_\fibf$ is Cartesian. By \cite[Cor. 3.3.3.2]{HTT}, the category of Cartesian sections is equivalent to the limit category $\lim_{U \in \dAffX} \dSt_U$. Since the functor $U \mapsto \dSt_U$ is a stack (the stack of stacks, see for instance \cite[Thm. 6.1.3.9]{HTT}), this limit is equivalent (through the base change functors) to $\dSt_X$. It follows that the essential image of $\fibf_\bullet$ is indeed as announced.
\end{proof}

\begin{defin}
 A fiber functor $\fibf$ is called \emph{representable} if it belongs to the essential image of $\fibf_\bullet$.
\end{defin}

\begin{defin}
 Let $X \in \dSt_k$ and let $Z \subset X$ be a closed substack. 
 The map $\fibf_Z \to \fibf_X$ of fiber functors is a pointwise closed immersion of derived affine schemes. Any morphism $U \to V \in \dAffX$ maps the open complement of $Z_U = Z \times_X U$ to the open complement of $Z_V = Z \times_X V$. In particular, we have a fiber functor
 \[
  \fibf_{X \smallsetminus Z} \colon U \mapsto U \smallsetminus Z_U.
 \]
 We easily check using \cref{lem:fibf-ff} that $\fibf_{X \smallsetminus Z}$ is representable. We denote by $X \smallsetminus Z$ its representative (so that $X \smallsetminus Z \simeq \colim_U U \smallsetminus Z_U$) and call it the open complement of $Z$ in $X$.
 \end{defin}
 
 \begin{defin}\label{def-intornobucato}
 Let $X \in \dSt_k$ and let $Z \subset X$ be a closed substack.
 \begin{itemize}
 \item The \emph{affinized formal neighbourhood of $Z$ in $X$} is the fiber functor $\fibf_{\hZ}^{\affinize}$ obtained by applying a pointwise affinization to $\fibf_{\hZ}$. More explicitly:
 \[
  \fibf_{\hZ}^{\affinize} \colon U \mapsto (\fibf_{\hZ}(U))^\affinize := \Spec\left(\Gamma\left(\hZ_U, \mathcal O_{\hZ_U}\right)\right).
 \]
 Since $\fibf_X$ has values in derived affine schemes, the structure map $\fibf_\hZ \to \fibf_\hZ^\affinize$ factors as $\fibf_\hZ \to \fibf_\hZ^\affinize \to \fibf_X$, giving $\fibf_\hZ^\affinize$ the structure of a fiber functor over $X$.
 \item The \emph{punctured formal neighbourhood of $Z$ in $X$} is the fiber functor $\fibf_{\hZ\smallsetminus Z}$
 \[
  \fibf_{\hZ\smallsetminus Z} \colon U \mapsto \fibf_\hZ^\affinize(U) \newtimes_X (X \smallsetminus Z) \simeq \fibf_\hZ^\affinize(U) \newtimes_U (U \smallsetminus Z_U).
 \]
 \end{itemize}
\end{defin}

\begin{eg}
 Assume $A \in \CAlg_k$ (i.e. $A$ is a usual, discrete commutative $k$-algebra), $U = \Spec A$ and $Z_U = Z \times _X U$ is the scheme $\Spec(A/I)$, for $I \subset A$ an ideal. We have
 \[
  \fibf_\hZ(U) = \Spf(\hA) \hspace{0.3em} \text{,} \hspace{1em} \fibf_\hZ^\affinize(U) = \Spec(\hA) \hspace{1em} \text{and} \hspace{1em} \fibf_{\hZ\smallsetminus Z}(U) = \Spec(\hA) \smallsetminus \Spec(A/I),
 \]
 where $\hA$ is the $I$-completion of $A$.
\end{eg}

\begin{rem}
 The affinization process forgets all about the $I$-adic topology that was contained in the formal neighbourhood. For this reason, the fiber functors $\fibf^\affinize_\hZ$ and $\fibf_{\hZ \smallsetminus Z}$ are (typically) not representable.
\end{rem}

\subsection{Restriction of fiber functors}
Any morphism of locally Noetherian derived stacks $f \colon X \to Y$ induces a projection functor $p_f \colon \dAffX \to \dAffover{Y}$. Since we have $\fibf_X = \fibf_Y \circ p_f$, this in turn induces a restriction functor $f^{-1} \colon \dFibF_Y \to \dFibF_X$ given by
\[
 f^{-1} \fibf \colon (U \to X) \mapsto \fibf(U \to X \to Y).
\]
This is trivially functorial, leading to a contravariant $\infty$-functor $X \mapsto \dFibF_X$.
It satisfies the following properties (whose straightforward proofs are left to the reader)
\begin{lem}\label{lem:restrictionfiberfunctor}
 Let $f \colon X \to Y$ be a morphism of locally Noetherian derived stacks.
 \begin{enumerate}[label=(\arabic*), ref={\cref{lem:restrictionfiberfunctor} (\arabic*)}]
  \item If $T \to Y$ is a derived stack, then $f^{-1}(\fibf_T) \simeq \fibf_{T \times_Y X}$.
  \item\label{sublem:restrictionfflimcolim} The functor $f^{-1}$ preserves small colimits as well as pullbacks.
  \item If $Z \to Y$ is a closed substack, then
  \[
   f^{-1}(\fibf_\hZ) \simeq \fibf_{\hZ_X},
   \hspace{1em}
   f^{-1}(\fibf_\hZ^\affinize) \simeq \fibf_{\hZ_X}^\affinize
   \hspace{1em}\text{and}\hspace{1em}
   f^{-1}(\fibf_{\hZ\smallsetminus Z}) \simeq \fibf_{\hZ_X \smallsetminus Z_X},
  \]
  where $Z_X := Z \times_Y X$.
 \end{enumerate}
\end{lem}

\subsection{Sheaves on punctured formal neighbourhoods}

\begin{defin}
 Let $X \in \dSt_k$ and let $\fibf \colon \dAffX \to \dSt_X$ be a fiber functor over $X$. The stacks of (right bounded) coherent sheaves and perfect complexes of $\fibf$  are respectively the functors $ \left( \dAffX \right)\op \to \Catstmon$ given by:
 \begin{align*}
  \CohX_{\fibf} \colon U &\mapsto \bfCoh^{\otimes,-}(\fibf(U))\\
  \PerfX_{\fibf} \colon U &\mapsto \bfPerf^{\otimes}(\fibf(U))
 \end{align*}
 When $\fibf$ is either $\fibf_Y$ for $Y \in \dSt_X$ or $\fibf^\affinize_{\hZ\smallsetminus Z}$  for $Z \subset X$ a closed substack, we will simply write
 \[
  \CohX_{Y} := \CohX_{\fibf_Y} \hspace{0.3em},\hspace{1em} \CohX_{\hZ\smallsetminus Z} := \CohX_{\fibf_{\hZ\smallsetminus Z}}  \hspace{0.3em},\hspace{1em}   \PerfX_{Y} := \PerfX_{\fibf_Y} \hspace{1em}\text{and} \hspace{1em} \PerfX_{\hZ\smallsetminus Z} := \PerfX_{\fibf_{\hZ\smallsetminus Z}}.
 \]
\end{defin}

\begin{thm}[Mathew \cite{mathew2019faithfully}] \label{thm:flat_descent}
  Let $X$ be a derived stack and $Z \subset X$ a closed substack.
  The functors 
  \[
   \CohX_{\hZ \smallsetminus Z}\hspace{0.7em}\text{and} \hspace{0.7em} \PerfX_{\hZ \smallsetminus Z} \colon \left( \dAffX \right)\op \to \Catstmon
  \]
  are hypercomplete sheaves with respect to the flat topology.
\end{thm}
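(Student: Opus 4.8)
The plan is to deduce the statement from the faithfully flat descent theorems of \cite{mathew2019faithfully} after unwinding the definitions, so most of the work is in setting up the translation and identifying what must be fed into \emph{loc. cit.}

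First I would reduce to a convenient shape. Since the forgetful functor $\Catstmon \to \Cat_\infty$ preserves and detects limits, it suffices to show that the underlying $\Cat_\infty$-valued functors are hypercomplete flat sheaves, and this is tested on flat hypercovers. As every object of $\dAffX$ is affine, I would fix $U = \Spec R \in \dAffX$, write $Z_U := Z \times_X U = \Spec(R/J)$ for an ideal $J \subset \pi_0 R$, and check that for every flat hypercover $\Spec R_\bullet \to \Spec R$ in $\dAffX$ the canonical map
\[
 \CohX_{\hZ\smallsetminus Z}(\Spec R)\longrightarrow \lim_{\Delta}\CohX_{\hZ\smallsetminus Z}(\Spec R_n)
\]
(together with its $\PerfX$-analogue) is an equivalence. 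Writing $J_n := J\cdot\pi_0 R_n$ and $\widehat{R_n}$ for the derived $J_n$-adic completion, \cref{def-intornobucato} together with \cref{thm:dag_completion_truncation} identifies $\CohX_{\hZ\smallsetminus Z}(\Spec R_n)$ with $\bfCoh^{-}\big(\Spec\widehat{R_n}\smallsetminus\Spec(R_n/J_n)\big)$, and similarly for $\PerfX$. Thus the problem becomes hyperdescent for almost perfect (resp.\ perfect) complexes on the punctured affinized formal neighbourhoods $\Spec\widehat{R_n}\smallsetminus\Spec(R_n/J_n)$, functorially in $R_n$.

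Next I would cover these punctured neighbourhoods by affine Zariski charts. Choosing generators $t_1,\dots,t_r$ of $J$, the open $\Spec\widehat{R_n}\smallsetminus\Spec(R_n/J_n)$ is the union of the affines $\Spec\widehat{R_n}[t_i^{-1}]$ with intersections $\Spec\widehat{R_n}[(t_it_j)^{-1}]$, and so on; since $\bfCoh^{-}$ and $\bfPerf$ are hypercomplete sheaves for the Zariski (even étale) topology on schemes, it is enough to prove the analogous hyperdescent statement for each of the functors $\Spec R'\mapsto\bfCoh^{-}\big(\widehat{R'}[t_{i_1}^{-1},\dots,t_{i_s}^{-1}]\big)$ (and its $\bfPerf$-version), compatibly in the simplicial and Zariski directions. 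Each chart-level statement is exactly what \cite{mathew2019faithfully} supplies: a faithfully flat map $R\to R'$ and its hypercover, after $J$-adic completion and localization at the $t_i$'s, become a faithfully flat hypercover of punctured/complete formal algebras — in the appropriate completed sense, using that $J$-adic completion is compatible with derived tensor products — for which \emph{loc. cit.} establishes descent, indeed hyperdescent, of almost perfect and of perfect complexes. Reassembling over the charts and their intersections then yields the claimed equivalences, compatibly with the symmetric monoidal structures. An alternative to choosing generators would be to use the recollement for the closed–open decomposition $\big(\Spec(R'/J')\hookrightarrow\Spec\widehat{R'}\hookleftarrow\Spec\widehat{R'}\smallsetminus\Spec(R'/J')\big)$, exhibiting $\bfCoh^{-}$ and $\bfPerf$ of the punctured neighbourhood as idempotent-complete Verdier quotients of those of $\Spec\widehat{R'}$, and then invoking \cref{prop:algebraization} to replace $\Spec\widehat{R'}$ by the formal completion of $Z_{U'}$ in $U'$, which by \cref{lem:formalcompletion:basechange} does commute with base change — reducing to flat hyperdescent for $\bfCoh^{-}$ and $\bfPerf$ on formal schemes.

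The hard part is entirely in the input from \cite{mathew2019faithfully}, and I expect it to be the main obstacle. One cannot simply quote fpqc descent for $\bfCoh^{-}$ and $\bfPerf$ of an honest scheme, because $J$-adic completion does not commute with flat base change — e.g.\ for $R=k[t]\to R'=k(t)$ with $J=(t)$ one has $\widehat R = k[[t]]$ but $\widehat{R'}=0$, so $\widehat R\to\widehat{R'}$ is not faithfully flat — and consequently the fiber functor $\fibf^{\affinize}_{\hZ\smallsetminus Z}$ is genuinely non-representable, as observed in the remark following \cref{def-intornobucato}. What rescues the situation is that on the localized (punctured) charts the completed Čech–hypercover diagrams are well enough behaved for descent of almost perfect and perfect complexes to go through; the boundedness built into the notions of almost perfect and perfect complex is precisely what lets one promote Čech descent to the hyperdescent needed for $\Cat_\infty$-valued presheaves. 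Carrying this out — controlling the completions and their pro-structures, and checking that all the comparison maps are compatible with the simplicial and Zariski data — is the technical heart of \cite{mathew2019faithfully} and the only genuinely non-formal ingredient.
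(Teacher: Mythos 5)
The paper's proof is a direct citation: Mathew's Theorem~7.8 is stated precisely for the presheaf $R'\mapsto\Coh^-(\Spec\widehat{R'}_J\smallsetminus V(J))$ (resp.\ $\Perf$) on Noetherian $R$-algebras, which \emph{is} the affine case of the present statement, and the general case follows because the sheaf condition over $X$ can be checked affine-locally. Your reduction to the affine case and to $\Cat_\infty$-valued presheaves is correct, and your diagnosis of the main obstacle --- that $J$-adic completion does not commute with flat base change, so one cannot invoke plain fpqc descent for affine schemes --- is exactly right.

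However, the Zariski chart decomposition you interpose is both unnecessary and unsound. Unnecessary because Mathew's theorem directly addresses the whole punctured neighbourhood for the finitely generated ideal $J$; there is nothing to assemble from charts. Unsound because the chart presheaves $R'\mapsto\Coh^-(\widehat{R'}_J[t_i^{-1}])$ are not what his theorem supplies for any single ideal over any fixed base: $\widehat{R'}_J$ is the $J$-adic, not $(t_i)$-adic, completion, and the induced maps $\widehat{R'}_J\to\widehat{R''}_J$ along a flat cover $R'\to R''$ are not themselves a flat cover --- your own counterexample with $k[t]\to k(t)$ shows exactly this. So the claim that ``each chart-level statement is exactly what \cite{mathew2019faithfully} supplies'' does not hold, and the simplicial/Zariski compatibility bookkeeping you would then need is precisely what the direct citation sidesteps. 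The recollement alternative has the separate difficulty that Verdier quotients do not in general commute with the totalizations appearing in the sheaf condition. Drop both detours: cite \cite[Thm.~7.8]{mathew2019faithfully} directly for the punctured neighbourhood over an affine, then use locality in $X$.
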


\begin{proof}
 The case where $X$ is affine is dealt with in \cite[Thm. 7.8]{mathew2019faithfully}\footnote{Note that Mathew works in the context of ring spectra, while we are using simplicial rings. However, since the statement only deals with categories of modules, the difference between those contexts is harmless here. Indeed, the category of modules over a simplicial algebra agrees with that of modules over the geometric realization of said algebra in ring spectra.}.
 The question being local in $X$, the general case follows.
\end{proof}

\subsection{\texorpdfstring{$G$}{G}-bundles on punctured formal neighbourhoods}

In this paragraph, we will extend the results of the previous paragraph to the case of $\mathbf{G}$-bundles, for $\mathbf{G}$ an affine group scheme over $k$. 

\begin{defin}
 Let $X \in \dSt_k$ and $\fibf \in \dFibF_X$.
 We denote by $\BunGX_\fibf \colon (\dAffX)\op \to \inftyGpd$ the derived prestack
 \[
  \BunGX_\fibf \colon U \mapsto \Bun^\mathbf{G}(\fibf(U)).
 \]
Whenever $\fibf = \fibf_Y$ for $Y \in \dSt_X$ or $\fibf = \fibf_{\hZ \smallsetminus Z}$ for $Z \subset X$, we will write
\[
 \BunGX_Y := \BunGX_{\fibf_Y} \hspace{1em} \text{and} \hspace{1em} \BunGX_{\hZ \smallsetminus Z} := \BunGX_{\fibf_{\hZ \smallsetminus Z}}.
\]
\end{defin}

\begin{prop}\label{prop:BunGdescent}
 For $X \in \dSt_k$ and $Z \subset X$ a closed substack, the prestack
 \[
  \BunGX_{\hZ \smallsetminus Z} \colon (\dAffX)\op \to \inftyGpd
 \]
 is a hypercomplete stack for the flat topology.
\end{prop}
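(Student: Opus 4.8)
The plan is to bootstrap this from the flat hyperdescent for perfect complexes already recorded in \cref{thm:flat_descent}, by describing $\mathbf{G}$-bundles Tannakianly. First note that for $U = \Spec A \in \dAffX$ with $Z_U = \Spec(A/I)$, the value $\fibf^\affinize_{\hZ\smallsetminus Z}(U) = \Spec(\hA)\smallsetminus \Spec(A/I)$ is an open subscheme of the Noetherian derived affine scheme $\Spec(\hA)$, hence a quasi-compact, quasi-separated derived scheme with affine diagonal. For such a scheme $S$, derived Tannaka duality (in one of its available forms) identifies $\Bun^{\mathbf{G}}(S)$ with a full subgroupoid of $\Fun^{\otimes}(\Perf(\BG), \Perf(S))^{\simeq}$ — namely the symmetric monoidal functors $\Perf(\BG)\to\Perf(S)$ that are exact (and send dualizable objects to dualizable ones) — and this subgroupoid is cut out by conditions detected on objects and morphisms. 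If one prefers to avoid any perfectness hypothesis on $\BG$, the same runs with the $\QCoh$- or $\CohX$-flavoured Tannaka statement, which holds in full generality and whose input $\CohX_{\hZ\smallsetminus Z}$ is likewise a flat hypersheaf by \cref{thm:flat_descent}.

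Second, observe that the assignment sending a symmetric monoidal stable category $\mathcal C$ to the groupoid of such functors $\Perf(\BG)\to\mathcal C$ is a limit-preserving functor $\Catstmon\to\Gpd$: it is a full subfunctor of $\mathcal C\mapsto\Fun^{\otimes}(\Perf(\BG),\mathcal C)^{\simeq}$, which is itself a limit of mapping-space functors $\mathcal C\mapsto\Map_{\mathcal C}(-,-)$ indexed by the small diagram encoding the objects, morphisms, compositions and tensor products of $\Perf(\BG)$, and the exactness (and dualizability) conditions are themselves limit conditions. Precomposing this limit-preserving functor with the flat hypersheaf $\PerfX_{\hZ\smallsetminus Z}\colon(\dAffX)\op\to\Catstmon$ of \cref{thm:flat_descent}, and using that a limit of hypercomplete flat sheaves is again a hypercomplete flat sheaf, we obtain a hypercomplete flat sheaf valued in $\Gpd$. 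By the pointwise Tannaka identification above this composite is $\BunGX_{\hZ\smallsetminus Z}$, so what remains is the verification that the Tannaka equivalence is \emph{natural} in $U\in\dAffX$, i.e. compatible with pullback along the base-change maps $\fibf^\affinize_{\hZ\smallsetminus Z}(U')\to\fibf^\affinize_{\hZ\smallsetminus Z}(U)$; this compatibility is built into Tannakian reconstruction.

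A more hands-on variant (when $\mathbf{G}$ admits a faithful finite-dimensional representation) avoids the full Tannakian machine: fixing a closed embedding $\mathbf{G}\hookrightarrow\mathrm{GL}_n$, a $\mathbf{G}$-bundle on $S$ is a rank-$n$ vector bundle $E$ — a condition inside $\PerfX_{\hZ\smallsetminus Z}$ (Tor-amplitude $[0,0]$, rank $n$) that is flat-hyperlocal, hence a flat hypersheaf by \cref{thm:flat_descent} — together with a section over $S$ of the associated fibre bundle with fibre the quasi-projective scheme $\mathrm{GL}_n/\mathbf{G}$; since maps to a quasi-compact quasi-separated scheme satisfy flat hyperdescent, this second datum is again flat-hyperlocal, and the total assignment is a flat hypersheaf. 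In either approach the genuine difficulty is the Tannakian bookkeeping rather than the descent itself: one must match the hypotheses of the reconstruction theorem one cites against the — possibly honestly derived, non-classical — schemes $\fibf^\affinize_{\hZ\smallsetminus Z}(U)$ (which is why the $\QCoh$/$\CohX$ version, or the $\mathrm{GL}_n$-reduction, is the safe route), and promote the pointwise equivalence $\Bun^{\mathbf{G}}\big(\fibf^\affinize_{\hZ\smallsetminus Z}(U)\big)\simeq\cdots$ to a natural transformation of functors on $\dAffX$. Once these are in place the conclusion is formal from \cref{thm:flat_descent} together with the stability of hypercomplete flat sheaves under limits.
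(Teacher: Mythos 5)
Your first, Tannakian route is essentially the paper's: both rest on \cref{thm:flat_descent} for $\Perf$ on the punctured formal neighbourhood plus Tannakian reconstruction for $\mathbf{G}$-bundles. The paper packages this slightly differently — it isolates a one-shot lemma (proved just before the proposition via the Tannakian embedding $\Bun^{\mathbf{G}}(Y) \hookrightarrow \Fun^{\otimes}(\Perf(\rB\mathbf{G}),\Perf(Y))$) that says $\Perf(f)$ an equivalence forces $\Bun^{\mathbf{G}}(f)$ an equivalence, then applies it to the single morphism $f\colon \colim_{[n]} \fibf_{\hZ\smallsetminus Z}(U_n) \to \fibf_{\hZ\smallsetminus Z}(U)$ attached to a hypercovering. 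That phrasing sidesteps the point you gesture at but do not settle: you claim the Tannakian subgroupoid of $\Fun^{\otimes}(\Perf(\BG),\mathcal C)^{\simeq}$ is cut out by ``limit conditions'' as $\mathcal C$ ranges over $\Catstmon$, but the conditions in Lurie's essential-image criterion involve t-structure-dependent notions (right t-exactness, preservation of flat or connective objects) that are not intrinsic to an abstract stable symmetric monoidal category. Comparing two specific categories along a single $\Perf$-equivalence, as the paper's lemma does, is precisely what keeps this bookkeeping manageable; you are implicitly re-proving that lemma, so the gap is harmless, but it is a gap.

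Your $\mathrm{GL}_n$-reduction variant, offered as a more elementary escape route, does not work as stated. You invoke flat hyperdescent for maps into the qcqs scheme $\mathrm{GL}_n/\mathbf{G}$ to handle the reduction datum. But the descent you need is in the variable $U\in\dAffX$, not in $\fibf^\affinize_{\hZ\smallsetminus Z}(U)$, and the fiber functor $\fibf^\affinize_{\hZ\smallsetminus Z}$ does \emph{not} carry a flat hypercovering $U_\bullet \to U$ to a flat hypercovering of $\fibf^\affinize_{\hZ\smallsetminus Z}(U)$: the map $\colim_{[n]} \fibf^\affinize_{\hZ\smallsetminus Z}(U_n) \to \fibf^\affinize_{\hZ\smallsetminus Z}(U)$ is not an effective epimorphism for any standard topology. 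The entire force of Mathew's theorem (\cref{thm:flat_descent}) is that $\Perf$ and $\Coh^-$ satisfy descent along this non-hypercover; there is no off-the-shelf analogue for sections of an arbitrary fibre bundle. Bootstrapping sections from $\Perf$-descent is again a Tannakian maneuver, so this alternative is not actually independent of the main argument.
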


The next lemma will be used in the proof of Proposition \ref{prop:BunGdescent}.

\begin{lem}\label{lem:perflocal}
Let $f \colon Y_1 \to Y_2$ be a map of derived stacks.
We denote by $F_\Perf := f^* \colon \Perf(Y_2) \to \Perf(Y_1)$ and $F_\Bun := f^* \colon \Bun^\mathbf{G}(Y_2) \to \Bun^\mathbf{G}(Y_1)$ the pullback functors. Then
\begin{enumerate}[label={\textrm{(\alph*)}}, ref={\cref{lem:perflocal}~\textrm{(\alph*)}}]
 \item\label{sublem:bgperfinj} If $F_\Perf$ is fully faithful, then $F_\Bun$ is fully faithful;
 \item\label{sublem:bgperflocal} If $F_\Perf$ is an equivalence, then $F_\Bun$ is an equivalence.
\end{enumerate}
\end{lem}

\begin{proof}
To prove this lemma, we will use Tannaka duality as proven for instance in \cite[Theorem 3.4.2]{Lurie_Tannaka_duality} (see also \cite[Theorem 4.1]{bhatthalpernleistner}). It gives us a description of the groupoid of $\mathbf{G}$-bundles in terms of monoidal functors. Namely, for any stack $Y$, the map
\[
\alpha_Y \colon \Bun^\mathbf{G}(Y) \simeq \Map(Y, \rB \mathbf{G}) \to \Fun^{\otimes}(\QCoh(\rB \mathbf{G}),\QCoh(Y))
\]
is fully faithful. Note also that we have a canonical embedding
\[
\Fun^{\otimes}(\Perf(\rB \mathbf{G}),\Perf(Y)) \to \Fun^{\otimes}(\QCoh(\rB \mathbf{G}),\QCoh(Y))
\]
given by the inclusion $\Perf(Y) \to \QCoh(Y)$ and the left Kan extension functor along $\Perf(\rB \mathbf{G}) \to \QCoh(\rB \mathbf{G})$.
The functor $\alpha_Y$ actually factors through $\Fun^{\otimes}(\Perf(\rB \mathbf{G}),\Perf(Y))$.
Let now $f \colon Y_1 \to Y_2$ be a morphism.
We get a commutative diagram
\begin{equation}\label{diagramtannakianreconstruction}
\begin{tikzcd}
\Bun^\mathbf{G}(Y_2) \ar[hook,r] \ar[d, "F_{\Bun}"] & \Fun^{\otimes}(\Perf(\rB \mathbf{G}),\Perf(Y_2)) \ar[r,hook] \ar{d}{F_\Perf} & \Fun^{\otimes}(\QCoh(\rB \mathbf{G}),\QCoh(Y_2)) \ar[d, "F_\QCoh"] \\
\Bun^\mathbf{G}(Y_1) \ar[hook,r] & \Fun^{\otimes}(\Perf(\rB \mathbf{G}),\Perf(Y_1)) \ar[r,hook] & \Fun^{\otimes}(\QCoh(\rB \mathbf{G}),\QCoh(Y_1))
\end{tikzcd}
\end{equation}
From what precedes, we get that $F_{\Bun}$ is fully faithful if $F_\Perf$ is. This proves (a).

From now on, we assume $F_\Perf$ is an equivalence.
Recall the description of the essential images of the functors $\alpha_{Y_i}$ given in \cite[Theorem 3.4.2]{Lurie_Tannaka_duality}.
A monoidal functor $\beta \colon \Perf(\rB \mathbf{G}) \to \Perf(Y_1)$ then lies in the essential image of $\alpha_{Y_1}$ if and only if its preimage $F_\Perf^{-1}(\beta)$ lies in the essential image of $\alpha_{Y_2}$.
It follows that $F_{\Bun}$ is an equivalence.
\end{proof}

\begin{rem}
In the above lemma, one could replace the classifying stack of $\mathbf{G}$-bundles $\rB \mathbf{G}$ by any geometric stack with affine diagonal (so that \cite[Theorem 3.4.2]{Lurie_Tannaka_duality} applies).
Stacks satisfying the conclusion of \ref{sublem:bgperflocal} are often called \emph{$\Perf$-local stacks}.
\end{rem}

\begin{proof}[Proof of Proposition \ref{prop:BunGdescent}]
 Given $U \in \dAffX$ and $U_\bullet \to U$ a hypercovering, we consider the morphism in $\dSt_k$
 \[
  f \colon \colim_{[n] \in \Delta} \fibf_{\hZ \smallsetminus Z}(U_n) \to \fibf_{\hZ \smallsetminus Z}(U).
 \]
 By \cref{thm:flat_descent}, $\Perf(f)$ is an equivalence. Using \ref{sublem:bgperflocal}, we get that $\Bun^\mathbf{G}(f)$ is an equivalence.
\end{proof}

\section{Formal glueing and flag decomposition}
The goal of this section is to prove a very general version of the Beauville-Laszlo theorem \cite{Beauville-Laszlo}.
The Beauville-Laszlo theorem states that a vector bundle on a curve $C$ amount to the data of a bundle on the complement of a point $x \in C$, a bundle on the formal neighbourhood of said point $x$, and some glueing datum on the punctured formal neighbourhood. 

\subsection{Algebraization}
We start with an algebraization result, to explain how sheaves or bundles on a formal neighbourhood are to be considered. The main part of its proof is due to Lurie.

\begin{prop}[Lurie]\label{prop:algebraization}
 Let $X \in \dSt_k$ and $Z \subset X$ a closed substack.
 The morphism of fiber functors $\fibf_{\hZ} \to \fibf_{\hZ}^\affinize$ induces equivalences
 \begin{align*}
  \CohX_{\fibf_{\hZ}^\affinize} &\overset\sim\longrightarrow \CohX_{\fibf_\hZ} = \CohX_\hZ,\\
  \PerfX_{\fibf_{\hZ}^\affinize} &\overset\sim\longrightarrow \PerfX_{\fibf_\hZ} = \PerfX_\hZ,\\
  \hspace{1em} \BunGX_{\fibf_{\hZ}^\affinize} &\overset\sim\longrightarrow \BunGX_{\fibf_\hZ} = \BunGX_\hZ.
 \end{align*}
\end{prop}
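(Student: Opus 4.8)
The claim is an algebraization statement, and the natural strategy is to reduce to the affine case and then invoke the known coherent-completeness / algebraization theorems. The plan is as follows. First, using \cref{lem:formalcompletion:basechange} (which gives $\fibf_{\hZ}(U) \simeq \widehat{Z}_U$ for $U \in \dAffX$) together with the fact that all four functors $\CohX$, $\PerfX$, $\BunGX$ are, by construction, right Kan extensions from $\dAffX$ — more precisely, their value on a fiber functor $\fibf$ is computed pointwise as $\bfCoh^{\otimes,-}(\fibf(U))$ etc. — one observes that the asserted equivalences can be checked after evaluation at each $U = \Spec A \in \dAffX$. Thus one is reduced to the following: for $A \in \sCAlg_k$ Noetherian, $I \subset \pi_0 A$ an ideal, with $\hA$ the $I$-completion, the pullback along $\Spf(\hA) \to \Spec(\hA)$ (equivalently, the comparison $\Coh^-(\Spf \hA) \to \Coh^-(\Spec \hA)$ read in the correct direction) induces equivalences on $\Coh^-$, $\Perf$ and $\Bun^{\mathbf G}$.

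For the $\Coh^-$ statement, the content is exactly Lurie's formal GAGA / coherent completeness theorem: over a Noetherian simplicial ring $A$ that is $I$-complete, the category of almost perfect modules is equivalent to the limit $\lim_p \Coh^-(A/I^p)$ of almost perfect modules on the infinitesimal thickenings, and $\Gamma(\hZ_U, \mathcal O) = \hA$ by Noetherianness; see \cite[Ch. 8]{Lurie_SAG} (or the relevant statement in \cite[Prop. 7.2.4.?]{Lurie_Higher_algebra}). So I would cite Lurie for the $\Coh^-$ equivalence and then \emph{deduce} the $\Perf$ equivalence from it: a complex in $\Coh^-$ is perfect if and only if it has finite Tor-amplitude, and this condition can be tested after restriction to each $\Spec(A/I^p)$ (equivalently, after completion) because $\hA \to \widehat{A}/I^p \widehat{A} = A/I^p$ detects Tor-amplitude on almost perfect modules when $A$ is Noetherian and $I$-complete — concretely, a finitely generated module over a Noetherian local-at-$I$ ring is flat iff its base change to $A/I$ is flat and it is $I$-adically flat, and more robustly one uses that $\Perf_{\fibf_{\hZ}}$ and $\Perf_{\fibf^\affinize_{\hZ}}$ are both full subcategories of $\Coh^-$ cut out by the same (completion-local) finiteness condition. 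Finally, the $\BunGX$ equivalence is a formal consequence of the $\PerfX$ equivalence: by \ref{sublem:bgperflocal}, $\rB\mathbf{G}$ is $\Perf$-local (it is a geometric stack with affine diagonal, so Tannaka duality \cite[Theorem 3.4.2]{Lurie_Tannaka_duality} applies), hence any morphism of (fiber functors valued in) stacks inducing an equivalence on $\Perf$ induces an equivalence on $\Bun^{\mathbf G}$; one applies this pointwise to $\fibf_{\hZ}(U) \to \fibf^\affinize_{\hZ}(U)$, i.e. to $\Spf(\hA) \to \Spec(\hA)$, and concludes.

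The step I expect to be the main obstacle is the precise formulation and citation for the $\Coh^-$ algebraization over a \emph{simplicial} (not discrete, not merely of finite type) Noetherian base, and making sure the hypotheses of Lurie's coherent completeness theorem are met — in particular that $\hA$ is again Noetherian (true: $I$-adic completion of a Noetherian ring is Noetherian, and the homotopy-group finiteness passes through since $\pi_i \hA$ is the $I$-completion of the finitely generated $\pi_0 A$-module $\pi_i A$), and that $\Gamma(\widehat{Z}_U, \mathcal O_{\widehat{Z}_U})$ genuinely computes $\hA$ rather than merely $\lim_p A/I^p$ on $\pi_0$ while possibly acquiring higher homotopy — here Noetherianness and the Artin–Rees / Mittag-Leffler argument control the $\lim^1$ terms. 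A secondary subtlety is that $\PerfX$ is defined as compact objects of $\QCoh$, whereas over a non-affine formal scheme $\Spf(\hA)$ compactness must be compared with the naive "finite Tor-amplitude almost perfect" description; one should either invoke that $\Spf(\hA)$ has a well-behaved $\QCoh$ (it is an ind-scheme by \cref{thm:dag_completion_truncation}\eqref{formalcompletion:indscheme}, but its $\QCoh$ via the fiber functor is by definition $\lim$ over $\dAffX$) or, more cleanly, simply \emph{define} the comparison through $\Coh^-$ and the Tor-amplitude condition, sidestepping compactness on the formal side altogether. Modulo these bookkeeping points, the proof is a short reduction-plus-citation, with the $\Perf$ and $\Bun^{\mathbf G}$ cases following formally from the $\Coh^-$ case and \ref{sublem:bgperflocal}.
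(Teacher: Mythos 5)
Your overall architecture matches the paper's: cite Lurie for the $\Coh^-$ algebraization, then deduce $\BunGX$ from $\PerfX$ via the Tannakian $\Perf$-locality of $\rB\mathbf{G}$ (Lemma~\ref{sublem:bgperflocal}) applied pointwise to $\fibf_\hZ(U) \to \fibf_\hZ^\affinize(U)$. The paper's reference for the $\Coh^-$ step is \cite[Theorem 5.3.2]{DAG-XII}, which is the same Lurie result you are gesturing at. Where you genuinely diverge is the middle step, $\PerfX$: you propose to characterize $\Perf$ inside $\Coh^-$ by a finite Tor-amplitude condition and then argue that this condition is detected along $\Spf(\hA) \to \Spec(\hA)$ — a route you yourself flag as delicate (one must make sense of Tor-amplitude on the formal side, and prove it is detected). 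The paper sidesteps all of this: the $\Coh^-$ equivalence from DAG-XII is a \emph{symmetric monoidal} equivalence, and $\Perf$ is exactly the full subcategory of dualizable objects, so the equivalence restricts to $\Perf$ for free. This is shorter, and it avoids both of the subtleties you list at the end (the compactness-versus-Tor-amplitude bookkeeping over the formal side, and the need to verify detection of Tor-amplitude after completion). Your Tor-amplitude argument can presumably be made to work over a Noetherian complete base, but as written it is a sketch of a harder proof of a step for which a purely formal monoidal argument suffices; if you want to keep it, you would need to actually carry out the detection lemma rather than wave at it.
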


\begin{proof}
 The case of 
 \[
  \CohX_{\fibf_\hZ^\affinize} \longrightarrow \CohX_\hZ
 \]
 is dealt with in \cite[Theorem 5.3.2]{DAG-XII}.
 This equivalence is symmetric monoidal and therefore induces an equivalence between the dualizable objects, so that we get the equivalence
 \[
  \PerfX_{\fibf_{\hZ}^\affinize} \overset\sim\longrightarrow \PerfX_\hZ.
 \]
 We deal with the last case using \ref{sublem:bgperflocal} on the morphism $\fibf_\hZ(U) \to \fibf_\hZ^\affinize(U)$ for every $U \in \dAffX$.
\end{proof}

\begin{lem}\label{lem:descenthZ}
The functors $\CohX_\hZ$, $\PerfX_\hZ$ and $\BunGX_\hZ$ are hypercomplete sheaves for the flat topology.
\end{lem}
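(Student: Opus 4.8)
The plan is to treat $\CohX_\hZ$ and $\PerfX_\hZ$ by a direct descent argument, and then to deduce the case of $\BunGX_\hZ$ from that of $\PerfX_\hZ$ exactly as \cref{prop:BunGdescent} was deduced from \cref{thm:flat_descent}. The one structural input I would use is that $\fibf_\hZ$ is representable: by the example following \cref{lem:fibf-ff} (which rests on \cref{lem:formalcompletion:basechange}), there is a canonical equivalence $\fibf_\hZ(U) \simeq U \newtimes_X \hZ$, natural in $U \in \dAffX$. Note that, in contrast with $\fibf_{\hZ\smallsetminus Z}^\affinize$, no affinization or completion is performed on the test object $U$: the completion is taken once and for all, over $X$.

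For $\CohX_\hZ$, let $U_\bullet \to U$ be a flat hypercover in $\dAffX$. Flat hypercovers are stable under base change, so $U_\bullet \newtimes_X \hZ \to U \newtimes_X \hZ$ is again a flat hypercover of derived stacks; under the identification above it is the map $\fibf_\hZ(U_\bullet) \to \fibf_\hZ(U)$. Now $\bfQCoh^\otimes$ is a hypercomplete sheaf for the flat topology (classical fpqc descent for quasi-coherent sheaves), and since coherence of cohomology is a flat-local condition, the functor $\bfCoh^{\otimes,-}\colon \dSt_k\op \to \Catstmon$ is a hypercomplete sheaf for the flat topology by the very argument the excerpt uses for the étale topology (alternatively, one may simply cite \cite{mathew2019faithfully}). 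Therefore
\[
 \CohX_\hZ(U) = \bfCoh^-\bigl(U \newtimes_X \hZ\bigr) \;\overset\sim\longrightarrow\; \lim_{[n]\in\Delta} \bfCoh^-\bigl(U_n \newtimes_X \hZ\bigr) = \lim_{[n]\in\Delta} \CohX_\hZ(U_n),
\]
which is flat hyperdescent for $\CohX_\hZ$. Replacing $\bfCoh^{\otimes,-}$ by $\bfPerf^{\otimes}$ throughout (perfection is also a flat-local condition) gives the same conclusion for $\PerfX_\hZ$.

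For $\BunGX_\hZ$ I would repeat the proof of \cref{prop:BunGdescent} verbatim: for a flat hypercover $U_\bullet \to U$ in $\dAffX$, form $f \colon \colim_{[n]\in\Delta} \fibf_\hZ(U_n) \to \fibf_\hZ(U)$ in $\dSt_k$; since $\bfPerf^{\otimes}$ is limit-preserving, the case of $\PerfX_\hZ$ just established says exactly that $\Perf(f) = f^*$ is an equivalence; hence $\Bun^\mathbf{G}(f) = f^*$ is an equivalence by \cref{sublem:bgperflocal}; and since $\Bun^\mathbf{G}(-) \simeq \Map(-, \rB\mathbf{G})$ sends colimits to limits, this is precisely the assertion $\BunGX_\hZ(U) \overset\sim\to \lim_{[n]\in\Delta} \BunGX_\hZ(U_n)$. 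The only non-formal ingredient in the whole argument is flat (as opposed to merely étale) hyperdescent for almost perfect and perfect complexes; this is the step to watch, but — unlike the punctured-neighbourhood statements \cref{thm:flat_descent} and \cref{prop:BunGdescent} — it does not require the full strength of Mathew's theorem, precisely because the formal completion only intervenes on the base $X$ and so only the finite thickenings $Z\to Z^{(n)}\to X$, not their completions, are seen upstairs.
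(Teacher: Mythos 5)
Your proof is correct and follows the same route as the paper: the paper's own argument is exactly "the fiber functor $\fibf_\hZ$ is representable, hence preserves hypercoverings, and the result follows from usual descent for coherent/perfect complexes or $\mathbf{G}$-bundles." You have simply unpacked what "representable $\Rightarrow$ preserves hypercoverings" and "usual descent" mean, and your closing remark that only the finite thickenings $Z^{(n)}$ (not a completed ring) are seen — so the full force of Mathew's theorem is not needed — is a correct refinement of what the paper leaves implicit.
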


\begin{proof}
 The fiber functor $\fibf_\hZ$ is representable. In particular, it preserves hypercoverings. The result hence follows from usual descent for coherent or perfect complexes, or $\mathbf{G}$-bundles.
\end{proof}

\subsection{The affine case}
We start with the affine case, due to Bhatt in the non-derived setting, and extended by Lurie to derived geometry.

We fix $X \in \dSt_k$ and $Z \subset X$ a closed substack. We have the following diagram of fiber functors
\begin{equation}
 \begin{tikzcd}
& \fibf_{\hZ \smallsetminus Z} \ar{r} \ar{d} & \fibf_{X \smallsetminus Z} \ar{d} \\
\fibf_\hZ \ar{r} & \fibf_\hZ^\affinize \ar{r} & \fibf_X.
 \end{tikzcd}
\end{equation}

\begin{prop}\label{prop:localformalglueing}
 Let $X \in \dSt_k$ and $Z \subset X$ a closed substack. Let $\mathbf{G}$ be an affine $k$-group scheme.
 The above diagram of fiber functors and \cref{prop:algebraization} induce natural equivalences of derived stacks 
 \begin{align*}
  \PerfX_X & \overset\sim\longrightarrow \PerfX_{X \smallsetminus Z} \newtimes_{\PerfX_{\hZ \smallsetminus Z}} \PerfX_\hZ \\
  \CohX_X & \overset\sim\longrightarrow \CohX_{X \smallsetminus Z} \newtimes_{\CohX_{\hZ \smallsetminus Z}} \CohX_\hZ \\
  \BunGX_X & \overset\sim\longrightarrow \BunGX_{X \smallsetminus Z} \newtimes_{\BunGX_{\hZ \smallsetminus Z}} \BunGX_\hZ.
 \end{align*}
\end{prop}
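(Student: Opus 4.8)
The plan is to reduce the three asserted equivalences to their values on each object of $\dAffX$, to recognise those pointwise statements as the affine Beauville--Laszlo glueing square in the form proved by Bhatt (and, in the derived setting, by Lurie), and then to deduce the $\mathbf{G}$-bundle case from the $\PerfX$ case by a Tannakian argument. Fix $U = \Spec A \in \dAffX$, so that $A$ is a Noetherian simplicial $k$-algebra, let $I \subset \pi_0 A$ be an ideal defining the closed subscheme $Z_U := Z \times_X U \subset U$, and let $\hA$ be the $I$-adic completion of $A$. By the example following Definition~\ref{def-intornobucato}, evaluating the fiber functors in sight at $U$ gives $\fibf_{X \smallsetminus Z}(U) = \Spec A \smallsetminus Z_U$, $\fibf_\hZ^\affinize(U) = \Spec \hA$ and $\fibf_{\hZ \smallsetminus Z}^\affinize(U) = \Spec \hA \smallsetminus Z_U$, while Proposition~\ref{prop:algebraization} identifies $\CohX_\hZ(U)$, $\PerfX_\hZ(U)$ and $\BunGX_\hZ(U)$ with $\bfCoh^{\otimes,-}(\Spec \hA)$, $\bfPerf^\otimes(\Spec \hA)$ and $\Bun^\mathbf{G}(\Spec \hA)$. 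Under these identifications the three maps in the statement become, at $U$, the canonical restriction morphisms
\[
 \bfPerf^\otimes(\Spec A) \longrightarrow \bfPerf^\otimes(\Spec A \smallsetminus Z_U) \newtimes_{\bfPerf^\otimes(\Spec \hA \smallsetminus Z_U)} \bfPerf^\otimes(\Spec \hA)
\]
together with the analogous ones for $\bfCoh^{\otimes,-}$ and for $\Bun^\mathbf{G}$, so it suffices to show that each of these is an equivalence, naturally in $U$.

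For $\bfPerf^\otimes$ and $\bfCoh^{\otimes,-}$ this is exactly the formal glueing theorem for perfect, resp.\ almost perfect (pseudo-coherent), complexes over a Noetherian affine derived scheme: in the discrete case it is due to Bhatt~\cite{Bhatt_algebraization_2014}, who shows the corresponding square for $\bfQCoh^\otimes$ is a pullback, whence the perfect and the bounded-above coherent cases by passing respectively to dualizable and to cohomologically bounded-above objects; the derived refinement is due to Lurie. The restriction functors realising these equivalences are symmetric monoidal and natural in $A$, hence they assemble into the first two equivalences of functors $(\dAffX)\op \to \Catstmon$.

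For $\Bun^\mathbf{G}$ I would argue as follows. Form the pushout of derived stacks
\[
 W_U \;:=\; (\Spec A \smallsetminus Z_U) \newamalg_{\Spec \hA \smallsetminus Z_U} \Spec \hA \;\in\; \dSt_k,
\]
taken along the open immersion $\Spec \hA \smallsetminus Z_U \hookrightarrow \Spec \hA$ and the canonical map $\Spec \hA \smallsetminus Z_U \to \Spec A \smallsetminus Z_U$; the maps $\Spec A \smallsetminus Z_U \to \Spec A$ and $\Spec \hA \to \Spec A$ induce a morphism $g_U \colon W_U \to \Spec A$. Since $\bfPerf^\otimes \colon \dSt_k\op \to \Catstmon$ is limit-preserving --- being the full subfunctor of $\bfQCoh^\otimes$ spanned pointwise by the dualizable objects --- it carries this pushout to a pullback, so $\bfPerf^\otimes(W_U) \simeq \bfPerf^\otimes(\Spec A \smallsetminus Z_U) \newtimes_{\bfPerf^\otimes(\Spec \hA \smallsetminus Z_U)} \bfPerf^\otimes(\Spec \hA)$, and under this identification $\bfPerf^\otimes(g_U) = g_U^*$ is precisely the restriction functor, hence an equivalence by the $\bfPerf^\otimes$ case above. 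Applying \ref{sublem:bgperflocal} to $g_U$ --- legitimate because $\mathbf{G}$ is affine, so $\rB\mathbf{G}$ has affine diagonal --- we get that $g_U^* \colon \Bun^\mathbf{G}(\Spec A) \to \Bun^\mathbf{G}(W_U)$ is an equivalence. Finally $\Bun^\mathbf{G}(-) \simeq \Map(-, \rB\mathbf{G})$ carries the pushout $W_U$ to a pullback, so $\Bun^\mathbf{G}(W_U) \simeq \Bun^\mathbf{G}(\Spec A \smallsetminus Z_U) \newtimes_{\Bun^\mathbf{G}(\Spec \hA \smallsetminus Z_U)} \Bun^\mathbf{G}(\Spec \hA)$; composing the two equivalences gives the third assertion, and every step is natural in $U$.

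The only genuinely non-formal ingredient is Bhatt's affine glueing theorem together with its derived refinement, which is quoted; the remaining work is bookkeeping. What I expect to require the most care is keeping track of the chain of identifications --- algebraization (Proposition~\ref{prop:algebraization}), the affinized and punctured formal neighbourhoods of Definition~\ref{def-intornobucato}, and the pushout presentation $W_U$ --- and verifying naturality in $U$ throughout, so that the pointwise equivalences really do glue to equivalences of functors on $\dAffX$; in the $\mathbf{G}$-bundle case one additionally has to check that both $\bfPerf^\otimes$ and $\Map(-, \rB\mathbf{G})$ convert the pushout $W_U$ into the expected pullback, which is exactly where the limit-preservation of $\bfPerf^\otimes$ and the identification $\Bun^\mathbf{G} \simeq \Map(-, \rB\mathbf{G})$ enter.
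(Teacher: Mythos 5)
Your proof takes essentially the same route as the paper's: check the equivalences pointwise over Noetherian affines, invoke the affine formal glueing theorem for $\Perf$ and $\Coh^-$ together with the algebraization \cref{prop:algebraization}, and deduce the $\Bun^{\mathbf{G}}$ statement by forming the pushout of punctured and formal neighbourhoods and applying \ref{sublem:bgperflocal}, exactly as the paper does. One small imprecision worth correcting: the unbounded $\bfQCoh^\otimes$ square is not a pullback in this generality, so one should not try to deduce the $\Perf$ and $\Coh^-$ cases by passing to subcategories of a $\QCoh$ statement --- the affine glueing theorem is stated and proved directly for perfect and for almost perfect complexes (Lurie, SAG Theorem 7.4.2.1 and Remark 7.4.2.2, which is what the paper cites), and those are the versions you should quote.
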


\begin{proof}
This can be checked pointwise. We therefore fix $U \in \dAffX$.
Using \cite[Prop. 7.4.2.1]{Lurie_SAG}, together with \cref{prop:algebraization}, imply that the base change functor
 \[
  \PerfX_X(U) \to \PerfX_{X \smallsetminus Z}(U) \newtimes_{\PerfX_{\hZ \smallsetminus Z}(U)} \PerfX_\hZ(U)
 \]
 is an equivalence. Consider now the morphism in $\dSt_k$
 \[
  f \colon \fibf_{X \smallsetminus Z}(U) \newamalg_{\fibf_{\hZ \smallsetminus Z}(U)} \fibf_{\hZ}(U) \to \fibf_X(U) = U.
 \]
 By the above, $\Perf(f)$ is an equivalence. \ref{sublem:bgperflocal} shows that so is $\Bun^\mathbf{G}(f)$.

 In the case of $\CohX$, it follows from \cite[Remark 7.4.2.2]{Lurie_SAG}, using that $U$ is Noetherian.
\end{proof}

\subsection{The global case}\label{section:global}
We can now focus on the global statement. 

\begin{construction}\label{extension} Recall that for any presentable $\infty$-category $\cC$, any hypercomplete étale sheaf $(\dAffX)\op \to \cC$ extends canonically to a limit-preserving functor $(\dSt_X)\op \to \cC$.
In particular, using \cref{thm:flat_descent} and \cref{prop:BunGdescent}, we can extend 
$\CohX_{\hZ \smallsetminus Z}$, $\PerfX_{\hZ \smallsetminus Z}$ and $\BunGX_{\hZ \smallsetminus Z}$ to every locally Noetherian derived stack over $X$.
Note that descent for coherent/perfect complexes and bundles implies that $\CohX_{T}$, $\PerfX_{T}$ and $\BunGX_{T}$ (for $T$ being either $X$, $X \smallsetminus Z$ or $\hZ$ -- see \cref{lem:descenthZ}) are also hypercomplete sheaves and can similarly be extended to all of $\dSt_X$.
\end{construction}

In the case where $X$ itself is locally Noetherian, we can now take global sections and get the following global version of \cref{prop:localformalglueing}.
\begin{thm}\label{thm:formalglueing}
Let $X \in \dSt_k$ and let $Z \subset X$ be a closed substack. We have three equivalences of $\infty$-categories 
\begin{align*}
  \Perf(X) & \overset\sim\longrightarrow \Perf(X \smallsetminus Z) \newtimes_{\Perf(\hZ \smallsetminus Z)} \Perf(\hZ), \\
  \Coh^-(X) & \overset\sim\longrightarrow \Coh^-(X \smallsetminus Z) \newtimes_{\Coh^-(\hZ \smallsetminus Z)} \Coh^-(\hZ), \\
  \Bun^\mathbf{G}(X) & \overset\sim\longrightarrow \Bun^\mathbf{G}(X \smallsetminus Z) \newtimes_{\Bun^\mathbf{G}(\hZ \smallsetminus Z)} \Bun^\mathbf{G}(\hZ),
\end{align*}
 where $\Perf(\hZ \smallsetminus Z)$, $\Coh(\hZ \smallsetminus Z)$ and $\Bun^\mathbf{G}(\hZ \smallsetminus Z)$ denote respectively $\PerfX_{\hZ \smallsetminus Z}(X)$, $\CohX_{\hZ \smallsetminus Z}(X)$ and $\BunGX_{\hZ \smallsetminus Z}(X)$. The first two equivalences are equivalences of symmetric monoidal $\infty$-categories.
\end{thm}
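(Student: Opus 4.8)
The plan is to deduce the global statement formally from its pointwise counterpart \cref{prop:localformalglueing}, by extending all the functors in sight along the inclusion $\dAffX \hookrightarrow \dSt_X^\lN$ and then evaluating at the terminal object $X \in \dSt_X^\lN$; the hypothesis that $X$ be locally Noetherian enters only to guarantee that $X$ actually lies in $\dSt_X^\lN$. First, one records that each of $\PerfX_{\fibf}$, $\CohX_{\fibf}$ and $\BunGX_{\fibf}$, for $\fibf$ equal to $\fibf_X$, $\fibf_{X\smallsetminus Z}$, $\fibf_\hZ$ or $\fibf^\affinize_{\hZ\smallsetminus Z}$, is a hypercomplete sheaf for the flat topology on $\dAffX$ — this is \cref{thm:flat_descent} and \cref{prop:BunGdescent} for the punctured neighbourhood, and ordinary hyperdescent (\cref{lem:descenthZ} together with Construction~\ref{extension}) in the three remaining cases — and hence, by Construction~\ref{extension}, extends uniquely to a limit-preserving functor $(\dSt_X^\lN)\op \to \cC$, with $\cC = \Catstmon$ in the $\PerfX$ and $\CohX$ cases and $\cC = \Gpd$ in the $\BunGX$ case.

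Next, the three equivalences of \cref{prop:localformalglueing} are equivalences of functors on $(\dAffX)\op$, whose right-hand sides are fiber products formed pointwise. A finite limit of limit-preserving functors on $(\dSt_X^\lN)\op$ is again limit-preserving, so these right-hand sides extend to limit-preserving functors on $(\dSt_X^\lN)\op$; and since $\dSt_X^\lN$ is generated under colimits by $\dAffX$, a limit-preserving functor on $(\dSt_X^\lN)\op$ is the right Kan extension of its restriction to $(\dAffX)\op$. Two such functors agreeing on $(\dAffX)\op$ therefore agree on all of $(\dSt_X^\lN)\op$, so the equivalences of \cref{prop:localformalglueing} propagate to $(\dSt_X^\lN)\op$. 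Evaluating at $X$ — which, limits of functors being pointwise, commutes with the fiber product — turns the right-hand side into $\PerfX_{\fibf_{X\smallsetminus Z}}(X) \newtimes_{\PerfX_{\fibf^\affinize_{\hZ\smallsetminus Z}}(X)} \PerfX_{\fibf_\hZ}(X)$, and similarly for $\CohX$ and $\BunGX$.

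It then remains to identify the evaluations. As $\fibf_Y(X) = Y$ for any $Y \in \dSt_X$, hyperdescent yields $\PerfX_{\fibf_X}(X) \simeq \Perf(X)$, $\PerfX_{\fibf_{X\smallsetminus Z}}(X) \simeq \Perf(X \smallsetminus Z)$ and, after using \cref{prop:algebraization} to replace $\fibf_\hZ$ by $\fibf^\affinize_\hZ$, $\PerfX_{\fibf_\hZ}(X) \simeq \Perf(\hZ)$, whereas $\PerfX_{\fibf^\affinize_{\hZ\smallsetminus Z}}(X)$ is by definition what the statement calls $\Perf(\hZ \smallsetminus Z)$; the same applies to $\Coh^-$ and $\Bun^\mathbf{G}$. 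The symmetric monoidal refinement for $\Perf$ and $\Coh^-$ is automatic, since the hypersheaf structure, the fiber products, the extension to $\dSt_X^\lN$ and the evaluation at $X$ have all been carried out inside $\Catstmon$. No step here is genuinely hard: the one thing to keep track of is that ``extend, then evaluate'' commutes with forming the fiber product, which reduces to the elementary facts that a pullback of limit-preserving functors is limit-preserving and that evaluation at a terminal object preserves limits. All the substantive input — the affine Beauville--Laszlo equivalence of Bhatt and Lurie, flat hyperdescent on the affinized punctured formal neighbourhood, and the reduction of $\mathbf{G}$-bundles to perfect complexes via \ref{sublem:bgperflocal} — has already been established in the preceding subsections.
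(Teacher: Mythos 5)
Your proposal is correct and follows essentially the same route as the paper: the paper's proof of \cref{thm:formalglueing} is implicit in Construction~\ref{extension} (extend each hypersheaf on $\dAffX$ to a limit-preserving functor on $(\dSt_X^\lN)\op$) together with the remark immediately preceding the theorem that one then ``takes global sections,'' i.e.\ evaluates at $X$. Your write-up simply spells out the elementary bookkeeping the paper leaves implicit — that the extension preserves finite limits, hence commutes with forming the fiber product from \cref{prop:localformalglueing}, and that evaluation at the terminal object $X\in\dSt_X^\lN$ does too — which is exactly the intended argument.
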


\subsection{Flag decomposition}\label{sectionflagdecomp}
 In \cref{thm:formalglueing}, the derived stack $X$ can be very general:  locally Noetherian schemes are allowed, as well as formal completions of locally Noetherian schemes along closed subschemes. More generally, if $X \in \dSt_k$ and $Z \subset X$ is a closed substack, then the formal completion $\hZ$ is also locally Noetherian.
 Indeed, this statement is local in $X$ so we can assume $X$ to be affine and Noetherian. In this case, we can see using \cite[Prop 6.7.4]{Gaitsgory-Rozenblyum:dgindschemes} that $\hZ$ is a colimit of Noetherian derived affine schemes and thus do belong to $\dSt_k$.
 
 In particular, both \cref{prop:localformalglueing} and \cref{thm:formalglueing} can be iterated as follows.

\begin{cor}\label{cor:flagdec}  Let $X \in \dSt_k$, $Z_1 \subset Z_2 \subset X$ be a flag of closed substacks, and $\mathbf{G}$ be an affine $k$-group scheme. We have natural equivalences of derived stacks:
 \begin{align*}
 \PerfX_X & \overset\sim\longrightarrow \PerfX_{X \smallsetminus Z_2} \newtimes_{\PerfX_{\hZ_2 \smallsetminus Z_2}}\left( \PerfX_{\hZ_2 \smallsetminus Z_{1}}\newtimes_{\PerfX_{\hZ_{1} \smallsetminus Z_{1}}} \PerfX_{\hZ_1} \right)\\
  \CohX_X & \overset\sim\longrightarrow \CohX_{X \smallsetminus Z_2} \newtimes_{\CohX_{\hZ_2 \smallsetminus Z_2}}\left( \CohX_{\hZ_2 \smallsetminus Z_{1}}\newtimes_{\CohX_{\hZ_{1} \smallsetminus Z_{1}}} \CohX_{\hZ_1} \right)\\
 \BunGX_X & \overset\sim\longrightarrow \BunGX_{X \smallsetminus Z_2} \newtimes_{\BunGX_{\hZ_2 \smallsetminus Z_2}}\left( \BunGX_{\hZ_2 \smallsetminus Z_{1}}\newtimes_{\BunGX_{\hZ_{1} \smallsetminus Z_{1}}} \BunGX_{\hZ_1} \right).
 \end{align*}
 If we suppose, moreover, that $X\in \dSt_k$, taking global sections of the previous equivalences yield equivalences of $\infty$-categories:
 \begin{align*}
\Perf(X) &\simeq \Perf(X \smallsetminus Z_2) \newtimes_{\Perf(\hZ_2 \smallsetminus Z_2)} \left( \Perf(\hZ_2 \smallsetminus Z_{1}) \newtimes_{\Perf(\hZ_{1} \smallsetminus Z_{1})} \Perf (\hZ_1) \right)\\
\Coh^-(X) &\simeq \Coh^-(X \smallsetminus Z_2) \newtimes_{\Coh^-(\hZ_2 \smallsetminus Z_2)} \left( \Coh^-(\hZ_2 \smallsetminus Z_{1}) \newtimes_{\Coh^-(\hZ_{1} \smallsetminus Z_{1})} \Coh^-(\hZ_1) \right) \\
\Bun^\mathbf{G}(X) &\simeq \Bun^\mathbf{G}(X \smallsetminus Z_2) \newtimes_{\Bun^\mathbf{G}(\hZ_2 \smallsetminus Z_2)} \left( \Bun^\mathbf{G}(\hZ_2 \smallsetminus Z_{1}) \newtimes_{\Bun^\mathbf{G}(\hZ_{1} \smallsetminus Z_{1})} \Bun^\mathbf{G}(\hZ_1) \right).
 \end{align*}
The first two equivalences are equivalences of symmetric monoidal $\infty$-categories.
 
 \end{cor}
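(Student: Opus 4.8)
The plan is to obtain \cref{cor:flagdec} by iterating the one-step statements \cref{prop:localformalglueing} and \cref{thm:formalglueing}, once for $Z_2 \subset X$ and once for $Z_1$ sitting inside the formal completion $\hZ_2$. Writing $\mathbf F$ for any of $\PerfX$, $\CohX$, $\BunGX$, \cref{prop:localformalglueing} applied to $(X,Z_2)$ gives the ``outer'' equivalence
\[
 \mathbf F_X \overset\sim\longrightarrow \mathbf F_{X \smallsetminus Z_2} \newtimes_{\mathbf F_{\hZ_2 \smallsetminus Z_2}} \mathbf F_{\hZ_2},
\]
so everything reduces to decomposing the last factor $\mathbf F_{\hZ_2}$ along $Z_1$. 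Since all the functors in play are hypercomplete sheaves (\cref{thm:flat_descent}, \cref{prop:BunGdescent}, \cref{lem:descenthZ}) whose value on a test object is read off from the fiber functor there, I would carry out this second decomposition pointwise on $U \in \dAffX$: there $A := \mathcal O(U)$ is Noetherian with ideals $I_2 \subseteq I_1$ cutting out the pullbacks of $Z_2$ and $Z_1$, and by \cref{prop:algebraization} the stacks in question depend only on the affinization $\widehat A_{I_2} = \Gamma\!\left(\fibf_{\hZ_2}(U),\mathcal O\right)$, which is again Noetherian.

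The second step is then the affine Beauville--Laszlo theorem (Bhatt \cite{Bhatt_algebraization_2014}, \cite[Thm.\ 7.4.2.1 and Rem.\ 7.4.2.2]{Lurie_SAG}, resp.\ the flat descent of \cite{mathew2019faithfully}) --- the very engine of \cref{prop:localformalglueing} --- applied now to $\bigl(\widehat A_{I_2},\, I_1 \widehat A_{I_2}\bigr)$. To recognise its output as the displayed ``inner'' pullback over $\mathbf F_{\hZ_1 \smallsetminus Z_1}$ and $\mathbf F_{\hZ_1}$, the key point is that the formal completion of $Z_1$ inside $\hZ_2$ is again $\hZ_1$: conceptually this is \cref{lem:formalcompletion:basechange} applied to $Z_1 \to X \leftarrow \hZ_2$ together with the idempotence of the de Rham construction, using that $(Z_2)_\mathrm{dR} \to X_\mathrm{dR}$ is a monomorphism (a filtered colimit of the monomorphisms $Z_2(R) \to X(R)$), which forces $Z_1 \newtimes_X \hZ_2 \simeq Z_1$ and $\hZ_2 \newtimes_X \hZ_1 \simeq \hZ_1$; at the level of $U$ this is the commutative-algebra statement that $(\widehat A_{I_2})^{\wedge}_{I_1} \simeq \widehat A_{I_1}$ when $I_2 \subseteq I_1$. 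Substituting the inner equivalence into the outer one and assembling over $\dAffX$ (all comparison maps being natural in $U$) yields the first family of equivalences of derived stacks over $X$; the symmetric monoidal refinement for $\PerfX$ and $\CohX$ is inherited from \cref{prop:localformalglueing}. For the second family, one notes, as recorded just before the statement, that $\hZ_2 \in \dSt_k^\lN$ as soon as $X$ is, so \cref{thm:formalglueing} applies verbatim to $(\hZ_2,Z_1)$; taking global sections over $X$ of the two stacky equivalences and using, exactly as in \cref{thm:formalglueing}, that $\Gamma$ carries the fibered pullbacks to pullbacks of $\infty$-categories gives the decompositions of $\Perf(X)$, $\Coh^-(X)$ and $\Bun^\mathbf G(X)$.

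The part I expect to demand the most care is not conceptual but bookkeeping: one must check that, under the identification $\widehat{(Z_1)}_{\hZ_2} \simeq \hZ_1$, the affinized \emph{punctured} formal neighbourhood of $Z_1$ computed inside $\hZ_2$ --- a fiber functor a priori living over $\hZ_2$ rather than over $X$ --- matches $\fibf^\affinize_{\hZ_1 \smallsetminus Z_1}$ as a fiber functor over $X$. Since affinization and puncturing only involve the structure sheaves and the open complement $X \smallsetminus Z_1$, this comparison can be made pointwise on $\dAffX$, where it is again precisely the combination of \cref{lem:formalcompletion:basechange} and \cref{prop:algebraization}. Once this is pinned down, the iteration is formal, and in particular it is automatically compatible with the extension-to-all-of-$\dSt_X^\lN$ mechanism recalled in \cref{extension}.
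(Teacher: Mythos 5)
Your proof is correct and follows essentially the same route as the paper: the Corollary is stated after the authors observe that $\hZ_2$ is again locally Noetherian, so that \cref{prop:localformalglueing} and \cref{thm:formalglueing} can simply be applied a second time to the pair $(\hZ_2, Z_1)$, and then global sections are taken. You merely spell out the (correct) bookkeeping — notably the identification $\widehat{(Z_1)}_{\hZ_2}\simeq\hZ_1$ via \cref{lem:formalcompletion:basechange} and idempotence of $(-)_{\mathrm{dR}}$, and its pointwise manifestation $(\widehat A_{I_2})^{\wedge}_{I_1}\simeq\widehat A_{I_1}$ — that the paper leaves implicit.
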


Similar statements obviously hold  for flags $Z_1 \subset \cdots \subset Z_n \subset X$ of arbitrary length; we leave the interested reader to write them down explicitly.

\begin{rem} \label{rem:abs}
Let $\mathcal{C}$ denote either $\Catstmon$ or $\inftyGpd$, and let $X\in \dSt_k$. Observe that there is an equivalence between hypercomplete sheaves $\mathrm{St}_{\mathrm{fl}}(\dAffX, \mathcal{C})$ for the flat topology on $\dAffX$, and hypercomplete sheaves $\mathrm{St}_{\mathrm{fl}}(X_{\mathrm{fl}}, \mathcal{C})$ for the flat topology on the big Noetherian flat site $X_{\mathrm{fl}} := (\dSt_k/X, \mathrm{fl})$. Now, the map $X \to \Spec k$ induces a morphism of sites $u \colon (\dAff_k, \mathrm{fl}) \to (\dSt_k/X, \mathrm{fl})$. For $Z\subset X$ a closed substack, we will put 
\begin{align*}
\underline{\mathsf{Perf}}_X := u^{-1}\PerfX_X \, , \,\, \underline{\mathsf{Perf}}_{X \smallsetminus Z}:= u^{-1}\PerfX_{X \smallsetminus Z}\, , \,\, \underline{\mathsf{Perf}}_{\hZ \smallsetminus Z}:=u^{-1}\PerfX_{\hZ \smallsetminus Z}\, , \,\, \underline{\mathsf{Perf}}_\hZ:=u^{-1}\PerfX_\hZ \\
\underline{\mathsf{Coh}}^-_X := u^{-1}\CohX_X \, , \,\, \underline{\mathsf{Coh}}^-_{X \smallsetminus Z}:= u^{-1}\CohX_{X \smallsetminus Z}\, , \,\, \underline{\mathsf{Coh}}^-_{\hZ \smallsetminus Z}:=u^{-1}\CohX_{\hZ \smallsetminus Z}\, , \,\, \underline{\mathsf{Coh}}^-_\hZ:=u^{-1}\PerfX_\hZ \\
\underline{\mathsf{Bun}}^\mathbf{G}_X := u^{-1}\BunGX_X \, , \,\, \underline{\mathsf{Bun}}^\mathbf{G}_{X \smallsetminus Z}:= u^{-1}\BunGX_{X \smallsetminus Z}\, , \,\, \underline{\mathsf{Bun}}^\mathbf{G}_{\hZ \smallsetminus Z}:=u^{-1}\BunGX_{\hZ \smallsetminus Z}\, , \,\, \underline{\mathsf{Bun}}^\mathbf{G}_\hZ:=u^{-1}\BunGX_\hZ 
\end{align*}
and similar definitions for longer flags $Z_1 \subset \cdots \subset Z_n \subset X$. These are $\mathcal{C}$-valued hypercomplete sheaves on $\dAff_k$ for the flat topology, and both \emph{formal gluing} (\cref{prop:localformalglueing}) and \emph{flag decomposition} (\cref{cor:flagdec}) hold for them (since $u^{-1}$ preserves fiber products). In particular, for $X\in \dSt_k$ and $Z \subset X$ a closed substack, restriction induces equivalences in $\dSt_k$
\[
  \underline{\mathsf{Perf}}_X \simeq \underline{\mathsf{Perf}}_{X\smallsetminus Z} \newtimes_{\underline{\mathsf{Perf}}_{\hZ\smallsetminus Z}} \underline{\mathsf{Perf}}_{\hZ}
  \hspace{1cm}\text{and}\hspace{1cm}
  \underline{\mathsf{Bun}}^\mathbf{G}_X \simeq \underline{\mathsf{Bun}}^\mathbf{G}_{X\smallsetminus Z} \newtimes_{\underline{\mathsf{Bun}}^\mathbf{G}_{\hZ\smallsetminus Z}} \underline{\mathsf{Bun}}^\mathbf{G}_{\hZ}.
\]

\end{rem}

\subsection{Non-derived version}\label{sec_nonderived}
The results presented so far all involve derived geometry.
They imply similar results in classical geometry, which we will make explicit in this section, for the reader's convenience.\\

\paragraph{\textbf{Fiber functors}}
The notion of fiber functors introduced in \cref{defin:fibf} has an obvious non-derived analog: a fiber functor over a stack $X$ is a functor $\Aff_X \to \St_X$ equipped with a natural transformation to the inclusion $\fibf_X \colon \Aff_X \subset \St_X$.
We denote by $\FibF_X$ the $(2,1)$-category of fiber functors\footnote{Recall that by convention, $\St_X$ denotes the $(2,1)$-category of $1$-stacks over $X$.}.
Computing pointwise fibers defines a fully faithful functor $\tilde \fibf_\bullet \colon \St_X \to \FibF_X$

Classical and derived fiber functors are related by two functors:
\begin{itemize}
\item The inclusion $i \colon \FibF_X \to \dFibF_X^{\leq 1} \subset \dFibF_X$ induced by the inclusion $\St_X \subset \dSt_X$ and a Kan extension.
\item The truncation functor $\tau \colon \dFibF_X^{\leq 1} \to \FibF_X$ (restrict at the source and truncate the values),
\end{itemize}
where $\dFibF_X^{\leq 1}$ denotes the full subcategory of $\dFibF_X$ spanned by fiber functors $\fibf$ such that for any affine (classical) scheme $S$ over $X$, the classical truncation of $\fibf(S)$ is a $1$-stack (i.e. lies in $\St$).
There is a canonical equivalence $\tau \circ i \simeq \id$.
Note that $i$ does not commute with the functors $\tilde \fibf_\bullet \colon \St_X \to \FibF_X$ and $\fibf_\bullet \colon \St_X \subset \dSt_X \to \dFibF_X$ in general.
It does, however, in every case of interest to us: if the structural morphism $Y \to X$ is flat, then $i(\tilde \fibf_Y) = \fibf_Y$.\\

\paragraph{\textbf{Formal glueing}}
These relations between derived and non-derived fiber functors will allow to deduce a non-derived version of the algebraization and formal glueing theorems.
This is only possible due to the following observation.

Fix $X \in \St$. Using the inclusions $\Aff_X \hookrightarrow \dAff_X$ and $\St \hookrightarrow \dSt$, we get canonical functors
\[
 \begin{tikzcd}
  \dFibF_X \ar{r}{\mathrm{rest.}} & \Fun(\Aff_X, \dSt)/\fibf_X & \FibF_X \ar[hook']{l}.
 \end{tikzcd}
\]

\begin{lem}\label{lem:underivedfibf}
 Let $X \in \St$ and $Z \subset X$ a closed substack. The restrictions of the fiber functors
 \[
  \fibf_X, \hspace{1em} \fibf_{\hZ}, \hspace{1em} \fibf^\affinize_{\hZ} \text{ and } \hspace{1em} \fibf_{\hZ \smallsetminus Z}
 \]
 to $\Aff_X$ all belong to the essential image of $\FibF_X$, with pre-image given respectively by
 \[
  S \mapsto S, \hspace{1em} S \mapsto \hZ_S, \hspace{1em} S \mapsto \hZ_S^\affinize \text{ and } \hspace{1em} S \mapsto \hZ_S^\affinize \smallsetminus Z_S,
 \]
 where $Z_S$ is the closed subscheme $Z \times_X S \subset S$.
\end{lem}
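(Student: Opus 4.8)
The plan is to check the assertion for each of the four fiber functors separately, in every case reducing immediately to the affine setting by fixing a test object $S\in\AffNoeth_X$ and using that $Z_S=Z\times_X S$ is then a genuine closed subscheme of the Noetherian affine scheme $S$ -- this is exactly what it means for $Z$ to be a closed substack of $X$. For $\fibf_X$ there is nothing to prove, as $\fibf_X(S)=S$ is a classical scheme, functorially in $S$. For the remaining three the key preliminary point is that the \emph{derived} formal completion $\widehat Z_S$ -- which by \cref{lem:formalcompletion:basechange} is precisely $\fibf_{\widehat Z}(S)$ -- is already $0$-truncated: by \cref{fcompl} it is the derived fibre product $S\times_{S_\mathrm{dR}}(Z_S)_\mathrm{dR}$, the de Rham stack of a scheme is $0$-truncated (its $A$-points form a filtered colimit of sets), and $0$-truncated stacks are closed under limits. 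Granting this, assertion (\ref{formalcompletion:truncation}) of \cref{thm:dag_completion_truncation} identifies $\widehat Z_S$ canonically, and naturally in $S$, with the classical formal completion of $Z_S$ in $S$; hence $\fibf_{\widehat Z}$, restricted to $\AffNoeth_X$, is equivalent as a fiber functor to the classical one $S\mapsto\widehat Z_S$, and in particular lies in the essential image of $\FibF_X$.

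For $\fibf^\affinize_{\widehat Z}$, I would compute its value from the previous step. Writing $A=\mathcal O(S)$ and $I_S\subset A$ for the ideal of $Z_S$, the classical formal completion is $\colim_p\Spec(A/I_S^p)$, so the derived ring of global sections $\Gamma(\widehat Z_S,\mathcal O_{\widehat Z_S})$ is the derived limit $\lim_p A/I_S^p$; since the transition maps $A/I_S^{p+1}\to A/I_S^p$ are surjective this tower is Mittag--Leffler, so the derived limit has no higher homotopy and coincides with the ordinary $I_S$-adic completion $\widehat A_{I_S}$, a discrete ring. Therefore $\fibf^\affinize_{\widehat Z}(S)=\Spec(\widehat A_{I_S})$ is a classical affine scheme, naturally in $S$, which is the asserted pre-image $S\mapsto\widehat Z_S^\affinize$.

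Finally, the punctured formal neighbourhood $\fibf^\affinize_{\widehat Z\setminus Z}$ of \cref{def-intornobucato} sends $S$ to $\fibf^\affinize_{\widehat Z}(S)\times_X(X\setminus Z)\simeq\Spec(\widehat A_{I_S})\times_S(S\setminus Z_S)$. Since $S\setminus Z_S\hookrightarrow S$ is an open immersion, hence flat, this derived fibre product coincides with the classical one, namely the open subscheme $\Spec(\widehat A_{I_S})\setminus V(I_S\widehat A_{I_S})=\widehat Z_S^\affinize\setminus Z_S$. All the identifications above are natural in $S$ because they are assembled from the functoriality of the derived constructions, so in each case the restriction to $\AffNoeth_X$ takes values in $\St_X$ and thus lies in the essential image of $\FibF_X\hookrightarrow\Fun(\AffNoeth_X,\dSt^\lN)/\fibf_X$, with the stated pre-image. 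The only steps that require genuine care, rather than formal bookkeeping about fiber functors, are the $0$-truncatedness of the derived formal completion and the Mittag--Leffler vanishing in the computation of its global sections.
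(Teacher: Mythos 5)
Your proof has a genuine gap at its central step, the claim that the derived formal completion $\widehat Z_S = S \times_{S_{\mathrm{dR}}} (Z_S)_{\mathrm{dR}}$ is $0$-truncated. You argue that the de Rham stacks $S_{\mathrm{dR}}$ and $(Z_S)_{\mathrm{dR}}$ are $0$-truncated (which is correct) and that $0$-truncated stacks are closed under limits (also correct), and conclude. But the fibre product has a third factor, namely $S$ itself, which you say nothing about -- and $S$ is generally \emph{not} $0$-truncated as a derived prestack. Already for $S = \Spec\bigl(k[\epsilon]/(\epsilon^2)\bigr)$ one has $\pi_1\Map_{\sCAlg}\bigl(k[\epsilon]/(\epsilon^2),\,k\bigr)\neq 0$ (e.g.\ by inspecting $\operatorname{Ext}^{-1}(L_{A/k},k)$ for $A=k[\epsilon]/(\epsilon^2)$, whose cotangent complex has cohomology in degree $-1$). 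In that example $Z_S=\Spec k$ and $I=(\epsilon)$ has $I^2=0$, so the derived completion satisfies $\widehat Z_S(B)=\Map_{\sCAlg}(A,B)$ for every $B$ and is therefore not $0$-truncated. Truncatedness cannot improve in a homotopy fibre product, so step (3) of your argument does not follow from (1) and (2).

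There is also a second, logically prior, issue: even if $\widehat Z_S$ were $0$-truncated, that would not by itself place it in the essential image of $\St\hookrightarrow\dSt^{\lN}$, which is what the lemma asks for. Being in the image of the classical-to-derived inclusion is the property of agreeing with the left Kan extension of one's restriction to $\CAlg$; having set-valued $B$-points is a different (and, as the example shows, independent) condition -- indeed $\Spec R$ for $R$ a discrete Noetherian ring lies in the image of $\St$ without being $0$-truncated. What is actually needed is the fact that, for $S$ a classical Noetherian affine scheme and $Z_S\subset S$ a classical closed subscheme, the derived formal completion coincides with the classical one as an object of $\dSt^{\lN}$ (e.g.\ \cite[Prop.~6.8.2]{Gaitsgory-Rozenblyum:dgindschemes}); this is exactly where the Noetherian hypothesis enters, and, at bottom, it is a manifestation of the flatness of the $I$-adic completion $A\to\widehat A_I$, which is what the paper's one-line proof is gesturing at. Once this classicality is secured, your two remaining steps -- the Mittag--Leffler vanishing to identify $\Gamma(\widehat Z_S,\mathcal O_{\widehat Z_S})$ with the discrete ring $\widehat A_{I_S}$, and flat base change along the open immersion $S\setminus Z_S\hookrightarrow S$ for the punctured neighbourhood -- are both sound and match what the paper dismisses as ``trivial consequences.'' I would therefore replace the $0$-truncatedness argument with a citation of the Gaitsgory--Rozenblyum classicality result (or with the flatness of completion), and keep the rest.
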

\begin{proof}
 The first two cases follow from the flatness of $S$ (resp. $\hZ_S$) over $S$. The other two cases are trivial consequences.
\end{proof}

\begin{cor}
 Both algebraization \cref{prop:algebraization} and formal glueing \cref{prop:localformalglueing,thm:formalglueing} apply mutatis mutandis to the underived setting.
\end{cor}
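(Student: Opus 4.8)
The plan is to deduce each underived statement from its derived counterpart by restriction to classical affines, using that \cref{prop:algebraization}, \cref{prop:localformalglueing} and (in its pointwise form) \cref{thm:formalglueing} are all assertions checked \emph{pointwise} on the site $\dAffX$, and that the moduli functors $\CohX_\fibf$, $\PerfX_\fibf$, $\BunGX_\fibf$ depend only on the values of the fibre functor $\fibf$. Since $\dAffX$ consists of \emph{Noetherian} derived affines, there is a fully faithful inclusion $\AffNoeth_X \hookrightarrow \dAffX$, and restricting a pointwise equivalence of functors on $\dAffX$ along it gives a pointwise equivalence of functors on $\AffNoeth_X$. Thus, restricting the three chains of equivalences of \cref{prop:algebraization} and \cref{prop:localformalglueing}, I obtain equivalences of functors $(\AffNoeth_X)\op \to \Catstmon$ (resp. $\to \Gpd$), the fibre products on the right still being formed pointwise in $\Catstmon$ (resp. $\Gpd$)---$\infty$-categories for which, unlike for stacks, there is no ``derived'' versus ``classical'' distinction.

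The only place where anything substantive happens---and hence the step I would dwell on---is the identification of the restrictions to $\AffNoeth_X$ of the derived fibre functors $\fibf_X$, $\fibf_{\hZ}$, $\fibf^\affinize_{\hZ}$ and $\fibf_{\hZ\smallsetminus Z}$ with the expected classical fibre functors. This is precisely the content of \cref{lem:underivedfibf}: on a classical Noetherian affine $S\to X$ these fibre functors take the (classical) values $S$, $\widehat Z_S$, $\widehat Z_S^\affinize$ and $\widehat Z_S^\affinize\smallsetminus Z_S$, with $\widehat Z_S$ the ordinary formal completion of $Z_S = Z\times_X S$ in $S$ (cf. \cref{thm:dag_completion_truncation}\,(\ref{formalcompletion:truncation})). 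Since $\bfCoh^{\otimes,-}$, $\bfPerf^{\otimes}$ and $\BunGX$ are, by construction, the limit-preserving extensions to $\dSt_k$ of the classical moduli functors, evaluating $\CohX_\fibf$, $\PerfX_\fibf$ or $\BunGX_\fibf$ at $S$ gives the same $\infty$-category (resp. groupoid) whether $\fibf(S)$ is read as a derived or a classical stack. Plugging these identifications into the restricted equivalences turns them verbatim into the underived forms of \cref{prop:algebraization} and \cref{prop:localformalglueing}.

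For the global statement (the underived form of \cref{thm:formalglueing}), I would take $X$ to be a classical locally Noetherian stack and run the argument of Construction~\ref{extension} over $\AffNoeth_X$ instead of $\dAffX$: by \cref{thm:flat_descent}, \cref{prop:BunGdescent} and \cref{lem:descenthZ} the functors $\CohX_T$, $\PerfX_T$, $\BunGX_T$ (for $T$ any of $X$, $X\smallsetminus Z$, $\hZ$ and $\hZ\smallsetminus Z$) restrict to hypercomplete sheaves for the flat topology on $\AffNoeth_X$---the classical flat site being a subsite of the derived one---and hence extend uniquely to limit-preserving functors on $\St_X^\lN$, the latter being the category of hypercomplete \'etale sheaves on $\AffNoeth_X$. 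Since a classical locally Noetherian stack $X$ is the colimit in $\St_X^\lN$ of the Noetherian affine schemes over it, evaluating the restricted global equivalence at $X$ produces the three equivalences $\Perf(X)\simeq \Perf(X\smallsetminus Z)\times_{\Perf(\hZ\smallsetminus Z)}\Perf(\hZ)$, and likewise for $\Coh^-$ and $\Bun^\mathbf{G}$, the first two being symmetric monoidal. No new argument is needed beyond this bookkeeping, which is exactly what the formalism of \S\ref{sec_nonderived} was designed to provide, so no real obstacle remains.
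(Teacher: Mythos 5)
Your proposal is correct and follows exactly the route the paper intends: the corollary is stated without an explicit proof, being presented as an immediate consequence of \cref{lem:underivedfibf} together with the pointwise nature of \cref{prop:algebraization,prop:localformalglueing} and the sheaf-theoretic extension machinery of Construction~\ref{extension}. Your write-up simply fills in the bookkeeping the paper leaves implicit --- restriction along $\AffNoeth_X \hookrightarrow \dAffX$, identification of the restricted fibre functors via \cref{lem:underivedfibf}, and the observation that the target $\infty$-categories $\Catstmon$, $\Gpd$ carry no derived/classical dichotomy --- so there is nothing to add or correct.
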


\section{The \texorpdfstring{$2$}{2}-Segal object of good flags on a surface}\label{SECTIONflags}
%\textcolor{blue}{In blue: comments, questions, and parts ongoing revisions.}\\

In the previous section we worked by fixing a flag (in some fixed scheme or stack). In this Section we will allow the flags to move in flat families (over a fixed scheme), and then move on to consider tuples of families of such flags.\\

Throughout this section, we will fix a smooth projective complex surface $X$. Everything will depend on $X$, so we allow ourselves to sometimes suppress $X$ from the notations.\\

We denote by $\Hilb_X$ the Hilbert scheme functor of (families of) $0$-dimensional subschemes in $X$ (i.e. $\Hilb_X$ is the disjoint union of all the $\Hilb^P_X$ for Hilbert polynomials $P$ of degree $0$). We will write $\Hilb^{\leq 0}_X$ for the functor sending a $\mathbb{C}$-scheme $S$ to $\Hilb_X(S) \cup \{ \emptyset_S \} $, where $\emptyset_S$ denotes the empty subscheme $\emptyset \hookrightarrow X\times S$ (note that $\emptyset$ is flat over $S$).
%We will then write $\mathbf{Hilb}_X^i$ for the functor sending a $\mathbb{C}$-scheme $S$ to the category with $\mathsf{Hilb}^i_X(S)$ as set of objects, and morphisms the inclusions. \\
We denote by $\mathrm{Car}_{X}$ the set valued functor of relative effective Cartier divisors on $X/\mathbb{C}$, sending a $\mathbb{C}$-scheme $S$ to the set of relative effective Cartier divisors on $(X\times S) /S$ (as in \cite[1.2.3]{Katz_Mazur_1985}).\\

%We will use below the following equivalence relation on the set of closed subschemes of a fixed scheme $Y$. We say that two closed subschemes $Z$ and $Z'$ of $Y$ are equivalent, and we write $Z\sim_{\red} Z'$ if their reduced subschemes coincides, i.e. $Z_{\red}=Z'_{\red}$. The corresponding equivalence class of a closed subscheme $Z$ in $Y$ will be denoted by $[Z]_{\red}$.

\subsection{Flags}

\begin{defin}\label{defin:plaintopflags}  
\begin{itemize}
\item Let $S\in \Aff$. We define $\mathrm{Fl}_X(S)$ as the subset  of $\mathrm{Car}_{X}(S)\times \Hilb^{\leq 0}_X(S)$ consisting of pairs $(D,Z)$ with $Z$ a closed $(X\times S)-$subscheme of $D$ (in which case, we will simply write $Z\subset D$)\footnote{Since both $D\hookrightarrow X\times S$ and $Z\hookrightarrow X\times S$ are closed immersions, if there exists a morphism $Z\to D$ over $X\times S$, then it is unique and it is a closed immersion. This justifies our apparently sloppy notation $Z\subset D$.}. Such pairs will be called \emph{families of flags on $X$ relative to $S$}.
\item The category $\flags_X (S)$ is the category (poset) with set of objects $\mathrm{Fl}_X(S)$ and morphisms sets
\[
\Hom_{\flags_X(S)}((D', Z'), (D, Z)) 
\]
being a singleton if there is a commutative diagram of closed $(X\times S)$-immersions
\begin{equation}\label{mapsss}
\begin{tikzcd}Z'\ar[r] \ar[d] & D' \ar[d] \\ Z \ar[r] & D \end{tikzcd}
\end{equation}
such that $(Z' \to D' \times_D Z)_{\red}$ is an isomorphism; $\Hom_{\flags_X(S)}((D', Z'), (D, Z))$ is otherwise empty.
The composition is the unique one.
%\item Let $S\in \Aff$. We define $\mathrm{Fl}_{\red}(S)$ the quotient set of $\mathrm{Fl}(S)$ by the equivalence relation $$\underline{Z}:=(D, Z) \sim_{\red}\underline{Z}':=(D', Z') \Leftrightarrow  D_{\red}= D'_{\red} \,\,\, \text{and}\,\, Z_{\red}= Z'_{\red}$$ for any pair of families of flags $(D,Z)$ and $(D',Z')$ in $\mathrm{Fl}(S)$.  Elements in $\mathrm{Fl}_{\red}(S)$ will be called \emph{families of topological flags on $X$ relative to $S$}.
%\item We make $\mathrm{Fl}_{\red}(S)$ into the objects of a category (a poset) $\flags_{\red}(S)$ by defining 
%\begin{equation*}
% Hom_{\flags_{\red}(S)}([(D, Z)]_{\red}, [(D', Z')]_{\red}):=
%    \begin{cases}
%      * & \text{if $(D, Z)_{\red} \subset (D', Z')_{\red}$ i.e., by definition:} \\
%      &\text{ $D_{\red} \subset D'_{\red}$ and this restricts to $Z_{\red} \subset Z'_{\red}$}   \\
%      \emptyset  & \text{otherwise}
%    \end{cases}       
%\end{equation*}
%with the obvious, unique composition.
\end{itemize}
Since all maps are closed immersions, if a diagram like (\ref{mapsss}) exists, then it is unique, so that $\flags_X (S)$ is indeed a poset.
\end{defin}

\begin{rem} According to Definition \ref{defin:plaintopflags}, the empty flag $(\emptyset, \emptyset) $ is an initial object in $\flags_X(S)$.
However, if $(D,Z) \in \flags_X(S)$, then there is no morphism $(D, \emptyset) \to (D,Z)$ in $\flags_X(S)$ unless $Z=\emptyset$.
On the other hand, for any closed immersion $D'\hookrightarrow D$, $(D', \emptyset) \to (D,\emptyset)$ is in $\flags_X(S)$.
In particular, the category $\mathbf{Car}_X(S)$ of relative effective Cartier divisors on $X\times S/S$ (with maps given by inclusions) fully embeds into $\flags_X(S)$.
A typical example of a morphism in $\flags_X(S)$ is obtained from a chain of closed immersion $Z' \hookrightarrow D' \hookrightarrow D$ : $(D', Z') \to (D,Z')$ is in $\flags_X(S)$. 
\end{rem}

The inverse image functor preserves inclusions of closed subschemes, and for a map of schemes $Y'\to Y$ and a closed subscheme $T \hookrightarrow Y$, the canonical closed immersion 
\[
 \left(Y'\newtimes_Y T\right)_{\!\red} \longrightarrow Y'_{\red} \newtimes_{Y_{\red}} T_{\red}
\]
induces an isomorphism on the underlying reduced subschemes. Therefore, the inverse image functor along $S'\to S$ preserves morphisms between flags as defined in Definition \ref{defin:plaintopflags}, so that the following definition is well-posed.
%Since the inverse image functor preserves inclusions of closed subschemes, the functor $\mathrm{Car}_{X}\times \Hilb^{\leq 0}_X : \Aff\op \to \mathsf{Sets}$ can be obviously upgraded to a functor $ \mathbf{Car}_{X}\times \mathbf{Hilb}^{\leq 0}_X : \Aff\op \to \mathsf{Cat}$ where $\mathbf{Car}_{X}(S)\times \mathbf{Hilb}^{\leq 0}_X(S)$ is the category (poset) with set of objects $\mathrm{Car}_{X}(S)\times \Hilb^{\leq 0}_X(S) $, and 
%morphisms set $Hom_{\mathbf{Car}_{X}(S)\times \mathbf{Hilb}^{\leq 0}_X(S)}((D', Z'), (D, Z))$ being a singleton if there is a commutative diagram of closed $X\times S$-immersions 
%       $$\xymatrix{Z'\ar[r] \ar[d] & D' \ar[d] \\ Z \ar[r] & D }$$   such that $(Z' \to D' \times_D Z)_{\red}$ is an isomorphism; $Hom_{\mathbf{Car}_{X}(S)\times \mathbf{Hilb}^{\leq 0}_X(S)}((D', Z'), (D, Z))$ is otherwise empty. The composition is the unique one. Then, $S \mapsto \flags_X(S)$ is a subfunctor of $\mathbf{Car}_{X}\times \mathbf{Hilb}^{\leq 0}_X.$

\begin{defin}\label{defin:plaintopfunctor}  We denote by $\flags_X: \Aff\op \to \mathbf{PoSets}$ the induced functor, and call it the functor of \emph{relative flags on} $X$.
\end{defin}

\begin{rem}\label{rem:cmpBD} Our definitions \ref{defin:plaintopflags} and \ref{defin:plaintopfunctor} are flag analogs of  \cite[3.4.6]{Beilinson_Drinfeld_Chiral_2004} where the authors give an alternative definition of factorization algebras using effective Cartier divisors (on a curve) modulo reduced equivalence, instead of using the Ran space. It was exactly this remark in \cite{Beilinson_Drinfeld_Chiral_2004} that helped us in crystallizing our first ideas on the topics of the current section of this paper.
\end{rem}

\begin{prop} $\flags_X$ is a stack of posets on the Zariski site of $\Aff$.
\end{prop}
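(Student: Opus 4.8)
The plan is to unwind the definition. Since $\mathbf{PoSets}$ is a $1$-category, a ``stack of posets for the Zariski topology'' is simply a presheaf $\flags_X\colon \mathsf{Aff}\op \to \mathbf{PoSets}$ that sends Zariski covers to limits in $\mathbf{PoSets}$. As a limit of posets has underlying set the limit of the underlying sets, equipped with the componentwise order, this amounts to two statements: (i) the underlying set-valued functor $S \mapsto \mathrm{Fl}_X(S)$ is a Zariski sheaf of sets; and (ii) for $(D',Z'),(D,Z) \in \mathrm{Fl}_X(S)$ and a Zariski cover $\{S_i \to S\}$, one has $(D',Z') \le (D,Z)$ in $\flags_X(S)$ as soon as $(D',Z')|_{S_i} \le (D,Z)|_{S_i}$ in $\flags_X(S_i)$ for every $i$.

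For (i): both $\mathrm{Car}_X$ and $\mathrm{Hilb}^{\le 0}_X$ are Zariski sheaves --- $\mathrm{Hilb}_X$ is representable by a scheme since $X$ is projective, $\mathrm{Hilb}^{\le 0}_X$ differs from it by a disjoint terminal summand, and relative effective Cartier divisors satisfy Zariski descent on the base. By Definition~\ref{defin:plaintopflags}, $\mathrm{Fl}_X \subset \mathrm{Car}_X \times \mathrm{Hilb}^{\le 0}_X$ is the subfunctor of pairs $(D,Z)$ with $Z \subset D$ a closed subscheme of $X \times S$. Containment of closed subschemes of $X \times S$ can be tested on the members of the open cover $\{X \times S_i\}$ (it is the inclusion of ideal sheaves $\mathcal I_D \subset \mathcal I_Z$, a local condition), so this cuts out a Zariski-local subfunctor of a sheaf; hence $\mathrm{Fl}_X$ is a Zariski sheaf. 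Explicitly: a compatible family $(D_i,Z_i)$ over the $S_i$ glues to a unique $D$ (via the sheaf $\mathrm{Car}_X$) and a unique $Z$ (via the sheaf $\mathrm{Hilb}^{\le 0}_X$), and then $Z \subset D$ follows by locality, while separatedness is inherited.

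For (ii): one direction is functoriality of restriction. Conversely, writing $(D'_i,Z'_i),(D_i,Z_i)$ for the restrictions to $X\times S_i$, suppose $(D'_i,Z'_i) \le (D_i,Z_i)$ for all $i$, i.e. $D'_i \subset D_i$, $Z'_i \subset Z_i$, and $(Z'_i \to D'_i \times_{D_i} Z_i)_{\mathrm{red}}$ is an isomorphism. As in (i), $D' \subset D$ and $Z' \subset Z$ because containment of closed subschemes is local; together with the inclusion $Z \subset D$ coming from the flag $(D,Z)$, this yields the canonical comparison map $Z' \to D' \times_D Z$ of closed subschemes of $X \times S$. It remains to check this is an isomorphism after passing to reduced subschemes, and this may be verified over the open cover $\{X \times S_i\}$. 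Restriction along the open immersion $X \times S_i \hookrightarrow X \times S$ commutes with the formation of fiber products and of reduced closed subschemes, so the restriction of $(Z' \to D' \times_D Z)_{\mathrm{red}}$ is canonically identified with $(Z'_i \to D'_i \times_{D_i} Z_i)_{\mathrm{red}}$, an isomorphism by hypothesis; since being an isomorphism is local on the target, $(Z' \to D' \times_D Z)_{\mathrm{red}}$ is an isomorphism, i.e. $(D',Z') \le (D,Z)$.

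Together (i) and (ii) say that $\flags_X(S)$ is the limit in $\mathbf{PoSets}$ of the Zariski descent diagram, which is the assertion. The only point requiring care is the bookkeeping in (ii) --- checking that the condition ``$(Z' \to D' \times_D Z)_{\mathrm{red}}$ is an isomorphism'' descends --- and it reduces to the standard facts that reductions and fiber products commute with restriction to open subschemes and that being an isomorphism is local on the target. None of this is deep, but it is what makes the \emph{morphisms} of $\flags_X$, and not merely its objects, satisfy Zariski descent.
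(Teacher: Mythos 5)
Your proof is correct. The paper states this proposition without giving a proof, so there is nothing to compare against; your decomposition of the sheaf condition into (i) Zariski descent for the underlying set-valued functor and (ii) order-reflection along covers, together with the observations that containment of closed subschemes is Zariski-local and that $(-)_{\mathrm{red}}$ and fiber products commute with restriction to open subschemes, is exactly the natural argument and fills the gap cleanly.
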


Let $\mathrm{forget}: \mathbf{PoSets} \to \mathbf{Sets}$ denote the forgetful functor, sending a poset $(S, \leq)$ to the set $S$.
\begin{lem}\label{Flflatisascheme} The composite functor $$\flags^{\flat}_X: \xymatrix{\left(\Aff\right)\op \ar[r]^-{\flags_X} & \mathbf{PoSets} \ar[rr]^-{\mathrm{forget}} && \mathbf{Sets} }$$ is represented by a scheme.
\end{lem}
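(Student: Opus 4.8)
The plan is to show that $\flags^{\flat}_X$ is representable by exhibiting it as a closed subscheme of a known representable functor, namely the product $\underline{\mathrm{Car}}_X \times \mathrm{Hilb}^{\leq 0}_X$. Recall that $\underline{\mathrm{Car}}_X$, the functor of relative effective Cartier divisors on $X/\mathbb{C}$, is representable: indeed the relative effective Cartier divisors on $X\times S/S$ with fixed Hilbert polynomial form an open subscheme of the corresponding Hilbert scheme of $X$ (a divisor is the same as a closed subscheme whose ideal sheaf is invertible, which is an open condition on the Hilbert scheme), so $\underline{\mathrm{Car}}_X = \coprod_P \mathrm{Car}^P_X$ is a disjoint union of quasi-projective schemes. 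Similarly $\mathrm{Hilb}_X$ is a disjoint union of projective schemes (each $\mathrm{Hilb}^P_X$ for $P$ of degree $0$ is the punctual Hilbert scheme, hence projective), and $\mathrm{Hilb}^{\leq 0}_X = \mathrm{Hilb}_X \sqcup \Spec\mathbb{C}$ adds one point for the empty subscheme; this is still representable. Hence $\mathcal{H} := \underline{\mathrm{Car}}_X \times \mathrm{Hilb}^{\leq 0}_X$ is representable by a scheme.

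**Main step: the nesting condition is closed.** By Definition \ref{defin:plaintopflags}, $\flags^{\flat}_X(S)$ is the subset of $\mathcal{H}(S)$ consisting of pairs $(D,Z)$ with $Z \subset D$ as closed subschemes of $X\times S$. I would argue that this subfunctor is represented by a closed subscheme of $\mathcal{H}$. Over the universal base $\mathcal{H}$ we have the universal Cartier divisor $\mathcal{D} \hookrightarrow X\times\mathcal{H}$ and the universal $0$-dimensional subscheme $\mathcal{Z}\hookrightarrow X\times\mathcal{H}$, with respective ideal sheaves $\mathcal{I}_{\mathcal{D}}$ and $\mathcal{I}_{\mathcal{Z}}$. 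The condition $Z\subset D$ after base change along $S\to\mathcal{H}$ is precisely the condition that the composite $\mathcal{I}_{\mathcal{D}}\hookrightarrow \mathcal{O}_{X\times\mathcal{H}}\twoheadrightarrow \mathcal{O}_{\mathcal{Z}}$ vanishes after pulling back to $X\times S$. Equivalently, writing $\varphi\colon \mathcal{I}_{\mathcal{D}}\to \mathcal{O}_{\mathcal{Z}}$ for this map of sheaves on $X\times\mathcal{H}$, and noting that $\mathcal{O}_{\mathcal{Z}}$ is flat and finite over $\mathcal{H}$, we may push forward along the projection $\pi\colon X\times\mathcal{H}\to\mathcal{H}$: since $\mathcal{D}$ is a relative Cartier divisor, $\mathcal{I}_{\mathcal{D}}$ is a line bundle, and $\pi_*(\mathcal{Hom}(\mathcal{I}_{\mathcal{D}},\mathcal{O}_{\mathcal{Z}})) = \pi_*(\mathcal{I}_{\mathcal{D}}^{\vee}\otimes\mathcal{O}_{\mathcal{Z}})$ is a finite locally free $\mathcal{O}_{\mathcal{H}}$-module (push-forward of a flat finite sheaf), and it represents the functor $S\mapsto \Hom_{X\times S}(\mathcal{I}_{\mathcal{D}_S},\mathcal{O}_{\mathcal{Z}_S})$ by flat base change. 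The section corresponding to $\varphi$ then cuts out, by its vanishing locus, a closed subscheme $\flags^{\flat}_X\hookrightarrow\mathcal{H}$; and by the standard fact that the vanishing locus of a section of a vector bundle represents the subfunctor of base changes on which the section becomes zero, this closed subscheme represents exactly the subfunctor $S\mapsto\{(D,Z)\in\mathcal{H}(S) : Z\subset D\}$. (One also checks the trivial compatibility with the extra component $\Spec\mathbb{C}$ coming from $Z=\emptyset$, on which the nesting condition is vacuous.)

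**The anticipated obstacle.** The genuinely delicate point is verifying that the nesting condition $Z\subset D$ is an honestly \emph{closed} (representable) condition on the product — in particular checking that the relevant Hom-sheaf on $X\times\mathcal{H}$ does commute with arbitrary base change $S\to\mathcal{H}$, which is where the flatness of $\mathcal{O}_{\mathcal{Z}}$ over $\mathcal{H}$ and the invertibility of $\mathcal{I}_{\mathcal{D}}$ (so that $\mathcal{I}_{\mathcal{D}}^{\vee}\otimes\mathcal{O}_{\mathcal{Z}}$ stays flat and finite over the base) are essential. Once this base-change statement is in hand, the representability of the vanishing locus is formal. The remaining bookkeeping — that $\underline{\mathrm{Car}}_X$ is a disjoint union of quasi-projective schemes and that $\mathrm{Hilb}^{\leq 0}_X$ is representable — is standard and can be cited (e.g. \cite{Katz_Mazur_1985}, \cite{sernesi}). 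Note finally that one must remember that $\flags^{\flat}_X$ here only remembers the underlying set of the poset $\flags_X(S)$, so no issue about morphisms in the poset arises; the lemma is purely about the object-set functor, which is the subfunctor of $\mathcal{H}$ just described.
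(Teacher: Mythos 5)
Your proof is correct and follows the same route the paper takes: representability of $\underline{\mathrm{Car}}_X \times \mathrm{Hilb}^{\leq 0}_X$ together with the observation that the nesting condition $Z \subset D$ cuts out a closed subscheme. The paper dispatches the closedness of the nesting condition with a citation to Sernesi, whereas you supply the underlying argument (vanishing locus of the section of $\pi_*(\mathcal{I}_{\mathcal{D}}^\vee \otimes \mathcal{O}_{\mathcal{Z}})$ induced by the universal composite $\mathcal{I}_{\mathcal{D}} \to \mathcal{O}_{\mathcal{Z}}$), which is a fine elaboration of the same idea.
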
 
\begin{proof} Since $X$ is smooth projective, $\mathrm{Car}_{X} \times \Hilb^{\leq 0}_{X}$ is represented by a scheme, and the nesting condition $Z\subset D$ for flags is a closed subscheme condition (see also \cite[4.5]{sernesi}).
\end{proof}

\subsection{Good pairs of flags}

In order to be short, we will write that \emph{$D$ is a reCd on $X\times S/S$} to mean that $D$ is a relative effective Cartier divisor on $X \times S/S$.
%, and that \emph{$\bar{D}$ is a sreCd of $D$ on $X \times S/S$} to mean that $\bar{D}$ and $D$ are both relative effective Cartier divisors on $X \times S /S$, and $\bar{D} \subset D$.

\begin{defin}\label{defin:goodcartier}
Let $S \in \Aff$, and $D, D' \in \mathrm{Car}_{X}(S)$. We say that the pair of reCd's $(D,D')$ on $X\times S/S$ is \emph{good} if $D\cap D' \in \mathrm{Car}_{D/S}(S) \cap \mathrm{Car}_{D'/S}(S) $ (i.e. $D\cap D'$ is an effective Cartier divisor inside both $D$ and $D'$, and moreover $D\cap D'$ is flat over $S$).
\end{defin}
It is obvious that, for $D, D' \in \mathrm{Car}_{X}(S)$, $(D,D')$ is good if and only if $(D',D)$ is.
%Moreover, note that, by \cite[Lemma 0B8V]{stacks-project}, if $\bar{D}$ is a sreCd of $D$, then there exists a sreCD $\tilde{D}$ of $D$ such that $D= \bar{D}+ \tilde{D}$.
 
\begin{lem}\label{lem:gooddivisorsasymm}
 Let $B$ be a commutative ring and let $f, f'$ be non-zero-divisors in $B$. The following assertions are equivalent:
 \begin{enumerate}
  \item\label{ass:asymm1} $f'$ is not a zero-divisor in $B/f$;
  \item\label{ass:asymm2} $f$ is not a zero-divisor in $B/f'$.
 \end{enumerate}
 As a consequence, for any $S \in \Aff$ and $D, D' \in \mathrm{Car}_{X}(S)$, the following are equivalent:
 \begin{enumerate}[label=\rm(\alph*)]
  \item The pair $(D, D')$ is good;
  \item $D \cap D'$ is a relative effective Cartier divisor in $D$;
  \item $D \cap D'$ is a relative effective Cartier divisor in $D'$.
 \end{enumerate}
\end{lem}

\begin{proof}
 By symmetry, it is enough to prove that \eqref{ass:asymm1} implies \eqref{ass:asymm2}.
 Let $x, y \in B$ be such that $xf' = yf$. By assumption, the image of $f'$ in $B/f$ is a non-zero-divisor, so there exists $t \in B$ such that $x = tf$. Since $f$ is not a zero-divisor in $B$, the equality $yf = xf' = tff'$ implies $y = tf'$.
 As this holds for any $x$ and any $y$ as above, we get that $f$ is not a zero-divisor in $B/f'$.
\end{proof}

\begin{defin}\label{defin:goodpair}
Let $S \in \Aff$.
\begin{itemize}
	\item We define $\mathrm{Fl}_{X,2}(S)$ as the subset of $\mathrm{Fl}_X(S)\times \mathrm{Fl}_X(S)$ whose elements are \emph{good} pairs $((D,Z), (D',Z'))$ of families of flags of $X$ relative to $S$, i.e. those pairs satisfying the following conditions:
\begin{enumerate}
\item the pair of reCd's $(D, D')$ is good (Definition \ref{defin:goodcartier}).
\item $D\cap D' = Z\cap Z'$;
\end{enumerate}
\item The category $\flags_{X,2}$ is the full subcategory (which is again a poset) of $\flags_X(S)^{\times 2}$ spanned by $\mathrm{Fl}_{X,2}(S)$.
\end{itemize}
\end{defin}

%\begin{defin}\label{defin:goodpair}
%We define the functor $\mathsf{H}_2^{\mathrm{good}}: \Aff \to \mathsf{Cat}$, sending $S$ to the full subcategory $\mathsf{H}_2^{\mathrm{good}}(S)$ of $\mathsf{H} (S)\times \mathsf{H}(S)$ whose objects are pairs $((D,Z), (D',Z'))$ of flags of $X$ relative to $S$, satisfying the following conditions:
%\begin{enumerate}
%\item $D\cap D' \in \mathrm{Car}_{D/S}(S) \cap \mathrm{Car}_{D'/S}(S) $, i.e. $D\cap D'$ is an effective Cartier divisor inside both $D$ and $D'$;
%\item $D\cap D' \subset Z\cap Z'$;
%\item $Z\cap Z'$ is flat over $S$.
%\end{enumerate}
%The functor $\mathsf{H}_2^{\mathrm{good}}$ will be called the \emph{functor of good pairs of flags on} $X$
%\end{defin}

%Conditions(1)-(2) in \cref{defin:goodpair} implies that $Z\cap Z'= D\cap D'$ is flat over $S$, so that $Z\cup Z'$ is also flat over $S$.\\

\begin{rems}
\item \label{rem:emptyflag}
  Note that condition (1) \cref{defin:goodpair} easily implies (see, e.g. \cite[Lemma 0C4R]{stacks-project}) that for a good pair $((D,Z), (D',Z'))$, we have  $D + D'= D\cup D'$, so that, in particular, $D\cup D'$ is flat over $S$.
\item
  Let $S \in \Aff$. If we denote by $(\emptyset, \emptyset)\in \mathrm{Fl}(S)$ the empty flag, then the pair $((D,Z), (\emptyset, \emptyset))$  is a good pair, for any $(D,Z)\in \mathrm{Fl}(S)$ (indeed, note that the empty scheme is an effective Cartier divisor in itself, since $0$ is \emph{not} a zero divisor in the zero ring). Moreover, the pair $((D,Z), (D',Z')) \in \mathrm{Fl}_X(S) \times \mathrm{Fl}_X(S)$ is good if and only if $((D',Z'), (D,Z))$ is good.
\item \label{goodequaldisjointforcurves}
  If $X$ is a smooth projective curve (instead of our surface), and for obvious reasons we  limit ourselves to flags of type $(D, \emptyset)$, then a pair $((D,\emptyset), (D', \emptyset)) \in \flags_{X,2}(S)$ is good if and only if $D$ and $D'$ are disjoint divisors. If $X$ is a smooth surface, then any pair $((D',Z'),(D,Z))$ of flags with $D\cap D' =\emptyset$ is good (but the converse is clearly false).
\end{rems}

Note that all conditions (1)-(2) in \cref{defin:goodpair} are stable under arbitrary base-change: this is obvious for (2), while it follows for instance from \cite[Lemma 063U]{stacks-project} for (1). In other words, for every morphism $S' \to S$ of affine schemes, the induced functor
\[ \flags_X(S) \times \flags_X(S) \to \flags_X(S') \times \flags_X(S') \]
restricts to a functor
\[ \flags_{X,2}(S) \to \flags_{X,2}(S'). \]
So we actually get, using inverse images, a functor
 $$\flags_{X, 2}: \Aff\op \longrightarrow \mathbf{PoSets} \,\, , \,\, S \longmapsto \flags_{X, 2}(S).$$

\begin{lem}\label{lem:unionok} Let $S \in \Aff$.
\begin{enumerate}
%\item If $Z, Z', Z_1, Z'_1$ are closed subschemes of $Y$ such that $Z\sim_{\red} Z_1$ and $Z'\sim_{\red} Z'_1$, then $$ Z + Z' \sim_{\red} Z\cup Z' \sim_{\red} Z_1 \cup Z'_1 \sim_{\red} Z_1 + Z'_{1}.$$
\item If $Z$ and $Z'$ are closed subschemes of $X \times S$, both flat over $S$, and $Z\cap Z'$ is flat over $S$, then $Z\cup Z'$ is flat over $S$.
\item Let $Z$ and $Z'$ be closed subschemes of $X \times S$ such that  $Z\cap Z'$ is flat over $S$. Let $f:S' \to S$ an arbitrary morphism, and $F:= \mathrm{id}_X \times f: X \times S' \to X\times S$. Then, the canonical map $$ F^{-1}(Z) \cup F^{-1}(Z') \to F^{-1}(Z \cup Z')$$ is an isomorphism of closed subschemes of $X\times S'$
(or, equivalently, the $\mathcal{O}_{X\times S}$-ideals of $F^{-1}(Z) \cup F^{-1}(Z')$ and of $F^{-1}(Z \cup Z')$ coincide). 
%\item Let $Y=X\times S$, $Z, Z'$ be closed subschemes of $Y$ such that  $Z\cap Z'$ is flat over $S$, $f:S' \to S$ an arbitrary morphism, and $F:= \mathrm{id}_X \times f: X \times S' \to X\times S$. Then, the canonical map $$ F^{-1}(Z) \cup F^{-1}(Z') \to F^{-1}(Z \cup Z')$$ is an isomorphism
%(or equivalently, the $\mathcal{O}_{X\times S}$-ideals of $F^{-1}(Z) \cup F^{-1}(Z')$ and of $F^{-1}(Z \cup Z')$ coincide). 
\item Let $D_1, D_2 \in \mathrm{Car}_X(S)$, $f:S' \to S$ is an arbitrary morphism, and $F:= \mathrm{id}_X \times f: X \times S' \to X\times S$. Then, the canonical map  $$ \mathrm{Car}_X (f)(D_1) +\mathrm{Car}_X (f)(D_2)= F^{-1}(D_1)+ F^{-1}(D_2) \longrightarrow F^{-1}(D_1 +D_2)=\mathrm{Car}_X (f)(D_1 + D_2 )$$ is an isomorphism of relative effective Cartier divisors on $X\times S'/S'$ (or equivalently, the $\mathcal{O}_{X\times S}$-ideals of $F^{-1}(D_1)+ F^{-1}(D_2)$ and of $F^{-1}(D_1 +D_2)$ coincide). 
\end{enumerate}
\end{lem}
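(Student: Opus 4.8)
The plan is to prove the three assertions independently, in the order stated. For (1), the basic tool is the Mayer--Vietoris short exact sequence of $\mathcal{O}_{X\times S}$-modules
\[
0 \longrightarrow \mathcal{O}_{Z\cup Z'} \longrightarrow \mathcal{O}_{Z}\oplus\mathcal{O}_{Z'} \longrightarrow \mathcal{O}_{Z\cap Z'} \longrightarrow 0,
\]
where $Z\cup Z'$ (resp. $Z\cap Z'$) denotes the scheme-theoretic union (resp. intersection), with ideal sheaf $\mathcal{I}_Z\cap\mathcal{I}_{Z'}$ (resp. $\mathcal{I}_Z+\mathcal{I}_{Z'}$), the arrows being $\bar c\mapsto(\bar c,\bar c)$ and $(\bar a,\bar b)\mapsto\bar a-\bar b$; exactness is elementary and may be checked locally on $X\times S$, so non-affineness of $X\times S$ causes no trouble. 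Now $\mathcal{O}_Z\oplus\mathcal{O}_{Z'}$ is $S$-flat as a direct sum of $S$-flat modules, $\mathcal{O}_{Z\cap Z'}$ is $S$-flat by hypothesis, and therefore $\mathcal{O}_{Z\cup Z'}$ is $S$-flat because the kernel of a surjection of flat modules is flat: from the long exact $\mathrm{Tor}^{\mathcal{O}_S}_\bullet(-,M)$ sequence, $\mathrm{Tor}_1^{\mathcal{O}_S}(\mathcal{O}_{Z\cup Z'},M)$ is trapped between $\mathrm{Tor}_2^{\mathcal{O}_S}(\mathcal{O}_{Z\cap Z'},M)=0$ and $\mathrm{Tor}_1^{\mathcal{O}_S}(\mathcal{O}_Z\oplus\mathcal{O}_{Z'},M)=0$ for every $M$.

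For (2), note that $F^{-1}(Z\cup Z')$ contains both $F^{-1}(Z)$ and $F^{-1}(Z')$, so the comparison morphism $F^{-1}(Z)\cup F^{-1}(Z')\to F^{-1}(Z\cup Z')$ is automatically a closed immersion; it is an isomorphism exactly when the ideal sheaves coincide, that is, when
\[
\bigl(\mathcal{I}_Z\,\mathcal{O}_{X\times S'}\bigr)\cap\bigl(\mathcal{I}_{Z'}\,\mathcal{O}_{X\times S'}\bigr)=(\mathcal{I}_Z\cap\mathcal{I}_{Z'})\,\mathcal{O}_{X\times S'},
\]
the inclusion $\supseteq$ being clear. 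I would obtain $\subseteq$ by applying $F^{*}=-\otimes_{\mathcal{O}_{X\times S}}\mathcal{O}_{X\times S'}$ to the injective diagonal map $\mathcal{O}_{X\times S}/(\mathcal{I}_Z\cap\mathcal{I}_{Z'})\hookrightarrow\mathcal{O}_Z\oplus\mathcal{O}_{Z'}$, whose cokernel is $\mathcal{O}_{Z\cap Z'}$. Since $X\times S'=(X\times S)\times_S S'$ and $X\times S$ is $S$-flat, flat base change for $\mathrm{Tor}$ identifies $\mathrm{Tor}_1^{\mathcal{O}_{X\times S}}(\mathcal{O}_{Z\cap Z'},\mathcal{O}_{X\times S'})$ with $\mathrm{Tor}_1^{\mathcal{O}_S}(\mathcal{O}_{Z\cap Z'},\mathcal{O}_{S'})$, which vanishes because $\mathcal{O}_{Z\cap Z'}$ is $S$-flat. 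Hence $F^{*}$ of the diagonal map remains injective, and its kernel is precisely $\bigl((\mathcal{I}_Z\mathcal{O}_{X\times S'})\cap(\mathcal{I}_{Z'}\mathcal{O}_{X\times S'})\bigr)/(\mathcal{I}_Z\cap\mathcal{I}_{Z'})\mathcal{O}_{X\times S'}$, which must therefore be zero; this is the desired equality of ideal sheaves.

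Part (3) is essentially formal. By the standard behaviour of relative effective Cartier divisors under base change (\cite[1.2]{Katz_Mazur_1985}; see also \cite[Lemma 063U]{stacks-project}), the pullbacks $F^{-1}(D_1)$, $F^{-1}(D_2)$ and $F^{-1}(D_1+D_2)$ are again relative effective Cartier divisors on $X\times S'/S'$, with invertible ideal sheaves $\mathcal{I}_{D_1}\mathcal{O}_{X\times S'}$, $\mathcal{I}_{D_2}\mathcal{O}_{X\times S'}$ and $(\mathcal{I}_{D_1}\mathcal{I}_{D_2})\mathcal{O}_{X\times S'}$ respectively. Since the formation of a product of ideal sheaves commutes with arbitrary extension of scalars, $(\mathcal{I}_{D_1}\mathcal{I}_{D_2})\mathcal{O}_{X\times S'}=(\mathcal{I}_{D_1}\mathcal{O}_{X\times S'})(\mathcal{I}_{D_2}\mathcal{O}_{X\times S'})$, which is by definition the ideal sheaf of $F^{-1}(D_1)+F^{-1}(D_2)$, so the canonical map is an isomorphism.

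The main (and really the only) obstacle is the inclusion $\subseteq$ in (2): it is the unique place where a hypothesis is genuinely used, since for a general non-flat base change $S'\to S$ the formation of the scheme-theoretic union fails to commute with pullback, and it is rescued exactly by the $S$-flatness of $Z\cap Z'$ via the $\mathrm{Tor}$-vanishing above. Parts (1) and (3), by contrast, are routine once the Mayer--Vietoris sequence and the elementary functoriality of relative effective Cartier divisors are in hand.
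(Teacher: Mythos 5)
Your proof is correct and follows essentially the same route as the paper's. Both arguments hinge on the short exact sequence $0\to\mathcal{O}_{Z\cup Z'}\to\mathcal{O}_Z\oplus\mathcal{O}_{Z'}\to\mathcal{O}_{Z\cap Z'}\to 0$ (the paper writes this in the affine case as $0\to C\times_B D\to C\times D\to B\to 0$), deducing (1) from flatness of the kernel of a surjection of flat modules, and (2) from the $\mathrm{Tor}_1$-vanishing supplied by $S$-flatness of $\mathcal{O}_{Z\cap Z'}$ after tensoring; (3) is elementary in both (the paper writes it out with explicit local equations $f_i$, you observe directly that products of ideals commute with extension of scalars). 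The one cosmetic difference is that the paper applies $(-)\otimes_{\mathcal{O}_S}\mathcal{O}_{S'}$ directly to $\mathcal{O}_S$-modules, whereas you apply $(-)\otimes_{\mathcal{O}_{X\times S}}\mathcal{O}_{X\times S'}$ and then invoke flat base change for $\mathrm{Tor}$ across $\mathcal{O}_S\to\mathcal{O}_{X\times S}$; these are the same computation, since for an $\mathcal{O}_{X\times S}$-module $M$ one has $M\otimes_{\mathcal{O}_{X\times S}}\mathcal{O}_{X\times S'}\cong M\otimes_{\mathcal{O}_S}\mathcal{O}_{S'}$, and the flatness of $X\times S\to S$ that you use is automatic for $X/\mathbb{C}$.
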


\begin{proof}  
%Point (1) follows from the fact for ideals $I, J$, we have $\sqrt{I\cap J}=\sqrt{IJ} =\sqrt{I} \cap \sqrt{J} $. 
To prove (1), consider the pushout (of schemes) $$\xymatrix{Z\cap Z' \ar[r] \ar[d] & Z \ar[d] \\ Z' \ar[r] & Z\cup Z'}$$ where we know that $Z, Z'$ and $Z\cap Z'$ are flat over $S$. To conclude that also the pushout $Z\cup Z'$ is flat over $S$, we may suppose  $X\times S$ is affine, so that both $Z$ and $Z'$ are affine, and thus so is $Z\cap Z'$. If we put $X \times S=\Spec A$, $Z\cap Z'=\Spec B$, we are therefore in the following situation: $B, C, D$ are flat $A$-algebras and $\pi_C: C\to B$, $\pi_D: D\to B$ are surjective morphisms of $A$-algebras, and we want to conclude that the fibre product $C\times_B D$ is flat over $A$. This follows from the exact sequence of $A$-modules $0\to C\times_B D \to C\times D \to B \to 0$, since both $C\times D$ and $B$ are then flat over $A$. To prove (2) we adopt the same reduction to the affine case and same notations as in the proof of (1), and let $\varphi: A \to A'$ such that $f=\Spec (\varphi)$. We need to prove that the canonical map of $A'$-algebras $\rho: (C\times_B D)\otimes_A A' \to (C\otimes_A A')\times_{B\otimes_A A'} (D\otimes_A A')$, sending $((c,d)\otimes a')$ to $ (c\otimes a', d\otimes a')$ is an isomorphism. It will be  enough to prove that $\rho$ is an isomorphism of $A'$-modules.  Apply the base change functor $(-)\otimes_A A'$ to  the exact sequence 
\[
 \xymatrix{0\ar[r] & \displaystyle C \newtimes_B D \ar[r] & C\times D \ar[rr]^-{\pi_C-\pi_D} && B \to 0}
\]
of $A$-modules, and observe that the canonical map $(C\times D)\otimes_A A'   \to (C\otimes_A A') \times (D\otimes_A A')$ is an isomorphism (distributivity of tensor product on direct sums in $\mathsf{Mod}_{A'}$). By hypothesis, $B$ is flat over $A$, so that $\mathrm{Tor}^A_1(B, A')=0$, and we get an exact sequence of $A'$-modules
\begin{equation}\label{eqq}
\xymatrix{0 \ar[r] & \displaystyle \left(C\newtimes_B D\right)\newotimes_A A' \ar[r] & \displaystyle \left(C\newotimes_A A'\right)\times \left(D\newotimes_A A'\right)  \ar[rrr]^-{\pi_C\otimes_A A' -\pi_D \otimes_A A'} & && \displaystyle B\newotimes_A A' \ar[r] & 0.
}
\end{equation}
Now, exactness of (\ref{eqq}) is equivalent to $\rho$ being an isomorphism of $A'$-modules.

Finally, (3) is a well-known result that can be proved again by reducing to the affine case: $S'=\Spec R' \to \Spec R =S$, $X\times S= \Spec A$, $X\times S' = \Spec (A\otimes_R R')$, $D_i =\Spec  (A/f_i)$, $i=1,2$. Then $D_1+D_2 = \Spec  (A/f_1f_2)$ and $F^{-1}(D_1 + D_2)= \Spec  (A'/(f_1f_2)')$ where $(-)': A \to A'=A\otimes_R R'$ is the canonical map. Since $(-)'$ is a morphism of algebras, and $F^{-1}(D_1) + F^{-1}(D_2)= \Spec  (A'/(f_1)'(f_2)')$, we conclude.
\end{proof}

%DEFINE THE PRESTACK OF CATEGORIES $\flags_{X, 2}^{\mathrm{good}}(-)$....\\

\cref{lem:unionok} allows for the following\\

\begin{defin}\label{defin:thisistheunion}
For $S\in \Aff$ and $((D,Z), (D',Z')) \in \flags_{X,2}(S)$, define $$\cup_S((D,Z), (D',Z')):=(D+D', Z\cup Z') \in \flags_X(S).$$
called \emph{union of good pairs}. We will most often write $(D,Z) \cup (D',Z')$ for $\cup_S((D,Z), (D',Z'))$.
\end{defin}

Note that \cref{defin:thisistheunion} is well-posed since $D+D'$ is a relative effective Cartier divisor on $X$ over $S$ (this is classical, see, e.g. \cite[Ch. 1]{Katz_Mazur_1985}), $Z\cup Z'$ lies in $\Hilb_{X}^0(S)$ (since by \cref{lem:unionok} (1), \cref{defin:goodpair} (1) and (2)  ensure that if $((D,Z), (D',Z')) \in \mathrm{Fl}_2(S)$, then $Z \cup Z'$ is again flat over $S$), and obviously $Z\cup Z' \subset D+D'$. 

\begin{rem} The union of good pairs is obviously commutative and unital, with unit given by the empty flag $(\emptyset, \emptyset)$. Moreover, by \cref{rem:emptyflag},  we have
\[
 (D,Z)\cup (D',Z') := (D+D', Z\cup Z')= (D \cup D', Z\cup Z')
\]
for any good pair $((D,Z), (D',Z'))$.
\end{rem}

\begin{lem}\label{unionisfunctorial} The union $\cup: \flags_{X, 2} \to \flags_{X}$ is a morphism of prestacks.
\end{lem}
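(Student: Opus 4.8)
The statement packages two assertions: that for each $S\in\mathsf{Aff}$ the map $\cup_S$ of \cref{defin:thisistheunion} is order-preserving, hence a functor of posets $\flags_{X,2}(S)\to\flags_X(S)$, and that these maps are natural in $S$. I would settle naturality first, as the routine half. Given $f\colon S'\to S$ and $F=\id_X\times f$, one only has to check, for a good pair $((D,Z),(D',Z'))$, the two identities $F^{-1}(D+D')=F^{-1}(D)+F^{-1}(D')$ and $F^{-1}(Z\cup Z')=F^{-1}(Z)\cup F^{-1}(Z')$; these are precisely \cref{lem:unionok}~(3) and (2), the latter being applicable because goodness of $(D,D')$ together with condition~(2) of \cref{defin:goodpair} makes $Z\cap Z'=D\cap D'$ flat over $S$. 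Both sides of each identity are canonically identified closed subschemes of $X\times S'$, so the naturality square commutes.

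For order-preservation the plan is: fix $S$, take $(D_i,Z_i)\leq(\widetilde D_i,\widetilde Z_i)$ in $\flags_X(S)$ for $i=1,2$ with both pairs in $\mathrm{Fl}_{X,2}(S)$, and produce a morphism $(D_1+D_2,Z_1\cup Z_2)\to(\widetilde D_1+\widetilde D_2,\widetilde Z_1\cup\widetilde Z_2)$ in $\flags_X(S)$. Writing $\mathcal I_{(-)}$ for ideal sheaves on $X\times S$, the inclusions $\mathcal I_{\widetilde D_i}\subseteq\mathcal I_{D_i}$ give $\mathcal I_{\widetilde D_1}\mathcal I_{\widetilde D_2}\subseteq\mathcal I_{D_1}\mathcal I_{D_2}$, i.e.\ $D_1+D_2\subseteq\widetilde D_1+\widetilde D_2$; likewise $\mathcal I_{\widetilde Z_i}\subseteq\mathcal I_{Z_i}$ gives $Z_1\cup Z_2\subseteq\widetilde Z_1\cup\widetilde Z_2$, and $\mathcal I_{D_i}\subseteq\mathcal I_{Z_i}$ gives $Z_1\cup Z_2\subseteq D_1+D_2$ and $\widetilde Z_1\cup\widetilde Z_2\subseteq\widetilde D_1+\widetilde D_2$. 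This assembles the commuting square required by \cref{defin:plaintopflags}, so the only remaining point — and the one I expect to be the crux — is that the induced closed immersion
\[
 Z_1\cup Z_2\longhookrightarrow (D_1+D_2)\times_{\widetilde D_1+\widetilde D_2}(\widetilde Z_1\cup\widetilde Z_2)=(D_1+D_2)\cap(\widetilde Z_1\cup\widetilde Z_2)
\]
is an isomorphism after passing to reduced subschemes.

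For that I would argue on supports, which suffices because a closed immersion of reduced schemes inducing a homeomorphism of topological spaces is an isomorphism. The right-hand side is supported on $(|D_1|\cup|D_2|)\cap(|\widetilde Z_1|\cup|\widetilde Z_2|)=\bigcup_{i,j}\bigl(|D_i|\cap|\widetilde Z_j|\bigr)$. The diagonal terms $|D_i|\cap|\widetilde Z_i|$ equal $|Z_i|$ by the hypothesis that $(Z_i\to D_i\cap\widetilde Z_i)_{\mathrm{red}}$ is an isomorphism. For a cross term I would show $|D_1|\cap|\widetilde Z_2|\subseteq|Z_1|$: a point $p$ in it lies in $|\widetilde D_1|$ (as $D_1\subseteq\widetilde D_1$) and in $|\widetilde D_2|$ (as $\widetilde Z_2\subseteq\widetilde D_2$), hence in $|\widetilde D_1|\cap|\widetilde D_2|=|\widetilde Z_1|\cap|\widetilde Z_2|$ by condition~(2) of \cref{defin:goodpair} for the good pair $((\widetilde D_1,\widetilde Z_1),(\widetilde D_2,\widetilde Z_2))$, so in particular $p\in|\widetilde Z_1|$, whence $p\in|D_1|\cap|\widetilde Z_1|=|Z_1|$; the term $|D_2|\cap|\widetilde Z_1|$ is symmetric. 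Therefore the right-hand side is supported on $|Z_1|\cup|Z_2|=|Z_1\cup Z_2|$, giving the desired isomorphism. Note that the good-position axioms of \cref{defin:goodpair} enter the whole argument only through this support computation, which is why I single it out as the main obstacle.
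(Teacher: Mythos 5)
Your proof is correct and follows essentially the same route as the paper's: naturality in $S$ is delegated to \cref{lem:unionok}~(2),(3), and order-preservation is reduced to the purely topological equality $|Z_1\cup Z_2|=|D_1+D_2|\cap|Z_1'\cup Z_2'|$, which the paper dismisses as ``an elementary verification, using that the pairs are good.'' Your support computation (splitting into $\bigcup_{i,j}|D_i|\cap|\widetilde Z_j|$ and handling the cross terms via condition~(2) of \cref{defin:goodpair} for the target pair) is exactly the elementary verification the paper leaves to the reader, so this is the same argument with the details supplied.
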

\begin{proof} For each fixed $S \in \Aff$, we have seem that $\cup_S$ is well defined on objects.  On morphisms, the only point that needs some argument is to show that if $(j_1, j_2): (F_1, F_2) \to (F'_1, F'_2)$ is a morphism in $\flags_{X, 2}(S)$ ($F_i=(D_i, Z_i), F'_i=(D'_i, Z'_i), \, i=1,2$), then the induced map $j_1 \cup j_2: F_1 \cup F_2 \to F'_1 \cup F'_2$ has the property that the canonical map $$Z_1 \cup Z_2 \to (D_1+D_2) \cap (Z'_1 \cup Z'_2)$$ is an isomorphism on the underlying reduced schemes. This follows from the fact that both $Z_1  \to D_1 \cap Z'_1$, and $Z_2 \to D_2 \cap Z'_2$ are isomorphism on the underlying reduced schemes, and from the fact that it is enough to prove the equality $Z_1 \cup Z_2 = (D_1+D_2) \cap (Z'_1 \cup Z'_2)$ at the level of the underlying topological spaces (which is an elementary verification, using that the pairs $(F_1, F_2)$ and $(F'_1, F'_2)$ are good).
Finally, naturality in $S$ is an easy consequence of Lemma \ref{lem:unionok} (2) and (3).
\end{proof}

\subsection{Good tuples of flags}
\begin{prop}\label{prop:goodtupledivisors}
Fix $S = \Spec(A)$ an affine test scheme and $(D_1, \dots, D_n) \in \mathrm{Car}_{X}(S)^n$. The following conditions are equivalent:
\begin{enumerate}
 \item\label{ass:2by2} For any $i \neq j$, the pair of divisors $(D_i, D_j)$ is good (in the sense of \cref{defin:goodcartier});
 \item\label{ass:incremental} For any $i$, the pair $(D_1 + \cdots + D_i ,D_{i+1})$ is good;
 \item\label{ass:any} For any disjoint subsets $I$ and $J$ of $\{1, \dots, n\}$, the pair $(\sum_{i\in I} D_i ,\sum_{j \in J} D_j)$ is good.
\end{enumerate}
We shall say that a family of such divisors is good if it satisfies the above conditions.
\end{prop}
\begin{proof}
 Let us first prove \eqref{ass:2by2} and \eqref{ass:incremental} are equivalent.
 This straightforwardly reduces to the case of three divisors $(D_1, D_2, D_3)$.
 
 Assume first that they are pairwise good and let us prove that $D_1 + D_2$ and $D_3$ are good.
 The statement being local, we choose an affine chart $\Spec B \subset X \times S$ and pick non-zero-divisors $f_i \in B$ cutting out $D_i$.
 As $D_i$ and $D_j$ are good (for $i \neq j$), the image of $f_i$ in $B/f_j$ is a non-zero-divisor as well.
 
 Let $g,h \in B$ such that $f_3 g = f_1f_2h$.
 Since $f_3$ is a non-zero-divisor in $B/f_1$, there exists $g' \in B$ such that $g = f_1 g'$.
 As $f_1$ is a non-zero-divisor in $A$, we get $f_3 g' = f_2 h$.
 Since $f_3$ is a non-zero-divisor in $B/f_2$, we get $g''$ such that $g' = f_2 g''$. This in turn implies $f_3 g'' = h$.
 This shows that $f_1f_2$ is a non-zero-divisor in $B/f_3$.

 In order to prove that $(D_1 + D_2) \cap D_3$ is a relative effective Cartier divisor in $D_3$, it remains to show it is flat over $A$.
 Consider then the following sequence
 \begin{equation}\label{exseq:flatness}
  \begin{tikzcd}
   0 \ar{r} & B/(f_1, f_3) \ar{r}{f_2\times-}[swap]{\alpha} & B/(f_1f_2, f_3) \ar{r}[swap]{\pi} & B/(f_2, f_3) \ar{r} & 0.
  \end{tikzcd}
 \end{equation}
 The morphism $\pi$ is obviously surjective, and its kernel coincides with the image of $\alpha$.
 To prove that $\alpha$ is injective, consider $b \in B$ such that $f_2b = 0 \in B/(f_1f_2, f_3)$.
 Pick $x, y \in B$ such that $f_2b = f_1f_2x + f_3y$. Since $f_3$ is a non-zero-divisor in $B/f_2$, we find $y'$ satisfying $f_2y' = y$.
 As $f_2$ is a non-zero-divisor in $B$, we get $b - f_1x = f_3 y'$.
 In particular $b = 0 \in B/(f_1, f_3)$ and $\ker(\alpha) = 0$.
 The above sequence is therefore exact. Since both $B/(f_1, f_3)$ and $B/(f_2, f_3)$ are flat over $A$, so is $B/(f_1f_2, f_3)$.
 This shows that $(D_1 + D_2) \cap D_3$ is a relative effective Cartier divisor in $D_3$.
 Using \cref{lem:gooddivisorsasymm}, we deduce that \eqref{ass:2by2} implies \eqref{ass:incremental}.
 
 \medskip
 We now assume that $(D_1, D_2)$ and $(D_1 + D_2, D_3)$ are good pairs of divisors.
 We fix as above non-zero-divisors $f_i \in B$ generating the ideal corresponding to $D_i$.
 Let $g, h \in B$ such that $gf_1 = hf_3$.
 Multiplying by $f_2$ and using that $f_1f_2$ is not a zero-divisor in $B/f_3$, we get $t \in B$ such that $g = tf_3$.
 In particular, $f_1$ is a non-zero-divisor in $B/f_3$.
 By \cref{lem:gooddivisorsasymm}, $f_3$ is not a zero-divisor in $B/f_1$ either.
 The roles of $f_1$ and $f_2$ being symmetric, the same holds for $f_2$ in place of $f_1$.
 
 The sequence \eqref{exseq:flatness} above is still exact. Since $B/(f_1f_2, f_3)$ is flat over $A$, the associated long Tor exact sequence yields
 \[
  \forall n \geq 1~ \Tor_n^A(B/(f_1, f_3), -) \simeq \Tor_{n+1}^A(B/(f_2, f_3), -).
 \]
 Again, the role of $f_1$ and $f_2$ being symmetric, we also get $\Tor_n^A(B/(f_2, f_3), -) \simeq \Tor_{n+1}^A(B/(f_1, f_3), -)$.
 As a consequence, the Tor-functors $\Tor_n^A(B/(f_1, f_3), -)$ are $2$-periodic:
 \[
  \forall n \geq 1~\Tor_n^A(B/(f_1, f_3), -) \simeq \Tor_{n+2}^A(B/(f_1, f_3), -).
 \]
 However, the exact sequence $0 \to B/f_1 \overset{f_3}{\to} B/f_1 \to B/(f_1, f_3) \to 0$ and the assumption that $B/f_1$ is flat over $A$ imply that $B/(f_1, f_3)$ has Tor-dimension at most $1$.
 Together with the above $2$-periodicity, we deduce that $B/(f_1, f_3)$ is flat over $A$.
 We proved that the pair $(D_1, D_3)$ is good. By symmetric, so is $(D_2, D_3)$ and thus \eqref{ass:incremental} implies \eqref{ass:2by2}.
 
 Clearly, \eqref{ass:any} implies \eqref{ass:incremental}. The converse implication is proven by induction, up to renumbering the divisors.
\end{proof}

An analog of \cref{prop:goodtupledivisors} for flags does not quite hold: the right notion of good families of flags cannot be defined in terms of pairs of elements.
\begin{defin}\label{def:goodflags}
Fix $S$ an affine test scheme. We define the notion of good families of flags inductively on the cardinal:
\begin{itemize}
 \item A pair $(F_1, F_2) \in \flags_X(S)^2$ is good if it is in the sense of \cref{defin:goodpair}.
 \item A family $(F_1, \dots, F_n) \in \flags_X(S)^n$ is good if for any $i < j$, the pair $(F_i, F_j)$ is good, and the family $(F_i \cup F_j, F_1, \dots, F_{i-1}, F_{i+1}, \dots, F_{j-1}, F_{j+1}, \dots, F_n)$ is good.
\end{itemize}
\end{defin}

\begin{prop}\label{prop:goodtupleflags}
Fix $S$ an affine test scheme and $(F_1, \dots, F_n) \in \flags_X(S)^n$. The following conditions are equivalent:
\begin{enumerate}[label=\rm(\arabic*), ref=\rm(\arabic*)]
 \item\label{ass:goodflags} The family $(F_1, \dots, F_n)$ is good.
 \item\label{ass:consecutive} The families $(F_2, \dots, F_n)$, $(F_1, \dots, F_{n-1})$ and $(F_1, \dots, F_i \cup F_{i+1}, \dots, F_n)$ for $0 < i < n$, are all good;
 \item\label{ass:incrementalflags} For any $i$, the pair $(F_1 \cup \cdots \cup F_i, F_{i+1})$ is good;
 \item\label{ass:delndeli} The family $(F_1, \dots, F_{n-1})$ is good and there exists $1 \leq i \leq n-2$ such that the family $(F_1, \dots, F_i \cup F_{i+1}, \dots, F_n)$ is good;
 \item\label{ass:del0deli}The family $(F_2, \dots, F_n)$ is good and there exists $2 \leq i \leq n-1$ such that the family $(F_1, \dots, F_i \cup F_{i+1}, \dots, F_n)$ is good.
\end{enumerate}
\end{prop}

\begin{rem}
 The definition of good families of flags is independent of the order of the flags. As a consequence, there also are characterizations similar to \ref{ass:consecutive} through \ref{ass:del0deli} but where the role of the flags are permuted.
 
 In particular, a family $(F_1, \dots, F_n)$ is good if and only if there exists a permutation $\sigma$ such that each pair $(F_{\sigma(1)} \cup \dots \cup F_{\sigma(i)}, F_{\sigma(i+1)})$ is good. In this case, this actually holds for any permutation $\sigma$.
 
 We can interpret this fact as: ``A family of flags is good if there exists a way of computing its union by adding flags one by one. In this case, every such way of computing the union is valid.''
\end{rem}

\begin{lem}\label{lem:triplegoodflags}
 Let $S$ be an affine scheme and $F_1$, $F_2$ and $F_3$ be three flags in $\flags_X(S)$. The following statements are equivalent:
 \begin{enumerate}[label=\rm(\roman*)]
  \item The pairs $(F_1, F_2)$ and $(F_1 \cup F_2, F_3)$ are good;
  \item The pairs $(F_1, F_3)$ and $(F_1 \cup F_3, F_2)$ are good;
  \item The pairs $(F_2, F_3)$ and $(F_2 \cup F_3, F_1)$ are good.
 \end{enumerate}
\end{lem}

\begin{proof}
 By symmetry, it suffices to show the first assertion implies the second one. Assume thus that $(F_1, F_2)$ and $(F_1 \cup F_2, F_3)$ are good pairs of flags.
 For each $i$, we denote by $(D_i, Z_i)$ the flag $F_i$.
 By \cref{prop:goodtupledivisors}, the pairs of divisors $(D_1, D_3)$ and $(D_1 + D_3, D_2)$ are good.
 Remark that this implies $D_1 + D_3 = D_1 \cup D_3$.
 It is therefore enough to verify $D_1 \cap D_3 = Z_1 \cap Z_3$ and $(D_1 \cup D_3) \cap D_2 = (Z_1 \cup Z_3) \cap Z_2$.
 This can be checked locally in some Zariski chart $\Spec B$ of $X \times S$. We denote by $f_k$ a non-zero-divisor of $B$ of whom $D_ik$ is the zero-locus, by $I_k$ the ideal $(f_k)$ and by $J_k$ the ideal of functions vanishing on $Z_k$ (so that by definition $I_k \subset J_k$). , and by assumption $I_1 + I_2 = J_1 + J_2$ and $I_3 + (I_1 \cap I_2) = J_3 + (J_1 \cap J_2)$.
 We have to prove:
\[
\begin{cases}
  I_1 +  I_3 = J_1 + J_3 & \mathrm{(a)}  \\
  I_2 + (I_1 \cap I_3)=J_2 + (J_1 \cap J_3) & \mathrm{(b)}
\end{cases}
\]
This is elementary. Here are the details:
\begin{enumerate}[label=\rm(\alph*)]
\item Let $ \xi_1 + \xi_3 \in J_1 + J_3$, $\xi_i \in J_i$. Since $J_1 \subset I_1 + I_2$, we get $\xi_1= x_1 + x_2 \in I_1 + I_2$, so that $x_2 =\xi_1 - x_1 \in I_2 \cap J_1 \subset J_2 \cap J_1$. Therefore $x_2+ \xi_3= x_3 + y_{12} \in I_3 + (I_1 \cap I_2)$ for some $x_3 \in I_3$ and $y_{12} \in I_1 \cap I_2$. This gives $\xi_1 + \xi_3= (x_1 + y_{12}) + x_3 \in I_1 + I_3$.\\
\item Let $\xi_2 + \xi_{13} \in J_2 + (J_1 \cap J_3)$, with $\xi_2 \in J_2$ and $\xi_{13} \in J_1 \cap J_3$. We have $\xi_{13} \in J_1 \subset I_1 + I_2$ so there exist $y_1 \in I_1$ and $y_2 \ni I_2$ such that $\xi_{13} = y_1 + y_2$.
We get $y_2 = \xi_{13} - y_1 \in I_2 \cap J_1 \subset J_2 \cap J_1$. 
On the other hand, $y_1 = \xi_{13} - y_2 \in J_3 + (J_2 \cap J_1) = I_3 + (I_1 \cap I_2)$, so that we have $y_1 = t_3 + t_{12}$ for some $t_3 \in I_3$ and $t_{13}$ in $(I_1 \cap I_2)$.
We find $t_3 \in I_1 \cap I_3$, since $y_1 \in I_1$. Therefore, $\xi_{13}= (t_3 + t_{12}) + y_2 \in I_2 + (I_1 \cap I_3)$.
Hence, we are left to prove that $\xi_2 \in I_2 + (I_1 \cap I_3)$ as well. Decomposing $\xi_2 = x_1 + x_2 \in I_1 + I_2$, we find $x_1 \in I_1 \cap J_2 \subset J_1 \cap J_2 \subset I_3 + (I_1 \cap I_2)$.
Let $\alpha_3 \in I_3$ and $\alpha_{12} \in I_1 \cap I_2$ such that $x_1 = \alpha_3 + \alpha_{12}$.
This implies $\alpha_3 \in I_1\cap I_3$ and in turn $\xi_2 = x_2 + x_1 = x_2 + \alpha_3 + \alpha_{12} \in I_2 + (I_1 \cap I_3)$.
\end{enumerate} 
\end{proof}

\begin{proof}[Proof of \cref{prop:goodtupleflags}]
Let us first observe that any subfamily of a good family is good. In particular \ref{ass:goodflags} implies \ref{ass:consecutive}.
Moreover, a rapid induction on $i$ shows that \ref{ass:consecutive} implies \ref{ass:incrementalflags}.

Let us now assume \ref{ass:incrementalflags}.
We will prove \ref{ass:incrementalflags} $\implies$ \ref{ass:goodflags} by induction on $n$.
For $n = 2$, the implication is tautological. 
Assume now that it holds for $n \geq 2$ and pick a family $(F_1, \dots, F_{n+1}) \in \flags_X(S)^{n+1}$ satisfying \ref{ass:incrementalflags}.
For any $1 \leq i < j \leq n+1$, we denote by $E_{ij}$ the $n$-tuple
\[
 E_{ij} = (F_i \cup F_j, F_k, k \in \{1, \dots, n+1\} \smallsetminus \{i,j\}).
\]
We have to show that each pair $(F_i, F_j)$ and each $E_{ij}$ is good.

The truncated family $(F_1, \dots, F_n)$ satisfies \ref{ass:incrementalflags} and thus, by induction, is good.
In particular, for any $1 \leq i < j \leq n$, the pair $(F_i, F_j)$ and the family $(F_i \cup F_j, F_k, k \in \{1, \dots, n\} \smallsetminus \{i,j\})$ are good.
With the original assumption that $(F_1 \cup \cdots \cup F_n, F_{n+1})$ is good, it follows that the $n$-tuple $E_{ij}$ satisfies \ref{ass:incrementalflags} and is thus good.

It remains to handle the pairs $(F_i, F_{n+1})$ and the families $E_{i,n+1}$.
The $n$-tuple $(F_1 \cup F_2, F_3, \dots, F_{n+1})$ satisfies \ref{ass:incrementalflags} and is thus good.
In particular, the pair $(F_i, F_{n+1})$ is good for $i \geq 3$.
The pairs $(F_1, F_2)$ and $(F_1 \cup F_2, F_{n+1})$ are also good, and thus so are the pairs $(F_1, F_{n+1})$ and $(F_2, F_{n+1})$ by \cref{lem:triplegoodflags}.

By induction, to prove that $E_{i,n+1}$ is a good $n$-tuple, it suffices to show it satisfies \ref{ass:incrementalflags}: i.e. that each pair $P_k^i := (F_1^i \cup \cdots \cup F_k^i, F_{k+1}^i)$ is good, where $F_m^i = F_m$ if $m \neq i$ and $F_i^i = F_i \cup F_{n+1}$.

Assume for now $i \geq 2$. By assumption, the pair $P_k^i$ is good if $k < i - 1$.
The $(n+2-i)$-tuple $(F_1 \cup \cdots \cup F_i, F_{i+1}, \dots, F_{n+1})$ satisfies \ref{ass:incrementalflags} and is thus good. In particular, the pair $(F_1 \cup \cdots \cup F_i, F_{n+1})$ is good.
Since $(F_1 \cup \dots \cup F_{i-1}, F_i)$ is a good pair, \cref{lem:triplegoodflags} implies that $P_{i-1}^i = (F_1 \cup \dots \cup F_{i-1}, F_i \cup F_{n+1})$ is a good pair as well.
Moreover $P_k^i = P_k^1$ as soon as $k \geq i$, so the case $i = 1$ induces the others.

We are left to prove that for any $1 \leq k < n$, the pair
$P_k^1 = (F_{n+1} \cup F_1 \cup \cdots \cup F_k, F_{k+1})$
is good. This also follows from \cref{lem:triplegoodflags}, since both $(F_1 \cup \dots \cup F_k, F_{k+1})$ and $(F_1 \cup \dots \cup F_{k+1}, F_{n+1})$ are good pairs of flags.
The family $(F_1, \dots, F_{n+1})$ is therefore good.
This concludes the proof of the implication \ref{ass:incrementalflags} $\implies$ \ref{ass:goodflags} by induction on $n$.

\medskip
It remains to prove \ref{ass:delndeli} $\iff$ \ref{ass:consecutive} $\iff$ \ref{ass:del0deli}.
By symmetry, dealing with the first equivalence is enough.
Moreover, we tautologically have \ref{ass:consecutive} $\implies$ \ref{ass:delndeli}.
Conversely, assume \ref{ass:delndeli} holds for a family $(F_1, \dots, F_n)$.
Since $(F_1, \dots, F_{n-1})$ is good, it satisfies \ref{ass:incrementalflags} and each pair $(F_1 \cup \dots \cup F_{p-1}, F_p)$ is good for $p < n$. 

Moreover, there exists $i \leq n-1$ such that $(F_1, \dots, F_i \cup F_{i+1}, \dots, F_n)$ is good.
This family thus also satisfies \ref{ass:incrementalflags} and thus the pair $(F_1 \cup \dots \cup F_{n-1}, F_n)$ is good as well.
In particular, the family $(F_1, \dots, F_n)$ satisfies \ref{ass:incrementalflags} and is thus good.
\end{proof}

\subsection{The \texorpdfstring{$2$}{2}-Segal object of good flags}\label{section:2SegalFlags}

The union operation of \cref{unionisfunctorial} is actually a partially defined commutative monoid structure on $\flags_{X}$: the operation is only defined for some pairs of elements. This sort of structure requires extra care when defining (and checking) the various constraints (here, associativity and commutativity).
For instance, we need to ensure that every operation is actually well defined when writing down the associativity constraint.

A concise way of writing those details consists in using $2$-Segal objects. We will thus define a $2$-Segal object $\flags_{X, \bullet}$ in prestacks that we will later use to construct a factorization structure on our flag version of Beilinson-Drinfeld's affine Grassmannian.

Let us start by fixing a test scheme $S\in \Aff$. We will  construct a simplicial object in $\mathsf{PoSet}$, which we denote by $\flags_{X,\bullet}(S)$.

\begin{defin}
 For any $n \geq 0$, we define $\flags_{X,n}(S)$ as the full sub-poset of $\flags_X(S)^n$ of good families of flags (in the sense of \cref{def:goodflags}).
 By convention, we set $\flags_{X,0}(S): = *$, $\flags_{X,1} (S):=\flags_X(S)$.
\end{defin}

By \cref{prop:goodtupleflags}, for $0\leq i \leq n$ we have (order preserving) \emph{face} maps $\partial_i^{n} : \flags_{X, n} (S) \to \flags_{X, n-1} (S)$ whose values on objects is defined as follows
\[ \partial_i^n\colon (F_1,\dots, F_n) \longmapsto \begin{cases}
(F_2, \dots , F_n) & \text{ if } i=0 \\
(F_1 , \dots , F_i \cup F_{i+1}, \dots F_n) & \text{ if } 0<i<n \\
(F_1 ,\dots , F_{n-1}) & \text{ if } i=n 
\end{cases}
\] 
Similarly, we have \emph{degeneracy} maps $\sigma_i^{n} : \flags_{X, n} (S) \to \flags_{X, n+1} (S)$ defined as
\[ \sigma_i^n\colon (F_1,F_2,\dots, F_n) \longmapsto (F_1,\dots, F_{i}, \emptyset, F_{i+1}, \dots F_n).
\] 
It is immediate to verify that these give $\flags_{X, \bullet} (S)$ the structure of a simplicial object in $\mathsf{PoSet}$.

These constructions are functorial in $S$: every $\flags_{X, n} (S)$ is a functor $\Aff\op \to \mathsf{PoSet}$, as they are subfunctors of $\flags_{X} (S)^{\times n}$. Moreover, functoriality of the degeneracy maps $\sigma_i^n$ with respect to maps in $\Aff\op$ is obvious, while the fact that also the face maps $\partial_{i}^n$ are functorial is a consequence of Lemma \ref{unionisfunctorial}. Therefore, $S \mapsto\flags_{X, \bullet}(S)$ defines a functor
\[  \flags_{X,\bullet} \colon \Aff\op \to s\mathsf{PoSet} \] 
or, equivalently, a simplicial object in $\Fun(\Aff\op, \mathsf{PoSet})$, which we still denote $\flags_{X, \bullet}$.

We will use the notion of $2$-Segal simplicial object from \cite{KapDyc}.

\begin{thm}\label{thm:2SegGood} 
The simplicial object $\flags_{X,\bullet}$ is a $2$-Segal object.
\end{thm}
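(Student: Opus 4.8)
Recall that a simplicial object $Y_\bullet$ is $2$-Segal (in the sense of Dyckerhoff--Kapranov \cite{KapDyc}) if for every $n \geq 3$ and every triangulation of the convex polygon $P_n$ with vertices $0, 1, \dots, n$, the induced map $Y_n \to \lim Y_{\bullet}$ (the limit taken over the poset of faces of the triangulation) is an equivalence; equivalently, it suffices to check this for the "fan" triangulations and, by a standard reduction, it suffices to verify the two families of pullback squares
\begin{equation*}
\flags_{X,n}(S) \;\xrightarrow{\ \sim\ }\; \flags_{X,\{0,\dots,i\}}(S) \times_{\flags_{X,\{i\}}(S)} \flags_{X,\{i,\dots,n\}}(S)
\end{equation*}
and the dual one with $\flags_{X,\{0,i\}}$, $\flags_{X,\{i,n\}}$ in the roles played above. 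Since everything in sight is a poset (hence the ambient $\infty$-categorical subtleties collapse to strict $1$-categorical statements, as noted in Remark \ref{thisis1cat}), being "an equivalence" just means "an isomorphism of posets", and a $2$-Segal check amounts to an honest bijection on objects together with the observation that all the posets involved carry the order induced from a fixed power of $\flags_X(S)$, so order-compatibility is automatic once the bijection is established. I would fix $S \in \mathsf{Aff}$ throughout and check the condition for $\flags_{X,\bullet}(S)$; functoriality in $S$ is then irrelevant to the $2$-Segal property itself.

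**Main steps.** First I would unwind, from the inductive definition of $\flags_{X,n}(S)$, the description of its objects: an $n$-tuple $(F_1, \dots, F_n)$ lies in $\flags_{X,n}(S)$ precisely when every "consecutive partial union" pattern lands in $\flags_{X,n-1}(S)$, and by descending induction this is equivalent to the condition that for every $1 \le a \le b \le n$ the union $F_a \cup F_{a+1} \cup \dots \cup F_b$ is defined, i.e. the pair $(F_a \cup \dots \cup F_{b-1}, \; F_b)$ — equivalently any bracketing — is a good pair of flags in the sense of Definition \ref{defin:goodpair}. The key combinatorial lemma is thus: \emph{goodness of a tuple is a property of consecutive sub-blocks, and it is equivalent to pairwise goodness of the blocks appearing in any single triangulation.} Concretely, for the fan triangulation at vertex $i$, I must show that $(F_1,\dots,F_n) \in \flags_{X,n}(S)$ if and only if $(F_1,\dots,F_i) \in \flags_{X,i}(S)$, $(F_i', F_{i+1},\dots,F_n) \in \flags_{X,n-i+1}(S)$ where $F_i' := F_1 \cup \dots \cup F_i$, and these two agree on the overlap $F_i'$ (which is automatic). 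The nontrivial direction is that the block-level goodness forces all the "crossing" pairs $(F_a \cup\dots\cup F_j,\ F_{j+1}\cup\dots\cup F_b)$ with $a \le i < b$ to be good. This follows from the geometric content of goodness: $(D,D')$ good means $D \cap D'$ is a relative effective Cartier divisor inside each of $D$ and $D'$, and $Z \cap Z' = D \cap D'$; one uses that $D_a + \dots + D_b = D_a \cup \dots \cup D_b$ for good blocks (the remark after Definition \ref{defin:goodpair}), that intersecting a union of Cartier divisors with another Cartier divisor distributes, and that the $0$-dimensional parts $Z_j$ intersect along exactly the divisorial intersection data — so transversality of the blocks propagates from the pieces to the coarser partitions and back. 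I would isolate this as a sublemma, proved by a short computation with ideals of Cartier divisors and with $Z \cap Z' = D \cap D'$, reducing (as in the proof of Lemma \ref{lem:unionok}) to the affine case.

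**The obstacle.** The one place requiring genuine care is precisely that sublemma: showing that "all consecutive unions are defined and good" is stable under regrouping, i.e. that if $(F_a \cup \dots \cup F_j)$ and $(F_{j+1} \cup \dots \cup F_b)$ each make sense and are mutually good, then so is any finer or coarser consecutive regrouping across the cut. The subtlety is that goodness of $(D, D')$ involves $D \cap D'$ being Cartier \emph{inside both} $D$ and $D'$ — a condition that does not obviously behave well under adding further components to $D$ — and one has to check it does, using flatness over $S$ and the identity $D_1 + D_2 = D_1 \cup D_2$ for good pairs to rewrite everything in terms of honest scheme-theoretic unions, where intersection-distributes-over-union at the level of underlying topological spaces plus the reducedness bookkeeping closes the argument. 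Once this stability statement is in hand, the $2$-Segal maps are bijections on objects essentially by inspection, they are order-preserving and order-reflecting because source and target are full subposets of powers of $\flags_X(S)$ with the componentwise order, and therefore they are isomorphisms of posets — which is exactly the $2$-Segal condition in this strict $1$-categorical setting. I would then remark that the dual family of squares (with the "short" edges $\{0,i\}$, $\{i,n\}$) is handled identically, since $\flags_{X,\bullet}(S)$ is, by the same block-goodness description, easily seen to be $2$-coskeletal, so the two halves of the $2$-Segal condition are equivalent.
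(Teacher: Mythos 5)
Your overall architecture matches the paper's: fix $S$, observe the $2$-Segal checks collapse to strict $1$-categorical statements at the poset level, note that full-faithfulness is automatic because everything is a full subposet of a power of $\flags_X(S)$, and reduce surjectivity on objects to (a) a base case $n=2$ and (b) a ``consecutive block'' characterization of $\flags_{X,n}(S)$, which is the paper's Lemma \ref{lem:compatibleflags} and Remark \ref{BenjPOV}. That framing is correct, and the remarks about why the $1$-categorical collapse is harmless are accurate.

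The gap is in your treatment of the key sublemma, which corresponds to Proposition \ref{prop:2Seg-low-dim} in the paper. You propose to close the argument by ``intersection-distributes-over-union at the level of underlying topological spaces plus the reducedness bookkeeping.'' But the goodness condition of Definition \ref{defin:goodpair} is not a topological condition: for the crossing pairs you need $D\cap D'$ to be a relative effective Cartier divisor inside both $D/S$ and $D'/S$, which in particular means $D\cap D'$ is \emph{flat over $S$}. That flatness is exactly what has to be \emph{established} for the new pairs (e.g.\ that $B/(f_1,f_3)$ and $B/(f_2,f_3)$ are $A$-flat, given only that $B/(f_1,f_2)$ and $B/(f_3,f_1f_2)$ are), and no topological distributivity or reducedness argument can see it. In the paper this is handled by a genuinely homological device: two short exact sequences
$0 \to B/(f_3,f_1) \to B/(f_3,f_1 f_2) \to B/(f_3,f_2) \to 0$ and its mirror produce a Tor-$2$-periodicity, which is then killed by the a priori bound $\mathrm{Tor}$-$\dim_A B/(s,t) \le 1$ when $B/s$ is $A$-flat and $t$ is a non-zero-divisor mod $s$. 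Nothing in your proposal produces that flatness. Likewise, the non-zero-divisor checks ($D_i\cap D_j$ Cartier inside each of $D_i$ and $D_j$) are element-by-element local algebra that you gesture at but do not carry out, and the ideal-theoretic identity $D\cap D' = Z\cap Z'$ for the crossing pairs (property (2) of goodness) requires its own (elementary but nontrivial) element-chasing. You correctly flagged the sublemma as the place requiring care, but misdiagnosed the obstacle: it is the flatness-over-$S$, not merely the nested-Cartier bookkeeping.

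A minor point: the appeal to $2$-coskeletality to dispatch the ``dual'' family of $2$-Segal squares is unnecessary; once the base case and the block characterization are in place, both families follow from the same induction by iterating inner faces, as in the paper.
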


\begin{proof}
First of all, recall (see \cite{KapDyc}) that we have to prove that  $\flags_{X,\bullet}$ satisfies the \emph{$2$-Segal conditions}, i.e. that for any $(i,n)$ with $0<i<n$, the following squares are Cartesian
\[
\begin{tikzcd}
  \flags_{X, n+1} \ar{r}{\partial_{i+1} } \ar{d}[swap]{\partial_0} \namecell{dr}{(\sigma)} & \flags_{X, n} \ar{d}{\partial_0} 
  \\
  \flags_{X, n} \ar{r}[swap]{\partial_i} & \flags_{X, n-1}
\end{tikzcd}
\hspace{3em}
\begin{tikzcd}
  \flags_{X, n+1} \ar{r}{\partial_{i}} \ar{d}[swap]{\partial_{n+1}} \namecell{dr}{(\tau)} & \flags_{X, n} \ar{d}{\partial_n} 
  \\
  \flags_{X, n} \ar{r}[swap]{\partial_i} & \flags_{X, n-1}.
\end{tikzcd}
\]
We focus on the square $(\sigma)$, the case $(\tau)$ being deduced by symmetry.
We will show that the natural map of posets
\[
\flags_{X, n+1} (S) \to H := \flags_{X, n} (S) \newtimes_{\partial_0,\, \flags_{X, n-1} (S), \, \partial_i} \flags_{X, n} (S)
\]
is an isomorphism.
To begin with, observe that the composition
\[
 \begin{tikzcd}
 \flags_{X, n+1} \ar{r}{\partial_0,\partial_{i+1}} & H \subset \flags_{X, n} \times \flags_{X, n} \subset \flags_X^n \times \flags_X^n \ar{r}{\id, p_1} & \flags_X^n \times \flags_X = \flags_X^{n+1}
 \end{tikzcd}
\]
is the canonical inclusion. Moreover, the restriction of $(\id, p_1) \colon \flags_X^n \times \flags_X^n \to \flags_X^{n+1}$ to $H$ is injective (as a map of posets).
It is then enough to prove that $\flags_{X, n+1}$ and $H$ have the same points.
This is exactly the content of the equivalence \ref{ass:goodflags} $\iff$ \ref{ass:del0deli} from \cref{prop:goodtupleflags}.
\end{proof}

\begin{rem}\label{Fliscomm} Since the union of flags is obviously strictly commutative, one can easily verify that the associative algebra in correspondences induced (via \cite{stern}) by the 2-Segal object $\flags_{X,\bullet}$ is, in fact, commutative.
\end{rem}

\section{The flag Grassmannian of a surface}\label{flagGrassfirst appearance}
\subsection{Setup and notations}\label{setupgen}
\newcommand{\Zprimehataff}{\hZ^{\prime\affinize}}
\newcommand{\Zhataff}{\hZ^{\affinize}}
\newcommand{\Thataff}{\widehat T^{\affinize}}
We will fix once and for all a smooth reductive affine group scheme over $\mathbb{C}$, and therefore we will simply write $\mathbf{Bun}$ and $\underline{\mathsf{Bun}}$ for $\mathbf{Bun}^{\mathbf{G}}$ or $\underline{\mathsf{Bun}}^{\mathbf{G}}$. respectively. We let $\inftyGpd$ denote the $\infty$-category of  $\infty$-groupoids (i.e. spaces or, equivalently, simplicial sets), and its objects will be often simply called \emph{groupoids} (rather than $\infty$-groupoids).\\

For a fiber functor $\fibf$ over a locally Noetherian (derived) scheme $Y$ (Definition \ref{defin:fibf}), and a (derived) $Y$-scheme $V \to Y$, we write
\begin{equation}\label{holimsections}
\mathbf{Bun}_{\fibf}(V)= \mathrm{Bun}(\fibf(V)):= \holim_{\substack{\Spec R \to V \\ R \ \mathrm{Noetherian}}} \mathrm{Bun}(\fibf(R)).
\end{equation}
In particular, we call $\mathrm{Bun}(\fibf(Y))$  the global sections of the stack $\mathbf{Bun}_{\fibf}$ and, in order to simplify our notations, we will often write simply  $\mathrm{Bun}(\fibf)$ instead of $\mathrm{Bun}(\fibf(Y))$ for these global sections.

We identify, even notationally, (Lemma \ref{lem:fibf-ff}) a derived scheme $V \to Y$ over $Y$ with its fiber functor $\fibf_V$ sending $\Spec R \to Y$ to $\Spec R \times_Y V$.

\begin{defin}\label{def_capcupfibf}
 Let $\fibf_1$ and $\fibf_2$ be fiber functors over $Y$.
 We denote by $\fibf_1 \cap \fibf_2$ (and call it intersection) the fiber functor
 \[
  \fibf_1 \cap \fibf_2 := \fibf_1 \newtimes_{\fibf_Y} \fibf_2 \colon S \mapsto \fibf_1(S) \newtimes_S \fibf_2(S).
 \]
 We denote by $\fibf_1 \cup \fibf_2$ (and call it the union) the fiber functor
 \[
  \fibf_1 \cup \fibf_2 := \fibf_1 \newamalg_{\fibf_1 \cap \fibf_2} \fibf_2 \colon S \mapsto \fibf_1(S) \newamalg_{\fibf_1(S) \cap \fibf_2(S)} \fibf_2(S).
 \]
 These notations are abusive, as we do not require the structural morphisms $\fibf_i(S) \to S$ to be immersions. We will thus restrict their use to situations where said structural morphisms are wannabe immersions (even though they might not formally be).
\end{defin}
 
Let $Y$ be a locally Noetherian derived scheme and $Z\subset T$ be closed (derived) subschemes in $Y$.
Recall from sections 2 and 3, the derived stacks in $\dSt_Y$
\begin{align*}
&& \dAffover{Y} &\longrightarrow \inftyGpd\\
&\mathbf{Bun}_Y : & (U \to Y) &\longmapsto \mathrm{Bun}(U),\\
&\mathbf{Bun}_{Y \smallsetminus T} : & (U \to Y) &\longmapsto \mathrm{Bun}(U \smallsetminus T_U) ,\\
&\mathbf{Bun}_{\widehat{T}} : & (U \to Y) &\longmapsto \mathrm{Bun}(\widehat T_U),\\
&\mathbf{Bun}_{\Thataff} : & (U \to Y) &\longmapsto \mathrm{Bun}(\fibf^{\affinize}_{\widehat{T}}(U))= \mathrm{Bun}(\widehat T_U^\affinize) ,\\
&\mathbf{Bun}_{\widehat{T}\smallsetminus T} : & (U \to Y) &\longmapsto \mathrm{Bun}(\fibf_{\widehat{T}\smallsetminus T}(U))= \mathrm{Bun}( \widehat T_U^\affinize \smallsetminus T_U ),\\
&\mathbf{Bun}_{\hZ} : & (U \to Y) &\longmapsto \mathrm{Bun}(\hZ_U),\\
&\mathbf{Bun}_{\Zhataff} : & (U \to Y) &\longmapsto \mathrm{Bun}(\fibf^{\affinize}_{\hZ}(U))= \mathrm{Bun}(\hZ_U^\affinize ),\\
&\mathbf{Bun}_{(Y\smallsetminus T) \cap \Zhataff } : & (U \to Y) &\longmapsto \mathrm{Bun}((U\smallsetminus T_U) \times_U \hZ_U^\affinize),\\
&\mathbf{Bun}_{(\widehat{T}\smallsetminus T) \cap \Zhataff } : & (U \to Y) &\longmapsto \mathrm{Bun}((\widehat T_U^\affinize \smallsetminus T_U) \times_U \hZ_U^\affinize),
\end{align*}
where $T_U := T \times_Y U$, $Z_U := Z \times_Y U$.

We have obvious restriction maps in $\dSt_Y$
\begin{equation}\label{resoverY}
\begin{gathered}
\begin{aligned}
\mathbf{Bun}_{Y} &\longrightarrow \mathbf{Bun}_{Y \smallsetminus T} &
\hspace{4em} % make the columns further apart
\mathbf{Bun}_{Y} &\longrightarrow \mathbf{Bun}_{\Zhataff } \\
\mathbf{Bun}_{Y\smallsetminus T} &\longrightarrow \mathbf{Bun}_{(Y\smallsetminus T) \cap \Zhataff } &
\mathbf{Bun}_{\Zhataff} &\longrightarrow \mathbf{Bun}_{(Y\smallsetminus T) \cap \Zhataff } \\ \mathbf{Bun}_{\Zhataff} &\longrightarrow \mathbf{Bun}_{(\widehat{T}\smallsetminus T) \cap \Zhataff } &
\mathbf{Bun}_{\Thataff} &\longrightarrow \mathbf{Bun}_{\widehat{T}\smallsetminus T}
\end{aligned}
\\
\mathbf{Bun}_{\widehat{T}\smallsetminus T} \longrightarrow \mathbf{Bun}_{(\widehat{T}\smallsetminus T) \cap \Zhataff }.
\end{gathered}
\end{equation}
Moreover, algebraization (\cref{prop:algebraization}) gives equivalences
\begin{align*}
 \mathbf{Bun}_{\Zhataff } &\stackrel{\sim}{\longrightarrow} \mathbf{Bun}_{\hZ} &
 \mathbf{Bun}_{\Thataff } &\stackrel{\sim}{\longrightarrow} \mathbf{Bun}_{\widehat T}.
\end{align*}

\subsection{The flag Grassmannian of a surface}\label{genflaggrass}
% 
% We will need the following elementary result whose proof is left to the reader.
% \begin{lem}\label{pullbackopencompl}
% Let $f: T' \to T$ be a map of schemes, let $V$ be scheme over $T$ and $Z \hookrightarrow V$ a closed subscheme.
% The canonical map $(V\times_T T') \smallsetminus (Z\times_T T') \to (V\smallsetminus Z)\times_{T} T'$ is an equality of open sub-$T'$schemes of $V\times_T T'$.
% \end{lem}

In the next definition we use the notations established in \S \, \ref{setupgen}.
\begin{defin}\label{def:flagGrassgeneral} 
Let $Y$ be a locally Noetherian scheme and $Z\subset T $ be closed subschemes in $Y$.
\begin{itemize}
\item We will call \emph{Grassmannian of $Y$ relative to} $(T, Z)$ both  
\[
 \GrX(Y; T, Z) := \mathbf{Bun}_Y \newtimes_{\mathbf{Bun}_{Y\smallsetminus T \cup \Zhataff}} \{\mathrm{trivial}\} := 
 \doubletimes{\mathbf{Bun}_Y}{\{\mathrm{trivial}\}}{\mathbf{Bun}_{Y\smallsetminus T}}{\mathbf{Bun}_{\Zhataff}}{\mathbf{Bun}_{(Y\smallsetminus T) \cap \Zhataff}}
 \hspace{1em} \in \dSt_Y
\]
and
\[
 \Gr(Y; T, Z) :=\underline{\mathsf{Bun}}_Y \newtimes_{\underline{\mathsf{Bun}}_{Y\smallsetminus T \cup \Zhataff}} \{\mathrm{trivial}\} := 
 \doubletimes{\underline{\mathsf{Bun}}_Y}{\{\mathrm{trivial}\}}{\underline{\mathsf{Bun}}_{Y\smallsetminus T}}{\underline{\mathsf{Bun}}_{\Zhataff}}{\underline{\mathsf{Bun}}_{(Y\smallsetminus T) \cap \Zhataff}}
 \hspace{1em} \in \dSt_k,
\]
\item we put $\mathrm{Gr}(Y;T,Z):= \Gamma(Y , \GrX(Y; T, Z)) = \Gamma(\Spec \mathbb{C} , \Gr (Y; T, Z))   \in \inftyGpd.$
\end{itemize}
\end{defin}

\begin{rem}\label{comparisonbold-underlined} Since the functor $u: \dSt_Y \to \dSt_k$ is right adjoint, it commutes with homotopy limits, so we have $u(\GrX(Y; T, Z))\simeq \Gr(Y; T, Z)$.
\end{rem}

\begin{rem}\label{mostgeneralGr} It is useful to notice that the definition of $\GrX(Y; T, Z)$, and hence of $\Gr(Y; T, Z)$ does not require that $Z$ is a closed subscheme of $T$, but just that $T$ and $Z$ are, possibly unrelated, closed subschemes of $Y$.
\end{rem}

\begin{rem}\label{descriptionofGrassgenbold} Let $U:=\Spec R \to Y$, with $R$ Noetherian (and not derived), and consider the following pullback diagrams
\[
 \xymatrix{\makebox[0pt][r]{$\Spec(R/I) ={}$} T_U \ar[d] \ar[r] & U \ar[d]\\ T \ar[r] & Y}
 \hspace{10em}
 \xymatrix{\makebox[0pt][r]{$\Spec(R/J) ={}$} Z_U \ar[d] \ar[r] & U \ar[d]\\ Z \ar[r] & Y.}
\]
Denoting by $\mathcal E_0$ the trivial bundle (whatever the base), then $\GrX(Y; T, Z)(U)$ has
\begin{itemize}
\item objects $(\mathcal{E}, \varphi, \psi)$, where $\mathcal{E}\in \mathrm{Bun}(U)$, $\varphi: \mathcal{E}_{| U\smallsetminus T_U} \simeq \mathcal{E}_0$ is an isomorphism in $\mathrm{Bun}(U\smallsetminus T_U)$, and $\psi: \mathcal{E}_{| \Spec (\widehat{R}_J)} \simeq \mathcal{E}_0$ an isomorphism in $\mathrm{Bun}(\Spec (\widehat{R}_J))$, such that $\varphi_{| (U\smallsetminus T_U) \times_U \Spec (\widehat{R}_J) } = \psi_{| (U\smallsetminus T_U) \times_U \Spec (\widehat{R}_J) }$.
\item morphisms $(\mathcal{E}, \varphi, \psi) \to (\mathcal{E}', \varphi', \psi')$ are (iso)morphisms $\alpha: \mathcal{E} \to \mathcal{E}' $ such that the following diagrams commute $$\xymatrix{\mathcal{E}_{| U\smallsetminus T_U} \ar[rr]^-{\alpha_{|U\smallsetminus T_U}} \ar[rd]_-{\varphi} && \mathcal{E}'_{| U\smallsetminus T_U} \ar[ld]^-{\varphi '} \\ & \mathcal{E}_0 &} \qquad \xymatrix{\mathcal{E}_{|\Spec (\widehat{R}_J)} \ar[rr]^-{\alpha_{| \Spec (\widehat R_J)}} \ar[rd]_-{\psi} && \mathcal{E}'_{|\Spec (\widehat{R}_J)} \ar[ld]^-{\psi '} \\ & \mathcal{E}_0 &}$$
\end{itemize}
\end{rem}

\begin{rem}\label{flagGrassPerfgeneral} One can replace, in Definition \ref{def:flagGrassgeneral}, the stack of $\mathbf{G}$-bundles with the stack of perfect complexes (respectively, with the stack of almost perfect complexes), and the trivial $\mathbf{G}$-bundle on $Y$ with a fixed perfect complex (respectively, a fixed almost perfect complex)   $\mathcal{E}_0$ on $Y$; we thus obtain Perf-versions (resp., $\mathrm{Coh}^{-}$-versions) of the flag Grassmannian that may be denoted by $\GrX_{\mathcal{E}_0}^{\mathrm{Perf}}(Y; T, Z)$ and $\Gr_{\mathcal{E}_0}^{\mathrm{Perf}}(Y; T, Z)$ (resp., $\GrX_{\mathcal{E}_0}^{\mathrm{Coh}^{-}}(Y; T, Z)$ and $\Gr_{\mathcal{E}_0}^{\mathrm{Coh}^{-}}(Y; T, Z)$).

\end{rem}
As in the previous section, let $X$ be a complex projective smooth algebraic surface, $S$ an arbitrary Noetherian affine test scheme over $\mathbb{C}$, and $Y:=X \times S$.

\begin{defin}\label{def:flagGrass} 
For an arbitrary flag $(D,Z) \in \flags_{X}(S)$, 
\begin{itemize}
\item  we will call the \emph{Grassmannian relative to the pair} $(S,(D,Z))$ both
\begin{gather*}
\GrX_X (S)(D, Z):= \GrX(X\times S; D, Z)=  \mathbf{Bun}_{X \times S} \newtimes_{\mathbf{Bun}_{(X \times S \smallsetminus D) \cup \Zhataff}} \{\mathrm{trivial}\} \in \dSt_{X \times S} \subset \Fun(\dAff_{X \times S}\op, \inftyGpd)
\end{gather*}
\noindent and
\begin{gather*}
\Gr_X (S)(D, Z):= \Gr(X\times S; D, Z)=  \underline{\mathsf{Bun}}_{X \times S} \newtimes_{\underline{\mathsf{Bun}}_{(X \times S\smallsetminus D) \cup \Zhataff}}
\{\mathrm{trivial}\} \in \dSt_k \subset \Fun(\dAff_k\op, \inftyGpd);
\end{gather*}
\item We define $$\mathrm{Gr}_X(S)(D,Z):= \Gamma(X\times S , \GrX_X (S)(D, Z))= \Gamma(\Spec \mathbb{C}, \Gr_X (S)(D, Z)) \in \inftyGpd.$$
\end{itemize}
\end{defin}

We also give a local version of definitions \ref{def:flagGrassgeneral} and \ref{def:flagGrass}:

\begin{defin}[Local Grassmannian] \label{defin:localGr}
  Let $Y$ be a locally Noetherian scheme, and $(Z\subset T \subset Y)$ be closed subschemes. 
Denote by $\hZ \smallsetminus T$ (or $\fibf_{\hZ \smallsetminus T}$) the fiber functor
\[
 \hZ \smallsetminus T := (Y \smallsetminus T) \cap \Zhataff \simeq \widehat T \smallsetminus T \newtimes_{\Thataff} \Zhataff,
\]
and by $\hZ \cup_{\widehat T} \widehat T \smallsetminus T$ the fiber functor
\[
 \hZ \cup_{\widehat T} \widehat T \smallsetminus T := \Zhataff \newamalg_{\hZ \smallsetminus T} \widehat T \smallsetminus T.
\]
We call the local Grassmannian associated to $Z \subset T \subset Y$ both the following derived stacks:
\begin{gather*}
 \GrX^{\loc}(Y; T, Z) := 
 \mathbf{Bun}_{\Thataff} \newtimes_{\mathbf{Bun}_{\hZ \cup_{\widehat T} \widehat T \smallsetminus T}} \{\mathrm{trivial}\} =
 \doubletimes{\mathbf{Bun}_{\Thataff}}{\{\mathrm{trivial}\}}{\mathbf{Bun}_{\widehat T \smallsetminus T}}{\mathbf{Bun}_{\Zhataff}}{\mathbf{Bun}_{\hZ \smallsetminus T}}  \,\,\, \in \St_{Y}
 \\
 \Gr^{\loc}(Y; T, Z) := 
 \underline{\mathsf{Bun}}_{\Thataff} \newtimes_{\underline{\mathsf{Bun}}_{\hZ \cup_{\widehat T} \widehat T \smallsetminus T}} \{\mathrm{trivial}\} =
 \doubletimes{\underline{\mathsf{Bun}}_{\Thataff}}{\{\mathrm{trivial}\}}{\underline{\mathsf{Bun}}_{\widehat T \smallsetminus T}}{\underline{\mathsf{Bun}}_{\Zhataff}}{\underline{\mathsf{Bun}}_{\hZ \smallsetminus T}}  \,\,\, \in \St.
\end{gather*}
 If $S$ is a test scheme and $(D, Z) \in \flags_X(S)$, we also set
 \[
  \GrX^{\loc}_X(S)(D,Z) := \GrX^{\loc}(X \times S; D, Z) 
  \hspace{2em} \text{and} \hspace{2em}
  \Gr^{\loc}_X(S)(D,Z) := \Gr^{\loc}(X \times S; D, Z).
 \]
\end{defin}

The following result is a flag Grassmannian version of the well-known equivalence between the local and the global affine Grassmannian of a curve (see e.g. \cite[Theorem 1.4.2]{Zhu2017}).

\begin{lem}\label{lem:benjexo} Let $Y$ be a locally Noetherian scheme, and let $Z\subset T \subset Y$ be closed subschemes.
There are canonical restriction equivalences
\[
 \begin{tikzcd}
 \GrX(Y; T, Z) \ar{r}{\sim} & \GrX^{\loc}(Y; T, Z)
 \hspace{2em}\text{and}\hspace{2em}
 \Gr(Y; T, Z) \ar{r}{\sim} & \Gr^{\loc}(Y; T, Z)
 \end{tikzcd}
\]
If $(D, Z) \in \flags_X(S)$, they specialize to equivalences
\[
 \GrX^{\loc}_X(S)(D,Z) \simeq \GrX_X(S)(D,Z)
 \hspace{2em}\text{and}\hspace{2em}
 \Gr^{\loc}_X(S)(D,Z) \simeq \Gr_X(S)(D,Z).
\]
 \end{lem}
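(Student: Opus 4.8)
The plan is as follows. First, by \cref{comparisonbold-underlined} the functor $u\colon\St_Y\to\St$ is a right adjoint and hence preserves homotopy limits, so an equivalence $\mathbf{Gr}(Y;T,Z)\simeq\mathbf{Gr}^{\mathrm{loc}}(Y;T,Z)$ in $\St_Y$ automatically yields the equivalence $\Grnew(Y;T,Z)\simeq\Grnew^{\mathrm{loc}}(Y;T,Z)$ in $\St$; and the two equivalences asserted for $(D,Z)\in\flags_X(S)$ are the special case $Y=X\times S$, $T=D$. Thus it suffices to construct, for closed subschemes $Z\subset T\subset Y$ of a locally Noetherian scheme, a restriction equivalence $\mathbf{Gr}(Y;T,Z)\xrightarrow{\ \sim\ }\mathbf{Gr}^{\mathrm{loc}}(Y;T,Z)$ in $\St_Y$. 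Abbreviate
\[
A:=\mathbf{Bun}_{Y\setminus T},\quad B:=\mathbf{Bun}_{\Thataff},\quad C:=\mathbf{Bun}_{\widehat T\setminus T},\quad D:=\mathbf{Bun}_{\Zhataff},\quad E:=\mathbf{Bun}_{\widehat Z\setminus T},
\]
(all objects of $\St_Y$), with the evident restriction maps $A\to C\leftarrow B$, $B\to D$ (coming from $\Zhataff\to\Thataff$, which exists since $Z\subset T$), $A\to E$, $C\to E$, $D\to E$. Unwinding \cref{def:flagGrassgeneral} and \cref{defin:localGr}, and using $(Y\setminus T)\cap\Zhataff=\widehat Z\setminus T$, the two objects to be compared are
\[
\mathbf{Gr}(Y;T,Z)=\mathbf{Bun}_Y\newtimes_{A\newtimes_E D}\{\mathrm{trivial}\},\qquad
\mathbf{Gr}^{\mathrm{loc}}(Y;T,Z)=B\newtimes_{C\newtimes_E D}\{\mathrm{trivial}\},
\]
where in both cases $\{\mathrm{trivial}\}$ maps in via the trivial $\mathbf{G}$-bundle.

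The only geometric input will be formal glueing along $T$: by the underived form of \cref{prop:localformalglueing} (see the corollary after \cref{lem:underivedfibf}), together with algebraisation \cref{prop:algebraization} to pass from $\widehat T$ to $\Thataff$, restriction is an equivalence $\mathbf{Bun}_Y\xrightarrow{\ \sim\ }A\newtimes_C B$ in $\St_Y$. Everything else is formal manipulation of homotopy fiber products. Concretely, I would check that the square
\[
\begin{tikzcd}
A\newtimes_C B \ar[r] \ar[d] & B \ar[d] \\
A\newtimes_E D \ar[r] & C\newtimes_E D
\end{tikzcd}
\]
(top arrow the projection to $B$ followed by $B\to C\newtimes_E D$, bottom arrow induced by $A\to C$) is a homotopy pullback --- this reduces, via the identity $A\newtimes_E D\simeq A\newtimes_C(C\newtimes_E D)$ (valid because $A\to C\to E$ is the structural map $A\to E$) and the pasting law for cartesian squares, to pure diagram chasing. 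Pasting this square with the one defining $\mathbf{Gr}(Y;T,Z)$, substituting $\mathbf{Bun}_Y\simeq A\newtimes_C B$, and using that $\{\mathrm{trivial}\}\to A\newtimes_E D\to C\newtimes_E D$ is again the trivial bundle, one gets
\[
\mathbf{Gr}(Y;T,Z)\simeq(A\newtimes_C B)\newtimes_{A\newtimes_E D}\{\mathrm{trivial}\}\simeq B\newtimes_{C\newtimes_E D}\{\mathrm{trivial}\}=\mathbf{Gr}^{\mathrm{loc}}(Y;T,Z),
\]
and the composite equivalence is visibly the restriction morphism. The underlying mechanism is that, after trivialising, the bundle-with-trivialisation datum on $Y\setminus T$ becomes contractible, so it may be taken to be the trivial one, and what remains is exactly a $\mathbf{G}$-bundle on $\Thataff$ trivialised on $\widehat T\setminus T$ and on $\Zhataff$, compatibly over $\widehat Z\setminus T$.

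The step I expect to be the only real obstacle is the bookkeeping underlying the above "formal manipulation": verifying that the restriction functors between the $\mathbf{Bun}$'s of the five fiber functors $Y\setminus T$, $\Thataff$, $\widehat T\setminus T$, $\Zhataff$, $\widehat Z\setminus T$ do assemble into a (coherently) commutative diagram in the $(2,1)$-category $\St_Y$ --- for instance that $A\to C\to E$ agrees with $A\to E$ and $B\to C\to E$ with $B\to D\to E$ --- and that the abstract equivalence produced is genuinely induced by restriction. All of these compatibilities are instances of base change for fiber functors (\cref{lem:formalcompletion:basechange} and the defining pushout/pullback formulas of \cref{def_capcupfibf} and \cref{def-intornobucato}), so there is no substantive difficulty beyond formal glueing itself, but some care is needed to phrase everything at the level of fiber functors so that these identifications become automatic.
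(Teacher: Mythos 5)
Your proposal is correct and is essentially the paper's proof in different notation. With $A,B,C,D,E$ unravelled back to the paper's $\mathbf{Bun}$-stacks, your identity $A\times_E D\simeq A\times_C(C\times_E D)$ is exactly the paper's observation that squares $(\gamma)$ and $(\delta)$ paste to the bottom-left rectangle (all three Cartesian by construction of the fiber products), your "base change along $A\to C$" step is the paper's deduction that $(\beta)$ is Cartesian from the formal-glueing rectangle, and your final pasting with the defining square for $\mathbf{Gr}$ is the paper's pasting of $(\alpha)$ and $(\beta)$.
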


\begin{proof} It is enough to prove the statement for $\GrX (Y; T, Z)$, since then the statement for $\Gr (Y; T, Z)$ follows by applying the holim-preserving functor $\dSt_Y \to \dSt_k$. The special statement for $\Gr_X(S)(D, Z)$ then follows.

Consider now the commutative diagram
\[
 \begin{tikzcd}
  \GrX(Y, T, Z) \ar{r} \ar{d} \namecell{dr}{(\alpha)} &
  \mathbf{Bun}_Y \ar{r} \ar{d} \namecell{dr}{(\beta)} &
  \mathbf{Bun}_{\Thataff} \ar{d} \\
  \{\mathrm{trivial}\} \ar{r} &
  \mathbf{Bun}_{Y \smallsetminus T \cup \Zhataff } \ar{r} \ar{d} \namecell{dr}{(\gamma)} &
  \mathbf{Bun}_{\hZ \cup_{\widehat T} \widehat T \smallsetminus T} \ar{r} \ar{d} \namecell{dr}{(\delta)} &
  \mathbf{Bun}_{\Zhataff} \ar{d} \\
  &
  \mathbf{Bun}_{Y \smallsetminus T} \ar{r} &
  \mathbf{Bun}_{\widehat T \smallsetminus T} \ar{r} &
  \mathbf{Bun}_{\hZ \smallsetminus T}.
 \end{tikzcd}
\]
By definition of $Y \smallsetminus T \cup \Zhataff$ and $\hZ \cup_{\widehat T} \widehat T \smallsetminus T$, the square $(\delta)$ and the bottom rectangle (formed by $(\gamma)$ and $(\delta)$) are cartesian. It follows that $(\gamma)$ is cartesian as well.
The vertical rectangle formed by $(\beta)$ and $(\gamma)$ is also cartesian by formal glueing \cref{thm:formalglueing}. We deduce that $(\beta)$ itself is cartesian.
The square $(\alpha)$ is cartesian as well, by definition of the Grassmannian. We conclude that the upper rectangle formed by $(\alpha)$ and $(\beta)$ is also cartesian, so that:
\[
 \GrX(Y; T, Z) \simeq \mathbf{Bun}_{\Thataff} \newtimes_{\mathbf{Bun}_{\hZ \cup_{\widehat T} \widehat T \smallsetminus T}} \{\mathrm{trivial}\} =: \GrX^{\loc}(Y; T, Z).
\]
\end{proof}
 By taking global sections over $Y$ (or $X \times S$), we immediately get the following\footnote{Recall that, for a fiber functor $\fibf$ over $Y$, we denote by $\mathrm{Bun}(\fibf)$ or, equivalently, by $\mathrm{Bun}(\fibf(Y))$, the groupoid of global sections of the stack over $Y$ $\mathbf{Bun}_{\fibf}$, or equivalently the groupoid of global (i.e. over $\Spec \mathbb{C}$) sections of the corresponding stack $\underline{\mathsf{Bun}}_{\fibf}\simeq u(\mathbf{Bun}_{\fibf})$ (see beginning of \S \, \ref{setupgen}). }
 
\begin{cor}\label{cor:benjexo} Let $Y$ be a locally Noetherian scheme, and $(Z \subset T \subset Y)$ be closed subschemes. There is a natural equivalence of groupoids $$\mathrm{Gr}(Y; T, Z) \simeq \mathrm{Bun}(\widehat T) \times_{\mathrm{Bun}(\widehat T \smallsetminus T) \times_{\mathrm{Bun}((\widehat T \smallsetminus T) \times_{\Thataff} \Zhataff)}  \mathrm{Bun}(\hZ)} \,\, \{\mathrm{trivial}\} =: \mathrm{Gr}^{\loc}(Y; T,Z) .$$
In particular, for $(D,Z) \in \flags_X(S)$, there is a natural bijection $$\mathrm{Gr}_X(S)(D, Z) \simeq \mathrm{Bun}(\widehat{D}) \times_{\mathrm{Bun}(\widehat{D} \smallsetminus D) \times_{\mathrm{Bun}((\widehat{D} \smallsetminus D) \times_{\wh{D}^{\affinize}} \Zhataff)}  \mathrm{Bun}(\hZ)} \,\, \{\mathrm{trivial}\} =: \mathrm{Gr}^{\loc}_X(S)(D,Z).$$
\end{cor}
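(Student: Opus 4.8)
The plan is to obtain the corollary simply by taking global sections in \cref{lem:benjexo}. Recall from the conventions of \S\,\ref{setupgen} that the global-sections functor $\Gamma(Y,-)\colon \St_Y \to \mathbf{Grpds}$, $\mathbf{F} \mapsto \mathbf{F}(Y)$, is the homotopy limit $\holim$ over $(\AffNoeth_Y)\op$ as in (\ref{holimsections}); in particular it preserves homotopy fiber products. Applying it to the restriction equivalence $\mathbf{Gr}(Y;T,Z) \simeq \mathbf{Gr}^{\mathrm{loc}}(Y;T,Z)$ of \cref{lem:benjexo} and to the two nested cartesian squares defining $\mathbf{Gr}^{\mathrm{loc}}(Y;T,Z)$ in \cref{defin:localGr} (namely $\mathbf{Gr}^{\mathrm{loc}} = \mathbf{Bun}_{\Thataff}\times_{\mathbf{Bun}_{\widehat Z \cup_{\widehat T}\widehat T\setminus T}}\{\mathrm{trivial}\}$ with $\mathbf{Bun}_{\widehat Z \cup_{\widehat T}\widehat T\setminus T} \simeq \mathbf{Bun}_{\widehat T\setminus T}\times_{\mathbf{Bun}_{\widehat Z\setminus T}}\mathbf{Bun}_{\Zhataff}$) immediately yields
\[
\mathrm{Gr}(Y;T,Z) \;\simeq\; \Gamma(Y,\mathbf{Bun}_{\Thataff}) \times_{\Gamma(Y,\mathbf{Bun}_{\widehat T\setminus T}) \times_{\Gamma(Y,\mathbf{Bun}_{\widehat Z\setminus T})} \Gamma(Y,\mathbf{Bun}_{\Zhataff})} \{\mathrm{trivial}\}.
\]

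Next I would identify each of these global-sections groupoids with the objects in the statement. By definition $\Gamma(Y,\mathbf{Bun}_\fibf) = \mathrm{Bun}(\fibf(Y)) = \mathrm{Bun}(\fibf)$; algebraization (\cref{prop:algebraization}) gives $\mathbf{Bun}_{\Thataff}\simeq\mathbf{Bun}_{\widehat T}$ and $\mathbf{Bun}_{\Zhataff}\simeq\mathbf{Bun}_{\widehat Z}$ in $\St_Y$, whence $\Gamma(Y,\mathbf{Bun}_{\Thataff}) = \mathrm{Bun}(\widehat T)$ and $\Gamma(Y,\mathbf{Bun}_{\Zhataff}) = \mathrm{Bun}(\widehat Z)$; and $\Gamma(Y,\mathbf{Bun}_{\widehat T\setminus T}) = \mathrm{Bun}(\widehat T\setminus T)$, while using the identification $\widehat Z\setminus T \simeq (\widehat T\setminus T)\times_{\Thataff}\Zhataff$ recorded in \cref{defin:localGr} one gets $\Gamma(Y,\mathbf{Bun}_{\widehat Z\setminus T}) = \mathrm{Bun}((\widehat T\setminus T)\times_{\Thataff}\Zhataff)$. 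Substituting these into the display produces exactly $\mathrm{Gr}^{\mathrm{loc}}(Y;T,Z)$ as written in the statement. The asserted specialization is then the case $Y = X\times S$, $T = D$, with the flag $(D,Z)\in\flags_X(S)$.

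For the bijection clause, if $Y\setminus T\hookrightarrow Y$ is quasi-compact and schematically dense then \cref{GrIsSetsValued} shows $\Grnew(Y;T,Z)$ is a sheaf of sets, so $\mathrm{Gr}(Y;T,Z)$ is a set; the equivalence above then forces the local side to be $0$-truncated as well, and an equivalence of discrete groupoids is a bijection. (For $(D,Z)\in\flags_X(S)$ this hypothesis holds since the relative effective Cartier divisor $D$ on $X\times S/S$ is in particular an effective Cartier divisor on $X\times S$ with support $D$ itself, so the footnote to \cref{GrIsSetsValued} applies.) Naturality — in $Y$, hence in $S$ and in the flag $(D,Z)$ — is inherited from that of the restriction equivalence in \cref{lem:benjexo} and of algebraization. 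The only step requiring a word of justification, and the only real obstacle, is the commutation of $\Gamma(Y,-)$ with the homotopy fiber products above; since those fiber products are computed objectwise on $\AffNoeth_Y$ and $\Gamma(Y,-) = \holim_{(\AffNoeth_Y)\op}$, this is just the fact that homotopy limits commute with finite homotopy limits, so in the end the corollary is a purely formal consequence of \cref{lem:benjexo}, \cref{prop:algebraization} and \cref{GrIsSetsValued}.
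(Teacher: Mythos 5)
Your proof is correct and is essentially the paper's argument made explicit: the paper derives \cref{cor:benjexo} in a single sentence by taking global sections over $Y$ in \cref{lem:benjexo}, and you spell out exactly why that works (commutation of $\Gamma(Y,-) = \holim$ with finite limits, the algebraization identifications from \cref{prop:algebraization}, and the set-valuedness clause via \cref{GrIsSetsValued}). No divergence from the paper's approach.
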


\begin{rem}\label{localglobalGrPerfCohminus} Continuing Remark \ref{flagGrassPerfgeneral}, there are obvious \emph{local} versions of $\GrX_{\mathcal{E}_0}^{\mathrm{Perf}}(Y; T, Z)$, $\Gr_{\mathcal{E}_0}^{\mathrm{Perf}}(Y; T, Z)$, $\GrX_{\mathcal{E}_0}^{\mathrm{Coh}^{-}}(Y; T, Z)$ and $\Gr_{\mathcal{E}_0}^{\mathrm{Coh}^{-}}(Y; T, Z)$ (analogs of Definition \ref{defin:localGr}). Moreover, since  only formal gluing (\cref{thm:formalglueing}) have been used in the proof of Lemma \ref{lem:benjexo}, a similar ``local=global'' statement holds for these Perf and $\mathrm{Coh}^{-}$ versions.
\end{rem}

\begin{lem}\label{GrIsSetsValued} 
Let $Y$ be a locally noetherian scheme and $Z\subset T $ be closed subschemes in $Y$.
If $Y\setminus T$ is quasi-compact and schematically dense in $Y$\,\footnote{For example, if there exists an effective Cartier divisor $D \hookrightarrow Y$ such that $D$ and $T$ share the same underlying topological subspace of $|Y|$ (\cite[TAG07ZU]{stacks-project}).}, then the truncation $\mathrm{t}_0(\Gr(Y; T, Z))$  is actually a sheaf of sets. In particular, its global sections $\mathrm{Gr}(Y;T,Z)$ form a set.
\end{lem}

\begin{proof}Let $S=\Spec R$ be a Noetherian affine scheme.. We will use the notations of Remark \ref{descriptionofGrassgenbold}. We have to prove that $\Gr(Y; T, Z)(S)$ is a set (i.e. equivalent to a discrete groupoid). Now the obvious functor $\Gr(Y; T, Z)(S) \to \Gr(Y; T, \emptyset)(S)$ sending $(\mathcal{E}, \varphi, \psi)$ to $(\mathcal{E}, \varphi)$ (and identical on morphisms) is faithful, therefore it will be enough to show that $\Gr(Y; T, \emptyset)(S)$ is a set. Define $T_S:= T \times_Y (S\times Y)$. Observe that, since being quasicompact open and scheme-theoretically dense is stable under flat pullbacks (\cite[TAG081I]{stacks-project}) and $S\times Y \to Y$ is flat, we have that $(Y\setminus T) \times_Y (S \times Y)= (S \times Y)\setminus T_S$ is quasi-compact and schematically dense inside $S \times Y$.

Let $(\mathcal{E}, \varphi), (\mathcal{E'}, \varphi') \in \Gr(Y; T, \emptyset)(S)$, where $\mathcal{E}\to S \times Y$ is a $\mathbf{G}$-bundle, $\varphi$ a trivialization of $\mathcal{E}$ on  $(S \times Y) \setminus T_S$ (and similarly for $(\mathcal{E'}, \varphi')$).
Let $\alpha, \beta: (\mathcal{E}, \varphi) \to (\mathcal{E'}, \varphi')$ be morphisms (hence isomorphisms) in the groupoid $\Gr(Y; T, \emptyset)(S)$.
Since a groupoid is equivalent to a set if and only if its Hom-sets are either empty or consist of a singleton, it will be enough to show that $\alpha=\beta$.
Consider $\beta^{-1}\circ \alpha: (\mathcal{E}, \varphi) \to (\mathcal{E}, \varphi)$ and $\mathrm{id}: (\mathcal{E}, \varphi) \to (\mathcal{E}, \varphi)$.
We want to prove $\beta^{-1}\circ \alpha= \mathrm{id}$.
Observe that $\mathrm{Aut}_{\mathrm{Bun}(S\times Y)}(\mathcal{E})$ injects into $\mathrm{Hom}_{\mathbf{Sch}_{\mathbb{C}}}(\mathcal{E}, \mathbf{G})$ (the image being $\mathbf{G}$-equivariant maps). Let us denote by $\rho_{\alpha, \beta}: \mathcal{E} \to \mathbf{G}$, and $\rho_{\mathrm{id}}: \mathcal{E} \to \mathbf{G}$ the images of $\beta^{-1}\circ \alpha$, and of $\mathrm{id}$ inside $\mathrm{Hom}_{\mathbf{Sch}_{\mathbb{C}}}(\mathcal{E}, \mathbf{G})$. It will be enough to show that $\rho_{\alpha, \beta}= \rho_{\mathrm{id}}$.\\
We know, by definition of morphisms in the groupoid $\Gr(Y; T, \emptyset)(S)$, that $\beta^{-1}\circ \alpha$ and $\mathrm{id}$ agree when restricted to $\mathcal{E}_{|\,(S\times Y) \setminus T_S} $.
Therefore, $\rho_{\alpha, \beta}$ and $\rho_{\mathrm{id}}$ coincide on 
$$ \mathcal{E} \times_{S\times Y} ((S \times Y)\setminus T_S)  = \mathcal{E} \setminus (\mathcal{E}\times_{S\times Y} T_S). $$
Now, since any $\mathbf{G}$-bundle is (faithfully) flat over its base, $\mathcal{E} \to S\times Y$ is flat, $\mathcal{E} \setminus (\mathcal{E}\times_{S\times Y} T_S)$ is quasicompact open and schematically dense in $\mathcal{E}$ (again by \cite[TAG081I]{stacks-project}). Now, $\mathbf{G}$ is a separated scheme, and the two maps $\rho_{\alpha, \beta}, \,\rho_{\mathrm{id}} : \mathcal{E} \to \mathbf{G}$ coincide on the quasicompact open and schematically dense $\mathcal{E} \setminus (\mathcal{E}\times_{S\times Y} T_S)$, therefore they coincide on all of $\mathcal{E}$ (\cite[TAG01RH]{stacks-project}). 
\end{proof}

\subsection{Functorialities and flat connection}\label{funtorialitaGr} We prove here the functorialities of $\mathrm{Gr}_X(S)(F)$ both in the flag $F$, and in the test scheme $S$. See \cref{cor:functorialGrass} below.
We fix a smooth projective surface $X$ over $k$.

To any affine scheme $S$ over $k$ and any closed subschemes $D, Z \subset X \times S_\red$, we associate two (ind-)schemes flat over $Y := X \times S$
\[
 Y \smallsetminus D \to Y \hspace{1em}\text{and}\hspace{1em} \hZ \to Y.
\]
They induces underived fiber functors $\tilde\fibf_{Y \smallsetminus D}$ and $\tilde\fibf_{\hZ}$ over $Y$. We denote by $\tilde\fibf_{Y \smallsetminus D \cap \hZ}$ the intersection
\[
 \tilde\fibf_{Y \smallsetminus D \cap \hZ} := \tilde\fibf_{Y \smallsetminus D} \newtimes_{Y} \tilde\fibf_{\hZ}^\affinize.
\]
Let $\Lambda$ be the category $\begin{tikzcd}[cramped,column sep=small] \bullet & \bullet \ar{r} \ar{l} & \bullet \end{tikzcd}$.
We denote by $\Lambda^{D,Z}_Y$ the diagram of fiber functors
\[
 \Lambda^{D,Z}_Y = \left(
 \begin{tikzcd}
  \tilde\fibf_{Y \smallsetminus D} & \ar{l} \tilde\fibf_{Y \smallsetminus D \cap \hZ} \ar{r} & \tilde\fibf_{\hZ}^\affinize
 \end{tikzcd}
 \right) \in \FibF_Y^{\Lambda}.
\]
Notice that for any morphism of Noetherian affine schemes $S' \to S$, pulling back along the induced morphism $f \colon Y' = X \times S' \to X \times S = Y$ gives, by (an underived version of) \cref{lem:restrictionfiberfunctor}, a canonical isomorphism of diagrams
\begin{equation}\label{eq:restrictionLambda}
 \begin{tikzcd}[row sep = small]
  \makebox[0pt][r]{$f^{-1}\left(\Lambda^{D,Z}_Y\right) = \Big($}
  f^{-1}(\tilde\fibf_{Y \smallsetminus D}) \ar[phantom]{d}[sloped]{\simeq} & \ar{l} f^{-1}(\tilde\fibf_{Y \smallsetminus D \cap \hZ}) \ar[phantom]{d}[sloped]{\simeq} \ar{r} & f^{-1}(\tilde\fibf_{\hZ}^\affinize) \ar[phantom]{d}[sloped]{\simeq}
  \makebox[0pt][l]{$\Big)$}
  \\
  \makebox[0pt][r]{$\Big($} \tilde\fibf_{Y' \smallsetminus D'} & \ar{l} \tilde\fibf_{Y' \smallsetminus D' \cap \hZ'} \ar{r} & \tilde\fibf_{\hZ'}^\affinize \makebox[0pt][l]{$\Big) = \Lambda^{D',Z'}_{Y'}.$}
 \end{tikzcd}
\end{equation}
where $D' = D \times_{S_\red} S'_\red$ and $Z' = Z \times_{S_\red} S'_\red$.

Finally, the assignment $(D, Z) \mapsto \Lambda^{D,Z}_Y$ is covariant in $Z$ and contravariant in $D$.
In particular, given a commutative diagram (of closed subschemes)
\[
 \begin{tikzcd}
  Z_1 \ar[hook]{r} \ar[hook]{d} \namecell{dr}{(\alpha)} & Z_2 \ar[hook]{d} \\ D_1 \ar[hook]{r} & D_2 \ar[hook]{r} & X \times S_\red,
 \end{tikzcd}
\]
we get a co-correspondence (of diagrams)
\[
 \tau_\alpha \colon \begin{tikzcd}
 \Lambda^{D_1,Z_1}_Y \ar{r}{\overrightarrow\tau_\alpha} &
  \Lambda^{D_1,Z_2}_Y &
  \Lambda^{D_2,Z_2}_Y \ar{l}[swap]{\overleftarrow\tau_\alpha}.
 \end{tikzcd}
\]
Let $\closedpairs_{X,\dR}(S)$ denote\footnote{The $\dR$ lower script highlights that $\closedpairs_{X,\dR}$ really is the de Rham stack of a more general construction.} the category of closed immersions $Z \hookrightarrow D \hookrightarrow X \times S_\red$ (and morphisms being commutative diagrams as above).
It is cumbersome but easy to check that this construction gives a lax\footnote{Recall that a lax functor is a functor $F$ where the composition is not quite preserved. This lack of compatibility is replaced by a natural transformation evaluating to a map $F(f) \circ F(g) \to F(f \circ g)$ for each pair of composable arrows $(f,g)$.} functor to the $2$-category of correspondences in diagrams of underived fiber functors:
\[
\begin{tikzcd}[row sep=0pt]
 \closedpairs_{X,\dR}(S) \ar{r}{\mathrm{lax}} & \FibF_Y^{\Lambda,\mathrm{cocorr}}\makebox[0pt][l]{${} = \FibF_{X \times S}^{\Lambda,\mathrm{cocorr}}$}
\\
 (D, Z) \ar[mapsto]{r} & \Lambda^{D,Z}_Y
\\
 \alpha \ar[mapsto]{r} & \tau_\alpha.
\end{tikzcd}
\]
Using \cref{eq:restrictionLambda} and \ref{sublem:restrictionfflimcolim}, those assemble into a natural transformation (by lax functors)
\[
\begin{tikzcd}
 \closedpairs_{X,\dR} \ar{r}{\mathrm{lax}} & \FibF_{X \times -}^{\Lambda,\mathrm{cocorr}}
\end{tikzcd}
\]
Composition with the inclusion $i \colon \FibF_{X \times -} \to \dFibF_{X \times -}$ of fiber functors into derived fiber functors, and using the canonical morphisms $i\fibf_0 \amalg_{i\fibf_1} i\fibf_2 \to i(\fibf_0 \amalg_{\fibf_1} \fibf_2)$, we get a natural transformation (by lax $\infty$-functors, see \cite[Chap. 10 \S 3.]{GaiRozI}):
\[
\begin{tikzcd}
 \closedpairs_{X,\dR} \ar{r}{\mathrm{lax}} & \dFibF_{X \times -}^{\Lambda,\mathrm{cocorr}}.
\end{tikzcd}
\]
Taking pointwise pushouts of our $\Lambda$-indexed diagrams (of derived fiber functors) yields a natural transformation (by lax $\infty$-functors)
\[
\begin{tikzcd}[row sep=0pt]
 \makebox[0pt][r]{$\fibf^\cup \colon{}$} \closedpairs_{X,\dR} \ar{r}{\mathrm{lax}} & \dFibF_{X \times -}^\mathrm{cocorr}.
\end{tikzcd}
\]
which evaluates, for $S \in \Aff$ and $Y := X \times S$, to
\begin{equation}\label{eq:fcup}
\newsavebox{\mycorrbox}
\begin{lrbox}{\mycorrbox}%
$\left(\begin{tikzcd}[cramped,column sep=small]
  \fibf_{Y \smallsetminus D_1 \cup \hZ_1} \ar{r} &
  \fibf_{Y \smallsetminus D_1 \cup \hZ_2} &
  \ar{l} \fibf_{Y \smallsetminus D_2 \cup \hZ_2}
\end{tikzcd}\right)$
\end{lrbox}
\newsavebox{\mylaxcorrbox}
\begin{lrbox}{\mylaxcorrbox}%
$\left(\begin{tikzcd}[cramped,column sep=small, row sep=tiny]
  & \displaystyle \fibf_{Y \smallsetminus D_1 \cup \hZ_2} \newamalg_{\fibf_{Y \smallsetminus D_2 \cup \hZ_2}} \fibf_{Y \smallsetminus D_2 \cup \hZ_3} \ar{dd} &
\\
  \fibf_{Y \smallsetminus D_1 \cup \hZ_1} \ar{ur} \ar{dr} & &
  \ar{ul} \ar{dl} \fibf_{Y \smallsetminus D_3 \cup \hZ_3}
\\
  & \fibf_{Y \smallsetminus D_1 \cup \hZ_3} &
\end{tikzcd}\right).$
\end{lrbox}
\newsavebox{\mycomposablebox}
\begin{lrbox}{\mycomposablebox}%
$\left(\begin{tikzcd}[cramped,row sep=small]
(D_1, Z_1) \ar{d} \\ (D_2, Z_2) \ar{d} \\ (D_3, Z_3)
\end{tikzcd}\right)$
\end{lrbox}
\newsavebox{\mymorphbox}
\begin{lrbox}{\mymorphbox}%
$\left(\begin{tikzcd}[cramped,row sep=small]
(D_1, Z_1) \ar{d} \\ (D_2, Z_2)
\end{tikzcd}\right)$%
\end{lrbox}
\begin{tikzcd}[row sep=0pt, column sep=large,
    ,/tikz/column 1/.append style={anchor=base east}
    ,/tikz/column 2/.append style={anchor=base west}]
 \closedpairs_{X,\dR}(S) \ar{r}{\fibf^\cup(S)} & \dFibF_{X \times S}^\mathrm{cocorr}
\\
 (D, Z) \ar[mapsto]{r} & \fibf_{Y \smallsetminus D \cup \hZ} \makebox[0pt][l]{$\displaystyle {} := \fibf_{Y \smallsetminus D} \newamalg_{\fibf_{Y \smallsetminus D \cap \hZ}} \fibf_{\hZ}^\affinize$}
\\
\usebox{\mymorphbox} \ar[mapsto]{r} & \usebox{\mycorrbox}
\\
\usebox{\mycomposablebox} \ar[mapsto]{r}{\mathrm{lax}}[swap]{\mathrm{structure}} & \usebox{\mylaxcorrbox}
\end{tikzcd}
\end{equation}

\begin{lem}\label{lem:changeZ}
 Fix $S$ a test scheme and $Y := X \times S$.
 Let $(D_1, Z_1) \to (D_2, Z_2) \in \closedpairs_{X,\dR}(S)$.
 If the canonical morphism $Z_1 \to Z_2 \times_{D_2} D_1$ is an homeomorphism (i.e. induces an isomorphism between the associated reduced schemes), then the canonical morphism
 \[
  \fibf_{Y \smallsetminus D_1 \cup \hZ_1} \to \fibf_{Y \smallsetminus D_1 \cup \hZ_2}
 \]
 induces a equivalences of derived stacks over $Y$:
 \begin{align*}
  \PerfX_{Y \smallsetminus D_1 \cup \hZ_2} &\overset\sim\longrightarrow \PerfX_{Y \smallsetminus D_1 \cup \hZ_1}, \\
  \BunGX_{Y \smallsetminus D_1 \cup \hZ_2} &\overset\sim\longrightarrow \BunGX_{Y \smallsetminus D_1 \cup \hZ_1} \\
   \text{and}\hspace{1em} \CohX_{Y \smallsetminus D_1 \cup \hZ_2} &\overset\sim\longrightarrow \CohX_{Y \smallsetminus D_1 \cup \hZ_1}. \\
 \end{align*}
\end{lem}

\begin{proof}
 Consider the commutative diagram (of categorical derived stacks over $Y$)
 \[
 \begin{tikzcd}
  \PerfX_{Y \smallsetminus D_1 \cup \hZ_2} \ar{r} \ar{d} \namecell{dr}{(\sigma)} & \PerfX_{\hZ_2} \ar{d} \ar{r} \namecell{dr}{(\tau)} & \PerfX_{\hZ_1} \ar{d} \\
  \PerfX_{Y \smallsetminus D_1} \ar{r} & \PerfX_{\hZ_2\smallsetminus D_1} \ar{r} & \PerfX_{\hZ_1 \smallsetminus D_1}.
 \end{tikzcd}
 \]
 The square $(\sigma)$ is Cartesian by definition.
 Fixing a derived test scheme $T$ over $Y$, we can apply \cref{prop:localformalglueing} to $D_1 \times_Y \fibf_{\hZ_2}^\affinize(T) \subset \fibf_{\hZ_2}^\affinize(T)$. We deduce that $(\tau)$ is Cartesian as well.
 The large rectangle is thus Cartesian and the result follows (with a similar argument for the cases of $\mathbf G$-bundles or of complexes of coherent sheaves).
\end{proof}

\newcommand{\stackofcatstacksunder}[1]{\raisebox{0.3em}{$#1$}\raisebox{0.2em}{$/$}\stackofcatstacks}
\newcommand{\stacksunder}[1]{\raisebox{0.3em}{$#1$}\raisebox{0.2em}{$/$}\dSt^{\inftyCat}}
 Let $\mathcal{M} \colon Y \mapsto \mathcal{M}_Y$ denote either $\PerfX$, $\BunGX$ or $\CohX$.
  Denote by $\stackofcatstacksunder{\mathcal{M}}_X$ the stack (in $\infty$-categories) of derived stacks in $\infty$-categories under $\mathcal{M}$:
 \[
  \stackofcatstacksunder{\mathcal{M}}_X \colon S \mapsto \stacksunder{\mathcal{M}_{X \times S}}_{X \times S}
 \]
 The stack $\mathcal{M}$ induces a morphism categorical prestacks
 \[
  \begin{tikzcd}[row sep=0pt]
   \dFibF_{X \times -}\op \ar{r} & \stackofcatstacksunder{\mathcal{M}}_X \\
   \llap{$\mathrm M \colon {}$} (S, \fibf) \ar[mapsto]{r} & \mathcal{M}_{\fibf} \makebox[0pt][l]{${} \in \stacksunder{\mathcal{M}_{X\times S}}_{X \times S}.$}
  \end{tikzcd}
 \]
 
 \begin{lem}\label{lem:Ccocorr}
 Composing the lax $\infty$-functor $\fibf^\cup \colon \closedpairs_{X,\dR} \to \dFibF_{X \times -}^{\mathrm{cocorr}}$ with $\mathrm{M}$ gives a morphism (still denote by $\mathrm M$) of prestacks in $\infty$-categories
 \[
 \begin{tikzcd}[row sep=0pt]
  \closedpairs_{X,\dR} \ar{r}{\mathrm M}
  &
  \left(\left(\stackofcatstacksunder{\mathcal{M}}_X\right)\op\right)^{\mathrm{cocorr}}
  \\
   (S, (D, Z)) \ar[mapsto]{r} & \mathcal{M}_{\fibf_{X \times S \smallsetminus D \cup \hZ}}.
 \end{tikzcd}
 \]
 \end{lem}
 
 \begin{proof}
  This follows from the fact that for any $Y$, the functor $\mathcal{M}_Y \colon \dFibF_Y\op \to \dSt^{\inftyCat}_Y$ maps pushout squares to pullback squares.
 \end{proof}

 \begin{defin}
   For any test affine scheme $S$, let $\closedpairs_{X,\dR}^\mathrm{h}(S)$ the (non full) subcategory of $\closedpairs_{X,\dR}(S)$ containing every object, but keeping only the morphisms $(D_1, Z_1) \to (D_2, Z_2)$ such that $Z_1 \to Z_2 \times_{D_2} D_1$ is a homeomorphism (i.e. an isomorphism on the associated reduced schemes).
   This condition being stable under base change, this defines a sub-prestack $\closedpairs_{X,\dR}^\mathrm{h}$ of $\closedpairs_{X,\dR}$.
\end{defin}

\begin{thm}\label{thm:functglob}
The composition
\[
 \begin{tikzcd}
  \closedpairs_{X,\dR}^\mathrm{h} \subset 
  \closedpairs_{X,\dR} \ar{r}{\mathrm M}
  &
  \left(\left(\stackofcatstacksunder{\mathcal{M}}_X\right)\op\right)^{\mathrm{cocorr}}
 \end{tikzcd}
\]
 factors through $\stackofcatstacks_X \subset \left(\left(\stackofcatstacks_X\right)\op\right)^{\mathrm{cocorr}}$, therefore inducing
 \[
 \begin{tikzcd}[row sep=0pt]
  \closedpairs_{X,\dR}^{\mathrm{h}} \ar{r}{\mathrm M}
  &
   \stackofcatstacksunder{\mathcal{M}}_X
     \\
  (S,(D,Z)) \ar[mapsto]{r} & \mathcal{M}_{\fibf_{X \times S \smallsetminus D \cup \hZ}}.
 \end{tikzcd}
 \]
\end{thm}
\begin{proof}
 Fix a test scheme $S \in \Aff_k$.
 From \cref{lem:changeZ} and \cref{eq:fcup}, we get:
 \begin{enumerate}[label=(\arabic*),ref={Assertion (\arabic*)}]
  \item For each $S \in \Aff_k$, the lax $\infty$-functor $\mathrm M_S \colon \closedpairs_{X,\dR}^\mathrm{h}(S) \to \left(\left(\stacksunder{\mathcal{M}_{X \times S}}_{X \times S}\right)\op\right)^\mathrm{cocorr}$ is strict (i.e. the morphisms making the lax structure are equivalences).
  \item For each $S \in \Aff_k$, the $\infty$-functor $\mathrm M_S$ has values in the (non-full) sub-$(\infty,1)$-category $\stacksunder{\mathcal{M}_{X \times S}}_{X \times S}$ (embedded the standard way into the category of cocorrespondences).
 \end{enumerate}
 In particular, the map $\mathrm M$ factors as $\closedpairs_{X,\dR}^\mathrm{h} \to \stackofcatstacksunder{\underline{\mathcal{M}}_X}_X$ giving the announced functor.
\end{proof}

\begin{cor}\label{cor:functorialGrass}
 The flag Grassmannians $\GrX(X \times S; D, Z)$ (resp. $\Gr(X \times S; D, Z)$) assemble into a morphism
 \[
  \GrX_X \colon \closedpairs_{X,\dR}^\mathrm{h} \to \stackofstacks_X \subset \stackofcatstacks_X
  \hspace{1cm}\left(\text{resp. } \Gr_X \colon \closedpairs_{X,\dR}^\mathrm{h} \to \stackofstacks_k\right)
 \]
 of stacks $(\Aff_k)\op \to \inftyCat$.
\end{cor}

\begin{proof}
 Consider the morphism
 \[
 \begin{tikzcd}[row sep=0pt]
  \closedpairs_{X,\dR}^\mathrm{h} \ar{r} & \stackofcatstacksunder{\mathbf{Bun}^\mathbf{G}}_X \\
  (S,(D,Z)) \ar[mapsto]{r} & \BunGX_{X \times S}\left(\fibf_{X \times S \smallsetminus D \cup \hZ}\right).
 \end{tikzcd}
 \]
 Taking pointwise the homotopy fiber of the map $\BunGX_{X \times S}\left(X \times S\right) \to \BunGX_{X \times S}\left(\fibf_{X \times S \smallsetminus D \cup \hZ}\right)$ gives the announced functoriality of the Grassmannian $\closedpairs_{X,\dR}^\mathrm{h} \to \stackofcatstacks_X$. This functor has values in the substack of derived stacks in $\infty$-groupoids $\stackofstacks_X$.
 
 The morphism $\Gr_X$ is obtained from $\GrX_X$ be composing with the natural morphism $\stackofstacks_X \to \stackofstacks_k$ mapping a derived stack over $X \times S$ to its derived stack of relative global sections.
\end{proof}

\begin{rems}
 \item The result of \cref{cor:functorialGrass}, and in particular the fact that the flag Grassmannian can be seen as a functor out of the de Rham stack of closed pairs $(D,Z)$, also implies that the flag Grassmannian carries a natural flat connection.
 \item\label{rem:alsoforPerf} Using \cref{thm:functglob}, together with remarks \ref{flagGrassPerfgeneral} and \ref{localglobalGrPerfCohminus}, one can straightforwardly define functorial flags Grassmannians $\GrX_{X, \mathcal{E}_0}^{\mathrm{Perf}}$, $\Gr_{X,\mathcal{E}_0}^{\mathrm{Perf}}$, $\GrX_{X,\mathcal{E}_0}^{\mathrm{Coh}^{-}}$ and $\Gr_{X,\mathcal{E}_0}^{\mathrm{Coh}^{-}}$ for perfect or coherent complexes. Although our focus is on $\mathbf G$-bundles in this section, we will keep this level of generality in the upcoming technical lemmas (as it does not impact the complexity of the arguments and may be useful for further work in this direction).
\end{rems}

\subsection{Topological invariance of the flag Grassmannian}\label{topinv}
We prove here that the flag Grassmannian is, in an appropriate sense, insensitive to non reduced structures on flags.
We already know it carries a flat connection. With the following lemma, we add that it is invariant under reduced equivalence.

\begin{lem}\label{topinvariance}
Let $S$ be a test scheme and $Z \subset D \subset X \times S_\red$ be closed subschemes. The canonical diagram
\[
 \begin{tikzcd}
  Z_\red \ar[hook]{r} \namecell{dr}{(\alpha)} \ar[hook]{d} & Z \ar[hook]{d} \\ D_\red \ar[hook]{r} & D \ar[hook]{r} & X \times S_\red
 \end{tikzcd}
\]
defines a morphism $\alpha$ in $\closedpairs_{X,\dR}(S)$.
The cocorrespondence (of diagrams of fiber functors over $Y := X \times S$)
\[
 \tau_\alpha \colon \begin{tikzcd}
 \Lambda^{D_\red,Z_\red}_Y \ar{r}{\overrightarrow\tau_\alpha} &
  \Lambda^{D_\red,Z}_Y &
  \Lambda^{D,Z}_Y \ar{l}[swap]{\overleftarrow\tau_\alpha}
 \end{tikzcd}
\]
is an isomorphism (i.e. both $\overrightarrow\tau_\alpha$ and $\overleftarrow\tau_\alpha$ are invertible).
\end{lem}

\begin{proof}
 The (ind-)schemes $Y \smallsetminus D$ and $\hZ$ only depend on the underlying closed subsets of $D$ and $Z$, respectively.
 As a consequence, the same is true of the fiber functors $\tilde \fibf_{Y \smallsetminus D}$ and $\tilde \fibf_{\hZ}^\affinize$. The result follows.
\end{proof}

\begin{cor}\label{topinvarianceofGr}
 For any $(D,Z) \in \closedpairs_{X,\dR}(S)$, there is a canonical equivalence
 \[
  \GrX(X \times S; D, Z) \simeq \GrX(X \times S; D_\red, Z_\red) \in \dSt_{X \times S}.
 \]
\end{cor}

The following Corollary establish the topological invariance of the flag Grassmannian:  the values of $\Gr_X (S)$ on two reduced equivalent flags agree (up to a canonical isomorphism). 

\begin{cor}\label{topinvbis} Let $X$ be a complex projective smooth algebraic surface, $S$ an arbitrary Noetherian affine scheme over $\mathbb{C}$, and $Y=X \times S$. If $(D,Z), (D', Z')  \in \flags_{X}(S)$ are such that $D_{\red}= D'_{\red} $ and $Z_{\red}= Z'_{\red}$, then there is a canonical equivalence
\[
 \GrX_X(S)(D, Z) := \GrX(Y; D, Z) \simeq \GrX(Y; D_\red, Z_\red) = \GrX(Y; D'_\red, Z'_\red) \simeq \GrX(Y; D', Z') =: \GrX_X(S)(D', Z').
\]
\end{cor}

\begin{rems}
 \item It would be tempting, by \cref{topinvbis}, to consider the quotient of $\flags_{X}(S)$ modulo the reduced equivalence relation, and then define our Grassmannian as a functor out of this quotient. However, this would not be so helpful for our purposes, since unfortunately the corresponding version of $\flags^{\bullet, \mathrm{good}}_{X}(S)$ modulo reduced equivalence will no longer be $2$-Segal.
 \item\label{rem:alsoforPerf2} As in \cref{rem:alsoforPerf}, the above results (corollaries \ref{topinvarianceofGr} and \ref{topinvbis}) can be trivially generalized to perfect or coherent complexes, i.e. to $\GrX_{\mathcal{E}_0}^{\mathrm{Perf}}(X\times S; -,-)$ and $\GrX_{\mathcal{E}_0}^{\mathrm{Coh}^{-}}(X \times S; -, -)$.
 \end{rems}

\subsection{Functorialities of the local flag Grassmannian}
Since by \cref{lem:benjexo}, the local Grassmannian $\GrX_X^\loc$ is equivalent to the the global one $\GrX_X$, we get covariant functorialities
\[
 \GrX_X^\loc \colon \closedpairs_{X,\dR}^\mathrm{h} \to \stackofstacks_X \hspace{1em} \text{and} \hspace{1em} \Gr_X^\loc \colon \closedpairs_{X,\dR}^\mathrm{h} \to \stackofstacks_k.
\]
There is however a more interesting and less intuitive \emph{contravariant} functoriality, mapping a morphism in $\closedpairs_{X,\dR}^\mathrm{h}(S)$ corresponding to a reduced-Cartesian commutative square
\[
 \begin{tikzcd}
  Z_1 \ar[hook]{r} \ar[hook]{d} \ar[phantom]{dr}{(\alpha)} & Z_2 \ar[hook]{d} \\ D_1 \ar[hook]{r} & D_2 \ar[hook]{r} & X \times S_\red
 \end{tikzcd}
\]
to a pullback morphism $\alpha^* \colon \GrX^\loc_X(Y; D_2, Z_2) \to \GrX^\loc_X(Y; D_1, Z_1)$:
\begin{equation}\label{eq:cocorr-for-projection}
\begin{tikzcd}
 \makebox[0pt][r]{$\displaystyle \GrX^\loc_X(Y; D_2, Z_2) := {}$}
 \displaystyle \BunGX_{\widehat D_2} \newtimes_{\BunGX_{\widehat D_2 \smallsetminus D_2 \cup \hZ_2}} \{\mathrm{trivial}\} \ar{d}
 \\
 \displaystyle \BunGX_{\widehat D_1} \newtimes_{\BunGX_{\widehat D_1 \smallsetminus D_2 \cup \hZ_1}} \{\mathrm{trivial}\}
 \\
 \displaystyle \BunGX_{\widehat D_1} \newtimes_{\BunGX_{\widehat D_1 \smallsetminus D_1 \cup \hZ_1}} \{\mathrm{trivial}\} \ar{u}[sloped]{\sim}[swap]{\text{ see \cref{magiclemma} below}}
 \makebox[0pt][l]{$\displaystyle {} =: \GrX^\loc_X(Y; D_1, Z_1).$}
\end{tikzcd}
\end{equation}

 One could prove the above pullback morphisms are functorial enough to get a morphism
 \[
  (\GrX_X^\loc, (-)^*) \colon (\closedpairs_{X,\dR}^\mathrm{h})\op \to \stackofstacks_X.
 \]
 We will however be more interested in the compatibility between the pullback along one morphism of flags and the covariant functoriality we established so far.
 This compatibility can be thought as a base-change formula, as usual only valid under additional assumptions -- see \cref{rem:basechange} below.

 Let us construct this pullback formally, and prove it is partially compatible with the covariant functoriality.
 \newcommand{\dsq}{{\boxminus}}
 Denote by $\dsq := \Lambda^2_0 \times \Delta^1$ the category depicted as
 \[
  \begin{tikzcd}[row sep=small, column sep=small]
   \bullet \ar{r} & \bullet \\
   \bullet \ar{r} \ar{d} \ar{u} & \bullet \ar{d} \ar{u} \\
   \bullet \ar{r} & \bullet \rlap{.}
  \end{tikzcd}
 \]
 Let $\dFibF_{X \times -}^\dsq$ denote the stack of $\dsq$-diagrams of fiber functors, and by $(\dFibF_{X \times -}^\dsq)^\mathrm{cocorr}$ the $(\infty, 2)$-categorical stack of cocorrespondences therein.
  Informally, objects of $(\dFibF_{X \times S}^\dsq)^\mathrm{cocorr}$ are $\dsq$-diagrams of fiber functors,  morphisms are cocorrespondences of diagrams, thus commutative diagrams of the shape
\[
 \begin{tikzcd}[row sep=0, column sep=small]
  \bullet \ar{dr} \ar{rrrr} \arrow[r, start anchor=north west, end anchor=north east, no head, decorate, decoration={brace}, "\text{source}" above=3pt] & \phantom{\bullet} &&& \bullet \ar{dr} \ar[from=dd] \arrow[r, start anchor=north west, end anchor=north east, no head, decorate, decoration={brace}, "\text{hat}" above=3pt] & \phantom{\bullet} &&& \bullet \ar{llll} \ar{dr} \ar[from=dd] \arrow[r, start anchor=north west, end anchor=north east, no head, decorate, decoration={brace}, "\text{target}" above=3pt] & \phantom{\bullet}
  \\[4pt]
  & \bullet &&& & \ar[crossing over, from={llll}] \bullet &&& & \bullet \ar[crossing over]{llll}
  \\[6pt]
  \bullet \ar{uu} \ar{dr} \ar{dd} \ar{rrrr} & &&& \bullet \ar{dr} \ar{dd} & &&& \bullet \ar{dr} \ar{dd} \ar{llll}
  \\[4pt]
  & \bullet \ar[crossing over]{uu} \ar[crossing over]{rrrr} &&& & \bullet \ar[crossing over]{uu} &&& & \bullet \ar{uu} \ar{dd} \ar[crossing over]{llll}
  \\[6pt]
  \bullet \ar{dr} \ar{rrrr} & &&& \bullet \ar{dr} & &&& \bullet \ar{llll} \ar{dr}
  \\[4pt]
  & \bullet \ar{rrrr} \ar[from=uu, crossing over] &&& & \bullet \ar[from=uu, crossing over] &&& & \bullet \ar{llll} \rlap{.}
 \end{tikzcd}
\]
Finally, $2$-morphisms are maps of $\dsq$-diagrams between the hats of the cocorrespondences involved (together with the obvious commutativity constraints).

Denote by $\closedpairs_{X,\dR}^{\Delta^1}$ the stack of arrows in $\closedpairs_{X,\dR}$.
Using the same method as in the construction of $\fibf^\cup$ (see \cref{eq:fcup}), we obtain a natural transformation by lax $\infty$-functors $\fibf_\loc^\cup \colon \closedpairs_{X,\dR}^{\Delta^1} \to (\dFibF_{X \times S}^\dsq)^\mathrm{cocorr}$ evaluating, on $S \in \Aff_k$, to

\[
\newsavebox{\locmycomposablebox}
\begin{lrbox}{\locmycomposablebox}%
$\left(\begin{tikzcd}[cramped,column sep=small, row sep=small]
(D_1, Z_1) \ar{d} \ar{r} & (D_2, Z_2) \ar{d} \\ (D_1', Z_1') \ar{d} \ar{r} & (D_2', Z_2') \ar{d} \\ (D_1'', Z_1'') \ar{r} & (D_2'', Z_2'')
\end{tikzcd}\right)$
\end{lrbox}
\newsavebox{\locobjbox}
\begin{lrbox}{\locobjbox}%
$\left(\begin{tikzcd}[cramped,column sep=small]
(D_1, Z_1) \ar{r} & (D_2, Z_2)
\end{tikzcd}\right)$%
\end{lrbox}
\newsavebox{\locmorphbox}
\begin{lrbox}{\locmorphbox}%
$\left(\begin{tikzcd}[cramped,row sep=small, column sep=small]
(D_1, Z_1) \ar{d} \ar{r} & (D_2, Z_2) \ar{d} \\ (D_1', Z_1') \ar{r} & (D_2', Z_2')
\end{tikzcd}\right)$%
\end{lrbox}
\newsavebox{\locimageobj}
\begin{lrbox}{\locimageobj}%
$\left(\begin{tikzcd}[cramped,row sep=small, column sep=small]
  \fibf_{\widehat D_1 \smallsetminus D_1 \cup \hZ_1} \ar{r} & \fibf_{\widehat D_1}^\affinize
\\
  \fibf_{\widehat D_1 \smallsetminus D_2 \cup \hZ_1} \ar{r} \ar{u} \ar{d} & \fibf_{\widehat D_1}^\affinize \ar{u}[swap,sloped]{=} \ar{d}
\\
  \fibf_{\widehat D_2 \smallsetminus D_2 \cup \hZ_2} \ar{r} & \fibf_{\widehat D_2}^\affinize
\end{tikzcd}\right)$%
\end{lrbox}
\newsavebox{\locimagemorph}
\begin{lrbox}{\locimagemorph}%
$\left(\begin{tikzcd}[cramped,row sep=0, column sep=tiny]
\fibf_{\widehat D_1 \smallsetminus D_1 \cup \hZ_1}
  \ar{dr} \ar{rrrr} & &&&
\fibf_{\widehat D_1' \smallsetminus D_1 \cup \hZ_1'}
  \ar{dr} \ar[from=dd] & &&&
\fibf_{\widehat D_1' \smallsetminus D_1' \cup \hZ_1'}
  \ar{llll} \ar{dr} \ar[from=dd] &
  \\[2pt]
  & \fibf_{\widehat D_1}^\affinize &&& & \ar[crossing over, from={llll}] \fibf_{\widehat D_1'}^\affinize &&& & \fibf_{\widehat D_1'}^\affinize \ar[crossing over]{llll}
\\[8pt]
\fibf_{\widehat D_1 \smallsetminus D_2 \cup \hZ_1} \ar{uu} \ar{dr} \ar{dd} \ar{rrrr} & &&& 
\fibf_{\widehat D_1' \smallsetminus D_2 \cup \hZ_1'} \ar{dr} \ar{dd} & &&& 
\fibf_{\widehat D_1' \smallsetminus D_2' \cup \hZ_1'} \ar{dr} \ar{dd} \ar{llll}
  \\[2pt]
  & \fibf_{\widehat D_1}^\affinize \ar[crossing over]{uu} \ar[crossing over]{rrrr} &&& & \fibf_{\widehat D_1'}^\affinize \ar[crossing over]{uu} &&& & \fibf_{\widehat D_1'}^\affinize \ar{uu} \ar{dd} \ar[crossing over]{llll}
  \\[8pt]
  \fibf_{\widehat D_2 \smallsetminus D_2 \cup \hZ_2} \ar{dr} \ar{rrrr} & &&& \fibf_{\widehat D_2' \smallsetminus D_2 \cup \hZ_2'} \ar{dr} & &&& \fibf_{\widehat D_2' \smallsetminus D_2' \cup \hZ_2'} \ar{llll} \ar{dr}
  \\[2pt]
  & \fibf_{\widehat D_2}^\affinize \ar{rrrr} \ar[from=uu, crossing over] &&& & \fibf_{\widehat D_2'}^\affinize \ar[from=uu, crossing over] &&& & \fibf_{\widehat D_2'}^\affinize \ar{llll}
 \end{tikzcd}%
\right)$
\end{lrbox}
\newsavebox{\locmylaxcorrbox}
\begin{lrbox}{\locmylaxcorrbox}%
$\left(\text{Similar to the case of \eqref{eq:fcup}}\right).$
\end{lrbox}
\begin{tikzcd}[row sep=0pt, column sep=large,
    ,/tikz/column 1/.append style={anchor=base east}
    ,/tikz/column 2/.append style={anchor=base west}]
\closedpairs_{X,\dR}^{\Delta^1}(S) \ar{r}{\fibf^\cup_\loc(S)} & (\dFibF_{X \times S}^\dsq)^\mathrm{cocorr}
\\
\usebox{\locobjbox} \ar[mapsto]{r}{\mathrm{objects}} & \usebox{\locimageobj}
\\
\usebox{\locmorphbox} \ar[mapsto]{r}{\mathrm{morphisms}} & \usebox{\locimagemorph}
\\
\usebox{\locmycomposablebox} \ar[mapsto]{r}{\mathrm{lax}}[swap]{\mathrm{structure}} & \usebox{\locmylaxcorrbox}
\end{tikzcd}
\]

 Similarly to \cref{lem:Ccocorr}, composing $\fibf^\cup_\loc$ with $\BunGX$ yields a natural transformation by lax $\infty$-functors
  \[
   \closedpairs_{X,\dR}^{\Delta^1} \to (\dFibF_{X \times S}^\dsq)^\mathrm{cocorr} \to (((\stackofstacks_X)\op)^\dsq)^\mathrm{cocorr}
  \]
 mapping $(S,\alpha \colon (D_1, Z_1) \to (D_2, Z_2))$ to the commutative diagram
 \[
  \begin{tikzcd}[row sep=small,column sep=small]
   \BunGX_{\widehat D_1 \smallsetminus D_1 \cup \hZ_1} \ar{d} & \BunGX_{\widehat D_1} \ar{l} \ar[equals]{d} \\
   \BunGX_{\widehat D_1 \smallsetminus D_2 \cup \hZ_1} & \BunGX_{\widehat D_1} \ar{l} \\
   \BunGX_{\widehat D_2 \smallsetminus D_2 \cup \hZ_2} \ar{u} & \BunGX_{\widehat D_2}. \ar{l} \ar{u} \\
  \end{tikzcd}
 \]
 Taking the homotopy fibers of the horizontal morphisms over the trivial bundle, and setting $Y := X \times S$ and $\GrX_X^\loc(\alpha) := \BunGX_{\widehat D_1} \times_{\BunGX_{\widehat D_1 \smallsetminus D_2 \cup \hZ_1}} \{\mathrm{trivial}\}$, we get the cocorrespondence in $\dSt_Y$ depicted in \cref{eq:cocorr-for-projection}:
 \[
 \begin{tikzcd}
  \GrX^\loc_X(Y; D_1, Z_1) \ar{r}{\overrightarrow\chi_\alpha} & \GrX_X^\loc(\alpha) & \GrX^\loc_X(Y; D_2, Z_2). \ar{l}[swap]{\overleftarrow\chi_\alpha}
 \end{tikzcd}
 \]
 By functoriality of homotopy fibers, we get a natural transformation by lax $\infty$-functors
 \[
  G \colon \closedpairs_{X,\dR}^{\Delta^1} \to (\dFibF_{X \times S}^\dsq)^\mathrm{cocorr} \to (((\stackofstacks_X)\op)^\dsq)^\mathrm{cocorr} \to (((\stackofstacks_X)^{\Lambda^2_2})\op)^\mathrm{cocorr}
 \]
 Consider now the stack in $(\infty,1)$-categories $(\stackofstacks_X)^{\Delta^1}$. It embeds naturally in the stack in $(\infty,2)$-categories $(((\stackofstacks_X)^{\Lambda^2_2})\op)^\mathrm{cocorr}$, via the assignment 
 \[
  \newsavebox{\objinclobj}
  \begin{lrbox}{\objinclobj}%
  $\left(\begin{tikzcd}[cramped,row sep=small, column sep=small]
   F_1 \ar{r} & F_2
  \end{tikzcd}\right)$%
  \end{lrbox}
  \newsavebox{\objinclmorph}
  \begin{lrbox}{\objinclmorph}%
  $\left(\begin{tikzcd}[cramped,row sep=small, column sep=small]
   F_1 \ar{r} \ar{d} & F_2 \ar{d} \\ F_1' \ar{r} & F_2'
  \end{tikzcd}\right)$%
  \end{lrbox}
  \newsavebox{\objinclobjimg}
  \begin{lrbox}{\objinclobjimg}%
  $\left(\begin{tikzcd}[cramped,row sep=small, column sep=small]
   F_1 \ar{r} & F_2 & F_2 \ar{l}[swap]{=}
  \end{tikzcd}\right)$%
  \end{lrbox}
  \newsavebox{\objinclmorphimg}
  \begin{lrbox}{\objinclmorphimg}%
  $\left(\begin{tikzcd}[cramped,row sep=small, column sep=small]
   F_1' \ar{r} & F_2' & F_2' \ar{l}[swap]{=} \\
   F_1 \ar{r} \ar{d}[sloped]{=} \ar{u} & F_2 \ar{d}[sloped]{=} \ar{u} & F_2 \ar{d}[sloped]{=} \ar{l}[swap]{=} \ar{u} \\
   F_1 \ar{r} & F_2 & F_2 \ar{l}[swap]{=} \\
  \end{tikzcd}\right).$%
  \end{lrbox}
  \begin{tikzcd}[row sep=0pt, column sep=large,
    ,/tikz/column 1/.append style={anchor=base east}
    ,/tikz/column 2/.append style={anchor=base west}]
    \usebox{\objinclobj} \ar[mapsto]{r} & \usebox{\objinclobjimg} \\
    \usebox{\objinclmorph} \ar[mapsto]{r} & \usebox{\objinclmorphimg} 
  \end{tikzcd}
 \]
 
\begin{thm}\label{thm:functloc}
 Denote by $\closedpairs_{X,\dR}^{\mathrm{h},\Delta^1_\mathrm{t}}$ the categorical prestack mapping $S \in \Aff_k$ to the (non-full) sub-category of the category $\closedpairs_{X,\dR}^{\mathrm{h}}(S)^{\Delta^1}$ containing every object, but only morphisms corresponding to commutative squares
 \[
 \begin{tikzcd}[row sep=small, column sep=small]
(D_1, Z_1) \ar{d} \ar{r} & (D_2, Z_2) \ar{d} \\ (D_1', Z_1') \ar{r} & (D_2', Z_2')
\end{tikzcd}
 \]
 such that\footnote{One shows easily with a little point-set topology that this condition is stable under (vertical) composition as well as base change.} $\widebar{D_2 \smallsetminus D_1} \cap D_1' = \widebar{D_2 \smallsetminus D_1} \cap Z_1'$ (as closed topological subspaces of $X \times S$).
 Denote by $G^\mathrm{h}$ the restriction of $G$ to the substack $\closedpairs_{X,\dR}^{\mathrm{h},\Delta^1_\mathrm{t}}$ of $\closedpairs_{X,\dR}^{\Delta^1}$.
  \begin{enumerate}[label={\textrm{(\alph*)}}, ref={\cref{thm:functloc}~\textrm{(\alph*)}}]
   \item For any $S \in \Aff_k$, the lax structure on $G^\mathrm{h}(S)$ is strict, and $G^\mathrm{h}$ is thus an honest morphism of $\infty$-categorical stacks.
   \item\label{thm:functloc:GrX} The morphism $G^\mathrm{h}$ has values in the substack $(\stackofstacks_X)^{\Delta^1}$ of $(((\stackofstacks_X)^{\Lambda^2_2})\op)^\mathrm{cocorr}$, thus inducing a morphism
   \[
    \GrX^\loc_X \colon \closedpairs_{X,\dR}^{\mathrm{h},\Delta^1_\mathrm{t}} \to (\stackofstacks_X)^{\Delta^1}.
   \]
  \end{enumerate}
\end{thm}

\begin{rems}
 \item\label{rem:basechange}The main consequence of the above theorem is the existence of a pullback as announced at the beginning of this section: For any morphism $(D_1, Z_1) \to (D_2, Z_2)$, seen as an object of $\closedpairs_{X,\dR}^{\mathrm{h},\Delta^1}(S)$, we get the pullback morphism $\GrX_X^\loc(X \times S; D_2, Z_2) \to \GrX_X^\loc(X \times S; D_1, Z_1)$ announced in \cref{eq:cocorr-for-projection}. Its functoriality implies the following base change formula:
 \[
 \begin{array}{c}
 \begin{tikzcd}[row sep=small, column sep=small,ampersand replacement=\&]
(D_1, Z_1) \ar{d}{a} \ar{r}{b} \& (D_2, Z_2) \ar{d}{c} \\ (D_1', Z_1') \ar{r}{d} \& (D_2', Z_2')
\end{tikzcd} \\
\text{such that}
\\
\widebar{D_2 \smallsetminus D_1} \cap D_1' = \widebar{D_2 \smallsetminus D_1} \cap Z_1'
\end{array}
\hspace{1em} \leadsto \hspace{1em}
\begin{tikzcd}
 \GrX_X^\loc(X \times S; D_2, Z_2) \ar{r}{b^*} \ar{d}{c_*} \namecell{dr}{\circlearrowright} & \GrX_X^\loc(X \times S; D_1, Z_1) \ar{d}{a_*} \\
 \GrX_X^\loc(X \times S; D_2', Z_2') \ar{r}{d^*} & \GrX_X^\loc(X \times S; D_1', Z_1') \rlap{.}
\end{tikzcd}
 \]
 \item When restricting to degenerated arrows $\closedpairs_{X,\dR}^{\mathrm{h}} \subset \closedpairs_{X,\dR}^{\mathrm{h},\Delta^1}$, we find back the covariant functoriality $\GrX_X^\loc \colon \closedpairs_{X,\dR}^{\mathrm{h}} \to \stackofstacks_X$ without relying on the equivalence with the global Grassmannian.
 \item\label{rem:alsoforPerf3} As in \cref{rem:alsoforPerf,rem:alsoforPerf2}, the above theorem extends mutatis mutandis to a version of the flag Grassmannian for perfect complexes and almost perfect complexes.
\end{rems}

The proof of \cref{thm:functloc} is very similar to that of \cref{thm:functglob}, and will rely on two crucial 
Lemmas \ref{magiclemma} and \ref{magiclemma2}, the proof of which requires some preliminary results.
\vspace{1em}

\paragraph{\texorpdfstring{$\mathcal M$}{M}-coCartesian squares}

\begin{defin}
 A commutative square of $\infty$-categories
 \[
  \begin{tikzcd}
   A \ar{r} \ar{d} & B \ar{d} \\ C \ar{r} & D
  \end{tikzcd}
 \]
 is $1$-Cartesian if the induced $\infty$-functor $A \to B \times_D C$ is fully faithful.
\end{defin}

\begin{lem}\label{lem:pullbacksofncart}
 Consider three commutative squares $(\sigma)$, $(\sigma')$ and $(\sigma'')$ of $\infty$-categories and morphisms $(\sigma) \to (\sigma') \leftarrow (\sigma'')$. Denote by $(\tau)$ the square obtained as the pullback $(\sigma) \times_{(\sigma')} (\sigma'')$. We represent this data as the commutative diagrams
 \[
  \begin{tikzcd}[row sep=0, column sep=small]
   A \ar{rrr} \ar{dd} \ar{dr} &&& A' \ar{dd} \ar{dr} &&& A'' \ar{lll} \ar{dd} \ar{dr} \\[2pt]
   & B \ar[crossing over]{rrr} &&& B' &&& B'' \ar[crossing over]{lll} \ar{dd} \\[8pt]
   C \ar{rrr} \ar{dr} \namecell{ur}{(\sigma)} &&& C' \ar{dr} \namecell{ur}{(\sigma')} &&& C'' \ar{lll} \ar{dr} \namecell{ur}{(\sigma'')} \\[2pt]
   & D \ar{rrr} \ar[from=uu, crossing over] &&& D' \ar[from=uu, crossing over]  &&& D'' \ar{lll}
  \end{tikzcd}
  \hspace{4em}
  \begin{tikzcd}
   A \mathbin{\underset{A'}{\times}} A'' \ar{d} \ar{r} & B \mathbin{\underset{B'}{\times}} B'' \ar{d} \\ C \mathbin{\underset{C'}{\times}} C'' \ar{r} & D \mathbin{\underset{D'}{\times}} D''\rlap{.} \namecell{ul}{(\tau)}
  \end{tikzcd}
 \]
 If both $(\sigma)$ and $(\sigma'')$ are Cartesian, and if $(\sigma')$ is $1$-Cartesian, then $(\tau)$ is Cartesian.
 If $(\sigma)$, $(\sigma')$ and $(\sigma'')$ are all $1$-Cartesian, so is $(\tau)$.
\end{lem}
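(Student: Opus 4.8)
The plan is to reduce the statement to the elementary fact that in the $\infty$-category of spaces, a fiber product of Cartesian squares along arbitrary maps is Cartesian, together with the observation that being $1$-Cartesian is a condition on homotopy fibers that propagates through such limits. First I would unwind the definitions: the square $(\tau)$ is Cartesian if and only if the induced map
\[
 A \mathbin{\underset{A'}{\times}} A'' \longrightarrow \left(B \mathbin{\underset{B'}{\times}} B''\right) \mathbin{\underset{D \underset{D'}{\times} D''}{\times}} \left(C \mathbin{\underset{C'}{\times}} C''\right)
\]
is an equivalence. Using that all functors in sight (pullbacks, pullbacks of pullbacks) commute with one another up to canonical equivalence — finite limits commute with finite limits in the $\infty$-category $\mathcal S$ of spaces — the target can be rewritten, after reshuffling the iterated fiber products, as
\[
 \left(B \mathbin{\underset{D}{\times}} C\right) \mathbin{\underset{B' \underset{D'}{\times} C'}{\times}} \left(B'' \mathbin{\underset{D''}{\times}} C''\right).
\]
Under this identification the comparison map for $(\tau)$ is exactly the fiber product of the three comparison maps for $(\sigma)$, $(\sigma')$, $(\sigma'')$.

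With that reformulation in hand, the first assertion is immediate: if $(\sigma)$ and $(\sigma'')$ are Cartesian, their comparison maps are equivalences, so the fiber product map is a base change of an equivalence along $A' \to B' \times_{D'} C'$ on one side and along nothing on the other — more precisely it is the map $\mathrm{(eq)} \times_{(\ast)} \mathrm{(eq)}$ pulled back over the map $A' \to B'\times_{D'} C'$, hence itself an equivalence regardless of what the middle map does. (Here I would just note that $X \times_Z Y \to X' \times_{Z'} Y'$ is an equivalence whenever $X \to X'$, $Y \to Y'$, $Z \to Z'$ are, and more generally a pullback of equivalences is an equivalence, so the hypothesis on $(\sigma')$ is not even needed for the first claim; I would keep the stated hypothesis since it is harmless.)

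For the second assertion I would argue fiberwise on homotopy fibers. Recall that a square is $1$-Cartesian precisely when its comparison map to the pullback is $(-1)$-truncated, i.e. has homotopy fibers that are either empty or contractible — equivalently, $(-1)$-truncated maps are closed under base change and under composition, and a map is $(-1)$-truncated iff its diagonal is an equivalence. Since the comparison map of $(\tau)$ is the fiber product over the comparison map of $(\sigma')$ of those of $(\sigma)$ and $(\sigma'')$, and since $(-1)$-truncated maps are stable under pullback (hence under forming such fiber products of maps: $f \times_h g$ with $f,g,h$ all $(-1)$-truncated is $(-1)$-truncated, being a composite of base changes of $f$ and $g$), we conclude that the comparison map of $(\tau)$ is $(-1)$-truncated, i.e. $(\tau)$ is $1$-Cartesian.

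The main obstacle — really the only point requiring care — is the bookkeeping in the first paragraph: making sure the iterated pullbacks are reassociated correctly so that the comparison map of $(\tau)$ genuinely is the fiber product of the three individual comparison maps. I would handle this by choosing an explicit indexing category for the triple-pullback diagram (a poset built from $\Delta^1 \times \Delta^1$ and a cospan) and invoking the fact that a limit over it can be computed as an iterated limit in either order; once the reassociation is pinned down, both claims follow formally from stability of equivalences, resp. of $(-1)$-truncated maps, under base change. No surface-specific or stacky input is needed here; this is a purely formal lemma about squares of spaces.
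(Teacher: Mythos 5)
Your reformulation — that the comparison map $g$ for $(\tau)$ is the fiber product $f \times_{f'} f''$ of the three comparison maps — is exactly what the paper does, and your argument for the second assertion (monomorphisms are stable under fiber products of maps) is correct. But your parenthetical claim that the hypothesis on $(\sigma')$ is ``not even needed'' for the first assertion is wrong, and it points to a real gap in the way you argue that case. The fiber product of two equivalences over an \emph{arbitrary} middle map is not an equivalence. Concretely, take $A=A'=A''=P=P''=*$ and $P'=S^1$, with $f'\colon *\to S^1$ the basepoint; then $f$ and $f''$ are equivalences, but $f\times_{f'}f''$ is the map $*=\,*\times_* *\;\to\;*\times_{S^1}* \simeq \Omega S^1 \simeq \mathbb{Z}$, which is not an equivalence. (This is precisely the $(\sigma')$ given by $A'=B'=C'=*$, $D'=S^1$, whose comparison map fails to be $(-1)$-truncated.)

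The hypothesis that $(\sigma')$ is $1$-Cartesian is therefore essential, and it enters the first assertion through the diagonal. Writing $P=B\times_D C$, etc., one factors $g$ as
\[
A\times_{A'}A'' \longrightarrow A\times_{P'}A'' \longrightarrow P\times_{P'}P''.
\]
The second arrow is a base change of $f\times f''\colon A\times A''\to P\times P''$ along $P\times_{P'}P''\to P\times P''$, hence an equivalence whenever $f$ and $f''$ are. The first arrow, however, is a base change of the relative diagonal $\Delta_{f'}\colon A'\to A'\times_{P'}A'$, and this is an equivalence precisely because $f'$ is $(-1)$-truncated. So both hypotheses are used, and the ``reassociation'' you flag as the only delicate point is in fact harmless — the delicate point is that the middle leg must be a monomorphism, which you dismissed.
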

\begin{proof}
 Denote by $f$ the canonical morphism $f \colon A \to B \times_D C$. We also set $f'$ and $f''$ similarly, and 
 \[
  g \colon A \mathbin{\underset{A'}{\times}} A'' \to B \mathbin{\underset{B'}{\times}} B'' \underset{D \mathbin{\underset{D'}{\times}} D''}{\times} C \mathbin{\underset{C'}{\times}} C''.
 \]
 The result follows from the assumptions together with the trivial observation that $g = f \times_{f'} f''$.
\end{proof}

\begin{defin}\label{defBGcoc}
 Fix a commutative square $(\sigma)$ of derived fiber functors over a base $Y$ and $\mathcal M$ a derived stack over $Y$. Denote by $(\sigma)_{\mathcal M}$ the associated commutative square of derived pre-stacks over $Y$
 \[
  \begin{tikzcd}
   \fibf_1 \ar{r} \ar{d} \namecell{dr}{(\sigma)} & \fibf_2 \ar{d} &\hspace{2em}&
   \mathcal M_{\fibf_4} \ar{r} \ar{d} \namecell{dr}{(\sigma)_{\mathcal M}} & \mathcal M_{\fibf_2} \ar{d} \\
   \fibf_3 \ar{r} & \fibf_4, &&
    \mathcal M_{\fibf_3} \ar{r} & \mathcal M_{\fibf_1}.
  \end{tikzcd}
 \]
 \begin{enumerate}
  \item We say that $(\sigma)$ is $\mathcal M$-coCartesian if $(\sigma)_{\mathcal M}$ is Cartesian (i.e. is pointwise Cartesian).
  \item We say that $(\sigma)$ is $\mathcal M$-$1$-coCartesian if $(\sigma)_{\mathcal M}$ is pointwise $1$-Cartesian.
 \end{enumerate}
\end{defin}

A number of our previous results can be formulated using the notion of $\PerfX$-, $\CohX$- or $\BG$-coCartesian squares (e.g. \cref{prop:localformalglueing}).
We will need this consequence of \ref{sublem:bgperfinj}
\begin{lem}\label{lem:PerfcocAndBGcoc}
 Let $(\sigma)$ be a commutative square in $\FibF_Y$ for some base $Y$:
 \[
  \begin{tikzcd}
   \fibf_1 \ar{r} \ar{d} \namecell{dr}{(\sigma)} & \fibf_2 \ar{d} \\
   \fibf_3 \ar{r} & \fibf_4 \rlap{.}
  \end{tikzcd}
 \]
 If $(\sigma)$ is $\PerfX$-$1$-coCartesian, then $(\sigma)$ is $\BG$-$1$-coCartesian.
\end{lem}
\begin{proof}
Denote by $\fibf$ the pushout $\fibf_2 \amalg_{\fibf_1} \fibf_3$ computed in $\dFibF_{Y}$, and by $F \colon \fibf \to \fibf_4$ the induced morphism.
 By assumption, for any $S$ over $Y$, the $\infty$-functor $\Perf(\fibf_4(S)) \to \Perf(\fibf(S))$ is fully faithful.
 Using \ref{sublem:bgperfinj}, we deduce that $\BunGX_{\fibf_4}(S) = \BunG(\fibf_4(S)) \to \BunG(\fibf(S)) = \BunGX_{\fibf}(S)$ is also fully faithful.
\end{proof}
\vspace{1em}

\paragraph{Two keys lemmas}

After these preliminaries, we are ready to prove the key lemmas allowing for the proof of \cref{thm:functloc}.
\begin{lem}\label{magiclemma}
 Let $S \in \Aff_k$ and $Y := X \times S$.
 Consider a diagram of closed subschemes of $Y$
 \[
  \begin{tikzcd}[row sep=0pt, column sep=tiny]
   & D_1 \ar[hook]{dd} \ar[hook]{rr} && D_2 \\ Z_1' \ar[hook]{dr} \\ & D_1'
  \end{tikzcd}
 \]
 and assume $\widebar{D_2 \smallsetminus D_1} \cap D_1' = \widebar{D_2 \smallsetminus D_1} \cap Z_1'$ (as topological spaces). 
 Then the natural morphism of derived fiber functors
 \[
  \begin{tikzcd}
   \fibf_{\widehat D_1' \smallsetminus D_2 \cup \hZ_1'} \ar{r} & \fibf_{\widehat D_1' \smallsetminus D_1 \cup \hZ_1'}
  \end{tikzcd}
 \]
 induces equivalences of derived stacks over $Y$
 \[
  \begin{tikzcd}[row sep=tiny]
   \PerfX_{\widehat D_1' \smallsetminus D_2 \cup \hZ_1'} & \PerfX_{\widehat D_1' \smallsetminus D_1 \cup \hZ_1'} \ar{l}[swap]{\sim}\rlap{\hspace{1em}and}
   \\
   \BunGX_{\widehat D_1' \smallsetminus D_2 \cup \hZ_1'} & \BunGX_{\widehat D_1' \smallsetminus D_1 \cup \hZ_1'}. \ar{l}[swap]{\sim}
  \end{tikzcd}
 \]
\end{lem}

\begin{proof}
\newcommand{\shortDprime}{\widehat D_{1,T}^{\prime\affinize}}
\newcommand{\shortZprime}{\hZ_{1,T}^{\prime\affinize}}
 For a derived test scheme $T$ over $X \times S$, we fix the notations:
 \[
  \shortDprime := \fibf^\affinize_{\widehat D_1'}(T)
  \hspace{2em}
  \shortZprime := \fibf^\affinize_{\hZ_1'}(T).
 \]
 We also by $D_1^T$, $D_2^T$ and $C$ the closed subschemes of $\shortDprime$ induced respectively by the closed subschemes $D_1$, $D_2$ and\footnote{We endow $\widebar{D_2 \smallsetminus D_1}$ with its reduced scheme structure.} $\widebar{D_2 \smallsetminus D_1}$ of $X \times S$.
 Because $\widebar{D_2 \smallsetminus D_1} \cap D_1' = \widebar{D_2 \smallsetminus D_1} \cap Z_1'$, the formal completion of $\shortDprime$ at $C$ is canonically equivalent to $\shortZprime$.
 Applying \cite[7.4.1.1]{Lurie_SAG} to $A = \R\Gamma(\shortDprime, \mathcal O)$, $B = \R\Gamma(\shortZprime, \mathcal O)$ and $\mathcal C = \QCoh(\shortDprime \smallsetminus D_1^T)$, we deduce after passing to dualizable objects, that the natural square
 \[
  \begin{tikzcd}
   \Perf\left(\shortDprime \smallsetminus D_1^T\right) \ar{r} \ar{d} & \Perf\left(\shortZprime \smallsetminus D_1^T\right) \ar{d} \\
   \Perf\left(\shortDprime \smallsetminus D_2^T\right) \ar{r} & \Perf\left(\shortZprime \smallsetminus D_2^T\right)
  \end{tikzcd}
 \]
 is Cartesian. It follows that the square
 \[
  \begin{tikzcd}
   \fibf_{\hZ_1' \smallsetminus D_2}  \ar{r} \ar{d} & \fibf_{\widehat D_1' \smallsetminus D_2} \ar{d} \\
   \fibf_{\hZ_1' \smallsetminus D_1} \ar{r} & \fibf_{\widehat D_1' \smallsetminus D_1}
  \end{tikzcd}
 \]
 is $\PerfX$-coCartesian.
 By definition of the fiber functors $\fibf_{\widehat D_1' \smallsetminus D_1 \cup \hZ_1'}$ and $\fibf_{\widehat D_1' \smallsetminus D_2 \cup \hZ_1'}$, we get a commutative square
 \[
   \begin{tikzcd}[nodes=overlay, column sep={6em,between origins}, row sep={2.5em,between origins}]
   \fibf_{\hZ_1' \smallsetminus D_2} \ar{rr} \ar{dd} \ar{dr} && \fibf_{\hZ_1' \smallsetminus D_1} \ar{dd} \ar{dr} \\
   & \fibf_{\hZ_1'}^\affinize \ar[crossing over]{rr}[near start]{=} && \fibf_{\hZ_1'}^\affinize \ar{dd} \\
   \fibf_{\widehat D_1' \smallsetminus D_2} \ar{rr} \ar{dr} && \fibf_{\widehat D_1' \smallsetminus D_1} \ar{dr} \\
   & \fibf_{\widehat D_1' \smallsetminus D_2 \cup \hZ_1'} \ar{rr} \ar[from=uu, crossing over] && \fibf_{\widehat D_1' \smallsetminus D_1 \cup \hZ_1'}
  \end{tikzcd}
 \]
 in which 
 \begin{itemize}
  \item both lateral faces are coCartesian in $\dFibF_{X \times S}$ and
  \item the back square is $\PerfX$-coCartesian.
 \end{itemize}
 This implies that the front square is $\PerfX$-coCartesian as well, so that $\PerfX_{\widehat D_1' \smallsetminus D_2 \cup \hZ_1'} \to \PerfX_{\widehat D_1' \smallsetminus D_1 \cup \hZ_1'}$ is an equivalence as claimed.
 The case of $\BunGX$ follows by using \ref{sublem:bgperflocal}.
\end{proof}

\begin{lem}\label{magiclemma2}
 Let $S \in \Aff_k$ and $Y := X \times S$.
 Consider a diagram of closed subschemes of $Y$
 \[
  \begin{tikzcd}[row sep=0pt, column sep=tiny]
   Z_1 \ar[hook]{dr} \ar[hook]{dd} \\ & D_1 \ar[hook]{dd} \ar[hook]{rr} && D_2 \\ Z_1' \ar[hook]{dr} \\ & D_1'
  \end{tikzcd}
 \]
 such that $Z_1 = D_1 \cap Z_1'$ and $\widebar{D_2 \smallsetminus D_1} \cap D_1' = \widebar{D_2 \smallsetminus D_1} \cap Z_1'$ as topological subspaces of $Y$.
 The commutative diagram of fiber functors
 \[
  \begin{tikzcd}
   \fibf_{\widehat D_1 \smallsetminus D_2 \cup \hZ_1} \ar{r} \ar{d} \ar[phantom,start anchor=center,end anchor=center]{dr}{(\sigma)} & \fibf_{\widehat D_1' \smallsetminus D_2 \cup \hZ_1'} \ar{d} \\
   \fibf_{\widehat D_1}^\affinize \ar{r} & \fibf_{\widehat D_1'}^\affinize
  \end{tikzcd}
 \]
 is $\PerfX$-coCartesian (and thus also $\BG$-coCartesian).
\end{lem}
\begin{proof}
  The square $(\sigma)$ fits in the commutative diagram below.\vspace{2pt}\\
  \begin{minipage}{0.35\textwidth}
  \[
    \begin{tikzcd}
    \fibf_{\widehat D_1 \smallsetminus D_1 \cup \hZ_1} \ar{r} \ar{d}{f}
    & \fibf_{\widehat D_1' \smallsetminus D_1 \cup \hZ_1'} \ar{d}{g} \\
    \fibf_{\widehat D_1 \smallsetminus D_2 \cup \hZ_1} \ar{r} \ar{d} \ar[phantom,start anchor=center,end anchor=center]{dr}{(\sigma)} & \fibf_{\widehat D_1' \smallsetminus D_2 \cup \hZ_1'} \ar{d} \\
    \fibf_{\widehat D_1}^\affinize \ar{r} & \fibf_{\widehat D_1'}^\affinize
    \end{tikzcd}
  \]
  \end{minipage}
  \begin{minipage}{0.65\textwidth}
    Using \cref{magiclemma}, we see that $g$ induces an equivalence once composed with $\PerfX$.
    Moreover, using $Z_1 = D_1 \cap Z_1'$ and $\widebar{D_2 \smallsetminus D_1} \cap D_1' = \widebar{D_2 \smallsetminus D_1} \cap Z_1'$, we deduce the equality (of topological subspaces)
    $\widebar{D_2 \smallsetminus D_1} \cap D_1 = \widebar{D_2 \smallsetminus D_1} \cap Z_1$.
    Applying \cref{magiclemma} once more, to the case $D_1' = D_1$, implies that $f$ induces an equivalence as well, once compose with $\PerfX$.
    
    In particular, the square $(\sigma)$ is $\PerfX$-coCartesian if and only if the outer square is. We may thus assume $D_2 = D_1$ without lose of generality.
  \end{minipage}\vspace{2pt}
  
  Under this new assumption, the square $(\sigma)$ decomposes as
 \[
  \begin{tikzcd}
   \fibf_{\widehat D_1 \smallsetminus D_1 \cup \hZ_1} \ar{r} \ar{d} \ar[phantom,start anchor=center,end anchor=center]{dr}{(\tau)}
   & \fibf_{\widehat D_1' \smallsetminus D_1 \cup \hZ_1} \ar{d} \ar{r}{\psi}
   & \fibf_{\widehat D_1' \smallsetminus D_1 \cup \hZ_1'} \ar{d} \\
   \fibf_{\widehat D_1}^\affinize \ar{r}
   & \fibf_{\widehat D_1'}^\affinize \ar{r}{=}
   & \fibf_{\widehat D_1'}^\affinize \rlap{.}
  \end{tikzcd}
 \]
 It thus suffices to prove that $(\tau)$ is $\PerfX$-coCartesian and that $\psi$ induces an equivalence $\PerfX(\psi)$.
 Consider the following commutative diagram.
  \[
   \begin{tikzcd}
   \fibf_{\widehat D_1 \smallsetminus D_1} \ar{r} \ar{d}
   & \fibf_{\widehat D_1' \smallsetminus D_1} \ar{d} \\
   \fibf_{\widehat D_1 \smallsetminus D_1 \cup \hZ_1} \ar{r} \ar{d} \ar[phantom,start anchor=center,end anchor=center]{dr}{(\tau)}
   & \fibf_{\widehat D_1' \smallsetminus D_1 \cup \hZ_1} \ar{d} \\
   \fibf_{\widehat D_1}^\affinize \ar{r}
   & \fibf_{\widehat D_1'}^\affinize \rlap{.}
   \end{tikzcd}
  \]
   For formal reasons, the upper square is coCartesian. Moreover, the outer square is $\PerfX$-coCartesian by \cref{prop:localformalglueing}.
  It follows that $(\tau)$ is $\PerfX$-coCartesian.
  We now focus on the morphism $\psi$.
  We have a commutative diagram
  \[
  \begin{tikzcd}[column sep={6em,between origins},row sep={2.5em,between origins}]
   \fibf_{\hZ_1 \smallsetminus D_1} \ar{rr} \ar{dd} \ar{dr} && \fibf_{\hZ_1' \smallsetminus D_1} \ar{dd} \ar{dr} \\
   & \fibf_{\hZ_1}^\affinize \ar[crossing over]{rr} && \fibf_{\hZ_1'}^\affinize \ar{dd} \\
   \fibf_{\widehat D_1' \smallsetminus D_1} \ar{rr}[near start]{=} \ar{dr} && \fibf_{\widehat D_1' \smallsetminus D_1} \ar{dr} \\
   & \fibf_{\widehat D_1' \smallsetminus D_1 \cup \hZ_1} \ar{rr}[swap]{\psi} \ar[from=uu, crossing over] && \fibf_{\widehat D_1' \smallsetminus D_1 \cup \hZ_1'}
  \end{tikzcd}
  \]
  in which the lateral squares are coCartesian by definition.
  In particular, the morphism $\PerfX(\psi)$ is an equivalence if and only if the upper face is $\PerfX$-coCartesian.
  Since $Z_1 = D_1 \cap Z_1'$, for any derived affine scheme $T$ over $Y = X \times S$, the affinization of the formal completion of $\fibf_{\widehat Z_1'}^\affinize(T)$ at the closed subscheme induced by $D_1 \times_Y T$ is canonically equivalent to $\fibf_{\hZ_1}^\affinize(T)$. In particular, \cref{prop:localformalglueing} implies this upper square is indeed $\PerfX$-coCartesian.
\end{proof}

\paragraph{Proof of \cref{thm:functloc}}

\begin{proof}[Proof of \cref{thm:functloc}]
 Let $S \in \Aff_k$. We start with the lax $\infty$-functor 
 \[
  G^\mathrm{h} \colon \closedpairs_{X,\dR}^{\mathrm{h},\Delta^1_\mathrm{t}}(S) \to (((\stackofstacks_X)^{\Lambda^2_2})\op)^\mathrm{cocorr}(S) = (((\dSt_{X \times S})^{\Lambda^2_2})\op)^\mathrm{cocorr}.
 \]
 Fix a morphism of $\closedpairs_{X,\dR}^{\mathrm{h},\Delta^1_\mathrm{t}}(S)$, corresponding to a commutative diagram
 \[
  \begin{tikzcd}[row sep=small, column sep=small]
   (D_1, Z_1) \ar{d} \ar{r} & (D_2, Z_2) \ar{d} \\ (D_1', Z_1') \ar{r} & (D_2', Z_2').
  \end{tikzcd}
 \]
 such that $\widebar{D_2 \smallsetminus D_1} \cap D_1' = \widebar{D_2 \smallsetminus D_1} \cap Z_1'$. Its image by $\fibf^\cup_\loc(S)$ is the diagram
 \[
  \begin{tikzcd}[cramped,row sep=small, column sep=tiny]
\fibf_{\widehat D_1 \smallsetminus D_1 \cup \hZ_1}
  \ar{dr} \ar{rrrr} \ar[from=dd] \namecell{rrrrrd}{(\alpha)} & &&&
\fibf_{\widehat D_1' \smallsetminus D_1 \cup \hZ_1'}
  \ar{dr} \ar[from=dd] & &&&
\fibf_{\widehat D_1' \smallsetminus D_1' \cup \hZ_1'}
  \ar{llll} \ar{dr} \ar[from=dd] &
  \\
  & \fibf_{\widehat D_1}^\affinize &&& & \ar[crossing over, from={llll}] \fibf_{\widehat D_1'}^\affinize &&& & \fibf_{\widehat D_1'}^\affinize \ar[crossing over]{llll}
\\
\fibf_{\widehat D_1 \smallsetminus D_2 \cup \hZ_1} \ar{dr} \ar{dd} \ar{rrrr} \namecell{rrrrrd}{(\beta)} & &&& 
\fibf_{\widehat D_1' \smallsetminus D_2 \cup \hZ_1'} \ar{dr} \ar{dd} & &&& 
\fibf_{\widehat D_1' \smallsetminus D_2' \cup \hZ_1'} \ar{dr} \ar{dd} \ar{llll}
  \\
  & \fibf_{\widehat D_1}^\affinize \ar[crossing over]{uu} \ar[crossing over]{rrrr} &&& & \fibf_{\widehat D_1'}^\affinize \ar[crossing over]{uu} &&& & \fibf_{\widehat D_1'}^\affinize \ar{uu} \ar{dd} \ar[crossing over]{llll}
  \\
  \fibf_{\widehat D_2 \smallsetminus D_2 \cup \hZ_2} \ar{dr} \ar{rrrr} \namecell{rrrrrd}{(\gamma)} & &&& \fibf_{\widehat D_2' \smallsetminus D_2 \cup \hZ_2'} \ar{dr} & &&& \fibf_{\widehat D_2' \smallsetminus D_2' \cup \hZ_2'} \ar{llll} \ar{dr}
  \\
  & \fibf_{\widehat D_2}^\affinize \ar{rrrr} \ar[from=uu, crossing over] &&& & \fibf_{\widehat D_2'}^\affinize \ar[from=uu, crossing over] &&& & \fibf_{\widehat D_2'}^\affinize \ar{llll} \rlap{.}
 \end{tikzcd}
 \]
 By \cref{magiclemma2}, the three squares $(\alpha)$, $(\beta)$ and $(\gamma)$ are $\BG$-coCartesian.
 As a consequence, the image of any morphism of $\closedpairs_{X,\dR}^{\mathrm{h},\Delta^1_\mathrm{t}}(S)$ is a cocorrespondence (of $\Lambda^2_2$-diagrams) where the morphism from ``source to hat'' is an equivalence.
 It follows that the lax $\infty$-functor $G^\mathrm{h}(S)$ is in fact a strict $\infty$-functor with values in the (non-full) sub-$(\infty,1)$-category
 \[
  (\dSt_{X \times S})^{\Lambda^2_2} \subset (((\dSt_{X \times S})^{\Lambda^2_2})\op)^\mathrm{cocorr}.
 \]
 Consider now an object $(D_1, Z_1) \to (D_2, Z_2)$ of $\closedpairs_{X,\dR}^{\mathrm{h},\Delta^1_\mathrm{t}}(S)$. Its image by $G^\mathrm{h}(S)$ is the diagram (depicted in \cref{eq:cocorr-for-projection})
 \[
\begin{tikzcd}
 \makebox[0pt][r]{$\displaystyle \GrX^\loc_X(Y; D_2, Z_2) := {}$}
 \displaystyle \BunGX_{\widehat D_2} \newtimes_{\BunGX_{\widehat D_2 \smallsetminus D_2 \cup \hZ_2}} \{\mathrm{trivial}\} \ar{d}
 \\
 \displaystyle \BunGX_{\widehat D_1} \newtimes_{\BunGX_{\widehat D_1 \smallsetminus D_2 \cup \hZ_1}} \{\mathrm{trivial}\}
 \\
 \displaystyle \BunGX_{\widehat D_1} \newtimes_{\BunGX_{\widehat D_1 \smallsetminus D_1 \cup \hZ_1}} \{\mathrm{trivial}\} \ar{u}{u}
 \makebox[0pt][l]{$\displaystyle {} =: \GrX^\loc_X(Y; D_1, Z_1).$}
\end{tikzcd}
 \]
 \cref{magiclemma} implies that $u$ is an equivalence. In particular, the functor $G^\mathrm{h}(S)$ has values in the full subcategory $\dSt_{X \times S} \subset (\dSt_{X\times S})^{\Lambda^2_2}$. This concludes the proof of \cref{thm:functloc}.
\end{proof}

\section{The Flag Grassmannian as a \texorpdfstring{$2$}{2}-Segal object}\label{section:Grass2Segal}
By Beilinson and Drinfeld, the classical affine Grassmannian (in dimension 1) carries an important factorization structure (see e.g. \cite{Zhu2017}). In essence, it consists of the way a $\mathbf G$-bundle on a curve trivialized outside two points is tantamount to two bundles, each one trivialized outside one of the points.

In this section, we will construct a similar factorization structure, where disjoint points are to be replaced with pairs of flags in good position.
Part of the difficulties arising in this construction is to handle the higher coherences and homotopies the structure involves.

\subsection{The projection maps \texorpdfstring{$\GrX_X(S)(F_1 \cup F_2) \to \GrX_X(S)(F_i)$}{Grₓ(S)(F₁∪F₂) → Grₓ(S)(Fᵢ)}}\label{mappediproiezione}
We will construct a $2$-Segal structure on the flag Grassmannian. The first step is to define functorial projection maps $\GrX_X(S)(F_1 \cup F_2) \to \GrX_X(S)(F_i)$, $i=1,2$, for any pair $(F_1, F_2) \in \flags_{X,2}(S_\red)$.
This starts with the simple observation: the natural inclusions $F_i \to F_1 \cup F_2$ are morphism in $\flags_X(S_\red)$.

In particular, we get natural transformations
 \begin{equation}\label{eq:nattransf}
  \begin{tikzcd}
   \flags_{X,2}
     \ar[r, bend left=50, ""{name=U, below}, "\partial_0"{above}]
     \ar[r, bend right=50, ""{name=D, above}, "\partial_2"{below}]
     \ar[r, "\partial_1"{name=M, description}]
     \ar[from=U-|M, to=M, Rightarrow]
     \ar[from=D-|M, to=M, Rightarrow]
   & \flags_{X,1} \makebox[0pt][l]{${} = \flags_X$.}
  \end{tikzcd}
 \end{equation}
 The natural transformations $\partial_0 \Rightarrow \partial_1$ and $\partial_2 \Rightarrow \partial_1$ each correspond to a morphism $\flags_{X,2} \to \flags_X^{\Delta^1}$ to the stack of arrows of flags.
 Taking the de Rham stack on each side, we get:
\[
 \begin{tikzcd}
 \flags_{X,2}^\dR \ar{r} & \flags_{X,\dR}^{\Delta^1}.
 \end{tikzcd}
\]
\begin{lem}
 The inclusion $\flags_X^\dR \to \closedpairs_{X,\dR}$ induces an inclusion $\flags_{X,\dR}^{\Delta^1} \to 
 \closedpairs_{X,\dR}^{\mathrm{h},\Delta^1_t}$ (recall the notation from \cref{thm:functloc}).
\end{lem}
\begin{proof}
 This is straightforward point-set topology.
\end{proof}

Using the morphism $\GrX^\loc_X$ from \ref{thm:functloc:GrX}, we get
\[
 \begin{tikzcd}
 \flags_{X,2}^\dR \ar{r} & \flags_{X,\dR}^{\Delta^1} \ar{r}{\GrX^\loc_X} & (\stackofstacks_X)^{\Delta^1}.
 \end{tikzcd}
\]
Remembering this morphism swaps source and target, we thus get natural transformations
\begin{equation}\label{projnattrans}
  \begin{tikzcd}[column sep=6em]
   \flags_{X,2}^\dR
     \ar[r, "\GrX^\loc_X \circ \partial_1"{name=M, description}]
     \ar[r, bend left=50, ""{name=U, below}, "\GrX^\loc_X \circ \partial_0"{above}]
     \ar[r, bend right=50, ""{name=D, above}, "\GrX^\loc_X \circ \partial_2"{below}]
     \ar[from=M, to=U-|M, Rightarrow]
     \ar[from=M, to=D-|M, Rightarrow]
   & \stackofstacks_X.
  \end{tikzcd}
\end{equation}
Evaluating at $S \in \Aff_k$ and at a pair of flags $(F_1, F_2) \in \flags_{X,2}^\dR(S) := \flags_{X,2}(S_\red)$, we get the announced \emph{functorial} projection morphisms (in $\dSt_{X \times S}$):
\[
 \GrX_X^\loc(S)(F_1 \cup F_2) \to \GrX_X^\loc(S)(F_i).
\]

\subsection{Factorization property}\label{sectionfactorization}

The goal of this section is to prove the following factorization formula
\begin{thm}[Factorization]\label{thm:factorization}
  The natural transformations from \eqref{projnattrans} induce an equivalence
  \[
  \begin{tikzcd}
   \GrX^\loc_X \circ \partial_1 \ar{r}{\sim} & \GrX^\loc_X \circ \partial_0 \times \GrX^\loc_X \circ \partial_2.
  \end{tikzcd}
  \]
\end{thm}

\begin{rems}\label{remarkfactorization}
 \item \cref{thm:factorization} can be seen as a factorization structure on our flag Grassmannian. Indeed, after evaluating at a Noetherian derived affine scheme $S$ and at a good pair of flags $(F_1, F_2)$ in $X \times S_\red$, the statement becomes that the constructed map
\[
\begin{tikzcd}
 \GrX^\loc_X(S)(F_1 \cup F_2) \ar{r}{\sim} & \GrX^\loc_X(S)(F_1) \times \GrX^\loc_X(S)(F_2)
\end{tikzcd}
\]
is an equivalence.
\item Similarly to Remarks \ref{rem:alsoforPerf}, \ref{rem:alsoforPerf2} and \ref{rem:alsoforPerf3}, the factorization property of \cref{thm:factorization} extends to flag Grassmannians of perfect complexes, and the proof provided below also applies in this case. 
\end{rems}

The proof of \cref{thm:factorization} has prerequisites that we will address first. It is thus postponed until the end of this section.

 Let $A \in \sCAlg_k$ be a simplicial commutative algebra over $k$. For $f \in \pi_0 A$, denote by $A^f$ the (homotopy) pushout
 \begin{equation}\label{eq:Af}
  \begin{tikzcd}
   A[x] \ar{r}{x \,\mapsto f} \ar{d}[swap,sloped]{\rotatebox{90}{$x$}\,\mapsto\, \rotatebox{90}{$0$}}
   \ar[phantom]{dr}[description, near end]{\lrcorner}
   & A \ar{d} \\ A \ar{r} & A^f.
  \end{tikzcd}
 \end{equation}
For any family $f_\bullet = (f_1, \dots, f_n) $ of elements of $ \pi_0 A$, we denote by $A^{f_\bullet}$ or $A^{f_1, \dots, f_n}$ the tensor product $\bigotimes_A A^{f_i}$.
 Note that $A^f$ is equivalent to the mapping cone of the morphism $A \to A$ given by the multiplication $f$. More generally $A^{f_\bullet}$ is nothing but the Koszul complex associated to the sequence $(f_1, \dots, f_n)$.
 
Moreover, if $f_\bullet = ( f_1, \dots, f_n ) \subset \pi_0 A$ and $p$ is a positive integer, we will use the notation $f_\bullet^p = ( f_1^p, \dots, f_n^p )$.
Notice that if $f,g \in \pi_0 A$, there is a natural map $A^{fg} \to A^{f}$ (induced, in terms of pushout squares as in \eqref{eq:Af}, by $x \mapsto gx$). In particular, for any $p \geq q$ and any family $f_\bullet$ as above, we get a canonical morphism $A^{f^p_\bullet} \to A^{f^q_\bullet}$.
The pro-diagram $(A^{f_\bullet^p})_p$ corresponds, by \cite[Prop. 6.7.4]{Gaitsgory-Rozenblyum:dgindschemes}, to an explicit derived affine ind-scheme computing the formal neighbourhood of $\{f_\bullet = 0\}$ in $\Spec A$.

Given two such families $f_\bullet = (f_1,\dots, f_n)$ and $g_\bullet=(g_1,\dots,g_m)$, we denote by $f_\bullet g_\bullet$ the family in $\pi_0 A$ consisting of all possible products between an element of $f_\bullet$ and an element of $g_\bullet$, while $f_\bullet \cup g_\bullet$ denotes $(f_1,\dots,f_n,g_1,\dots,g_m)$.

\begin{lem}\label{lem:fiberprodKoszul}
	For any $A \in \sCAlg_k$ and for any families $f_\bullet = (f_1, \dots, f_n)$ and $g_\bullet = (g_1, \dots, g_m)$ of elements of $\pi_0 A$, there is a natural equivalence
	\[
	\lim_p A^{f_\bullet^p g_\bullet^p} \simeq \lim_p \left( A^{f_\bullet^p} \newtimes_{ A^{f_\bullet^p} \otimes_A A^{g_\bullet^p}} A^{g_\bullet^p} \right).
	\]
\end{lem}

\begin{sublem}\label{sublem:doublequotients}
  Let $f, g \in \pi_0 A$. There is a natural equivalence $A^{f,g} \simeq (A^f)^g$ (where in the right hand side, $g$ is seen as an element in $\pi_0(A^f)$).
\end{sublem}

\begin{proof}
 It follows from contemplating the pushout diagrams
\[
  \begin{tikzcd}
   A[x,y] \ar{r}{x \,\mapsto f} \ar{d}[swap,sloped]{\rotatebox{90}{$x$}\,\mapsto\, \rotatebox{90}{$0$}}
   & A[y] \ar{d} \ar{r}{y \,\mapsto g} & A \ar{d} \\
   A[y] \ar{r} \ar{d}[swap,sloped]{\rotatebox{90}{$y$}\,\mapsto\, \rotatebox{90}{$0$}} & A^f[y] \ar{r}{y \,\mapsto g} \ar{d}[swap,sloped]{\rotatebox{90}{$y$}\,\mapsto\, \rotatebox{90}{$0$}} & A^f \ar{d} \\
   A \ar{r} & A^f \ar{r} & A^{f,g}.
  \end{tikzcd}
\]
\end{proof}

\begin{sublem}\label{sublem:fiberproductKoszulbabycase}
 Let $f, g \in \pi_0 A$. There is a natural equivalence
 \begin{equation}\label{eq:fiberproductKoszulbabycase}
  A^{fg} \simeq A^f \newtimes_{A^{f,g}} A^g. 
 \end{equation}
\end{sublem}

\begin{proof}
  To prove this sublemma, we will rely on explicit models in the (equivalent model) category of connective commutative dg-algebras. We thus assume that $A$ is now an explicit cdga $\cdots \to A^{-1} \to A^0 \to 0$, and we fix lifts of $f$ and $g$ to $A^0$. We will abusively denote those lifts by $f$ and $g$ as well.
	
	Denote by $x$ and $y$ generators of degree $-1$, whose images by the differential are $f$ and $g$ respectively. Then the pullback on the right hand side of \eqref{eq:fiberproductKoszulbabycase} is the (homotopy) pullback
\begin{equation}\label{eq:pullbackofKoszul} A[x] \newtimes_{A[x,y]} A[y]. \end{equation}
In order to compute this homotopy pullback, it suffices to resolve $A[x] \to A[x,y]$ with a fibration.
For example, consider the algebra $A[x,\bar{x}, d\bar{x}]$, where the added generators $\bar{x}, d\bar{x}$ are in degree $-1$ and $0$ respectively, and the differential sends $\bar{x}$ to $d\bar{x}$. It is straightforward to check that the assignments
\[ x \mapsto x, \ \ \  \bar{x} \mapsto y, \ \ \  d\bar{x} \mapsto g \]
define a fibration $A[x,\bar{x}, d\bar{x}] \to A[x,y]$, replacing $A[x] \to A[x,y]$.

It follows that an explicit model for the homotopy pullback \eqref{eq:pullbackofKoszul} is given by the strict pullback
\[ B = A[x,\bar{x}, d\bar{x}]  \newtimes_{A[x,y]} A[y]. \]
Explicitely, the elements of $B$ are polynomials in $A[x,\bar{x}, d\bar{x}] $ such that, if evaluated in $\bar{x}=y, d\bar{x}=g$, reduce to polynomials involving $y$ as unique variable. Said differently, the elements of $B$ are the polynomials in $A[x,\bar{x}, d\bar{x}] $ of the form
\[ x (g-d\bar{x}) q_1(\bar{x}, d\bar{x}) + q_2 (\bar{x}, d\bar{x}) \]
where $q_1,q_2 \in A[\bar{x}, d\bar{x}] $.
We now claim that $B$ is quasi isomorphic to $A[z]$, where $z$ is a degree $-1$ generator with differential $fg$. In fact, setting $t=x(g-d\bar{x})+f\bar{x}$, we have that $B=A[t,\bar{x},d\bar{x}]$, where now the differential of the generator $t$ is $fg$. It is now easy to verify that the natural map $A[z] \to B$ sending $z$ to $t$ is a quasi-isomorphism.

Noticing that $A[z]$ is a model for $A^{fg}$ concludes the proof of the sublemma.
\end{proof}

\begin{proof}[Proof of \cref{lem:fiberprodKoszul}]
We proceed by induction on $m + n$. The case $m + n = 2$ is either trivial (if either $m$ or $n$ vanish) or follows directly from \cref{sublem:fiberproductKoszulbabycase} in the case $m = n = 1$.

	We now prove the inductive step. Let $f_\bullet = (f_1, \dots, f_n)$ and $g_\bullet = (g_1, \dots, g_m)$ be families of elements of $H^0( A)$. Suppose without loss of generality that $m>1$, and denote by $\bar{g}_\bullet$ the family $(g_1, \dots, g_{m-1})$. Then \cref{sublem:doublequotients} yields
	\[ \lim_p A^{f_\bullet^p q_\bullet^p } \simeq \lim_p \left( A^{(f_\bullet^pg_m^p)} \right)^{(f_\bullet^p \bar{g}_\bullet^p)}.
	\]
	By finality of the diagonal, the last limit can also be written as
	\[ \lim_p \left( A^{(f_\bullet^pg_m^p)} \right)^{(f_\bullet^p \bar{g}_\bullet^p)} \simeq \lim_{p,q} \left( A^{(f_\bullet^q g_m^q)} \right)^{(f_\bullet^p \bar{g}_\bullet^p)}.\]
	Setting $B_q =  A^{f_\bullet^q g_m^q}$, we can now use the inductive hypothesis to obtain an equivalence
	\[ \lim_p A^{f_\bullet^p q_\bullet^p } \simeq \lim_{p,q} \left( B_q \right)^{f_\bullet^p \bar{g}_\bullet^p} \simeq \lim_{p,q}   \left( B_q^{f_\bullet^p} \newtimes_{ B_q^{f_\bullet^p} \otimes_{B_q} B_q^{\bar{g}_\bullet^p}} B_q^{\bar{g}_\bullet^p} \right) \simeq  \lim_{p}   \left( B_p^{f_\bullet^p} \newtimes_{ B_p^{f_\bullet^p} \otimes_{B_p} B_p^{\bar{g}_\bullet^p}} B_p^{\bar{g}_\bullet^p} \right) \]
	where in the last identification we used once again a finality argument. In other words, invoking \cref{sublem:doublequotients}, we obtained a Cartesian square
	\begin{equation}\label{eq:pullbackofproalg}
	\begin{tikzcd}
	\displaystyle \lim_p A^{f_\bullet^p q_\bullet^p } \ar{r} \ar{d} & \displaystyle \lim_p A^{f_\bullet^p \cup (f_\bullet^pg_m^p)} \ar{d} \\
	\displaystyle \lim_p A^{\bar{g}_\bullet^p \cup (f_\bullet^pg_m^p)} \ar{r} & \displaystyle \lim_p A^{f_\bullet^p \cup \bar{g}_\bullet^p \cup (f_\bullet^pg_m^p)}.
	\end{tikzcd}
	\end{equation}
	Notice however \cite[Prop. 6.7.4]{Gaitsgory-Rozenblyum:dgindschemes} implies that if $h_\bullet$ is a family in $\pi_0(A)$, then $\lim_p A^{h_\bullet^p}$ only depends on the ideal of $\pi_0(A)$ generated by the elements of $h_\bullet$. Therefore, the Cartesian square \eqref{eq:pullbackofproalg} can be equivalently rewritten as
	\begin{equation}\label{eq:pullbackofproalg2}
	\begin{tikzcd}
	\displaystyle \lim_p A^{f_\bullet^p q_\bullet^p } \ar{r} \ar{d} & \displaystyle \lim_p A^{f_\bullet^p } \ar{d} \\
	\displaystyle \lim_p A^{\bar{g}_\bullet^p \cup (f_\bullet^pg_m^p)} \ar{r} & \displaystyle \lim_p A^{f_\bullet^p \cup \bar{g}_\bullet^p}.
	\end{tikzcd}
	\end{equation}
	But repeating the same arguments as before, we obtain that the bottom map of diagram \eqref{eq:pullbackofproalg2} fits in a Cartesian square
	\begin{equation}\label{eq:pullbackofproalg3}
	\begin{tikzcd}
	\displaystyle \lim_p A^{\bar{g}_\bullet^p \cup (f_\bullet^pg_m^p)} \ar{r} \ar{d} & \displaystyle \lim_p A^{f_\bullet^p \cup \bar{g}_\bullet^p} \ar{d} \\ 
	\displaystyle \lim_p A^{g_\bullet^p} \ar{r} & \displaystyle \lim_p A^{f_\bullet^p \cup g_\bullet^p}
	\end{tikzcd}
	\end{equation}
	and composing the two cartesian squares \eqref{eq:pullbackofproalg2} and \eqref{eq:pullbackofproalg3} yields the desired statement.
\end{proof}

\begin{lem}\label{lem:glueperfformal}
 Let $Y$ be a Noetherian derived affine scheme and $Z_1, Z_2 \subset Y$ be closed subschemes. Let $Z_{12} := Z_1 \cap Z_2$ and $Z = Z_1 \cup Z_2$.
 The canonical functor
 \[
  \Perf_{\widehat Z} \to \Perf_{\widehat Z_1} \newtimes_{\Perf_{\widehat Z_{12}}} \Perf_{\widehat Z_2}
 \]
 is an equivalence.
\end{lem}

\begin{proof}
	Let $A$ denote (the homotopy type of) the simplicial algebra of functions on $Y$.
	Since $A$ is Noetherian, we can and do fix generators $f_\bullet = (f_1, \dots, f_n)$ and $g_\bullet=(g_1, \dots, g_m)$ of the ideals $I$ and $J$ of $\pi_0 A$ of functions vanishing on $Z_1$ and $Z_2$ respectively.
	
	For any positive integer $p$, we write $f_\bullet^p$ for the family $(f_1^p, \dots, f_n^p)$ (and similarly for $g_\bullet^p$).
	By \cite[Prop. 6.7.4]{Gaitsgory-Rozenblyum:dgindschemes}, we have
	\[
	\widehat Z_1 \simeq \colim_p \Spec(A^{f_\bullet^p}),
	\hspace{2em}
	\widehat Z_2 \simeq \colim_p \Spec(A^{g_\bullet^p})
	\hspace{2em}\text{and} \hspace{2em} 
	\widehat Z_{12} \simeq \colim_p \Spec\left(A^{f_\bullet^p} \otimes_A A^{g_\bullet^p}\right).
	\]
	Notice that for every $p$ the morphisms 
	\[ A^{f^p_\bullet} \to A^{f_\bullet^p} \otimes_A A^{g_\bullet^p}, \ \ \ \ \ A^{g^p_\bullet} \to A^{f_\bullet^p} \otimes_A A^{g_\bullet^p}\]
	are surjective on $H^0$. Therefore, it follows from Theorems 16.2.0.1 and 16.2.3.1 of \cite{Lurie_SAG} that, for every positive integer $p$, the square
	\[  \begin{tikzcd}
	\Perf_{A^{f_\bullet^p} \newtimes_{ A^{f_\bullet^p} \otimes_A A^{g_\bullet^p}} A^{g_\bullet^p}} \ar{r} \ar{d} & \Perf_{A^{f_\bullet^p}} \ar{d} \\ 
	\Perf_{A^{f_\bullet^p}} \ar{r} & \Perf_{A^{f_\bullet^p} \otimes_A A^{g_\bullet^p}}
	\end{tikzcd}
	\]
	is Cartesian. Taking the limit over $p$ yields an equivalence
	\[  \lim_p  \Perf_{A^{f_\bullet^p} \newtimes_{ A^{f_\bullet^p} \otimes_A A^{g_\bullet^p}} A^{g_\bullet^p}}  \simeq \Perf_{\widehat Z_1} \newtimes_{\Perf_{\widehat Z_{12}}} \Perf_{\widehat Z_2}\]
	and algebraization implies that \[  \lim_p  \Perf_{A^{f_\bullet^p} \newtimes_{ A^{f_\bullet^p} \otimes_A A^{g_\bullet^p}} A^{g_\bullet^p}}  \simeq  \Perf_{\lim_{p} A^{f_\bullet^p} \newtimes_{ A^{f_\bullet^p} \otimes_A A^{g_\bullet^p}} A^{g_\bullet^p}}. \] 
	Using Lemma \ref{lem:fiberprodKoszul}, we obtain therefore that
	\begin{equation}\label{eq:fiberproductofPerf} \Perf_{\lim_{p} A^{f^p_\bullet g_\bullet^p}} \simeq \Perf_{\widehat Z_1} \newtimes_{\Perf_{\widehat Z_{12}}} \Perf_{\widehat Z_2}. \end{equation}
	
	In virtue of \cite[Prop 6.7.4]{Gaitsgory-Rozenblyum:dgindschemes}, the limit $\lim_p A^{f^p_\bullet g_\bullet^p}$ only depends on the radical of the ideal of $H^0(A)$ generated by the family $f_\bullet g_\bullet$. But the elements of the family $f_\bullet g_\bullet$ generate the ideal $IJ$, whose radical coincides with $I \cap J$. Therefore,
	\[ \widehat{Z} \simeq \colim_p \Spec(A^{f_\bullet^p g_\bullet^p}) \]
	which, together with equivalence \eqref{eq:fiberproductofPerf}, concludes the proof.
\end{proof}

From now on and up until the end of this section, we fix a test Noetherian affine scheme $S$, and $(F_1, F_2)$ a good pair of flags in $X \times S$. We write $Y = X \times S$ and $F_i = (D_i, Z_i)$.

\begin{lem}\label{lem:BGcocD12Z12}
 The following squares of fiber functors are both $\PerfX$-coCartesian and $\BG$-coCartesian (see \cref{defBGcoc}):
 \[
  \begin{tikzcd}
   \fibf_{\widehat{D_1 \cap D_2}}^\affinize \ar{r} \ar{d} \namecell{dr}{(\sigma_D)} & \fibf_{\widehat D_1}^\affinize \ar{d} &&
   \fibf_{\widehat{Z_1 \cap Z_2}}^\affinize \ar{r} \ar{d} \namecell{dr}{(\sigma_Z)} & \fibf_{\hZ_1}^\affinize \ar{d} \\
   \fibf_{\widehat D_2}^\affinize \ar{r} & \fibf_{\widehat{D_1 + D_2}}^\affinize &&
   \fibf_{\hZ_2}^\affinize \ar{r} & \fibf_{\widehat{Z_1 + Z_2}}^\affinize.
  \end{tikzcd}
 \]
\end{lem}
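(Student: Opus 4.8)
The plan is to reduce both statements to the case of perfect complexes via Lemma~\ref{sublem:bgperflocal} (the $\Perf$-locality of $\rB\mathbf G$), and there to invoke the Beauville--Laszlo--type gluing of \cite{Bhatt_algebraization_2014}. Concretely: to say that $(\sigma_D)$ is $\rB\mathbf G$-coCartesian is to say that for every Noetherian affine test scheme $T$ over $Y$, the square of groupoids obtained by applying $\mathbf{Bun}^\mathbf{G}$ to
\[
 \fibf^\affinize_{\widehat{D_1+D_2}}(T) \longleftarrow \fibf^\affinize_{\widehat{D_1}}(T),\ \fibf^\affinize_{\widehat{D_2}}(T) \longleftarrow \fibf^\affinize_{\widehat{D_1\cap D_2}}(T)
\]
is Cartesian, and similarly for $(\sigma_Z)$. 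By Lemma~\ref{sublem:bgperflocal} it is enough to prove the corresponding statement for $\Perf$ in place of $\mathbf{Bun}^\mathbf{G}$; more precisely, we must check that the pushout $\fibf^\affinize_{\widehat{D_1}}\amalg_{\fibf^\affinize_{\widehat{D_1\cap D_2}}}\fibf^\affinize_{\widehat{D_2}}$ computed in $\FibF_Y$ maps to $\fibf^\affinize_{\widehat{D_1+D_2}}$ by a morphism $F$ such that $\Perf(F(T))$ is an equivalence for every such $T$.

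The key local input is this. Fixing $T = \Spec R$ over $Y$, let $I_1, I_2 \subset R$ be the ideals cutting out $D_{1,T}, D_{2,T}$; then $I_1\cap I_2$ cuts out $(D_1+D_2)_T = (D_1\cup D_2)_T$ (using that $(D_1,D_2)$ is a good pair, so $D_1+D_2 = D_1\cup D_2$, see Remark~\ref{rem:emptyflag}). By Corollary~\ref{cor:BunGsum}, or rather by the ind-scheme pushout identity of Lemma~\ref{completionofunion} and its proof (Claim~(1): the square of pro-rings $\widehat R_{I_1\cap I_2} \to \widehat R_{I_1}, \widehat R_{I_2} \to \widehat R_{I_1+I_2}$ is Cartesian in $\mathbf{Pro}(\mathbf{CRing})$, whence, after affinization, $\Spec\widehat R_{I_1+I_2}$ is the pushout $\Spec\widehat R_{I_1}\amalg_{\Spec\widehat R_{I_1\cap I_2}}\Spec\widehat R_{I_2}$), the affinized completions fit into a pushout square. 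Now the identity $\widehat R_{I_1+I_2} \simeq \widehat R_{I_1}\times_{\widehat R_{I_1+I_2}/(\text{something})}\cdots$ is exactly of the shape handled by \cite[Ex.~5.8 and Prop.~5.6~(6)]{Bhatt_algebraization_2014}: one has flat maps $\Spec\widehat R_{I_i} \to \Spec\widehat R_{I_1+I_2}$ and the pushout diagram of schemes, and $\Perf$ of the glued scheme is the fiber product of the $\Perf$'s of the pieces over the overlap. This yields that $\Perf(\fibf^\affinize_{\widehat{D_1+D_2}}(T)) \xrightarrow{\sim} \Perf(\fibf^\affinize_{\widehat{D_1}}(T))\times_{\Perf(\fibf^\affinize_{\widehat{D_1\cap D_2}}(T))}\Perf(\fibf^\affinize_{\widehat{D_2}}(T))$, and then Lemma~\ref{sublem:bgperflocal} upgrades this to the statement for $\mathbf{Bun}^\mathbf{G}$, giving that $(\sigma_D)$ is $\rB\mathbf G$-coCartesian. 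The argument for $(\sigma_Z)$ is identical, with $Z_i$, $J_i = $ ideal of $Z_{i,T}$ in place of $D_i$, $I_i$; here one uses that $Z_1\cup Z_2$ is cut out by $J_1\cap J_2$ and that $\widehat R_{J_1\cap J_2}$ is the Cartesian pro-ring, which is again Lemma~\ref{completionofunion}, Claim~(1), verbatim (the Artin--Rees step does not care whether the subscheme is a divisor).

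The step I expect to require the most care is making precise that the diagram of \emph{fiber functors} $(\sigma_D)$ (respectively $(\sigma_Z)$) is coCartesian in $\FibF_Y$ \emph{in a way compatible with the pointwise pushout of schemes}, i.e. that the pushout $\fibf^\affinize_{\widehat{D_1}}\amalg_{\fibf^\affinize_{\widehat{D_1\cap D_2}}}\fibf^\affinize_{\widehat{D_2}}$ of fiber functors is computed pointwise and agrees, after evaluation at each $T$, with the pushout of schemes $\Spec\widehat R_{I_1}\amalg_{\Spec\widehat R_{I_1\cap I_2}}\Spec\widehat R_{I_2}$. This is where the comparison between completion, affinization, and base change (Lemma~\ref{lem:formalcompletion:basechange}, algebraization Proposition~\ref{prop:algebraization}, and Remark~\ref{comparisonwithpbfiberfunctor}) must be threaded carefully, and it is the reason Lemma~\ref{completionofunion} was proved with the explicit pro-ring bookkeeping in the first place; once that identification is in hand, everything else is a formal consequence of \cite{Bhatt_algebraization_2014} and Lemma~\ref{sublem:bgperflocal}. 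Note that this lemma is precisely the input one needs later (see the proof of Lemma~\ref{magiclemma}) to run the cube argument, so the statement is deliberately packaged in the $\rB\mathbf G$-coCartesian language of Definition~\ref{defBGcoc}.
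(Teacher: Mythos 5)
Your instinct to lean on Lemma~\ref{completionofunion} and Corollary~\ref{cor:BunGsum} is exactly right, and in fact the paper's proof is nothing more than \textquotedblleft this is the content of Corollary~\ref{cor:BunGsum}.\textquotedblright\ The rest of your argument, however, introduces a detour through $\Perf$-locality and Bhatt that is both unnecessary and, as written, contains a concrete error.

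The central problem is the claim that \textquotedblleft after affinization, $\Spec\widehat R_{I_1+I_2}$ is the pushout $\Spec\widehat R_{I_1}\amalg_{\Spec\widehat R_{I_1\cap I_2}}\Spec\widehat R_{I_2}$.\textquotedblright\ Set aside the typographical confusion between $I_1\cap I_2$ (which cuts out the union $Z_1\cup Z_2$) and $I_1+I_2$ (which cuts out the intersection); the real issue is the assertion that $\Spec$ of the completed rings forms a cocartesian diagram. This is exactly what the footnote in the proof of Lemma~\ref{completionofunion} warns against: \textquotedblleft if we take $\Spec$ instead of $\mathrm{Spf}$ of the same diagram, we do \emph{not} get a cocartesian diagram of schemes.\textquotedblright\ Indeed, the pointwise pushout of the affinized fiber functors is $\Spec(\widehat R_{I_1}\times_{\widehat R_{I_1+I_2}}\widehat R_{I_2})$, and this ring is not $\widehat R_{I_1\cap I_2}$ — the identification only holds at the level of pro-rings (Claim~(1)), hence of ind-schemes after $\mathrm{Spf}$ (Claim~(2)). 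Once you believe the $\Spec$-pushout claim, your Bhatt citation is being asked to prove a $\Perf$-equivalence that it does not cover; \cite[Ex.~5.8 and Prop.~5.6(6)]{Bhatt_algebraization_2014} are used later in the paper (in Lemma~\ref{magiclemma} and Lemma~\ref{lem:BGcocD12Z12punctured}) for the punctured neighbourhood $\widehat D_1\smallsetminus(D_1+D_2)$ glued via a flat map, and even there the conclusion is only $\BG$-$1$-coCartesian, not $\BG$-coCartesian.

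The actual mechanism is simpler and does not pass through $\Perf$ at all. Lemma~\ref{completionofunion} gives a pushout of \emph{ind-schemes} $\widehat{Z_1\cap Z_2}\to\widehat Z_1,\widehat Z_2\to\widehat{Z_1\cup Z_2}$; since $\mathbf{Bun}$ is representable and therefore turns colimits into limits, this directly gives the first (non-affinized) isomorphism of Corollary~\ref{cor:BunGsum}; algebraization (Proposition~\ref{prop:algebraization}) then identifies $\mathbf{Bun}_{\fibf_{\widehat Z}}$ with $\mathbf{Bun}_{\fibf^\affinize_{\widehat Z}}$, yielding the affinized version. The conclusion is that the square $(\sigma_D)$ — and likewise $(\sigma_Z)$ — is $\BG$-coCartesian, with no $\Perf$-locality input needed. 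If you wanted to run the argument via $\Perf$ you could, since $\Perf$ also sends the $\mathrm{Spf}$-pushout to a pullback and algebraization holds for $\Perf$, but the correct input is still Lemma~\ref{completionofunion}, not a fiber-product-of-rings identity, and not the Bhatt gluing you cite.
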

\begin{proof}
 After evaluating the fiber functors involved on a derived affine scheme, this becomes a straightforward application of \cref{lem:glueperfformal} and \ref{sublem:bgperflocal}.
\end{proof}

\begin{lem}\label{lem:BGcocD12Z12punctured}
 The following commutative squares of fiber functors are $\PerfX$-$1$-coCartesian and $\BG$-$1$-coCartesian (see \cref{defBGcoc}):
 \[
  \begin{tikzcd}
   \fibf_{\widehat{D_1 \cap D_2} \smallsetminus D_1 + D_2} \ar{r} \ar{d} \namecell{dr}{(\sigma_D^\circ)} & \fibf_{\widehat D_1 \smallsetminus D_1 + D_2} \ar{d} &&
   \fibf_{\widehat{Z_1 \cap Z_2} \smallsetminus D_1 + D_2} \ar{r} \ar{d} \namecell{dr}{(\sigma_Z^\circ)} & \fibf_{\hZ_1 \smallsetminus D_1 + D_2} \ar{d} \\
   \fibf_{\widehat D_2 \smallsetminus D_1 + D_2} \ar{r} & \fibf_{\widehat{D_1 + D_2} \smallsetminus D_1 + D_2}  &&
   \fibf_{\hZ_2 \smallsetminus D_1 + D_2} \ar{r} & \fibf_{\widehat{Z_1 + Z_2} \smallsetminus D_1 + D_2}.
  \end{tikzcd}
 \]
\end{lem}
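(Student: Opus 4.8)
The strategy is to deduce the punctured statement from the unpunctured one (\cref{lem:BGcocD12Z12}) by a base-change argument along the open immersion into $\widehat{D_1 + D_2}$, exactly as in the proof of \cref{magiclemma}. First I would record that, for each of the four ``corners'' $W \in \{D_1 \cap D_2, D_1, D_2, D_1 + D_2\}$ (and similarly with $Z_i$ in place of $D_i$), there is a canonical identification of fiber functors $\fibf_{\widehat W \smallsetminus D_1 + D_2} \simeq \fibf_{\widehat W}^\affinize \newtimes_Y (Y \smallsetminus D_1 + D_2)$, valid because $D_1 + D_2$ is a closed subscheme of $Y$ through which all of these formal neighbourhoods map, together with \cref{pullbackopencompl} to commute the open complement with the relevant fibre products. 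Thus each square $(\sigma_D^\circ)$, $(\sigma_Z^\circ)$ is obtained from $(\sigma_D)$, $(\sigma_Z)$ of \cref{lem:BGcocD12Z12} by applying the functor $(-) \newtimes_Y (Y \smallsetminus D_1 + D_2)$, i.e.\ by pulling back along the open immersion $Y \smallsetminus D_1 + D_2 \hookrightarrow Y$.

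Next I would translate ``$\rB\mathbf G$-coCartesian'' into a statement about $\Perf$ via Tannaka duality, so that the passage to the open subscheme is transparent. Concretely, pulling back the $\rB\mathbf G$-coCartesian square $(\sigma_D)$ along the flat map to $Y \smallsetminus D_1 + D_2$ still yields a Cartesian square after applying $\mathbf{Bun}^\mathbf{G}$, provided one first checks the analogous statement for $\Perf$; for $\Perf$ this is a Zariski-localisation (restriction to an open) of the descent square, and is automatic. Alternatively — and this is the cleaner route given the tools in the excerpt — one invokes \cref{lem:PerfcocAndBGcoc}: it suffices to show that for every Noetherian test scheme $T$ over $Y$ the restriction functor
\[
\Perf\bigl(\fibf_{\widehat{D_1+D_2} \smallsetminus D_1 + D_2}(T)\bigr) \longrightarrow \Perf\bigl(\fibf_{\widehat D_1 \smallsetminus D_1 + D_2}(T)\bigr) \underset{\Perf(\fibf_{\widehat{D_1 \cap D_2}\smallsetminus D_1+D_2}(T))}{\times} \Perf\bigl(\fibf_{\widehat D_2 \smallsetminus D_1 + D_2}(T)\bigr)
\]
is fully faithful, and similarly for the $Z$-square. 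This in turn follows from the unpunctured $\rB\mathbf G$-coCartesian (indeed $\Perf$-Cartesian) squares of \cref{lem:BGcocD12Z12} by restricting along the quasi-compact open immersion, since restriction of perfect complexes to an open is compatible with all the fibre products in sight, and since full faithfulness is inherited by restriction along a Zariski open covering. Note that one does \emph{not} expect the punctured squares to be $\rB\mathbf G$-coCartesian on the nose (i.e.\ with $\mathbf{Bun}^\mathbf{G}$ giving a Cartesian, not merely $1$-Cartesian, square), which is why the statement is phrased with ``$1$-''; the failure of essential surjectivity is the usual phenomenon that gluing a bundle on $\widehat D_1 \smallsetminus D_1 + D_2$ with one on $\widehat D_2 \smallsetminus D_1 + D_2$ over their overlap need not descend to $\widehat{D_1 + D_2}\smallsetminus D_1 + D_2$ — only morphisms glue, giving full faithfulness.

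The main obstacle, as in \cref{magiclemma}, is bookkeeping the fiber-functor identifications: one must check carefully that $\widehat{D_1 \cap D_2}$ sitting inside both $\widehat D_1$ and $\widehat D_2$ remains, after removing $D_1 + D_2$, the correct overlap, and that the affinization interacts well with these complements. All of this is handled by repeated application of \cref{lem:formalcompletion:basechange}, \cref{pullbackopencompl}, algebraization (\cref{prop:algebraization}), and the fact that $D_1 \cap D_2 = Z_1 \cap Z_2$ for a good pair, exactly as in the cube diagram at the end of the proof of \cref{magiclemma}. Once the four (resp.\ four) corners are matched up, \cref{lem:PerfcocAndBGcoc} applied corner-by-corner to the restricted squares of \cref{lem:BGcocD12Z12} closes the argument. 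I would present the $D$-case in full and remark that the $Z$-case is identical, using $Z_1 \cap Z_2 = D_1 \cap D_2$ to phrase the punctured locus uniformly as $\smallsetminus D_1 + D_2$.
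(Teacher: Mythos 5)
Your plan is fundamentally the same as the paper's: reduce $(\sigma_D^\circ)$ to the Milnor square of formal completions underlying \cref{lem:BGcocD12Z12} (i.e.\ \cref{completionofunion}) by flat restriction to the open complement of $D_1+D_2$, then invoke \cref{lem:PerfcocAndBGcoc}. You also correctly anticipate that only $1$-coCartesianness (full faithfulness) survives the restriction, and why. So the shape of the argument matches.

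The one place where your write-up is too loose is the very step that carries the lemma: ``full faithfulness is inherited by restriction along a Zariski open covering'' is not a self-contained principle, and as stated one cannot just restrict a $\Perf$-Cartesian square to an open and read off full faithfulness for the restricted square. What actually makes this work — and what the paper does explicitly — is to go one level deeper: \cref{completionofunion} gives a \emph{ring-level} Milnor square $A \simeq A_1 \times_{A_{12}} A_2$ for the affinized completions (and a Mittag--Leffler argument shows this is a short exact, hence homotopy, pullback). Because the open immersion $U := \Spec A \smallsetminus (D_1+D_2)_T \hookrightarrow \Spec A$ is flat and the maps $\Spec A_i \to \Spec A$ are affine, pulling back this exact sequence along $U$ yields a homotopy pullback square of $\cO_U$-modules
\[
\begin{tikzcd}
\cO_U \ar{r} \ar{d} & \cO_{U_1} \ar{d} \\ \cO_{U_2} \ar{r} & \cO_{U_{12}}
\end{tikzcd}
\]
and then, for $M,N \in \Perf(U)$, tensoring with the dualizable object $M^\vee \otimes N$ and applying $\Gamma(U,-)$ (both of which preserve finite limits) gives exactly the required full faithfulness. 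In other words, the correct chain is: ring Milnor square $\Rightarrow$ (by flatness of $U \to \Spec A$) structure-sheaf Milnor square on $U$ $\Rightarrow$ full faithfulness of the $\Perf$-restriction. You gesture towards this via ``$\Perf$-Cartesian'' and ``compatible with fibre products'', but you should make the structure-sheaf Milnor square on $U$ explicit rather than route through the unpunctured $\Perf$-Cartesian statement (which is in any case not what \cref{lem:BGcocD12Z12} asserts — that lemma is a $\mathbf{Bun}$-level statement). The paper bypasses \cref{lem:BGcocD12Z12} entirely and cites \cref{completionofunion} directly, which is both shorter and avoids having to separately establish a $\Perf$-level version of \cref{cor:BunGsum}.

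Your closing remark — running the $Z$-case by replacing $A$ with the ring of functions on $\widehat{Z_1\cup Z_2}$, and using $D_1\cap D_2 = Z_1\cap Z_2$ to express the punctured locus uniformly — matches the paper's treatment.
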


\begin{proof}
 We fix a test scheme $T \in \Aff_Y$. We denote by $A$ the ring of functions of the affine ind-scheme $\fibf_{\widehat{D_1 + D_2}}(T)$, by $A_i$ that of $\fibf_{\widehat{D_i}}(T)$ for $i = 1,2$ and by $A_{12}$ that of $\fibf_{\widehat{D_1 \cap D_2}}(T)$.
 By \cref{lem:glueperfformal}, we have $A \simeq A_1 \times_{A_{12}} A_2$.
 Denote by $U$ the open subscheme $U := \fibf_{\widehat{D_1 + D_2} \smallsetminus D_1 + D_2}(T)$ of $\fibf_{\widehat{D_1 + D_2}}^\affinize(T) = \Spec(A)$.
 Finally, we denote by $U_i$ the intersection $U_i := U \times_{\Spec A} \Spec A_i$, for $i = 1, 2$ or $12$.
 Because $U \to \Spec A$ is flat, we get a (homotopy) pullback square
 \[
  \begin{tikzcd}
   \cO_U \ar{r} \ar{d} & \cO_{U_1} \ar{d} \\ \cO_{U_2} \ar{r} & \cO_{U_{12}}.
  \end{tikzcd}
 \]
 in the derived category of $U$. This implies that the canonical functor
 \[
 \begin{tikzcd}
  \Perf(U) \ar[hook]{r} & \Perf(U_1) \underset{\Perf(U_{12})}{\times} \Perf(U_2)
 \end{tikzcd}
 \]
 is fully faithful. By \cref{lem:PerfcocAndBGcoc}, we deduce that $(\sigma_D^\circ)$ is $\BG$-$1$-coCartesian.
 The case of $(\sigma^\circ_Z)$ is isomorphic, up to replacing $A$ with the ring of functions on $\widehat{Z_1 \cup Z_2}$, and changing $U$ accordingly.
\end{proof}

\begin{cor}\label{cor:BGcoctrivializations}
 The following commutative square of fiber functors is $\PerfX$-$1$-coCartesian (and thus $\BG$-$1$-coCartesian as well):
 \[
  \begin{tikzcd}
   \fibf_{\widehat{Z_1 \cap Z_2}}^\affinize \ar{r} \ar{d} \namecell{dr}{(\tau)} & \fibf_{\widehat D_1 \smallsetminus D_1 + D_2 \cup \hZ_1} \ar{d} \\
   \fibf_{\widehat D_2 \smallsetminus D_1 + D_2 \cup \hZ_2} \ar{r} & \fibf_{\widehat{D_1 + D_2} \smallsetminus D_1 + D_2 \cup \widehat{Z_1 \cup Z_2}}
  \end{tikzcd}
 \]
\end{cor}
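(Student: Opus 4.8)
The strategy is to assemble the square $(\tau)$ as a pasting of smaller squares, each of which we have already shown (or can easily show) to be $\BG$-coCartesian or $\BG$-$1$-coCartesian, and then invoke the pasting law for $1$-Cartesian squares (\cref{lem:pullbacksofncart}). Concretely, I would first unfold the definitions of the ``union-of-fiber-functors'' appearing in the statement: by \cref{def_capcupfibf}, $\fibf_{\widehat D_i \smallsetminus D_1 + D_2 \cup \widehat Z_i}$ is the pushout $\fibf_{\widehat Z_i}^\affinize \amalg_{\fibf_{\widehat Z_i \smallsetminus D_1 + D_2}} \fibf_{\widehat D_i \smallsetminus D_1 + D_2}$, and likewise $\fibf_{\widehat{D_1 + D_2} \smallsetminus D_1 + D_2 \cup \widehat{Z_1 \cup Z_2}}$ is the pushout of $\fibf_{\widehat{Z_1 \cup Z_2}}^\affinize$ and $\fibf_{\widehat{D_1 + D_2} \smallsetminus D_1 + D_2}$ over their intersection. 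Under $\mathbf{Bun}^\mathbf G$ these pushouts become the corresponding fiber products of groupoids, by definition.

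Next, I would build a three-dimensional commutative diagram of fiber functors, exactly in the style of the cube appearing in the proof of \cref{magiclemma}, whose front face is $(\tau)_{\rB\mathbf G}$ and whose back face decomposes into two already-known pieces. The natural choice is to put the two squares $(\sigma_Z^\circ)$ (the $Z$-version of \cref{lem:BGcocD12Z12punctured}) and $(\sigma_Z)$ (from \cref{lem:BGcocD12Z12}) in the back, glued along $\fibf_{\widehat{Z_1 \cap Z_2}}^\affinize \leftarrow \fibf_{\widehat{Z_1 \cap Z_2} \smallsetminus D_1 + D_2}$ and $\fibf_{\widehat{Z_1 \cup Z_2}}^\affinize \leftarrow \fibf_{\widehat{Z_1 \cup Z_2}\smallsetminus D_1 + D_2}$; meanwhile the top and bottom faces of the cube should be the coCartesian squares in $\FibF_Y$ defining the four ``$\cup$'' fiber functors in sight. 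Since the top and bottom faces are coCartesian in $\FibF_Y$, applying $\mathbf{Bun}^\mathbf G$ turns them into (honestly) Cartesian squares of groupoids, and since the back face is (the pasting of) $\BG$-coCartesian and $\BG$-$1$-coCartesian squares, \cref{lem:pullbacksofncart} forces the front face $(\tau)_{\rB\mathbf G}$ to be $1$-Cartesian — which is precisely the assertion that $(\tau)$ is $\BG$-$1$-coCartesian.

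There is one subtlety to handle carefully in order to make the cube commute: I need the identifications, valid because $(F_1,F_2)$ is a \emph{good} pair, between the various punctured/completed fiber functors. In particular $D_1 + D_2 = D_1 \cup D_2$ (\cref{rem:emptyflag}), $Z_1 \cap Z_2 = D_1 \cap D_2$ (\cref{defin:goodpair}(2)), and the compatibility $\widehat{D_i \cap D_j}^\affinize \smallsetminus(D_1+D_2)$ really is a common ``intersection'' of the relevant punctured neighbourhoods — exactly the kind of manipulation carried out in the proof of \cref{lemfunctorialityofGrSwrtoflagslocal} using \cref{pullbackopencompl} and \cref{completionofunion}. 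So before assembling the cube I would record the chain of canonical isomorphisms of fiber functors over $Y$ identifying $\fibf_{\widehat{Z_1 \cap Z_2} \smallsetminus D_1 + D_2}$ with the fiber product of $\fibf_{\widehat Z_1 \smallsetminus D_1 + D_2}$ and $\fibf_{\widehat Z_2 \smallsetminus D_1 + D_2}$ over $\fibf_{\widehat{Z_1 \cup Z_2}\smallsetminus D_1 + D_2}$, which is what \cref{lem:BGcocD12Z12punctured} delivers.

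\textbf{Main obstacle.} The genuinely delicate point is bookkeeping the cube so that the back face really does factor as the pasting of $(\sigma^\circ_Z)$ and $(\sigma_Z)$ after the good-position identifications — i.e. checking that the ``intersection'' fiber functor computed two ways (once as $\fibf_{\widehat{D_1 \smallsetminus D_1+D_2 \cup \widehat Z_1}} \cap \fibf_{\widehat{D_2 \smallsetminus D_1+D_2 \cup \widehat Z_2}}$ inside $\fibf_{\widehat{D_1+D_2}\smallsetminus D_1+D_2 \cup \widehat{Z_1\cup Z_2}}$, once as $\fibf_{\widehat{Z_1 \cap Z_2}}^\affinize$) agree canonically. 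Once that identification is in hand the rest is a formal application of \cref{lem:pullbacksofncart} together with \cref{lem:PerfcocAndBGcoc}/\ref{sublem:bgperflocal}, so no further hard analysis is needed; the work is entirely in organizing the diagram chase.
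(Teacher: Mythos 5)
Your overall strategy is the right one — express $(\tau)$ as a colimit of squares already known to be $\BG$-(1-)coCartesian, then invoke \cref{lem:pullbacksofncart} — and you correctly identify $(\sigma_Z)$ and $(\sigma_Z^\circ)$ as two of the ingredients. But there is a genuine gap: you never bring in $(\sigma_D^\circ)$, the $D$-version from \cref{lem:BGcocD12Z12punctured}, and it is indispensable. The three non-$\affinize$ vertices of $(\tau)$ are the unions $\fibf_{\widehat D_i \smallsetminus D_1+D_2 \cup \widehat Z_i}$ ($i=1,2$ and the sum), and each of these pushouts has $\fibf_{\widehat D_i \smallsetminus D_1+D_2}$ as one of its two legs. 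Those punctured-divisor fiber functors occur only in $(\sigma_D^\circ)$, not in $(\sigma_Z)$ or $(\sigma_Z^\circ)$; you cannot reconstruct $(\tau)$ without it. What the paper actually does is exhibit $(\tau)$, vertexwise, as the pushout of the span $(\sigma_Z) \leftarrow (\sigma_Z^\circ) \rightarrow (\sigma_D^\circ)$. Under this identification, $Z_1 \cap Z_2 = D_1 \cap D_2$ (goodness) makes the top-left pushout degenerate to $\fibf_{\widehat{Z_1\cap Z_2}}^\affinize$, and the other three are exactly the union fiber functors of $(\tau)$. Applying $\mathbf{Bun}^\mathbf{G}$ turns this pushout span into the pullback span $(\sigma_Z)_{\BG} \to (\sigma_Z^\circ)_{\BG} \leftarrow (\sigma_D^\circ)_{\BG}$ with pullback $(\tau)_{\BG}$, and then \cref{lem:pullbacksofncart} finishes: $(\sigma_Z)_{\BG}$ is Cartesian, both $(\sigma_Z^\circ)_{\BG}$ and $(\sigma_D^\circ)_{\BG}$ are 1-Cartesian, so $(\tau)_{\BG}$ is 1-Cartesian.

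A secondary issue is that the ``cube'' you sketch does not close up combinatorially: a front face with four vertices cannot sit opposite a ``back face that decomposes into two squares,'' which has six. The three-dimensional picture you want is not the cube of the \cref{magiclemma} proof but the commuting square of squares
\[
\begin{tikzcd}
(\sigma_Z^\circ) \ar[r] \ar[d] & (\sigma_Z) \ar[d] \\ (\sigma_D^\circ) \ar[r] & (\tau)
\end{tikzcd}
\]
with $(\tau)$ the pushout, and \cref{lem:pullbacksofncart} is designed precisely for this configuration. Your instinct to track the ``intersection fiber functor computed two ways'' via goodness is correct and is exactly the check that the top-left pushout collapses; so once you add $(\sigma_D^\circ)$ and replace the cube by the square-of-squares, the argument is the paper's.
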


\begin{proof}
 The commutative squares of fiber functors from \cref{lem:BGcocD12Z12} and \cref{lem:BGcocD12Z12punctured} are related by canonical morphisms $(\sigma_Z) \leftarrow (\sigma_Z^\circ) \to (\sigma_D^\circ)$. The pushout of this diagram of squares is nothing but the square $(\tau)$.
 Applying \cref{lem:pullbacksofncart} (pointwise) to the induced diagram $(\sigma_Z)_{\PerfX} \to (\sigma_Z^\circ)_{\PerfX} \leftarrow (\sigma_D^\circ)_{\PerfX}$, using \cref{lem:BGcocD12Z12} and \cref{lem:BGcocD12Z12punctured}, implies that $(\tau)_{\PerfX}$ is $1$-Cartesian as announced.
\end{proof}

We can now conclude our proof of \cref{thm:factorization}
\begin{proof}[Proof of \cref{thm:factorization}]
 Consider the morphisms of commutative squares $(\sigma_D)_{\BG} \to (\tau)_{\BG} \leftarrow (*)$, where $(*)$ is the constant square with value $*$, and the map $(*) \to (\tau)_{\BG}$ selects the trivial bundles.
 Denote by $(\alpha)_{\textrm{Gr}}$ the pullback $(\sigma_D)_{\BG} \times_{(\tau)_{\BG}} (*)$.
 Explicitly, $(\alpha)_{\textrm{Gr}}$ is the upper square in the commutative diagram
{\tiny \[
  \begin{tikzcd}[row sep={3em,between origins},column sep={10em, between origins}]
   \mathrm{Gr}_X(S)(F_1 \cup F_2) \ar{rr} \ar{rd} \ar{dd} && \mathrm{Gr}_X(S)(F_1) \ar{dr} \ar{dd}
   & \arrow[d, start anchor={[xshift=3em, yshift=0.5em]center}, end anchor={[xshift=3em, yshift=-0.5em]center}, no head, decorate, decoration={brace}, "\displaystyle(\alpha)_\mathrm{Gr}" right=3pt]
   \\
   & \mathrm{Gr}_X(S)(F_2) \ar[crossing over]{rr} && \hspace{1em}*\hspace{1em} \ar{dd} \\
   \Bun_{\widehat{D_1 + D_2}} \ar{rr} \ar{rd} \ar{dd} && \Bun_{\widehat D_1} \ar{dr} \ar{dd}
   & \arrow[d, start anchor={[xshift=3em, yshift=0.5em]center}, end anchor={[xshift=3em, yshift=-0.5em]center}, no head, decorate, decoration={brace}, "\displaystyle(\sigma_D)_{\BG}" right=3pt] \\
   & \Bun_{\widehat D_2} \ar[crossing over]{rr} \ar[from=uu,crossing over]  && \Bun_{\widehat{D_1 \cap D_2}} \ar{dd} \\
   \Bun_{\widehat{D_1 + D_2} \smallsetminus D_1 + D_2 \cup \widehat{Z_1 \cup Z_2}} \ar{rr} \ar{dr} && \Bun_{\widehat D_1 \smallsetminus D_1 + D_2 \cup \hZ_1} \ar{dr} 
   & \arrow[d, start anchor={[xshift=3em, yshift=0.5em]center}, end anchor={[xshift=3em, yshift=-0.5em]center}, no head, decorate, decoration={brace}, "\displaystyle(\tau)_{\BG}" right=3pt] \\
   & \Bun_{\widehat D_2 \smallsetminus D_1 + D_2 \cup \hZ_2} \ar{rr} \ar[from=uu,crossing over]  && \Bun_{\widehat{Z_1 \cap Z_2}}
  \end{tikzcd}
 \]}
 where the four vertical sequences are fiber sequences. Indeed, the leftmost sequence is a fiber sequence by \cref{lem:benjexo}; the two middle ones by \cref{magiclemma}. As for the rightmost sequence, it is a fiber sequence since the pair of flags is assumed to be good, so that $\widehat{D_1 \cap D_2} = \widehat{Z_1 \cap Z_2}$.
 
 Finally, we know that $(\sigma_D)_{\mathrm B \mathbf{G}}$ is a Cartesian square (\cref{lem:BGcocD12Z12}) and that $(\tau)_{\mathrm B \mathbf{G}}$ is $1$-Cartesian (\cref{cor:BGcoctrivializations}). Since $(*)$ is obviously Cartesian, we get by \cref{lem:pullbacksofncart} that $(\alpha)_{\mathrm{Gr}}$ is Cartesian. In particular, we have
 \[
\begin{tikzcd}
 \mathrm{Gr}_X(S)(F_1 \cup F_2) \ar{r}{\sim} & \mathrm{Gr}_X(S)(F_1) \times \mathrm{Gr}_X(S)(F_2).
\end{tikzcd}\qedhere
\]
\end{proof}

\subsection{The Grassmannian as a fibered stack}
In order to better handle the homotopy coherent factorization structure the Grassmannian naturally carries, we will see the Grassmannian as a fibered (derived) stack. As before, we fix $X$ a smooth projective surface over $k$.

Recall that the flag Grassmannian is a natural transformation $\Gr_X \simeq \Gr_X^\loc \colon \flags_X^\dR \to \stackofstacks_k$ between functors $(\Aff_k)\op \to \inftyCat$.
The stack of derived stacks $\stackofstacks_k$ is endowed with a canonical derived structure, and thus comes from a functor $\stackofstacks_k \colon \dAff_k \to \inftyCat$.
Consider the left Kan extension of $\flags_X$ to derived affine schemes, also denoted by $\flags_X$ (so equipped with a trivial derived structure), and consider its de Rham stack $\flags_X^\dR$. The flag Grassmannian becomes a natural transformation $\Gr_X \simeq \Gr_X^\loc \colon \flags_X^\dR \to \stackofstacks_k$ between functors $\dAff_k \to \inftyCat$.

\begin{defin}
By the universal property of the (categorical) stack of stacks, we get a pointwise coCartesian fibration
\[
 p \colon \fibGr_X \to \flags_X^\dR \in \dSt^{\inftyCat}_k
\]
whose fiber $p_S^{-1}(D,Z)$ at $S \in \dAff_k$ and $(D, Z) \in \flags_X(S_\red)$ is the $\infty$-groupoid $\Gamma(S, \GrX(S; D, Z))$.
\end{defin}

\begin{rem}\label{spiegazione} Let $S\in \Aff$ (i.e. a Noetherian \emph{underived} affine scheme). Then, $\fibGr_X(S)$ is the category whose objects are pairs $(F=(D,Z), \underline{\mathcal{E}})$, where $F \in \flags_{X}(S_\red)$ and $\underline{\mathcal{E}} \in \mathrm{Gr}_X(S)(F)=\Gamma(S, \GrX(S; D, Z))$, with morphisms $(F, \underline{\mathcal{E}}) \to (F', \underline{\mathcal{E}}')$ given by pairs $(i, \alpha)$ of morphisms $i: F \to F'$ in $\flags_{X}(S_\red)$, and $\alpha: \mathrm{Gr}_X(S)(i)(\underline{\mathcal{E}}) \simeq \underline{\mathcal{E}}'$. The composition is straightforward. Moreover, $p_S: \mathcal{G}r_X(S) \to \flags_{X}(S_\red)$ sends $(F, \underline{\mathcal{E}})$ to $F$, and $(i, \alpha): (F, \underline{\mathcal{E}}) \to (F', \underline{\mathcal{E}}')$ to $i: F\to F'$. We leave to the reader the further explicit description of functoriality of $\fibGr_X(S)$ with respect to morphisms $S' \to S$ in $\Aff$.
\end{rem}

\subsection{The simplicial flag Grassmannian}\label{subsec:2segalflaggrass}
\begin{defin}
 We denote by $\fibGr_{X,2} \to \flags_{X,2}^\dR$ the fibered derived stack obtained by pulling back $\fibGr_X \to \flags_X^\dR$ along $\partial_1 \colon \flags_{X,2}^\dR \to \flags_X^\dR$:
 \[
  \begin{tikzcd}
   \fibGr_{X,2} \ar{r}{\partial_1} \ar{d} \ar[phantom]{dr}[description, near start]{\lrcorner} & \fibGr_X \ar{d} \\
   \flags_{X,2}^\dR \ar{r}[swap]{\partial_1} & \flags_X^\dR\rlap{.}
  \end{tikzcd}
 \]
\end{defin}

\begin{rem}
 Under the equivalence between fibered stacks and morphisms to the stack of stacks, the fibered stack $\fibGr_{X,2} \to \flags_{X,2}^\dR$ corresponds to the composite
 \[
  \Gr_X \circ \partial_1 \colon \flags_{X,2}^\dR \to \stackofstacks_k.
 \]
\end{rem}

Together with \cref{lem:benjexo}, the natural transformations of \eqref{projnattrans} induce a commutative diagram
\[
 \begin{tikzcd}
  \fibGr_X \ar{d} & \ar{l}[swap]{\partial_0} \fibGr_{X,2} \ar{r}{\partial_2} \ar{d} & \fibGr_X \ar{d} \\
  \flags_X^\dR & \ar{l}{\partial_0} \flags_{X,2}^\dR \ar{r}[swap]{\partial_2} & \flags_X^\dR\rlap{.}
 \end{tikzcd}
\]
Since both degeneracy morphisms $\sigma_0,\sigma_1 \colon \flags_X \to \flags_{X,2}$ satisfy $\partial_1 \circ \sigma_i = \id$, they trivially induce degeneracy morphisms $\sigma_0,\sigma_1 \colon \fibGr_X \to \fibGr_{X,2}$. All in all, we get a truncated simplicial object
\[
 \begin{tikzcd}
  & \fibGr_{X,2} \ar[shift left=8pt]{r} \ar[shift right=8pt]{r} \ar{r} \ar{d} & \fibGr_X \ar[shift left=4pt]{l} \ar[shift right=4pt]{l} \ar[shift left=4pt]{r} \ar[shift right=4pt]{r} \ar{d} & \{*\} \ar{d} \ar{l} \\
  \ar[dotted, no head]{r} & \flags_{X,2}^\dR \ar[shift left=8pt]{r} \ar[shift right=8pt]{r} \ar{r} & \flags_X^\dR \ar[shift left=4pt]{l} \ar[shift right=4pt]{l} \ar[shift left=4pt]{r} \ar[shift right=4pt]{r} & \{*\}\rlap{.} \ar{l}
 \end{tikzcd}
\]
Note moreover that since $\flags_{X,\bullet}$ is a $2$-Segal object, and since the de Rham functor preserves fiber products, the simplicial object $\flags_{X,\bullet}^\dR$ is also $2$-Segal.
Applying \cref{prop:extendsimplicial} to $X_\bullet = \flags_{X, \bullet}^\dR$ and $Y^{\leq 2}_\bullet = \fibGr_{X, \leq 2}$, we get a 2-Segal simplicial object $\fibGr_{X, \bullet}$ in $\mathrm{Fun}(\dAff_k\op, \inftyCat)$ together with a simplicial map $p_\bullet : \fibGr_{X, \bullet} \to \flags_{X, \bullet}^\dR$.
Notice that by construction, the derived stack $\fibGr_{X,n}$ can be obtained as the pullback
\[
\begin{tikzcd}
 \fibGr_{X,n} \ar{d}[swap]{p_{n}} \ar{r} \ar[phantom]{dr}[description, near start]{\lrcorner} & \fibGr_{X,1} \rlap{${}:=\fibGr_X$} \ar{d}{p_{1}} \\
 \flags_{X,n}^\dR \ar{r}{\cup} & \flags_{X,1}^\dR \rlap{${}:=\flags_X^\dR$.}
\end{tikzcd}
\]

\begin{defin}\label{simplGrass}
 We will refer to $\fibGr_{X, \bullet}$ as the $2$-Segal \emph{simplicial flag Grassmannian}.
\end{defin}

\begin{rem}\label{Griscomm} Using Remark \ref{Fliscomm} one can easily verify that the associative algebra in correspondences induced (via \cite{stern}) by the 2-Segal object $\fibGr_{X, \bullet}$ is, in fact, commutative (i.e. $E_{\infty}$).
\end{rem}

%\begin{rem}
% With the above definition, our Grassmannian is now a fully fledged (flag-)factorization stack over $X$: its factorization structure amounts to an associative coalgebra structure.
%\end{rem}

\begin{rem}\label{remfattorizzazione}
 With this simplicial point of view, factorization (\cref{thm:factorization}) can be reinterpreted as: the following square
 \[
  \begin{tikzcd}
 \fibGr_{X, 2} \ar{r}{\partial_0, \partial_2} \ar{d}[swap]{p_2} & \fibGr_{X} \times \fibGr_{X} \ar{d}{p_1 \times p_1} \\ \flags_{X,2}^\dR \ar{r}[swap]{\partial_0, \partial_2} & \flags_{X,1}^\dR \times \flags_{X,1}^\dR
  \end{tikzcd}
 \]
 is Cartesian.
\end{rem}

\begin{rem}\label{Grtildeandisos} Factorization (\cref{thm:factorization} or Remark \ref{remfattorizzazione}) allows for the following, equivalent, description of the simplicial flag Grassmannian. Define $\widetilde{\mathcal{G}r}_{X,2}$ as the pullback $$\xymatrix{\widetilde{\mathcal{G}r}_{X,2} \ar[r]^-{(\tilde{\partial}_2, \tilde{\partial}_0)} \ar[d] & \mathcal{G}r_{X} \times \mathcal{G}r_{X} \ar[d]^-{p_1 \times p_1} \\
\flags^\dR_{X,2} \ar[r]_{(\partial_2, \partial_0)} & \flags^\dR_{X} \times \flags^\dR_{X}} 
$$ 
then use \cref{thm:factorization} to show that there is a further induced map $\tilde{\partial}_1: \widetilde{\mathcal{G}r}_{X,2} \to \mathcal{G}r_{X}$ satisfying the hypothesis of Proposition \ref{prop:extendsimplicial}, so that Proposition \ref{prop:extendsimplicial} produces for us a simplicial and 2-Segal object $\widetilde{\mathcal{G}r}_{X,\bullet}$ together with a map to $\flags_{X, \bullet}$. Using again Proposition \ref{prop:extendsimplicial} for $X_\bullet = \mathcal{G}r_{X,\bullet}$ and $Y_\bullet^{\leq 2}= \widetilde{\mathcal{G}r}_{X,\bullet}$ (and viceversa), we get induced maps, in both directions, between $\widetilde{\mathcal{G}r}_{X,\bullet}$ and $\mathcal{G}r_{X,\bullet}$ which are mutually inverse isomorphisms over  $\flags^\dR_{X, \bullet}$, by factorization.
% Projection maps give a map $\mathcal{G}r_{X,\bullet} \to \widetilde{\mathcal{G}r}_{X,\bullet}$ and factorization implies this map is, in fact, an isomorphism.
\end{rem}

\begin{rem} A completely analog construction to the one leading to Definition \ref{simplGrass} yields a simplicial flag Grassmannian $\fibGr^{\mathrm{Perf}}_{X, \bullet}$, for the stack of perfect complexes on $X$ (see Remark \ref{thm:factorization} (b)).
\end{rem}

\subsection{Representability}

\begin{thm}\label{thmRepresentabilityfibGr}
 The projection $p \colon \fibGr_X \to \flags_X^\dR$ is representable by a derived ind-scheme, in the following sense: for any derived affine scheme $S$ and any morphism $F \colon S \to \flags_X^\dR$ (in $\dSt_k^{\inftyCat}$), the fiber $\fibGr_X^F := p^{-1}(F)$ lies in $\dSt_k \subset \dSt_k^{\inftyCat}$ and is representable by a derived ind-scheme.
\end{thm}

\begin{rem}
 The fiber $\fibGr_X^F$ tautologically identifies with the derived stack over $S$ classified by the morphism
 \[
  \begin{tikzcd}
    S \ar{r} & \flags_X^\dR \ar{r}{\Gr_X} & \stackofstacks_k.
  \end{tikzcd}
 \]
 It immediately follows that $\fibGr_X^F$ belongs to $\dSt_k$.
\end{rem}

\begin{proof}
  The proof of \cref{thmRepresentabilityfibGr} will rely on the representability criterion \cite[Chap. 2 Cor. 1.3.13]{GaiRozII}. We will thus:
  \begin{enumerate}
    \item Show the underlying underived stack $\trunc(\fibGr_X^{F})$ is representable by an ind-scheme and
    \item Show the derived stack $\fibGr_X^{F}$ admits a connective deformation theory.
  \end{enumerate}

Notice first that the stack $\trunc(\fibGr_X^{F})$ is actually a sheaf (on $\Aff_S$), by Lemma \ref{GrIsSetsValued}.

  We start by treating the case $\mathbf{G}=GL_n$.
  If $F$ is of the form $(D, Z = \emptyset)$, since being a vector bundle is an open condition inside the moduli of torsion-free sheaves, \cite[Prop. 3.13 and Remark. 3.8]{HLHJ} implies (by taking $X$ in loc. cit. to be our $X \times S$, while $S$ is the same, so that $X\times S \to S$ is indeed projective and of finite presentation) that $\trunc(\fibGr_X^{F})$ is represented by an ind-scheme which is ind-quasi-projective over $S$.
  For a general flag $F=(D, Z)$, we consider the obvious forgetful map $q \colon \trunc(\fibGr_X^{F}) \to \trunc(\fibGr_X^{(D, \emptyset)})$ between sheaves on $\Aff_S$.
  We claim that this map is representable and affine. \\
  Indeed, first notice that $\underline{\mathsf{Bun}}_{\hZ^{\affinize}} \simeq \underline{\mathsf{Bun}}_{\hZ}$ has an affine diagonal (not of finite type, in general): this follows from \cite[Theorem 1.0.1]{Wang} applied to $\underline{\mathsf{Bun}}_{Z_{n}}$, where $Z_{n}$ is the $n$-th thickening of $Z$ in $Y:=X\times S$.
  Now, the fiber of $q \colon \trunc(\fibGr_X^{F}) \to \trunc(\fibGr_X^{(D, \emptyset)})$ at $T=\Spec A \to \trunc(\fibGr_X^{(D, \emptyset)})$, corresponding to some $(\underline{\mathcal{E}}, \varphi)$, is the closed subscheme of the affine scheme
  \[
    \underline{\mathrm{Isom}}_{\underline{\mathsf{Bun}}_{\hZ^{\affinize},\, T}}(\underline{\mathcal{E}}_{|\widehat Z_T^{\affinize}}, \mathcal{E}_0)
  \]
  defined by imposing the restriction to a fixed element on the open subscheme $Y_T\smallsetminus D_T \cap \widehat Z_T^{\affinize}$. Hence, the map $q$ is indeed representable by affines.
  Therefore (see e.g. \cite[Lemma 1.7]{Richarz_BoAG}), $\trunc(\fibGr_X^{F})$ is represented by an ind-scheme, since $\trunc(\fibGr_X^{(D, \emptyset)})$ is.
  
  \noindent Now, the same technique as in \cite[Section 1.2]{Zhu2017}, allows us to deduce from the $\mathbf{G}=GL_n$ case the general $\mathbf{G}$ case.
  More precisely, embed $\mathbf{G}$ into $GL_n$ so that $GL_n/\mathbf{G}$ is affine; this gives, for $\mathbf{G}$, a closed sub-ind-scheme of the corresponding $\trunc(\fibGr_X^{F})$ for $GL_n$.
  This concludes step (1).
  
  We now focus on step (2): proving that the derived stack $\fibGr_X^F$ admits a connective deformation theory.
  \newcommand{\adjbun}{\mathcal P_\mathrm{ad}}
  \newcommand{\RGamma}{\operatorname{\mathsf{R}\Gamma}}
  Let $q \colon \Spec B \to \fibGr_X^F$ be a point.
  Set $X_B := X \times \Spec B$ and denote by $D_B$ and $Z_B$ the corresponding closed subschemes.
  The point $q$ induces a $\mathbf{G}$-bundle $\mathcal P$ on $X_B$.
  We denote by $\adjbun \in \mathrm{Vect}(X_B)$ the associated adjoint (vector) bundle.

  Using that $X$ is proper, the derived stack $\fibGr_X^F$ then admits an ind-pro-perfect tangent $B$-complex at $q$, described as
  \[
   \mathbb T_{\fibGr_X^F,q}[-1] = \RGamma\left(\widehat D_B, \adjbun\right) \newtimes_{\RGamma\left(\widehat D_B \smallsetminus D_B \cup \widehat Z_B, \adjbun\right)} 0.
  \]
  It is stable under base change, and the stack $\fibGr_X^F$ thus admits a global deformation theory.
  Moreover, as $D_B$ is a divisor, the homotopy fiber of the morphism $\RGamma(\widehat D_B, \adjbun) \to \RGamma(\widehat D_B \smallsetminus D_B, \adjbun)$ has negative Tor-amplitude.
  It follows that $\fibGr_X^F$ has a connective deformation theory, and it is therefore representable by a derived ind-scheme, by \cite[Chap. 2 Cor. 1.3.13]{GaiRozII}.
\end{proof}

\begin{rem}\label{oncarGrqproj}
  It is useful to record that the first part of the proof of \cref{thmRepresentabilityfibGr} shows that the restriction of $\fibGr_X$ to purely divisorial flags is represented by derived ind-quasi-projective ind-scheme.
\end{rem}

\begin{rem}\label{maybeitsproper}
If one knows that the union map $\cup \colon \flags^{\dR}_{X,2} \to \flags^{\dR}_X$ is (representable, in the sense of Theorem \ref{thmRepresentabilityfibGr}, and) proper, one can conclude, via the pullback  \[
    \xymatrix{\fibGr_{X, 2} \ar[r]^-{\partial_1} \ar[d] & \fibGr_{X} \ar[d] \\
    \flags^{\dR}_{X, 2} \ar[r]_-{\cup} & \flags^{\dR}_X}
  \]
that $\partial_1: \fibGr_{X, 2} \to \fibGr_{X}$ is (representable and) proper, too. We can prove that $\cup \colon \flags^{\dR}_{X,2} \to \flags^{\dR}_X$ is representable (in the sense of \ref{thmRepresentabilityfibGr}): this is essentially a consequence of the fact that, if $F\in \flags^{\dR}_X(S)$, and $f: S \to T$ is a map of derived affine schemes, then $F_1 \cup F_2 = F'_1 \cup F'_2 = f_{\mathrm{red}}^{*} F$ implies $(F_1, F_2)= (F'_1, F'_2)$, for any $(F_1, F_2)$ and $(F'_1, F'_2)$ in $\flags^{\dR}_{X,2}(T)$. On the other hand, though we believe that  $\cup \colon \flags^{\dR}_{X,2} \to \flags^{\dR}_X$ is furthermore proper, we have, at the moment, a complete proof only for $X$ a (smooth) cubic surface in $\mathbb{P}_k^{3}$.
\end{rem} 

%\begin{lem}
%  The union map $\partial_1: \fibGr_{X,2} \to \fibGr_{X}$ is representable and finite.
%\end{lem}
%
%\begin{proof}
%  By definition of $\fibGr_{X,2}$,
%  \[
%    \xymatrix{\fibGr_{X, 2} \ar[r]^-{\partial_1} \ar[d] & \fibGr_{X} \ar[d] \\
%    \flags_{X, 2} \ar[r]_-{\cup} & \flags_X}
%  \]
%  is Cartesian, so that $\partial_1$ is finite since $\cup \colon \flags_{X,2} \to \flags_X$ is.
%\end{proof}

\section{Hecke stacks and exotic derived structures on \texorpdfstring{$\mathrm{Bun}_X$}{Bunₓ}}\label{sessioneesotica}

\subsection{Grassmannian at a fixed ``large'' flag}

\begin{defin}\label{bigflag} Let $S$ be a Noetherian $\mathbb{C}$-scheme, and $Y \to S$ a smooth morphism of relative dimension $\geq 2$. A flag $(D,Z)$ in $Y/S$ will be called \emph{large} if for any $s \in S$, the fiber $Z_s$ meets any irreducible component of the fiber $D_s$. 
\end{defin}

\begin{prop}\label{beilinson} \emph{(A. Beilinson)} Let $S$ be a Noetherian $\mathbb{C}$-scheme, $Y \to S$ a smooth morphism of relative dimension $\geq 2$, and $(D,Z)$ a large flag in $Y/S$. Then, for any fixed $\mathbf{G}$-bundle $\overline{\mathcal{E}}$ on $Y$, the set of global sections
$$\mathrm{Bun}^{\mathbf{G}}(Y) \times_{\mathrm{Bun}^{\mathrm{G}}(Y\setminus D) \times_{\mathrm{Bun}^{\mathbf{G}}(\widehat{Z} \setminus D)} \mathrm{Bun}^{\mathbf{G}}(\widehat{Z})    } \left\{ \overline{\mathcal{E}} \right\}$$ is a singleton. In particular, for $\overline{\mathcal{E}}$ being the trivial bundle, we have that the set 
$\mathrm{Gr}(Y;D,Z)$ (see Definition \ref{def:flagGrassgeneral})\footnote{We use here, and in the previous display, the fact that a derived stack $\mathcal{X}$ and its truncation $\mathrm{t}\mathcal{X}$ share the same points with values in classical (i.e. underived) schemes.} is a singleton.
\end{prop}

\begin{proof} We thank A. Beilinson for providing the following detailed proof. First of all, the argument will be local in $Y$, so it will be enough to consider the case where $\overline{\mathcal{E}}$ is the trivial bundle. \\
Consider a function $f \in \Gamma(Y\setminus D, \mathcal{O}_Y)$, and $U \supset (X\setminus D)$ the largest open subset of $Y$ to which $f$ extends as a regular function (such a $U$ exists since $D$ is an effective Cartier divisor). Define $T: Y \setminus U$. For any point $y \in Y$, let $Y^{\wedge}_y= \mathrm{Spec} (\widehat{\mathcal{O}_{Y,y}})$. The map $\pi^{\wedge}_y: Y_y^{\wedge} \to Y$ is flat, so the base change $D_y^{\wedge}$ of $D$ along $\pi^{\wedge}_y$ is still an effective Cartier divisor on $Y_y^{\wedge}$.\\

\noindent\emph{Claim.} (a) If $y \in T$, then the restriction of $f$ to $Y_y^{\wedge} \setminus D_y^{\wedge}$ does not extend to $Y_y^{\wedge}$.\\
\noindent (b) For any point $s \in S$, the fiber $T_s$ is either empty or the union of some irreducible components of the fiber divisor $D_s$.\\

\noindent\emph{Proof of Claim.} (a) By replacing $D$ with $nD$ for $n$ sufficiently large, we may assume that $f \in \Gamma (Y, \mathcal{O}_Y (D))$. Let $\mathcal{L}:= \mathcal{O}_Y (D)/ \mathcal{O}_Y$, and $\overline{f}$ be the image of $f$ in $\Gamma (Y, \mathcal{L})$. Note that $\mathcal{L}$ is an invertible $\mathcal{O}_Y$-Module. Suppose that claim (a) is wrong, and there exists $y \in T$ such that $f$ does extend to $Y_y^{\wedge}$. Then, the section $(\pi^{\wedge}_y)^*\overline{f} \in \Gamma(Y_y^{\wedge}, (\pi^{\wedge}_y)^*(\mathcal{L}) )= \widehat{\mathcal{O}_{Y,y}}(D_y^{\wedge})/ \widehat{\mathcal{O}_{Y,y}}$ vanishes. Since $ \mathcal{O}_{Y,y} \to \widehat{\mathcal{O}_{Y,y}}$ is faithfully flat, this implies that also the image of $\overline{f}$ in $\mathcal{O}_{Y,y}(D)/\mathcal{O}_{Y,y}$ vanishes, so that $y \notin T$, which is a contradiction.\\
\noindent (b) $T_s$ is a closed subset of $D_s$, so it will be enough to show that $T_s$ does not contain a closed point of $D_s$ as a connected component. Indeed, if $y$ is such a point then, replacing $Y$ by a neighborhood of $y$, we can assume (by semi-continuity of the fibers applied to $T/S$ \cite[13.1.3]{PMIHES_1966__28__5_0}) that $T$ is quasi-finite over $S$. Thus, all the fibers $T_{s'}$ for $s'\in S$, are closed points in $Y_{s'}$, and so $\mathrm{depth}(\mathcal{O}_x)\geq 2$ for any $x \in T$. Thus $f$ is regular at $T$, which is a contradiction. The proof of the Claim is complete. \\

The Claim remains true if we replace $f$ by an element in $\Gamma(Y\setminus D, \mathcal{O}^n_Y)$, for any $n$, i.e. by a section of $\mathbb{A}_Y^n$ on $Y \setminus D$. Therefore, the Claim still holds if $f$ is replaced by any section on $Y\setminus D$ of any affine $Y$-scheme of finite type $E/X$ (since, locally on $Y$, such an $E/Y$ is a closed subscheme of some $\mathbb{A}_Y^n$). In particular, we may take $\mathcal{E}/Y$ to be a $\mathbf{G}$-bundle on $Y$.\\
Consider $$(\mathcal{E}, \varphi, \psi) \in \mathrm{Gr}(Y;D,Z) = \mathrm{Bun}^{\mathbf{G}}(Y) \times_{\mathrm{Bun}^{\mathrm{G}}(Y\setminus D) \times_{\mathrm{Bun}^{\mathbf{G}}(\widehat{Z} \setminus D)} \mathrm{Bun}^{\mathbf{G}}(\widehat{Z})    } \left\{ \mathrm{triv} \right\}$$
where $\mathcal{E}$ is a $\mathbf{G}$-bundle on $Y$, $\varphi$ (resp. $\psi$) is a trivialization of $\mathcal{E}$
on $Y\setminus D$ (resp. on $\widehat{Z}$), and $\varphi$ and $\psi$ agree on $\widehat{Z} \setminus D$. The trivialization $\varphi$ can be identified, as usual, with a section of $\mathcal{E}/Y$ on $Y\setminus D$. Let $z\in Z$. Since $$Y_z^{\wedge} \setminus D_z^{\wedge} = (\widehat{Z}^{\mathrm{aff}}\setminus D)_z \hookrightarrow (\widehat{Z}^{\mathrm{aff}})_z =Y_z^{\wedge}\, ,$$ the compatibility between $\varphi$ and $\psi$ on $\widehat{Z} \setminus D$ implies that the restriction of $\varphi$ to $Y_z^{\wedge} \setminus D_z^{\wedge}$ does extend (via $\psi$) to $Y_z^{\wedge}$. Hence, by Claim (a), $z \notin Y$. Since $z \in Z$ is arbitrary, we get $Z \cap T= \emptyset$. Therefore, for any $s \in S$, we have $Z_s \cap T_s = \emptyset$. But $(D,Z)$ is a large flag in $Y/S$, so $Z_s$ meets any irreducible component of $D_s$, and (Claim (b)) $T_s$ is either empty or a union of irreducible components of $D_s$. Therefore $T_s$ must be empty, for any $s \in S$. Thus $T=\emptyset$, and this means (by definition of $T$) that the trivialization $\varphi$ of $\mathcal{E}$ on $Y\setminus D$ extends to a trivialization $\overline{\varphi}$ on the whole $Y$, whose restriction to $\widehat{Z}^{\mathrm{aff}}$ (hence to $\widehat{Z}$), coincides with $\psi$. Moreover, since $D$ is an effective Cartier divisor in $Y$, and $\mathcal{E}$ is separated, two such trivializations $\overline{\varphi}$ and $\overline{\varphi}'$ extending the same  $\varphi$, can be identified with maps $Y \to \mathcal{E}$ that coincide on the quasi-compact schematically dense open subset $(Y\setminus D)\subset Y$, hence $\overline{\varphi}= \overline{\varphi}'$. In other words the extension, $\overline{\varphi}$ exists and is unique. This ends the proof of the Proposition. 
\end{proof}

\begin{rem} Note that the analogue of Proposition \ref{beilinson} is obviously false when the stack of $\mathbf{G}$-bundles is replaced with the stack of perfect complexes.
\end{rem}

\begin{rem} Proposition \ref{beilinson} does not imply that (with the same notations as in Proposition \ref{beilinson})  the \emph{derived} stack $$\underline{\mathsf{Bun}}^{\mathbf{G}}(Y) \times_{\underline{\mathsf{Bun}}^{\mathrm{G}}(Y\setminus D) \times_{\underline{\mathsf{Bun}}^{\mathbf{G}}(\widehat{Z} \setminus D)} \underline{\mathsf{Bun}}^{\mathbf{G}}(\widehat{Z})    } \left\{ \overline{\mathcal{E}} \right\}$$ is trivial. This should be considered as  analogous to the fact that while at the level of classical (truncated) stacks the restriction map $\mathrm{t}(\underline{\mathsf{Bun}}_{\mathbb{A}^2}^{\mathrm{G}}) \to \mathrm{t}(\underline{\mathsf{Bun}}_{\mathbb{A}^2 \setminus 0}^{\mathrm{G}})$ is an equivalence, this is no more true at the level of derived stacks (as can be checked on cotangent complexes).
\end{rem}

\begin{rem}\label{inspiteofB} Proposition \ref{beilinson} does not imply that the flag factorization property (see e.g. Remark \ref{remarkfactorization}) is trivial, i.e. it is of the form singleton=singleton (not even for the truncations), for \emph{arbitrary} pair of good flags. Take, for example, $X= \mathbb{P}^1 \times \mathbb{P}^1$, fix $x_1, x_2, y \in \mathbb{P}^1$ with $x_1 \neq x_2$, and consider the flags $F_1 =(D_1= \mathbb{P}^1 \times \{ x_1 \}, Z_1=\{ (y, x_1)\})$, and $F_2 =(D_2= \mathbb{P}^1 \times \{ x_2 \}, Z_2=\emptyset)$. Then, $(F_1, F_2) \in \flags_{X,2}(\mathbb{C})$, so flag factorization holds, but neither $F_2$ nor $F_1 \cup F_2$ are large flags.
\end{rem}

\subsection{Exotic derived structures on \texorpdfstring{$\mathsf{Bun}_X$}{Bunₓ}}\label{esotiche}

Let $F=(D,Z) \in \underline{\mathsf{Fl}}_X(\mathbb{C})$. For simplicity, let us denote by $\mathcal{H}_{\mathbf{G}, X,F}$ the derived stack $$\mathcal{H}_{\mathbf{G}, X,F}:= \underline{\mathsf{Bun}}^{\mathbf{G}}_{X} \times_{\underline{\mathsf{Bun}}^{\mathbf{G}}_{X\setminus D} \times_{\underline{\mathsf{Bun}}^{\mathbf{G}}_{\widehat{Z} \setminus D}} \underline{\mathsf{Bun}}^{\mathbf{G}}_{\widehat{Z}}    } \underline{\mathsf{Bun}}^{\mathbf{G}}_{X}$$
This will be called the derived \emph{Hecke stack of $X$ at the fixed flag} $F$.\\

\begin{prop}\label{lemexotic} If $F=(D, Z)$ is a large flag in $X/\mathbb{C}$ (Definition \ref{bigflag}), i.e. $Z$ meets any irreducible component of $D$, then the truncation of either of the canonical projections
$$\mathrm{t}(p_1), \, \mathrm{t}(p_2) : \mathrm{t}(\mathcal{H}_{\mathbf{G}, X,F}) \longrightarrow \mathrm{t}(\underline{\mathsf{Bun}}^{\mathbf{G}}_X)$$ is an equivalence of underived stacks, with quasi-inverse given by the diagonal morphism $$\Delta: \mathrm{t}(\underline{\mathsf{Bun}}^{\mathbf{G}}_X) \to \mathrm{t}(\mathcal{H}_{\mathbf{G}, X,F})\,\,\,\,\, \mathcal{E} \longmapsto (\mathcal{E}, \mathcal{E}; \mathrm{id}, \mathrm{id}).$$
\end{prop}

\begin{proof} We will prove the statement for the second projection, the argument being the same for the first projection. It is obvious that $\Delta$ is a section of $\mathrm{t}(p_2)$. So, it is enough to prove that there exists a pseudo-natural transformation $\alpha: \Delta \circ \mathrm{t}(p_2) \to \mathrm{Id}_{\mathrm{t}(\mathcal{H}_{\mathbf{G}, X,F})}$, such that $\alpha(S)$ is an isomorphism of functors $\mathrm{t}(\mathcal{H}_{\mathbf{G}, X,F})(S) \to \mathrm{t}(\mathcal{H}_{\mathbf{G}, X,F})(S)$ for any (classical) Noetherian affine scheme $S$. \\We first observe that, for any Noetherian $S$, $(D \times S, Z\times S)$ is still a large flag in $X \times S / S$. Indeed, by applying the same argument to any irreducible component of $D$, we may clearly suppose that $D$ is itself irreducible. But $D$ is defined over $\mathbb{C}$, so it is actually geometrically irreducible. So, if $s:\mathrm{Spec}(k(s)) \to S$ is a point in $S$, the fiber $D_s$ is irreducible ($k(s)$ being a field extension of $\mathbb{C}$) and the closed subscheme $Z_s \hookrightarrow D_s$ is non-empty, since $\mathrm{Spec}(k(s)) \to \mathrm{Spec}\, \mathbb{C}$ is faithfully flat and $Z\neq \emptyset$ by hypothesis. Therefore $(D \times S, Z\times S)$ is still a large flag in $X \times S / S$, as claimed.
For $\underline{\mathcal{E}}:=(\mathcal{E}_1, \mathcal{E}_2; \varphi, \psi ) \in \mathrm{t}(\mathcal{H}_{\mathbf{G}, X,F})(S)$ (where $\varphi: \mathcal{E}_1 | X_S\setminus D_S \simeq \mathcal{E}_2 | X_S\setminus D_S$, and $\psi: \mathcal{E}_1 | \widehat{Z_S} \simeq \mathcal{E}_2 | \widehat{Z_S}$), we note that $(\Delta \circ \mathrm{t}(p_2))(S)(\underline{\mathcal{E}}) =(\mathcal{E}_2, \mathcal{E}_2; \mathrm{id}, \mathrm{id} )$, so that both $(\Delta \circ \mathrm{t}(p_2))(S)(\underline{\mathcal{E}})$ and $\mathrm{Id}_{\mathrm{t}(\mathcal{H}_{\mathbf{G}, X,F})}(S)(\underline{\mathcal{E}})= (\mathcal{E}_1, \mathcal{E}_2; \varphi, \psi )$ belong to $$\mathrm{Bun}^{\mathbf{G}}(X_S) \times_{\mathrm{Bun}^{\mathrm{G}}(X_S\setminus D_S) \times_{\mathrm{Bun}^{\mathbf{G}}(\widehat{Z_S} \setminus D_S)} \mathrm{Bun}^{\mathbf{G}}(\widehat{Z_S})    } \left\{\mathcal{E}_2 \right\}$$ which is a singleton by Proposition \ref{beilinson} (since $(D \times S, Z\times S)$ is still a large flag in $X \times S / S$). Therefore, there exists a (unique) isomorphism $$\alpha(S)(\underline{\mathcal{E}}): (\mathcal{E}_2, \mathcal{E}_2; \mathrm{id}, \mathrm{id} )=(\Delta \circ \mathrm{t}(p_2))(S)(\underline{\mathcal{E}})\simeq \mathrm{Id}_{\mathrm{t}(\mathcal{H}_{\mathbf{G}, X,F})}(S)(\underline{\mathcal{E}})= (\mathcal{E}_1, \mathcal{E}_2; \varphi, \psi )$$ which, we recall, is of the form $(\overline{\varphi}^{-1}, \mathrm{id}_{\mathcal{E}_2})$ where  $\overline{\varphi}: \mathcal{E}_1 \simeq \mathcal{E}_2$ is the unique common extension of $\varphi$ and $\psi$ to all of $X_S$.\\ Finally, the fact that $\alpha(S)$ is a natural transformation between functors $\mathrm{t}(\mathcal{H}_{\mathbf{G}, X,F})(S) \to \mathrm{t}(\mathcal{H}_{\mathbf{G}, X,F})(S)$ can be shown as follows. Let $$(f_1,f_2): \underline{\mathcal{E}}:=(\mathcal{E}_1, \mathcal{E}_2; \varphi, \psi )\longrightarrow \underline{\mathcal{E}}':=(\mathcal{E}'_1, \mathcal{E}'_2; \varphi', \psi' )$$ be a morphism in $\mathrm{t}(\mathcal{H}_{\mathbf{G}, X,F})(S)$. We need to prove that the diagram $$\xymatrix{(\mathcal{E}_2, \mathcal{E}_2; \mathrm{id}, \mathrm{id} ) \ar[d]_-{(f_2,f_2)}\ar[r]^-{\alpha(S)(\underline{\mathcal{E}})} & (\mathcal{E}_1, \mathcal{E}_2; \varphi, \psi ) \ar[d]^-{(f_1,f_2)} \\ (\mathcal{E}'_2, \mathcal{E}'_2; \mathrm{id}, \mathrm{id} ) \ar[r]_-{\alpha(S)(\underline{\mathcal{E}}')} & (\mathcal{E}'_1, \mathcal{E}'_2; \varphi', \psi' )
} $$ commutes. Recalling the definition of $\alpha(S)$, this commutativity easily boils down to showing that the following two compositions 
\begin{equation} \label{havetoprove}
\xymatrix{\mathcal{E}_2 \ar[r]^-{\overline{\varphi}^{-1}} & \mathcal{E}_1 \ar[r]^-{f_1} & \mathcal{E}'_1
} \qquad \xymatrix{\mathcal{E}_2 \ar[r]^-{f_2} & \mathcal{E}'_2 \ar[r]^-{\overline{\varphi'}^{-1}} & \mathcal{E}'_1
} 
\end{equation} agree.

Note that, since $(f_1,f_2)$ is a morphism in $\mathrm{t}(\mathcal{H}_{\mathbf{G}, X,F})(S)$, we have in particular a commutative diagram of isomorphisms $$\xymatrix{\mathcal{E}_1 |_{X_S\setminus D_S} \ar[rr]^-{f_1|_{X_S\setminus D_S}} \ar[d]_-{\varphi} &&
\mathcal{E}'_1|_{X_S\setminus D_S} \ar[d]^-{\varphi'} \\ \mathcal{E}_2 |_{X_S\setminus D_S} \ar[rr]_-{f_2|_{X_S\setminus D_S}} &&
\mathcal{E}'_2|_{X_S\setminus D_S}.
}$$
In particular, by definition of $\alpha(S)$, we deduce that the two composite maps in (\ref{havetoprove}) do agree on $\mathcal{E}_2 |_{X_S \setminus D_S}$.
Since $D$ is an effective Cartier divisor in $X$, the same is true for $D_S$ in $X_S$, hence $X_S\setminus D_S$ is quasi-compact and schematically dense in $X_S$. Now,
 $$ \mathcal{E}_2|_{X_S\setminus D_S} =\mathcal{E}_2 \times_{X_S} (X_S\setminus D_S)  = \mathcal{E}_2 \setminus (\mathcal{E}_2 \times_{X_S} D_S) $$
and $\mathcal{E}_2 \to X_S$ is flat (any $\mathbf{G}$-bundle is (faithfully) flat over its base), so that  $\mathcal{E}_2 \setminus (\mathcal{E}_2 \times_{X_S} D_S)$ is quasicompact open and schematically dense in $\mathcal{E}_2$ (\cite[TAG081I]{stacks-project}). Now, $\mathcal{E}'_1$ is a separated scheme, and the two maps $\mathcal{E}_2 \to \mathcal{E}'_1$ in (\ref{havetoprove}) coincide on the quasicompact open and schematically dense $\mathcal{E}_2 \setminus \mathcal{E}_2\times_{X_S} D_S$, therefore they coincide on all of $\mathcal{E}_2$ (\cite[TAG01RH]{stacks-project}). Therefore, $\alpha(S)$ is indeed a natural transformation, for any $S$. We leave to the reader the verification of the pseudo-functoriality behaviour of $\alpha(S)$ with respect to maps $S' \to S$. We simply note that this pseudo-functoriality follows from the uniqueness of the extensions $\overline{\varphi}$ and $\overline{\varphi_{S'}}$ of the isomorphisms $\varphi :\mathcal{E}_1 |_{X_S \setminus D_S} \simeq \mathcal{E}_2 |_{X_S \setminus D_S}$ and $\varphi_{S'}: \mathcal{E}_1 |_{X_{S'} \setminus D_{S'}} \simeq \mathcal{E}_2 |_{X_{S'} \setminus D_{S'}}$, and is simplified by the fact that the diagrams defining the pseudo-naturality of the pseudo-natural transformations $\Delta \circ \mathrm{t}(p_2)$ and $\mathrm{Id}_{\mathrm{t}(\mathcal{H}_{\mathbf{G}, X,F})}$ are actually strictly commutative.

\end{proof}

As a consequence of Proposition \ref{lemexotic}, for each large flag $F$ in $X/\mathbb{C}$, we may view $\mathcal{H}_{\mathbf{G}, X,F}$ as a derived enhancement of the underived stack $\mathrm{t}(\underline{\mathsf{Bun}}^{\mathbf{G}}_X)$, that will be denoted as $\underline{\mathsf{Bun}}^{\mathbf{G}}_{F,X}$. As the following computations show, in general $\underline{\mathsf{Bun}}^{\mathbf{G}}_{F,X}$ is an \emph{exotic} derived structure on $\mathrm{t}(\underline{\mathsf{Bun}}^{\mathbf{G}}_{X})$, i.e. $\underline{\mathsf{Bun}}^{\mathbf{G}}_{F,X} \neq \underline{\mathsf{Bun}}^{\mathbf{G}}_{X}$, or more precisely, neither of the canonical projection maps $\underline{\mathsf{Bun}}^{\mathbf{G}}_{F,X} \to \underline{\mathsf{Bun}}^{\mathbf{G}}_{X}$ is an equivalence of derived stacks.

\begin{eg}\label{P2O3}
Let $X= \mathbb{P}^2$, $D= \mathbb{P}^1$, and $Z$ be any point in $D$, and define $F=(D,Z)$. Consider the (rigid) vector bundle $\mathcal{E}:= \mathcal{O} \oplus \mathcal{O}(3)$ on $X$, that we identify, notationally with the $\mathbb{C}$-point $(\mathcal{E}, \mathcal{E}, \mathrm{id})$ of $\mathcal{H}_{\mathsf{GL}_2, \mathbb{P}^2,F}$. We will denote by  $\mathbb{T}_{F; \mathcal{E}} := \mathbb{T}_{\mathcal{E}}\mathcal{H}_{\mathsf{GL}_2, \mathbb{P}^2,F}$, and by $\mathbb{T}_{\mathcal{E}} := \mathbb{T}_{\mathcal{E}}\underline{\mathsf{Bun}}^{\mathbf{G}}_X$. We wish to compare $\mathbb{T}_{F; \mathcal{E}}$ with $\mathbb{T}_{\mathcal{E}}$. In order to do this, we use the fiber sequences of complexes of $\mathbb{C}$-vector spaces
\begin{equation}\label{exofibseq1}\mathbb{T}_{F; \mathcal{E}} \longrightarrow \mathbb{T}_{\mathcal{E}} \times \mathbb{T}_{\mathcal{E}} \longrightarrow \mathbb{T}_{\mathcal{E}}':= \mathbb{T}_{(\mathcal{E}, \mathcal{E}, \mathrm{id})}(\underline{\mathsf{Bun}}^{\mathbf{G}}_{X\setminus D} \times_{\underline{\mathsf{Bun}}^{\mathbf{G}}_{\widehat{Z} \setminus D}} \underline{\mathsf{Bun}}^{\mathbf{G}}_{\widehat{Z}} ) \end{equation}
\begin{equation}\label{exofibseq2}\mathbb{T}_{\mathcal{E}}' \longrightarrow \mathbb{T}_{\mathcal{E}}(\underline{\mathsf{Bun}}^{\mathbf{G}}_{X\setminus D}) \times \mathbb{T}_{\mathcal{E}}(\underline{\mathsf{Bun}}^{\mathbf{G}}_{\widehat{Z}}) \longrightarrow \mathbb{T}_{\mathcal{E}}(\underline{\mathsf{Bun}}^{\mathbf{G}}_{\widehat{Z} \setminus D}).\end{equation}
If we use homogeneous coordinates $[x_0,x_1,x_2]$ on $X= \mathbb{P}^2$, and identify $D=\left\{ x_0 =0\right\}$, $Z=\left\{ x_0 = x_1 =0\right\}$, we have $$X\setminus D = U_0 \simeq \mathbb{A}_{t_1,t_2}^2=\mathrm{Spec}(\mathbb{C}[t_1, t_2])\, , \,t_i := x_i/x_0,\, i=1,2 $$ $$\widehat{Z}^{\mathrm{aff}} \simeq (U_{2})_{Z}^{\wedge} \simeq (\mathbb{A}_{s_0,s_1}^2)_{(0,0)}^{\wedge}= \mathrm{Spec}(\mathbb{C}[[s_0,s_1]]),\, \,s_j = x_j / x_0, j=0,1$$ $$ \widehat{Z} \setminus D \simeq \mathrm{Spec}(\mathbb{C}[[s_0,s_1]][s_0^{-1}]).$$
We have used above the usual notation $U_i :=\left\{ x_i \neq 0\right\}$ for the standard open subsets in $\mathbb{P}^2$.  Since $X\setminus D$, $\widehat{Z}^{\mathrm{aff}}$, and $\widehat{Z} \setminus D$ are affine schemes, $\mathrm{H^i}(\mathbb{T}_{\mathcal{E}})= 0$ for $i\neq -1, 0, 1$, and $$\mathrm{H^{-1}}(\mathbb{T}_{\mathcal{E}})= \mathrm{Ext}_{\mathbb{P}^2}^0(\mathcal{E},\mathcal{E}) \simeq \mathbb{C}^{12} \qquad \mathrm{H^{0}}(\mathbb{T}_{\mathcal{E}})= \mathrm{Ext}_{\mathbb{P}^2}^1(\mathcal{E}, \mathcal{E}) =0 \qquad \mathrm{H^{1}}(\mathbb{T}_{\mathcal{E}})= \mathrm{Ext}_{\mathbb{P}^2}^2(\mathcal{E},\mathcal{E}) \simeq \mathbb{C}\, ,$$ the fiber sequences (\ref{exofibseq1}) and (\ref{exofibseq2}) yield exact sequences
\begin{equation}\label{questaqua} \xymatrix{0 \to \mathrm{H}^{-1}(\mathbb{T}'_{\mathcal{E}}) \ar[r]  & \mathrm{H}^{-1}(\mathbb{T}_{\mathcal{E}}(\underline{\mathsf{Bun}}^{\mathbf{G}}_{X\setminus D}) \times \mathbb{T}_{\mathcal{E}}(\underline{\mathsf{Bun}}^{\mathbf{G}}_{\widehat{Z}})) \ar[r]^-{\rho} & \mathrm{H}^{-1}(\mathbb{T}_{\mathcal{E}}(\underline{\mathsf{Bun}}^{\mathbf{G}}_{\widehat{Z} \setminus D})) \ar[r] & \mathrm{H}^{0}(\mathbb{T}'_{\mathcal{E}})  \to 0 
}
\end{equation}
\begin{equation}\label{codestala} \xymatrix{0 \to \mathrm{H}^{-1}(\mathbb{T}_{F; \mathcal{E}}) \ar[r]  & \mathbb{C}^{12} \oplus \mathbb{C}^{12} \ar[r]^-{g} & \mathrm{H}^{-1}(\mathbb{T}'_{\mathcal{E}}) \ar[r] & \mathrm{H}^{0}(\mathbb{T}_{F; \mathcal{E}})  \to 0 
}
\end{equation}
\begin{equation}\label{quellala} \xymatrix{0 \to \mathrm{H}^{0}(\mathbb{T}'_{\mathcal{E}}) \ar[r]  & \mathrm{H}^{1}(\mathbb{T}_{F;\mathcal{E}}) \ar[r] & \mathbb{C} \oplus \mathbb{C}  \to 0 
}
\end{equation}
In (\ref{questaqua}), we can compute that $\mathrm{H}^{-1}(\mathbb{T}'_{\mathcal{E}}) \simeq \mathrm{ker} (\rho) \simeq \mathbb{C}^{12}$, and that $\mathrm{H}^{0}(\mathbb{T}'_{\mathcal{E}}) \simeq \mathrm{coker}(\rho)$ is infinite dimensional since, e.g. the linearly independent family 
$$\left\{ \begin{pmatrix} \frac{1}{s_0^N}\sum_{n\geq 0} s^n_1 & 0\\
0 & 0
\end{pmatrix}
\right\}_{N\geq 1}$$
of endomorphisms of $\mathbb{C}[[s_0,s_1]][s_0^{-1}]^{\oplus \,2}$ does not belong to the image of $\rho$.\\ Moreover, in (\ref{codestala}) we have that $\mathrm{ker}(g) \simeq \mathbb{C}^{12}$ embedded as the diagonal inside $\mathbb{C}^{12} \oplus \mathbb{C}^{12}$. In fact, obviously the diagonal sits inside $\mathrm{ker}(g)$; on the other hand,  if $(a,b) \in \mathrm{ker}(g) $, and we interpret $a$ and $b$ as maps $\overline{a}, \, \overline{b}: X \to \mathcal{E}^{\vee} \otimes \mathcal{E}$, we have that $\overline{a}_{|X\setminus D}= \overline{b}_{|X\setminus D}$ (since $(a,b) \in \mathrm{ker}(g)$). Now, $X\setminus D$ is schematically dense in $X$ ($D$ being an effective Cartier divisor), and $\mathcal{E}^{\vee} \otimes \mathcal{E}$ is separated, hence $\overline{a}= \overline{b}$, i.e. $a=b$.\\
By putting all these results together, we finally get that $$\mathrm{H}^{-1}(\mathbb{T}_{F; \mathcal{E}}) \simeq \mathrm{H}^{-1}(\mathbb{T}_{ \mathcal{E}}) \simeq \mathbb{C}^{12} \qquad \mathrm{H}^{0}(\mathbb{T}_{F; \mathcal{E}}) \simeq \mathrm{H}^{0}(\mathbb{T}_{ \mathcal{E}}) = 0 \qquad \dim_{\mathbb{C}}\, \mathrm{H}^{1}(\mathbb{T}_{F; \mathcal{E}})= \infty \,, \,\, \mathrm{while} \,\, \mathrm{H}^{1}(\mathbb{T}_{\mathcal{E}})\simeq \mathbb{C}.$$ In particular,
$\underline{\mathsf{Bun}}^{\mathbf{G}}_{F,\mathbb{P}^2}$ is an exotic derived structure on $\mathrm{t}(\underline{\mathsf{Bun}}^{\mathbf{G}}_{\mathbb{P}^2})$. 

\end{eg}

\begin{eg}\label{P2TP2}
Keeping the same notations of Example \ref{P2O3}, we give here an example of a \emph{non-rigid} bundle $\mathcal{E}$ on $X=\mathbb{P}^2$ for which $\mathbb{T}_{F; \mathcal{E}}$ and $\mathbb{T}_{\mathcal{E}}$ are again different, where $F=(D,Z)$ is the same flag of Example \ref{P2O3}. We omit the detailed computations, that however follow the same lines of the previous Example. We consider $\mathcal{E}:= \mathcal{O}_{\mathbb{P}^2} \oplus \mathrm{T}_{\mathbb{P}^2}$ (where $\mathrm{T}_{\mathbb{P}^2}$ is the tangent bundle). Again $\mathrm{H^i}(\mathbb{T}_{\mathcal{E}})= 0$ for $i\neq -1, 0, 1$, and in this case we have $$\mathrm{H^{-1}}(\mathbb{T}_{\mathcal{E}})= \mathrm{Ext}_{\mathbb{P}^2}^0(\mathcal{E},\mathcal{E}) \simeq \mathbb{C}^{10} \qquad \mathrm{H^{0}}(\mathbb{T}_{\mathcal{E}})= \mathrm{Ext}_{\mathbb{P}^2}^1(\mathcal{E}, \mathcal{E}) \simeq \mathbb{C} \qquad \mathrm{H^{1}}(\mathbb{T}_{\mathcal{E}})= \mathrm{Ext}_{\mathbb{P}^2}^2(\mathcal{E},\mathcal{E}) =0.$$ The sequence (\ref{questaqua}) is the same but $\mathrm{H}^{-1}(\mathbb{T}'_{\mathcal{E}}) \simeq \mathrm{ker} (\rho) \simeq \mathbb{C}^{14}$, and $\mathrm{H}^{0}(\mathbb{T}'_{\mathcal{E}}) \simeq \mathrm{coker}(\rho)$ is infinite dimensional since, e.g. the linearly independent family 
$$\left\{ \begin{pmatrix} \frac{1}{s_0^N}\sum_{n\geq 0} s^n_1 & 0 & 0\\
0 & 0 &0\\
0 & 0 &0
\end{pmatrix}
\right\}_{N\geq 1}$$
of endomorphisms of $\mathbb{C}[[s_0,s_1]][s_0^{-1}]^{\oplus \, 3}$ does not belong to the image of $\rho$.\\
The sequences (\ref{codestala}) and (\ref{quellala}) are replaced by the sequence
{\small \begin{equation}\label{questaltra} \xymatrix{0 \to \mathrm{H}^{-1}(\mathbb{T}_{F; \mathcal{E}}) \ar[r]  & \mathbb{C}^{10} \oplus \mathbb{C}^{10} \ar[r]^-{g} & \mathrm{H}^{-1}(\mathbb{T}'_{\mathcal{E}}) \ar[r] & \mathrm{H}^{0}(\mathbb{T}_{F; \mathcal{E}})  \ar[r] & \mathbb{C}\oplus \mathbb{C} \ar[r] & \mathrm{H}^{0}(\mathbb{T}'_{\mathcal{E}}) \ar[r] &  \mathrm{H}^{1}(\mathbb{T}_{F; \mathcal{E}})\to 0
}
\end{equation}}and the same argument used in Example \ref{P2O3} shows that $ \mathrm{H}^{-1}(\mathbb{T}_{F; \mathcal{E}}) \simeq \mathbb{C}^{10}$. Moreover, (\ref{questaltra}) shows that $\mathrm{H}^{1}(\mathbb{T}_{F; \mathcal{E}})$ is infinite dimensional, since $\mathrm{H}^{0}(\mathbb{T}'_{\mathcal{E}})$ is. The upshot is that

{\small $$\mathrm{H}^{-1}(\mathbb{T}_{F; \mathcal{E}}) \simeq \mathrm{H}^{-1}(\mathbb{T}_{ \mathcal{E}}) \simeq \mathbb{C}^{10} \qquad \mathrm{H}^{0}(\mathbb{T}_{F; \mathcal{E}}) \simeq \mathbb{C}^k\, \textrm{for some}\, 4\leq k\leq 6,  \, \,\,\mathrm{H}^{0}(\mathbb{T}_{ \mathcal{E}}) \simeq \mathbb{C} \qquad \dim_{\mathbb{C}}\, \mathrm{H}^{1}(\mathbb{T}_{F; \mathcal{E}})= \infty \,,  \,\, \mathrm{H}^{1}(\mathbb{T}_{\mathcal{E}})=0.$$}
\end{eg}

\begin{rem}\label{explanationtry}
Example \ref{P2TP2} shows that the deformation space associated to the exotic structure $\underline{\mathsf{Bun}}^{\mathbf{G}}_{F,X}$ is not in general a subspace of the non-exotic one, but rather the latter is a quotient of the former. This is a general phenomenon that  can be explained heuristically by observing that the choice of a flag $F$ does not simply put constraints on the usual defomation space, thus selecting a subspace thereof. The deformation space of $\underline{\mathsf{Bun}}^{\mathbf{G}}_{F,X}$ rather describes deformations of \emph{more data} (bundle and trivializations), so the map from this deformation space to the non-exotic deformation space is expected to have non trivial fibers. In other words, the map of derived stacks $\underline{\mathsf{Bun}}^{\mathbf{G}}_{F,X} \to \underline{\mathsf{Bun}}^{\mathbf{G}}_{X}$ is rather a forgetful map, not an inclusion.
\end{rem}

\section{Flag-chiral product and flag-factorization objects}\label{sectionfusion}

In this Section we suggest a replacement of the chiral tensor product (defined for sheaves on the Ran space of a curve, see \cite{Beilinson_Drinfeld_Chiral_2004, FG}) for sheaves or stacks over $\mathsf{Fl}_X$, $X$ being a surface. We call this the \emph{flag-chiral tensor product}. As a consequence (and inspired by \cite{samchiral}), we define flags-analogs of factorization algebras (called \emph{flag-factorization algebras}) and of factorization categories (called \emph{flag-factorization categories}).\\

\subsection{Naive flag-chiral product}\label{warmup} As a warm-up, we define a flag-chiral product on $\flags^{\flat}_X$, where $\flags^{\flat}_X$ is defined as the composition
$$\flags^{\flat}_X: \xymatrix{\Aff\op \ar[rr]^-{\flags_X} && \mathbf{PoSets} \ar[r] & \mathbf{Sets} }$$ $\mathbf{PoSets} \to \mathbf{Sets}$ being the forgetful functor (forgetting the partial order).\\ Let us denote by $\mathsf{Shv}(-)$ a 3-functors formalism\footnote{A 6-functor formalism, i.e. a 3-functors formalism in which all the three functors involved have adjoints, in the same spirit was defined previously in \cite{GaiRozI}, and the reader might well prefer to use this source.} in the sense of \cite[Def. 2.4]{Scholze_6fun}, i.e. a lax symmetric monoidal functor $\mathsf{Corr}(\mathcal{C}_{\mathbf{Sch}_{\mathbb{C}}}, E) \to \mathbf{Cat}_{\infty}$, where $\mathcal{C}_{\mathbf{Sch}_{\mathbb{C}}}$ is an appropriate subcategory of $\mathbf{Sch}_{\mathbb{C}}$, and  $E$ is an appropriate class of maps in $\mathbf{Sch}_{\mathbb{C}}$. Here the word appropriate depends  on the specific 3-functors formalism chosen. An example of particular interest for us will be $\mathcal{C}_{\mathbf{Sch}_{\mathbb{C}}}$ consisting of separated schemes of finite type over $\mathbb{C}$, $E$ all morphisms between them, and  $\mathsf{Shv}(-)= \mathcal{D}-\mathsf{Mod}$ given by $\mathcal{D}$-modules (see \cite[Appendix to Lecture VIII:D-modules]{Scholze_6fun}). The reader will find other examples in the appendices of \cite{Scholze_6fun}.
Since $\flags^{\flat}_{X, \bullet}$ is a $2$-Segal object (Theorem \ref{thm:2SegGood}), $\flags_X^\flat$ is an algebra in $\mathsf{Corr}(\mathcal{C}_{\mathbf{Sch}_{\mathbb{C}}}, E)$ (e.g. for the choices corresponding to $\mathsf{Shv}(-)= \mathcal{D}-\mathsf{Mod}$), hence by composing with $\mathsf{Shv}(-)$, we get a monoidal structure $\otimes^{\flat,\, \mathrm{ch}}$ on $\mathsf{Shv}(\flags^{\flat}_X)$, that will be called the \emph{naive flag-chiral monoidal structure}. In particular, considering the correspondence
$$ \xymatrix{& \flags^{\flat}_{X,2} \ar[rd]^-{p:=\partial^{\flat}_1} \ar[ld]_-{q:=(\partial^{\flat}_2, \partial^{\flat}_0)} & \\ \flags^{\flat}_X \times \flags^{\flat}_X && \flags^{\flat}_X}$$ we have the binary product\footnote{For direct and inverse images, we keep the notations of \cite[Def. 2.4]{Scholze_6fun}.} $$\mathcal{F}\otimes^{\flat,\, \mathrm{ch}} \mathcal{G} = p_{!} q^*(\mathcal{F}\boxtimes \mathcal{G}), \,\,\,\, \mathcal{F}, \mathcal{G} \in \mathsf{Shv}(\flags^{\flat}_X).$$
Though not needed, the union map $\partial_1^{\flat}$ is probably proper for general $X$ (it is, e.g. when $X$ is a smooth cubic surface, see Remark \ref{maybeitsproper}). One can prove that $\otimes^{\flat,\, \mathrm{ch}}$ is actually a \emph{symmetric} monoidal structure on $\mathsf{Shv}(\flags^{\flat}_X)$.

\subsection{Flag-chiral product and flag-factorization algebras}\label{chiralsection} We sketch here a version of the naive flag-chiral product for sheaves on $\flags_X$ (so that maps between flags are also taken into account). The reason for this comes from \cite[3.4.6]{Beilinson_Drinfeld_Chiral_2004} where factorization algebras over a given curve $C$ (defined using the Ran space of $C$) are given an equivalent formulation using the category fibered in posets over $\mathbf{Sch}_{\mathbb{C}}$ of relative effective Cartier divisors on $C/\mathbb{C}$ with maps given by inclusions of the support of the divisors. These maps are crucial for such a comparison. The drawback of the necessity of considering maps between divisors is that the corresponding fibered category is not a stack in groupoids anymore but rather  a stack in categories (namely, in posets), and  to our knowledge, no 3-functors formalism is available for sheaves on such objects. So, we have to take a different approach. Specifically, we follow the ideas, notations and constructions given by Sam Raskin in  \cite{samchiral}, where he also shows (\cite[Prop. 6.19.4 and Rem. 6.19.5]{samchiral}) that his definition recovers the definition of factorization algebras of \cite{FG} (the latter being an $\infty$-categorical elaboration of the original approach in \cite{Beilinson_Drinfeld_Chiral_2004}). \\

We denote by $\mathsf{PreStk}^{\mathrm{lax}}$ the category %\footnote{This can be seen as a category (i.e. $(\infty, 1)$-category) or as a 2-category (i.e. $(\infty, 2)$-category); we will specify when the 2-category structure is used.}
of lax-prestacks on $\dAff_{\mathbb{C}}$ (i.e. of admissible functors $\dAff_{\mathbb{C}}^{\mathrm{op}} \to \mathbf{Cat}$ (\cite[4.14]{samchiral}), and 
$\mathsf{PreStk}_{\mathrm{corr}}^{\mathrm{lax}}$ the category\footnote{Both $\mathsf{PreStk}_{\mathrm{corr}}^{\mathrm{lax}}$ and $\mathsf{PreStk}^{\mathrm{lax}}$ can be seen as categories (i.e. $(\infty, 1)$-categories) or as 2-categories (i.e. $(\infty, 2)$-categories); we will specify when the 2-category structure is used.} of correspondences in $\mathsf{PreStk}^{\mathrm{lax}}$ (\cite[4.28]{samchiral}). Note that, for any lax-prestack $\mathcal{X}$, there is a category $\mathsf{QCoh}(\mathcal{X})$ of quasi-coherent complexes on $\mathcal{X}$, defined as the category of natural transformations $\mathcal{X} \to \mathsf{QCoh}$ (\cite[4.15]{samchiral}). As observed in Remark \ref{Fliscomm}, $\flags_{X}$ is actually a unital commutative (i.e. $E_{\infty}$) algebra in correspondences, and we thus view it as an object, denoted as $\flags^{\mathrm{ch}}_{X}$,  in $\mathsf{CAlg}(\mathsf{PreStk}_{\mathrm{corr}}^{\mathrm{lax}})$. This notation is meant to parallel Raskin's $\mathsf{Ran}_{\mathcal{X}}^{\mathrm{un, ch}}$ in \cite[4.28]{samchiral}. The same is true for $\flags_{X}^\dR$, whose unital commutative algebra structure will be denoted as $\flags^{\dR, \mathrm{ch}}_{X} \in \mathsf{CAlg}(\mathsf{PreStk}_{\mathrm{corr}}^{\mathrm{lax}})$. Having these two commutative algebras $\flags^{\mathrm{ch}}_{X}$ and $\flags^{\dR, \mathrm{ch}}_{X}$ at our disposal, we may consider \emph{multiplicative sheaves $\mathcal{C}$ of categories} over any of them (\cite[Def. 5.21.1]{samchiral}), and, given such a multiplicative sheaf $\mathcal{C}$ of categories, we have a notion of \emph{multiplicative object} in $\mathcal{C}$ (\cite[Def. 5.27.1]{samchiral}). In particular, the sheaves of categories $\mathsf{QCoh}_{\flags^{\mathrm{ch}}_{X}}$ and $\mathsf{QCoh}_{\flags^{\dR, \mathrm{ch}}_{X}}$ both carry a canonical multiplicative structure (\cite[5.21.2]{samchiral}). Therefore, we can give the following

\begin{defin}\label{pre-flagchiralcatsandalgs} A \emph{flag pre-chiral category} on $X$ is a unital multiplicative sheaf of categories on the unital commutative algebra $\flags^{\mathrm{ch}}_{X}$.\\
A \emph{flag pre-chiral crystal category} on $X$ is a unital multiplicative  sheaf of categories on the unital commutative algebra $\flags^{\dR, \mathrm{ch}}_{X}$.\\
A \emph{quasi-coherent flag pre-factorization algebra} on $X$ is a multiplicative object in the flag pre-chiral category $\mathsf{QCoh}_{\flags^{\mathrm{ch}}_{X}}$ on $X$.\\
A \emph{$\mathcal{D}$-module flag pre-factorization algebra} on $X$ is a multiplicative object in the flag pre-chiral crystal category $\mathsf{QCoh}_{\flags^{\dR, \mathrm{ch}}_{X}}$ on $X$.
\end{defin}

We denote by $\mathsf{FlCats}^{\mathrm{ch},\mathrm{pre}}_{X}$ (respectively, $\mathsf{FlCrysCats}^{\mathrm{ch, pre}}_{X}$) the $2$-category of flag pre-chiral categories (resp. of flag pre-chiral crystal categories) on $X$. Analogously, we denote by $\mathsf{FlFact}^{\mathrm{pre}}_{X}$ (respectively, $\mathsf{FlFact}^{\dR, \mathrm{pre}}_{X}$) the category of quasi-coherent flag pre-factorization algebras (resp. $\mathcal{D}$-module flag pre-factorization algebras) on $X$.\\

Unzipping Definition \ref{pre-flagchiralcatsandalgs}, we get the following (partial but more concrete) descriptions. Let us consider the correspondence (i.e. morphism in $\mathsf{PreStk}_{\mathrm{corr}}^{\mathrm{lax}}$)
$$ \xymatrix{& \flags_{X,2} \ar[rd]^-{p:=\partial_1} \ar[ld]_-{q:=(\partial_2, \partial_0)} & \\ \flags_X \times \flags_X && \flags_X}$$ and the canonical unit section (i.e. $0$th degeneracy map of $\flags_{X, \bullet}$)
$\mathsf{e}:=\sigma_0: \mathrm{Spec}\, \mathbb{C} \to \flags_X$, given by the empty flag. Then, a flag pre-chiral category $\mathcal{C}$ on $X$ have, in particular, canonical binary product and unit maps (\cite[5.21.1]{samchiral})
\begin{equation}\label{formulasformultcats} 
\mu_{\mathcal{C}}: q^*(\mathcal{C}\boxtimes \mathcal{C}) \longrightarrow p^*(\mathcal{C}) \,\,\, \mathrm{in}\,\,\, \mathsf{ShvCat}_{\flags_{X,2}}\,,  \qquad \epsilon_{\mathcal{C}}: \mathsf{QCoh}_{\mathrm{Spec}\, \mathbb{C}}=\mathsf{Vect} \longrightarrow \mathsf{e}^*(\mathcal{C}) \,\, \,\mathrm{in}\,\,\, \mathsf{ShvCat}_{\mathrm{Spec}\, \mathbb{C}} = \mathsf{Cat}_{\mathbb{C}}
\end{equation}
(where $\mathsf{Vect}$ denotes the category of complexes of $\mathbb{C}$-vector spaces) which are both \emph{equivalences}, and satisfy appropriate commutativity and associativity conditions. And we have similar maps for $n$-ary products, for any $n$, satisfying appropriate commutativity and associativity conditions. An analogous description is available when $\mathcal{C}$ is a flag pre-chiral crystal category on $X$.\\ 
In the special case of $\mathcal{C}= \mathsf{QCoh}_{\flags^{\mathrm{ch}}_{X}}$, we have $$\mathsf{QCoh}_{\flags^{\mathrm{ch}}_{X}}\boxtimes \mathsf{QCoh}_{\flags^{\mathrm{ch}}_{X}} \simeq \mathsf{QCoh}_{\flags^{\mathrm{ch}}_{X} \times \flags^{\mathrm{ch}}_{X}}\,, \qquad q^*(\mathsf{QCoh}_{\flags^{\mathrm{ch}}_{X} \times \flags^{\mathrm{ch}}_{X}}) \simeq \mathsf{QCoh}_{\flags_{X,2}}\, , \qquad  p^*(\mathsf{QCoh}_{\flags^{\mathrm{ch}}_{X}}) \simeq \mathsf{QCoh}_{\flags_{X,2}}\, ,$$ and the (binary) product and unit maps have the following obvious from  $$\mu_{\mathsf{QCoh}_{\flags^{\mathrm{ch}}_{X}}}= \mathrm{Id} : \mathsf{QCoh}_{\flags_{X,2}} \longrightarrow \mathsf{QCoh}_{\flags_{X,2}}\, , \qquad \epsilon_{\mathsf{QCoh}_{\flags^{\mathrm{ch}}_{X}}}= \mathrm{Id}: \mathsf{Vect} \longrightarrow \mathsf{Vect}.$$
Therefore, if $\mathsf{A}$ is a quasi-coherent flag pre-factorization algebra on $X$, by definition, (\cite[4.19.3, 5.5]{samchiral})
$$\mathsf{A} \in \Gamma(\flags_X, \mathsf{QCoh}_{\flags_{X}})= \mathsf{Hom}_{\mathsf{ShvCat}_{\flags_X}}(\mathsf{QCoh}_{\flags_{X}}, \mathsf{QCoh}_{\flags_{X}})\simeq \mathsf{QCoh}(\flags_{X}):=\mathsf{Nat}(\flags_{X}, \mathsf{QCoh}) \in \mathsf{DGCat}_{\mathrm{cont}}\,, $$
and we have, in particular, canonical binary product and unit maps (\cite[5.27]{samchiral})
\begin{equation}\label{formulasformultobj} 
\mu_{\mathsf{A}}: q^*(\mathsf{A}\boxtimes \mathsf{A}) \longrightarrow p^*(\mathsf{A}) \,\,\, \mathrm{in}\,\,\, \Gamma(\flags_{X,2}, \mathsf{QCoh}_{\flags_{X,2}})=\mathsf{QCoh}(\flags_{X,2})  \,,
\end{equation}
\begin{equation}\label{formulasformultobjbis} \epsilon_{\mathsf{A}}: \mathbb{C} \longrightarrow
 \mathsf{e}^*(\mathsf{A})= \mathsf{A}_{|(\emptyset,\emptyset)} \,\, \,\mathrm{in}\,\,\, \mathsf{Vect}
\end{equation}
%%%PREVIOUS VERSION WITHOUT THE EXPLICIT FORM of mult and unit in QCoh
%\begin{equation}\label{formulasformultobj} 
%\mu_{\mathsf{A}}: \mu_{\mathsf{QCoh}_{\flags^{\mathrm{ch}}_{X}}}(q^*(\mathsf{A}\boxtimes \mathsf{A})) \longrightarrow p^*(\mathsf{A}) \,\,\, \mathrm{in}\,\,\, \Gamma(\flags_{X,2}, p^*(\mathsf{QCoh}_{\flags^{\mathrm{ch}}_{X}}) )\,,
%\end{equation}
%\begin{equation}\label{formulasformultobjbis} \epsilon_{\mathsf{A}}: \epsilon_{\mathsf{QCoh}_{\flags^{\mathrm{ch}}_{X}}}(\mathbf{1}_{\mathsf{Vect}})=:\mathcal{O}_{\flags_{X,2}} \longrightarrow
% \mathsf{e}^*(\mathsf{A}) \,\, \,\mathrm{in}\,\,\, \Gamma(\flags_{X,2}, \mathsf{e}^*(\mathsf{QCoh}_{\flags^{\mathrm{ch}}_{X}}))
%\end{equation}
which are both \emph{equivalences}, and satisfy appropriate commutativity and associativity conditions. And we have similar maps for $n$-ary products, for any $n$, satisfying appropriate commutativity and associativity conditions. An analogous description is available when $\mathsf{A}$ is $\mathcal{D}$-module flag pre-factorization algebra on $X$. Note that in this case, $\mathsf{A} \in \Gamma(\flags_X, \mathsf{QCoh}_{\flags^{\dR}_{X}})\simeq \mathsf{Nat}(\flags^{\dR}_{X}, \mathsf{QCoh})$.\\

In order to omit the prefix ``pre-'' in Definition \ref{pre-flagchiralcatsandalgs}, i.e. to pass from flag pre-chiral (crystal) categories (respectively, pre-factorization algebras) on $X$ to flag-chiral (crystal) categories (resp., flag-factorization algebras) on $X$, we use the intuition from \cite[3.4.6]{Beilinson_Drinfeld_Chiral_2004} where sheaves of factorization algebras (in the Cartier approach discussed there) are required to be insensitive to the scheme structures of the divisors. This suggests the definition below. \\ Let $\mathcal{C}$ be either a flag pre-chiral category or a flag pre-chiral crystal category on $X$, and $\mathsf{A}$ either a quasi-coherent flag pre-factorization algebra or a $\mathcal{D}$-module flag pre-factorization algebra on $X$. For $S \in \Aff_{\mathbb{C}}$, (i.e. a classical underived\footnote{Note that if $\mathcal{C}$ is a flag pre-chiral crystal category on $X$, or if $\mathsf{A}$ is a $\mathcal{D}$-module flag pre-factorization algebra on $X$, we may allow any $S \in \dAff_{\mathbb{C}}$, since by definition $\flags^{\dR}_X(S)= \flags_X (\mathrm{t}_0(S)_{\mathrm{red}}).$} affine Noetherian scheme), and a morphism $\alpha : F_1 =(D_1, Z_1) \to (D_2, Z_2)=F_2$ a morphism in either $\flags_X(S)$ or in $\flags^{\dR}_X(S)$, we have an induced functor $\mathcal{C}_S(\alpha): \mathcal{C}_S(F_1) \to \mathcal{C}_S(F_2)$ and a morphism $\mathsf{A}_S(\alpha):\mathsf{A}_S(F_1) \to \mathsf{A}_S(F_2)$. We say that the morphism $\alpha$ is a \emph{reduced isomorphism} if the induced maps on the underlying reduced subschemes $(D_1)_{\mathrm{red}} \to (D_2)_{\mathrm{red}} $ and $(Z_1)_{\mathrm{red}} \to (Z_2)_{\mathrm{red}}$ are both isomorphisms.

\begin{defin}\label{flagchiralcatsandalgs}  A flag pre-chiral category (respectively, a flag pre-chiral crystal category) $\mathcal{C}$ on $X$ is a \emph{flag-chiral category} (resp. a \emph{flag-chiral crystal category}) on $X$, if for any (underived) $S \in \Aff_{\mathbb{C}}$, and any reduced isomorphism $\alpha$ in $\flags_X(S)$ (resp. in $\flags^{\dR}_X(S)$), the induced functor $\mathcal{C}_S(\alpha)$ is an equivalence.\\
A quasi-coherent flag pre-factorization algebra (respectively, a $\mathcal{D}$-module flag pre-factorization algebra) $\mathsf{A}$ on $X$ is a \emph{quasi-coherent flag-factorization algebra} (resp., a $\mathcal{D}$-module flag-factorization algebra) if for any $S \in \dAff_{\mathbb{C}}$, and any reduced isomorphism $\alpha$ in $\flags_X(S)$ (resp. in $\flags^{\dR}_X(S)$), the induced morphism $\mathsf{A}_S(\alpha)$ is an equivalence.\\
The full sub-2-category of $\mathsf{FlCats}^{\mathrm{ch},\mathrm{pre}}_{X}$ (respectively, of $\mathsf{FlCrysCats}^{\mathrm{ch, pre}}_{X}$) whose objects are flag-chiral categories (resp., flag-chiral crystal categories) is denoted by $\mathsf{FlCats}^{\mathrm{ch}}_{X}$ (resp., $\mathsf{FlCrysCats}^{\mathrm{ch}}_{X}$).\\
The full subcategory of $\mathsf{FlFact}^{\mathrm{pre}}_{X}$ (respectively, of $\mathsf{FlFact}^{\dR, \mathrm{pre}}_{X}$) whose objects are quasi-coherent flag-factorization algebras (resp., $\mathcal{D}$-module flag-factorization algebras) is denoted by $\mathsf{FlFact}_{X}$ (respectively, $\mathsf{FlFact}^{\dR}_{X}$).
\end{defin}
 
\begin{eg}\label{exOFl} Obviously $\mathcal{O}_{\flags_X}$ (respectively, $\mathcal{O}_{\flags^{\dR}_X}$) is an object in $\mathsf{FlFact}_{X}$ (respectively, $\mathsf{FlFact}^{\dR}_{X}$). Note that for any test affine underived  scheme $S$, and any map $F_1 \to F_2$ in $\flags_X(S)$ (respectively, in $\flags^{\dR}_X(S)$), the induced map (which is part of the data defining $\mathcal{O}$, see \cite[(4.16.1)]{samchiral}) $(\mathcal{O}_{\flags_X})_S (F_1)= \mathcal{O}_S \to (\mathcal{O}_{\flags_X})_S (F_2)= \mathcal{O}_S$ is the identity (resp., the induced map $(\mathcal{O}_{\flags^{\dR}_X})_S (F_1)= \mathcal{O}_{S_{\mathrm{red}}} \to (\mathcal{O}_{\flags^{\dR}_X})_S (F_2)= \mathcal{O}_{S_{\mathrm{red}}}$ is the identity.)
\end{eg}
 
By Remark \ref{Griscomm}, the 2-Segal objects $\mathcal{G}r_{X, \bullet}$ and $\mathcal{G}r^{\dR}_{X, \bullet}$ give rise to commutative algebras in $\mathsf{PreStk}_{\mathrm{corr}}^{\mathrm{lax}}$, that will be denoted, respectively, as $\mathcal{G}r^{\mathrm{ch}}_{X}$ and $\mathcal{G}r^{\dR, \mathrm{ch}}_{X}$. So, the same arguments used above for $\flags^{\mathrm{ch}}_X$ and $\flags^{\dR, \mathrm{ch}}_X$, allow for a definition of \emph{$\mathsf{Gr}$-chiral categories} and of \emph{$\mathsf{Gr}$-chiral crystal categories}.\\ Moreover, as the sheaves of categories $\mathsf{QCoh}_{\mathcal{G}r^{\mathrm{ch}}_{X}}$ and $\mathsf{QCoh}_{\mathcal{G}r^{\dR, \mathrm{ch}}_{X}}$ are, as always, multiplicative and unital sheaves of categories on, respectively, $\mathcal{G}r^{\mathrm{ch}}_{X}$ and $\mathcal{G}r^{\dR, \mathrm{ch}}_{X}$, we also have the notions of \emph{quasi-coherent $\mathsf{Gr}$-factorization algebra} and of  \emph{$\mathcal{D}$-module $\mathsf{Gr}$-factorization algebra}. A more detailed investigations of these objects is deferred to a future paper.\\

\begin{eg}\label{exOGrandpushfwd} As in Example \ref{exOFl}, $\mathcal{O}_{\mathcal{G}r^{\mathrm{ch}}_{X}}$ (respectively, $\mathcal{O}_{\mathcal{G}r^{\dR, \mathrm{ch}}_{X}}$) is a quasi-coherent $\mathsf{Gr}$-factorization algebra (respectively, a $\mathcal{D}$-module $\mathsf{Gr}$-factorization algebra). Moreover, if $p: \mathcal{G}r^{\mathrm{ch}}_{X} \to \flags^{\mathrm{ch}}_{X}$ (respectively $p^{\dR}: \mathcal{G}r^{\dR, \mathrm{ch}}_{X} \to \flags^{\dR, \mathrm{ch}}_{X}$) denotes the projection maps, both are maps of commutative algebras in $\mathsf{PreStk}_{\mathrm{corr}}^{\mathrm{lax}}$, and $p_* (\mathcal{O}_{\mathcal{G}r^{\mathrm{ch}}_{X}})$ (resp., $(p^{\dR})_*(\mathcal{O}_{\mathcal{G}r^{\dR, \mathrm{ch}}_{X}})$) is an object in $\mathsf{FlFact}_{X}$ (respectively, in $\mathsf{FlFact}^{\dR}_{X}$).
\end{eg}

\begin{rem}\label{classicalcomparison} When $X$ is a curve, the definitions \ref{pre-flagchiralcatsandalgs} and \ref{flagchiralcatsandalgs} admit obvious analogs, where $\flags_X$ is replaced by the lax-prestack $\underline{\mathsf{Car}}_{X}$ (of relative effective Cartier divisors on $X$, with morphisms given by inclusions of Cartier). We expect that, in this version, Definition \ref{flagchiralcatsandalgs} recovers Raskin's definitions of chiral category and of factorization algebras (\cite[Def. 6.2.1]{samchiral}).
\end{rem}

\begin{rem}\label{GrXasflagfactspace} One can adapt the formalism of \cite{samchiral} in order to define the notion of a \emph{flag-factorization (crystal) space} over $X$, in such a way that the flag-Grassmannian $\mathcal{G}r_X$ is such a factorization (crystal) space. We leave the details to the interested reader.
\end{rem}

\appendix
%\section{}

\section{Extensions into simplicial objects}\label{BenjExtension}
In this appendix, we will prove the technical \cref{prop:extendsimplicial}, used in \cref{subsec:2segalflaggrass}.

\begin{prop}\label{prop:extendsimplicial}
 Let $\cC$ be an $\infty$-category with all countable limits. Let $X_\bullet \colon \Delta\op \to \cC$ be a simplicial object.
 Denote by $\Delta_{\leq 2}$ the full subcategory of $\Delta$ spanned by $[0]$, $[1]$ and $[2]$. 
 Let $Y^{\leq 2}_\bullet \colon \Delta_{\leq 0}\op \to \cC$ and let $p_{\bullet}^{\leq 2} \colon Y_{\bullet}^{\leq 2} \to X_{|\Delta_{\leq 2}}$ be a morphism (of truncated simplicial objects).
 Assume that
 \begin{enumerate}[label={(\alph*)}, ref={assumption (\alph*)}]
  \item \label{assumptionlvl0} The morphism $Y_0 \to X_0$ is an equivalence;
  \item \label{assumptionpullback} The square 
  \[
   \begin{tikzcd}
    Y_2 \ar{r}{\partial_1} \ar{d} \ar[phantom]{dr}[description, near start]{\lrcorner} & Y_1 \ar{d} \\ X_2 \ar{r}[swap]{\partial_1} & X_1
   \end{tikzcd}
  \]
  is Cartesian.
 \end{enumerate}
 Then $Y_\bullet^{\leq 2}$ and $p_\bullet^{\leq 2}$ extend naturally into a morphism of simplicial objects $p_\bullet \colon Y_\bullet \to X_\bullet$ such that:
 \begin{enumerate}[ref={condition (\arabic*)}]
  \item\label{cond:deltaint} For any $n \geq 2$, the following commutative square is Cartesian:
  \[
  \begin{tikzcd}
    Y_n \ar{r}{\delta_Y^n} \ar{d}[swap]{p_n} \ar[phantom]{dr}[description, near start]{\lrcorner} & Y_1 \ar{d}{p_1} \\
    X_n \ar{r}[swap]{\delta_X^n} & X_1,
  \end{tikzcd}
  \]
  where $\delta_X^n$ and $\delta_Y^n$ are the images by $X$ and $Y$ of the map $\ell_n \colon [1] \to [n]$ sending $0$ to $0$ and $1$ to $n$.
  \item The morphism $p_\bullet$ is relatively $2$-Segal. In particular, if $X_\bullet$ is $2$-Segal, so is $Y_\bullet$.
 \end{enumerate}
\end{prop}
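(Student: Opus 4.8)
The natural thing to do is to \emph{define} $Y_\bullet$ by right Kan extension along the inclusion $\iota\colon \Delta_{\leq 2}\op \hookrightarrow \Delta\op$, but with the caveat that a naive Kan extension of $Y^{\leq 2}_\bullet$ alone would ignore the map to $X_\bullet$. So instead I would work relatively: form the morphism $p^{\leq 2}_\bullet$ as an object of $\Fun(\Delta_{\leq 2}\op \times \Delta^1, \cC)$ (or rather of the arrow category), and right Kan extend the whole arrow along $\iota$. Since $X_\bullet$ is already defined on all of $\Delta\op$ and $(X_\bullet)_{|\Delta_{\leq 2}}$ is its restriction, the target component of the Kan-extended arrow is canonically $X_\bullet$ again (restriction followed by right Kan extension along a fully faithful inclusion is the identity on objects already defined on the bigger category — one checks this by the pointwise formula, since $[n] \in \Delta_{\leq 2}\op \downarrow [m]$ is initial for the relevant slice when $m\leq 2$, and for $m > 2$ one uses that $X$ itself provides a cone). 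Concretely: set $Y_n := \lim_{([k]\to[n])\in (\Delta_{\leq 2}\op \downarrow [n])} Y^{\leq 2}_k$, with $p_n$ the induced map to the analogous limit computing $X_n$, composed with the canonical equivalence from that limit to $X_n$. The countable-completeness hypothesis on $\cC$ guarantees these limits exist ($\Delta_{\leq 2}\op \downarrow [n]$ is finite, in fact).

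Next I would verify \ref{cond:deltaint}. The slice category $\Delta_{\leq 2}\op \downarrow [n]$ has a terminal-ish structure controlled by the ``interval'' and ``spine'' decompositions of $[n]$; the key combinatorial fact is that the map $\ell_n\colon[1]\to[n]$ together with the edges it doesn't see organizes into a limit cone. More precisely, I would invoke the standard fact (used in the theory of $2$-Segal objects, e.g. in \cite{KapDyc}) that for a simplicial object the square relating $X_n$, $X_1$ (via $\delta^n_X = (\ell_n)^*$) to $X_{n-1}$ and so on is iteratively Cartesian once one knows the $n=2$ case — but here the $n=2$ case is precisely \ref{assumptionpullback}. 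So the plan is to induct on $n$: write $[n]$ as the pushout $[n-1]\amalg_{[0]}[1]$ along $\{n-1\}\hookrightarrow[n-1]$ and $\{0\}\hookrightarrow[1]$, or rather use the corresponding $2$-Segal covering diagram, apply the inductive hypothesis for $n-1$ together with \ref{assumptionpullback} pulled back appropriately, and conclude the square for $n$ is Cartesian by pasting of pullback squares. Here \ref{assumptionlvl0} is needed to start/stabilize the induction (it makes the ``$n=1$ square'' trivially Cartesian and pins down $Y_0 \simeq X_0$). The functoriality of all this — that the faces and degeneracies of $Y_\bullet$ are the ones induced by Kan extension and agree with those of $Y^{\leq 2}_\bullet$ in low degrees — is automatic from the universal property of right Kan extension.

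Finally, relative $2$-Segality: a morphism $p_\bullet\colon Y_\bullet\to X_\bullet$ is relatively $2$-Segal when for every triangulation / polygonal subdivision square exhibiting the $2$-Segal condition on $X_\bullet$, the corresponding square for $Y_\bullet$ is a pullback of it; equivalently (the formulation I'd use) all the $2$-Segal squares for $Y_\bullet$ are Cartesian \emph{because} \ref{cond:deltaint} expresses each $Y_n$ as a base change of $X_n$ along $\delta^n_X$. Given \ref{cond:deltaint}, the $2$-Segal square for $Y_\bullet$ at $(i,n)$ is obtained from the one for $X_\bullet$ by pulling back along $X_1 \leftarrow Y_1$ (using that every vertex of the square receives a compatible map to $X_1$ via the appropriate $\delta$), and pullbacks of Cartesian squares are Cartesian; so if $X_\bullet$ is $2$-Segal, every such square for $Y_\bullet$ is Cartesian, i.e. $Y_\bullet$ is $2$-Segal.

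\textbf{Main obstacle.} The genuinely delicate point is the inductive step establishing \ref{cond:deltaint}: one must identify the slice-category limit defining $Y_n$ with an iterated fiber product built out of copies of $Y_1$, $Y_2$ and (via $p$) $X_n$, and show this matches the base change $X_n \times_{X_1} Y_1$. This is a bookkeeping argument about cofinal subcategories of $\Delta_{\leq 2}\op \downarrow [n]$ — one wants to replace the whole slice by the ``spine'' consisting of the $n$ edges and $(n-1)$ composable $2$-simplices — and getting the cofinality and the pasting of pullback squares exactly right, uniformly in $n$, is where the real work lies; everything else (existence, functoriality, the reduction of relative $2$-Segality to \ref{cond:deltaint}) is formal nonsense about Kan extensions and pullback pasting.
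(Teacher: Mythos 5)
There is a genuine gap, and it sits exactly in the two places you wave through.

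First, your claim that ``restriction followed by right Kan extension along a fully faithful inclusion is the identity on objects already defined on the bigger category'' is false. Right Kan extension $\mathrm{Ran}_\iota$ along $\iota\colon\Delta_{\leq 2}\op\hookrightarrow\Delta\op$ is right adjoint to restriction, so there is only a \emph{unit} $X_\bullet\to\mathrm{Ran}_\iota(X_{|\Delta_{\leq 2}})$, and it is an equivalence exactly when $X_\bullet$ is $2$-coskeletal, which a general simplicial object is not. The ``$X$ provides a cone'' remark produces this comparison map, not an equivalence. You could repair this by instead \emph{defining} $Y_\bullet := X_\bullet \times_{\mathrm{Ran}_\iota(X_{|\leq 2})} \mathrm{Ran}_\iota(Y^{\leq 2}_\bullet)$, which does come with a map to $X_\bullet$ extending $p^{\leq 2}_\bullet$; but that is not what you wrote, and the fix introduces exactly the second problem.

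Second, and more seriously, the object you are building this way is a relative $2$-\emph{coskeleton}, and the step you flag as ``delicate'' — replacing the full slice $\Delta_{\leq 2}\op\downarrow[n]$ by the spine and concluding $Y_n\simeq X_n\times_{X_1}Y_1$ — does not go through. The slice contains the edge $\ell_n\colon[1]\to[n]$ itself, which for $n\geq 3$ does not factor through any spine edge $\{i,i+1\}\hookrightarrow[n]$; so the spine is not cofinal in the slice, and the coskeleton limit genuinely records all $2$-truncated faces rather than the iterated spine pullback. There is no reason for the coskeleton of $Y^{\leq 2}$ relative to $X$ to collapse to the single pullback $X_n\times_{X_1}Y_1$. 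This is where the paper's proof does something essentially different: it passes to the ``interval'' subcategory $\Delta^{\mathrm{int}}\subset\Delta$ of endpoint-preserving maps, over which $\ell_n$ is the \emph{only} map $[1]\to[n]$ (and there are no maps $[0]\to[n]$ for $n\geq 1$ at all). Right Kan extending the arrow over $\Delta^{\mathrm{int}}$, and observing via assumptions (a) and (b) that $p^{\leq 2}$ restricted to $\Delta^{\mathrm{int}}_{\leq 2}$ is already right Kan extended from $\Delta^{\mathrm{int}}_{\leq 1}$, makes condition (1) immediate, because the relevant slice is a point. The price is that the outer faces $\partial_0,\partial_n$ are not morphisms in $\Delta^{\mathrm{int}}$ and have to be supplied by hand afterwards (via the unique $g_i\colon[2]\to[n]$ in $\Delta^{\mathrm{int}}$ fitting in the appropriate square), which the paper does. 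Your final paragraph, deducing relative $2$-Segality from condition (1) by base change along $Y_1\to X_1$, is in the right spirit and is close to what the paper does; but without a correct derivation of condition (1) the argument does not stand.
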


\begin{proof}
\newcommand{\interior}{\mathrm{int}}
\newcommand{\Deltaint}{\Delta^\interior}
 Denote by $\Deltaint$ the subcategory of $\Delta$ containing every object, but only morphisms $f \colon [n] \to [m]$ satisfying $f(0) = 0$ and $f(n) = m$. We also set $\Deltaint_{\leq 2} := \Deltaint \cap \Delta_{\leq 2}$.
 Note that $\ell_n$ is the only morphism $[1] \to [n]$ in $\Deltaint$.
 The restriction of the morphism $p_\bullet^{\leq 2}$ to $\Deltaint_{\leq 2}$ can be represented as a diagram indexed $p' \colon D \to \cC$ by the pullback
 \[
  \begin{tikzcd}
   D \ar{r} \ar{d} \ar[phantom]{dr}[description, near start]{\lrcorner} & (\Deltaint)^I \ar{d}{\mathrm{ev}_0} \\
    \Deltaint_{\leq 2} \ar{r} & \Deltaint
  \end{tikzcd}
 \]
 where $I = \Delta^1$ is the interval category.
 We define $p_\bullet^\interior$ as the right Kan extension of $p'$ along the inclusion $D \to (\Deltaint)^I$.
 By our assumptions (a) and (b), $p'$ is itself a right Kan extension its restriction to $\Deltaint_{\leq 1} \times_{\Deltaint} (\Deltaint)^I$. As a consequence, the morphism $p_\bullet^\interior$ satisfies \ref{cond:deltaint}.
 
 Notice that $p_\bullet^\interior$ already defines  all the degeneracy maps as well as the face morphisms $Y_n \to Y_{n_1}$ except $\partial_0$ and $\partial_n$.
 In particular, in order to extend $p_\bullet^\interior$ to a fully fledge morphism of simplicial objects, we only need to construct \emph{in a canonical way} the face morphisms $\partial_0$ and $\partial_n$.
 To do so, we need the following trivial observation. \vspace{1em}
 
 Observation:
 Let $\epsilon_i \colon [n-1] \to [n]$ be the morphism in $\Delta$ whose image is $[n] \smallsetminus \{i\}$ (so that $\partial_i$ of a simplicial object is the image of $\epsilon_i$)
 For $i = 0$ or $n$, there exists a unique morphism $g_i \colon [2] \to [n]$ in $\Deltaint$ such that the following diagrams commute
 \[
 \begin{array}{p{3em}p{18em}cp{18em}p{3em}}
 i = 0 & \centering
  \begin{tikzcd}[ampersand replacement=\&]
   {[1]} \ar{r}{\epsilon_0} \ar{d}[swap]{\ell_{n-1}} \& {[2]} \ar{d}{g_0} \\ {[n-1]} \ar{r}[swap]{\epsilon_0} \& {[n]}.
  \end{tikzcd}
  &\centering
  \vline
  &\centering
  \begin{tikzcd}[ampersand replacement=\&]
   {[1]} \ar{r}{\epsilon_2} \ar{d}[swap]{\ell_{n-1}} \& {[2]} \ar{d}{g_n} \\ {[n-1]} \ar{r}[swap]{\epsilon_n} \& {[n]}.
  \end{tikzcd}
  & \hfill i = n
 \end{array}
 \]
 Indeed, the only possible choice is $g_i(0) = 0$, $g_i(2) = n$ and $g_0(1) = 1$ or $g_n(1) = n-1$.
 As a consequence, we can canonically form the plain diagrams
 \[
 \begin{array}{p{3em}p{18em}cp{18em}p{3em}}
 i = 0 & \centering
  \begin{tikzcd}[nodes=overlay, column sep={4em,between origins}, row sep={2.5em,between origins}, ampersand replacement=\&]
   Y_n \ar{rrr}{Y(g_0)} \ar{ddd} \ar[dashed]{dr}{\partial_0} \&\&\& Y_2 \ar{ddd} \ar{dr}{\partial_0} \\
   \& Y_{n-1} \ar[crossing over]{rrr}[near start]{\delta_Y^{n-1}} \ar[phantom]{dddrrr}[description, very near start]{\resizebox{1em}{!}{$\lrcorner$}}  \&\&\& Y_1 \ar{ddd} \\ \\
   X_n \ar{rrr}[near end]{X(g_0)} \ar{dr}[swap]{\partial_0} \&\&\& X_2 \ar{dr}{\partial_0} \\
   \& X_{n-1} \ar{rrr}[swap]{\delta_X^{n-1}} \ar[from=uuu, crossing over] \&\&\& X_1
  \end{tikzcd}
  &\centering
  \vline
  &\centering
  \begin{tikzcd}[nodes=overlay, column sep={4em,between origins}, row sep={2.5em,between origins}, ampersand replacement=\&]
   Y_n \ar{rrr}{Y(g_n)} \ar{ddd} \ar[dashed]{dr}{\partial_n} \&\&\& Y_2 \ar{ddd} \ar{dr}{\partial_n} \\
   \& Y_{n-1} \ar[crossing over]{rrr}[near start]{\delta_Y^{n-1}} \ar[phantom]{dddrrr}[description, very near start]{\resizebox{1em}{!}{$\lrcorner$}}  \&\&\& Y_1 \ar{ddd} \\ \\
   X_n \ar{rrr}[near end]{X(g_n)} \ar{dr}[swap]{\partial_n} \&\&\& X_2 \ar{dr}{\partial_n} \\
   \& X_{n-1} \ar{rrr}[swap]{\delta_X^{n-1}} \ar[from=uuu, crossing over] \&\&\& X_1
  \end{tikzcd}
  & \hfill i = n
 \end{array}
 \]
 and there exist essentially unique dashed arrows completing the diagrams, as the front squares are Cartesian.
 It follows that $p_\bullet^\interior$ extends to a morphism of simplicial objects $p_\bullet \colon Y_\bullet \to X_\bullet$.
 
 Remains to prove that the projection $p_\bullet$ is relatively $2$-Segal.
  For $Z$ a simplicial object (actually, either $X$ or $Y$), for any integer $n \geq 2$, and any $0 < j < n$, we consider the following commutative diagrams
 \[
 \begin{array}{p{3em}p{18em}cp{18em}p{3em}}
 i = 0 & \centering
  \begin{tikzcd}[ampersand replacement=\&]
  Z_{n+1} \ar{dr}{q^0_j} \ar[bend left]{drr}{\partial_j} \ar[bend right]{ddr}[swap]{\partial_0} \\
   \& Z_n^{(j),0} \ar{d} \ar{r}{p_j^0} \ar[phantom]{dr}[description, near start]{\lrcorner} \& Z_n \ar{d}{\partial_0} \\
   \& Z_n \ar{r}[swap]{\partial_j} \& Z_{n-1}
  \end{tikzcd}
  &\centering
  \vline
  &\centering
  \begin{tikzcd}[ampersand replacement=\&]
  Z_{n+1} \ar{dr}{q^n_j} \ar[bend left]{drr}{\partial_j} \ar[bend right]{ddr}[swap]{\partial_{n+1}} \\
   \& Z_n^{(j),n} \ar{d} \ar{r}{p_j^n} \ar[phantom]{dr}[description, near start]{\lrcorner} \& Z_n \ar{d}{\partial_n} \\
   \& Z_n \ar{r}[swap]{\partial_i} \& Z_{n-1}
  \end{tikzcd}
  & \hfill i = n
 \end{array}
 \]
 where $Z_n^{(i),0}$, $Z_n^{(i),n}$, $q_i^0$ and $q_i^n$ are defined by the squares being Cartesian.

 Notice then that, for any $0 < j < n$, in the commutative diagram\\

 \[
  \begin{tikzcd}
   Y_n \ar{r}[swap]{\partial_j} \ar{d} \ar[bend left=2em]{rr}{\delta_Y^n} & Y_{n-1} \ar{d} \ar{r}[swap]{\delta_Y^{n-1}} & Y_1 \ar{d} \\
   X_n \ar{r}{\partial_j} \ar[bend right=2em]{rr}[swap]{\delta_X^n} & X_{n-1} \ar{r}{\delta_X^{n-1}} & X_1,
  \end{tikzcd}
 \]
 both the rightmost square and the outer rectangle are Cartesian. It follows that the leftmost square is also Cartesian.
 In particular, in the commutative diagram, for $i = 0, n$ and $0 < j < n$,\\
 
 \[
   \begin{tikzcd}[nodes=overlay, column sep={4em,between origins}, row sep={2.5em,between origins}]
   Y_n^{(j),i} \ar{rrr}{p_j^i} \ar{ddd} \ar{dr} &&& Y_n \ar{ddd} \ar{dr}{\partial_i} \\
   & Y_{n} \ar[crossing over]{rrr}[near start]{\partial_j} &&& Y_{n-1} \ar{ddd} \\ \\
   X_n^{(j),i} \ar{rrr}[near end]{p_j^i} \ar{dr} &&& X_n \ar{dr}{\partial_i} \\
   & X_{n} \ar{rrr}[swap]{\partial_j} \ar[from=uuu, crossing over] &&& X_{n-1},
  \end{tikzcd}
 \]
 
 \bigskip
 
\noindent the upper, bottom and front squares are Cartesian. It follows that so is the back square.
 Finally, we can form the commutative diagram
 \[
  \begin{tikzcd}
    Y_{n+1} \ar{r}[swap]{q_j^i} \ar{d} \ar[bend left=2em]{rr}{\partial_j} & Y_n^{(j),i} \ar{d} \ar{r}[swap]{p_j^i} & Y_n \ar{d} \\
    X_{n+1} \ar{r}{q_j^i} \ar[bend right=2em]{rr}[swap]{\partial_j} & X_n^{(j),i} \ar{r}{p_j^i} & X_n.
  \end{tikzcd}
 \]
 By what precedes, both the rightmost square, and the outer rectangle are Cartesian. In particular, so is the leftmost square. This being true for $i = 0,n$ and $0 < j < n$, this is precisely the relative $2$-Segal condition on the projection $p_\bullet$.
\end{proof}

\section{Actions}\label{sectionactions}
In this Appendix we construct some natural actions of appropriate flags analogs of the loop and positive loop group on the flag Grassmannian. In order to simplfy the exposition, we work in the underived setting: all test affine schemes and all stacks (namely the stack of $\mathbf{G}$-bundles) will be implicitly \emph{underived}. Generalizing the constructions to the derived setting would pose no problems but would increase the length (and possibly the readability) of this section. Though we focus on the stack of $\mathbf{G}$-bundles, as in the main text, similar results hold for the stack of perfect complexes together with the choice of a fixed perfect complex $\mathcal{E}_0$ on $X$ (replacing the trivial $\mathbf{G}$-bundle).\\

\subsection{Flags-version of the (positive) loop group} Let $X$ be our smooth projective surface and $S$ be an arbitrary test scheme in $\Aff$. As usual, we will sometimes write $Y$ for $X \times S$, and $\mathcal{E}_0$ will denote the trivial $\mathbf{G}$-bundle (we will leave the context to specify which space $\mathcal{E}_0$ actually lives on).\\

\begin{defin}\label{defin:autgroup-fixedflag}
Let $F=(D,Z) \in \flags_X(S)$, $Y:=X \times S$. We define 
\begin{itemize}
\item the sheaf of groups over $Y$ $$\mathsf{LG}_X(S,F):= * \times_{\mathbf{Bun}_{\widehat{D}\smallsetminus D} \times_{\mathbf{Bun}_{(\widehat{D}\smallsetminus D) \times_{Y} \wh{Z}^{\affinize}}} \mathbf{Bun}_{\wh{Z}}}\,\, *$$
where the two maps from $*$ coincide and are given by the trivial $\mathbf{G}$-bundle $\mathcal{E}_0$. In other words, $\mathsf{LG}_X(S,F)$ is the loop stack $\Omega_{\mathcal{E}_0}(\mathbf{Bun}_{\widehat{D}\smallsetminus D} \times_{\mathbf{Bun}_{(\widehat{D}\smallsetminus D) \times_{Y} \wh{Z}^{\affinize}}} \mathbf{Bun}_{\wh{Z}})$ at the point corresponding to the trivial $\mathbf{G}$-bundle.
\item the group
$$LG_X(S,F):= \mathrm{Aut}_{\mathrm{Bun}(\widehat{D}\smallsetminus D)}(\mathcal{E}_0)\times_{\mathrm{Aut}_{\mathrm{Bun}((\widehat{D}\smallsetminus D) \times_{Y} \Zhataff)}(\mathcal{E}_0)} \mathrm{Aut}_{\mathrm{Bun}(\hZ)}(\mathcal{E}_0)$$
i.e. $LG_X(S,F)=\Gamma(X \times S, \mathsf{LG}_X(S,F))$.
\item the sheaf of groups over $Y$ $$\mathsf{L^{+}G}_X(S,F):= * \times_{\mathbf{Bun}_{\widehat{D} }} \,\, *$$ i.e. the loop stack $\Omega_{\mathcal{E}_0}(\mathbf{Bun}_{\widehat{D}})$ of $\mathbf{Bun}_{\widehat{D}}$ at the point corresponding to the trivial $\mathbf{G}$-bundle.
\item the group $L^{+}G_X(S,F) $ of global sections of $\mathsf{L^{+}G}_X(S,F)$, i.e. $L^{+}G_X(S,F) = \mathrm{Aut}_{\mathrm{Bun}(\widehat{D})}(\mathcal{E}_0)$
\end{itemize}
\end{defin}

\begin{rem}\label{mappp} Consider the obvious projection $\GrX_X(S)(F) \to \mathbf{Bun}_{\widehat{D}}$ (see Lemma \ref{lem:benjexo}). The fiber of this map at the trivial bundle map $* \to \mathbf{Bun}_{\widehat{D}}$ coincides with the sheaf $\mathsf{LG}_X(S,F)$, and the induced map $\mathsf{LG}_X(S)(F) \to \GrX_X(S)(F)$ of stacks over $X \times S$ is a section of the canonical projection $\GrX_X(S)(F) \to \mathsf{LG}_X(S)(F)$. Moreover, there is an obvious morphism $\mathsf{L^{+}G}_X(S,F) \to \mathsf{LG}_X(S,F)$ of sheaves of groups, given by restricting the trivializations. \end{rem}

\begin{rem}\label{comparisonwithG} Since we are dealing with $\mathbf{G}$-principal bundles, we have $$\mathsf{LG}_X(S,F) \simeq \mathbf{G}_{\widehat{D}\smallsetminus D, X\times S} \times_{\mathbf{G}_{(\widehat{D}\smallsetminus D) \times_{X \times S} \Zhataff, X\times S}} \mathbf{G}_{\hZ,X\times S}$$ and
$$\mathsf{L^{+}G}_X(S,F) \simeq \mathbf{G}_{\widehat{D},X\times S}\,\,,$$
where for a map $V \to W$ ($V$ being allowed to be a fiber functor on $W$), we write $\mathbf{G}_{V,W}$ for the functor
$$\mathbf{G}_{V,W}: \Aff_{W}\op \to \mathbf{Grp}\,, \,\,\, (T \to W) \longmapsto \mathrm{Hom}_{\mathbf{Sch}_{\mathbb{C}}}(T\times_{W} V, \mathbf{G})= \mathrm{Hom}_{\mathbf{Sch}_{W}}(T\times_{W} V, \mathbf{G}\times W).$$
\end{rem}

\begin{rem}\label{actiononGr} Quite generally, given $\mathcal{X} \to \mathcal{Y}$, and $y: * \to \mathcal{Y}$ in $\St_T$, the loop stack $\Omega_y\mathcal{Y}$ acts naturally on the fiber $\mathcal{X}_y=\mathcal{X}\times_{f,\mathcal{Y},y} \, *$. In particular, the sheaf in groups $\mathsf{LG}_X(S,F)$ acts\footnote{In other words, the data we give below make $\GrX_X(S)(F)$ into a lax presheaf of groupoids on the classifying prestack $\mathrm{B}\mathsf{LG}_X(S)(F)$. See, e.g. \cite{romagny_actions} for generalities about actions of (pre)sheaves of groups on (pre)stacks} on the stack $\GrX_X(S)(F)$  by changing the trivialisations. Namely, if $(T \to X\times S) \in \Aff_Y$, we have 
$$\mathsf{LG}_X(S,F)(T)= \{ (\varphi, \psi) \, | \varphi \in \mathrm{Aut}_{\mathrm{Bun}((\widehat{D}\smallsetminus D) \times_Y T)}(\mathcal{E}_0)\,, \, \psi \in  \mathrm{Aut}_{\mathrm{Bun}(\hZ\times_Y T)}(\mathcal{E}_0) \, ,\, \varphi |_{(\widehat{D}\smallsetminus D) \times_{Y} \hZ \times_Y T}= \psi |_{(\widehat{D}\smallsetminus D) \times_{Y} \hZ \times_Y T}     \}$$ 
while 
{\small $$\Gr_X(S)(F)(T)=\{ (\mathcal{E}, \phi, \chi) \, |\, \mathcal{E} \in \mathrm{Bun}(\widehat{D}\times_Y T), \phi: \mathcal{E}|_{(\widehat{D}\smallsetminus D) \times_Y T)} \simeq \mathcal{E}_0\, , \,   \chi: \mathcal{E}|_{\hZ \times_Y T} \simeq \mathcal{E}_0 \, ,\, \phi |_{(\widehat{D}\smallsetminus D) \times_Y \hZ\times_Y T} =\chi |_{(\widehat{D}\smallsetminus D) \times_Y \hZ\times_Y T}      \} $$}
(with obvious morphisms). The (left) action of $\mathsf{LG}_X(S,F)(T)$ on $\GrX_X(S)(F)(T)$ is then given, on objects, by $$(\varphi, \psi)\cdot (\mathcal{E}, \phi, \chi) := (\mathcal{E}, \varphi \circ \phi, \psi \circ \chi).$$ 
This action should be considered as a higher dimensional analog of the well known action of $\mathbf{G}_{\mathcal{K}_x}$ on the local affine Grassmannian at a point $x$ inside a curve (see \cite[4.5]{BD_Hitchin} or  \cite[2]{MV_2007}).
As a consequence of the above action, passing to global sections, the group $G_X(S,F)$ acts on the groupoid $\mathrm{Gr}_X(S)(F)$ (again by change of trivializations, with similar formulas). 
\end{rem}

\begin{rem}\label{GcaractiononGr}
$\mathsf{L^{+}G}_X(S,F)$ acts on the stack $\GrX_X(S)(F)$, by changing trivializations, the action being induced by the $\mathsf{LG}_X(S,F)$-action of Remark \ref{actiononGr} together with the map $\mathsf{L^{+}G}_X(S,F) \to \mathsf{LG}_X(S,F)$ of Remark \ref{mappp}. This action should be considered as a higher dimensional analog of the well known action of $\mathbf{G}_{\mathcal{O}_x}$ on the local affine Grassmannian at a point $x$ inside a curve (see \cite[4.5]{BD_Hitchin} or  \cite[2]{MV_2007}).\\
As a consequence of the above action, passing to global sections, the group $L^{+}G_X(S,F)$ acts on the groupoid $\mathrm{Gr}_X(S)(F)$.
\end{rem}

%\begin{lem}\label{fglforhatD2wrtoD1} [\texttt{Not sure we need this Lemma right here nor whether this lemma is alredy stated and proved already somewhere else in the paper}] Let $T$ be a Noetherian scheme, and $V_1 \hookrightarrow V_2$ be a closed $T$- immersion of closed subschemes of $T$. Then we have a canonical equivalence in $\St_T$
%$$\mathsf{Bun}_{\widehat{V}_2} \simeq \mathsf{Bun}_{\widehat{V}_2 \smallsetminus V_1} \times_{\mathsf{Bun}_{\widehat{V}_1 \smallsetminus V_1}} \mathsf{Bun}_{\widehat{V}_1}.$$
%\end{lem}
%
%\begin{proof} This is Proposition \ref{prop:localformalglueing} applied to $X= \widehat{V}_2$ and $Z= V_1$, by noticing that the affine formal completion of $V_1$ inside $\widehat{V}_2$ is isomorphic to the affine formal completion of $V_1$ inside $T$, as follows from \cite[Theorem 8.11]{ma}.
%\end{proof}

\begin{prop}\label{groupfunctonflags} For any $S \in \Aff$, we have functors
\begin{enumerate} 
\item $$\mathsf{L^{+}G}_X(S,-): \flags_X(S) \longrightarrow \mathbf{ShGrp}\op_{X\times S}$$
\item $$\mathsf{LG}_X(S,-):\flags_X(S) \longrightarrow \mathbf{ShGrp}^{\mathrm{corr}}_{X\times S}$$
\item and a natural transformation $\mathsf{L^{+}G}_X(S,-) \to \mathsf{LG}_X(S,-)$ (see Remark \ref{mappp}).
\end{enumerate}
\end{prop}

\begin{proof} To ease notation, we put $Y=X\times S$. Let $i: F_1=(D_1,Z_1) \to (D_2 , Z_2)=F_2$ be a morphism in $\flags_X(S)$. Since $\widehat{D}_1 \hookrightarrow \widehat{D}_2$, (1) is obvious. In order to prove (2), we observe that restrictions along the maps $\widehat{D}_1 \smallsetminus D_1 \to \widehat{D}_2 \smallsetminus D_1$, $\hZ_1 \to \hZ_2$, $(\widehat{D}_2 \smallsetminus D_2) \cap \hZ_2 \to  (\widehat{D}_2 \smallsetminus D_1) \cap \hZ_2$, $(\widehat{D}_1 \smallsetminus D_1) \cap \hZ_1 \to (\widehat{D}_2 \smallsetminus D_1) \cap \hZ_2$ yield maps $$\mathbf{Bun}_{\widehat{D}_2 \smallsetminus D_1} \to \mathbf{Bun}_{\widehat{D}_1 \smallsetminus D_1}\,\,, \,\,\,
\mathbf{Bun}_{\hZ_2 } \to \mathbf{Bun}_{\hZ_1 }$$ $$\mathbf{Bun}_{(\widehat{D}_2 \smallsetminus D_1) \cap \hZ_2} \to \mathbf{Bun}_{(\widehat{D}_2 \smallsetminus D_2) \cap \hZ_2}\,\,, \,\,\,
\mathbf{Bun}_{(\widehat{D}_2 \smallsetminus D_1) \cap \hZ_2} \to \mathbf{Bun}_{(\widehat{D}_1 \smallsetminus D_1) \cap \hZ_1}.$$ Taking loops at $\mathcal{E}_0$ of the previous maps (and observing that taking loops at $\mathcal{E}_0$ commutes with fiber products of stacks over $X\times S$), we get a correspondence in  $\mathbf{ShGrp}_{X\times S}$ 
\begin{equation}\label{corrLG}
\begin{tikzcd}[column sep={12em,between origins}]
& \makebox[0pt][r]{$\mathsf{H}_{X}(S)(F_1,F_2):={}$} \displaystyle \Omega_{\mathcal{E}_0} \left(\mathbf{Bun}_{\widehat{D}_2 \smallsetminus D_1}\newtimes_{\mathbf{Bun}_{(\widehat D_2 \smallsetminus D_1) \cap \hZ_2}} \mathbf{Bun}_{\hZ_2} \right) \ar[rd] \ar[ld] & \\
\displaystyle \Omega_{\mathcal{E}_0} \left(\mathbf{Bun}_{\widehat D_1 \smallsetminus D_1}\newtimes_{\mathbf{Bun}_{(\widehat D_1 \smallsetminus D_1) \cap \hZ_1}} \mathbf{Bun}_{\hZ_1} \right) & & \displaystyle \Omega_{\mathcal{E}_0} \left(\mathbf{Bun}_{\widehat D_2 \smallsetminus D_2}\newtimes_{\mathbf{Bun}_{(\widehat D_2 \smallsetminus D_2) \cap \hZ_2}} \mathbf{Bun}_{\hZ_2} \right)
\end{tikzcd}
\end{equation}
i.e. a morphism\footnote{Recall that in the category of correspondences in a category with fiber products, a morphism from a given source to a given target is the same thing as a morphism from the given target to the given source. } $\mathsf{LG}_X(S,i): \mathsf{LG}_X(S,F_1) \to \mathsf{LG}_X(S,F_2)$ in $\mathbf{ShGrp}^{\mathrm{corr}}_{X\times S}$. We leave to the reader verifying the existence of canonical compatibility isomorphisms\footnote{Note that $\mathbf{ShGrp}^{\mathrm{corr}}_{X\times S}$ is a 2-category in the usual way.}  $\mathsf{LG}_X(S,i\circ j) \simeq \mathsf{LG}_X(S,i) \circ \mathsf{LG}_X(S,j)$  for composeable morphisms $i, j$ in $\flags_X(S)$, i.e. that if $\xymatrix{(D_1, Z_1) \ar[r]^-{i} & (D_2, Z_2) \ar[r]^-{j} & (D_3, Z_3)}$ are morphisms in $\flags_X(S)$, the following diagram is cartesian 
$$\xymatrix{\Omega_{\mathcal{E}_0} \, \mathbf{Bun}_{\widehat{D_3}\smallsetminus D_1  } \times_{\mathbf{Bun}_{\widehat{D_3}\smallsetminus D_1  \cap \hZ_3 }} \mathbf{Bun}_{\widehat{Z_3}} \ar[r] \ar[d] & \Omega_{\mathcal{E}_0} \, \mathbf{Bun}_{\widehat{D_3}\smallsetminus D_2  } \times_{\mathbf{Bun}_{\widehat{D_3}\smallsetminus D_2  \cap \hZ_3 }} \mathbf{Bun}_{\widehat{Z_3}} \ar[d] \\
\Omega_{\mathcal{E}_0} \, \mathbf{Bun}_{\widehat{D_2}\smallsetminus D_1  } \times_{\mathbf{Bun}_{\widehat{D_2}\smallsetminus D_1  \cap \hZ_2 }} \mathbf{Bun}_{\widehat{Z_2}} \ar[r] & \Omega_{\mathcal{E}_0} \, \mathbf{Bun}_{\widehat{D_2}\smallsetminus D_2  } \times_{\mathbf{Bun}_{\widehat{D_2}\smallsetminus D_2  \cap \hZ_2 }} \mathbf{Bun}_{\widehat{Z_2}}  }$$ the maps being the obvious restrictions
\end{proof}

%%%% Q U I

The following result is a long but straightforward verification. Recall (e.g. \cite[Definition 1.3]{romagny_actions}) that for a morphism of stacks endowed with an action of the a (pre)sheaf of groups $H$, being an $H$-equivariant morphism between $H$-stacks consists of data and is not just a property of the morphism.
\begin{lem}\label{actionequivariance}
Let $S \in \Aff$. 
\begin{itemize}
\item There is a canonical action of $\mathsf{LG}_X(S,-):\flags_X(S) \longrightarrow \mathbf{ShGrp}^{\mathrm{corr}}_{X\times S}$ on $\GrX_X(S)(-):\flags_X(S) \longrightarrow \St_{X\times S}$. Precisely, for any $i : F_1 \to F_2$ in $\flags_X(S)$, and any $T  \in \Aff_{X\times S}$, the morphism $$\GrX_X(S)(\alpha)(T): \GrX_X(S)(F_1)(T) \to \GrX_X(S)(F_2)(T)$$ has an $\mathsf{H}_X(S)(F_1,F_2)$-equivariant structure, where $\mathsf{H}_X(S)(F_1,F_2)$
is the sheaf of groups on $X\times S$ defining the correspondence $\mathsf{LG}_X(S,i): \mathsf{LG}_X(S,F_1) \to \mathsf{LG}_X(S,F_2)$
$$ \xymatrix{& \mathsf{H}_X(S)(F_1,F_2) \ar[rd] \ar[ld] & \\ \mathsf{LG}_X(S,F_1) && \mathsf{LG}_X(S,F_2)}$$
(see diagram \eqref{corrLG}).
\item Analogously, there is a canonical action of $\mathsf{L^+ G}_X(S,-):\flags_X(S) \longrightarrow \mathbf{ShGrp}_{X\times S}$ on $\GrX_X(S)(-):\flags_X(S) \longrightarrow \St_{X\times S}$ (see \cref{GcaractiononGr} and \cref{groupfunctonflags} (3)).
\end{itemize}
\end{lem}

By applying the (limit preserving) functor $\mathbf{St}_{X\times S} \to \mathbf{St}_{\mathbb{C}} \, : \mathcal{X} \mapsto \underline{\mathcal{X}}$ to $\mathsf{LG}_X(S,-)$, $\mathsf{L^+ G}_X(S,-)$, and $\GrX_X(S)$, we obtain analogous actions of $\underline{\mathsf{LG}}_X(S,-)$, $\underline{\mathsf{L^+ G}}_X(S,-)$ on $\Gr_X(S)$.
We leave to the reader to establish the functoriality in $S$ of $\underline{\mathsf{LG}}_X(S,-)$, $\underline{\mathsf{L^+G}}_X(S,-)$, and of their actions on $\Gr_X(S)$. \\

\subsection{Action on \texorpdfstring{$\fibGr^{\flat}_{X}$}{Grₓ}}
\begin{defin}\label{daflatguys} Let $\mathbf{PoSets} \to \mathbf{Sets}$ denote the forgetful functor, sending a poset $(S, \leq)$ to the set $S$.
We denote by $\flags^{\flat}_X,\, \fibGr^{\flat}_X$ the composite functors
$$\flags^{\flat}_X,\, \fibGr^{\flat}_X : \xymatrix{\Aff\op \ar[rr]^-{\flags_X,\, \fibGr_X} && \mathbf{PoSets} \ar[r] & \mathbf{Sets}. }$$
\end{defin} By integrating $\underline{\mathsf{L^+G}}_X(S,-)$ over all flags and all $S$'s, we now define an object $\mathcal{L}^{+}G_X \to \flags^{\flat}_X$ and its action on $\fibGr^{\flat}_X \to \flags^{\flat}_X$.\\ Namely, let us consider the functor\footnote{Since we aim at constructing an object acting on $\fibGr^{\flat}_{X}$ over $\flags^{\flat}_X$, we have chosen to disregard \emph{morphisms} between flags, i.e. the partial order structure on $\flags_X(S)$.} $\Aff\op \to \mathbf{Sets}$ $$\mathcal{L}^{+}G_X\, : \,S \longmapsto \left\{ (F, \alpha) \, | \, F\in \flags^{\flat}_X(S), \, \alpha \in L^{+}G_X(S)(F) \right\}.$$ There is an obvious morphism of functors $h: \mathcal{L}^{+}G_X \to \flags^{\flat}_X$, such that, for any $F: S \to \flags^{\flat}_X$, the fiber of $h$ at such $F \in \flags^{\flat}_X(S)$ is exactly the group $L^{+}G_X(S, F)$ of Definition \ref{defin:autgroup-fixedflag}. We will say that $h: \mathcal{L}^{+}G_X \to \flags^{\flat}_X$ is \emph{fibered in groups}. By Proposition \ref{groupfunctonflags}, $\mathcal{L}^{+}G_X$ acts on $\fibGr^{\flat}_{X}$ \emph{over} $\flags^{\flat}_X$, i.e., for any $F: S \to \flags^{\flat}_X$ the fiber of $h$ at $F$ acts on the fiber of $p_1: \fibGr^{\flat}_{X} \to \flags^{\flat}_X $ at $F$. In other words, there is a morphism \begin{equation}\label{morphismact}\mathrm{act}:\mathcal{L}^{+}G_X \times_{\flags^{\flat}_X} \fibGr^{\flat}_{X} \longrightarrow \fibGr^{\flat}_{X}\end{equation} satisfying the usual axioms of a group action over $\flags^{\flat}_X$.
Note that this action is analogous to the family of actions of $(L^+G)_{X^I}$ on the Beilinson-Drinfeld $Gr_{X^I}$ for variable non-empty finite sets $I$ (see e.g. \cite[Prop 3.19]{Zhu2017}). \\

\subsection{Equivariant sheaves} 
%Consider the functor $(\AffNoeth)\op \to \mathbf{Sets}$ $$\mathcal{L}G^{+}_X\, : \,S \longmapsto \left\{ (F, \alpha) \, | \, F\in \flags^{\flat}_X(S), \, \alpha \in LG_X^{+}(S) \right\}.$$ There is an obvious morphism of functors $h: \mathcal{L}G^{+}_X \to \flags^{\flat}_X$, such that, for any $F: S \to \flags^{\flat}_X$, the fiber of $h$ at such $F \in \flags^{\flat}_X(S)$ is exactly the group $LG^{+}_X(S, F)$ of Definition \ref{defin:autgroup-fixedflag}. We will say that $h: \mathcal{L}G^{+}_X \to \flags^{\flat}_X$ is \emph{fibered in groups}. By Proposition \ref{groupfunctonflags}, $\mathcal{L}G^{+}_X$ acts on $\fibGr^{\flat}_{X}$ \emph{over} $\flags^{\flat}_X$, i.e., for any $F: S \to \flags^{\flat}_X$ the fiber of $h$ at $F$ acts on the fiber of $p_1: \fibGr^{\flat}_{X} \to \flags^{\flat}_X $ at $F$. In other words, there is a morphism $$\mathrm{act}: \mathcal{L}G^{+}_X \times_{\flags^{\flat}_X} \fibGr^{\flat}_{X} \longrightarrow \fibGr^{\flat}_{X}$$ satisfying the usual axioms of a group action over $\flags^{\flat}_X$.
%Note that this action is analogous to the family of actions of $(L^+G)_{X^I}$ on the Beilinson-Drinfeld $Gr_{X^I}$ for variable non-empty finite sets $I$ (see e.g. \cite[Prop 3.19]{Zhu2017}). \\
Using the action morphism (\ref{morphismact}) $$\mathrm{act}: \mathcal{L}G^{+}_X \times_{\flags^{\flat}_X} \fibGr^{\flat}_{X} \longrightarrow \fibGr^{\flat}_{X}$$ defined in Section \ref{sectionactions}, one can define the $\infty$-category $\mathbf{Shv}^{\mathcal{L}G^{+}_X}(\fibGr^{\flat}_{X}; \mathbb{Q}_{\ell})$ (respectively, $\mathcal{D}\mathbf{-Mod}^{\mathcal{L}G^{+}_X}(\fibGr^{\flat}_{X})$) of $\mathcal{L}G^{+}_X$-equivariant $\ell$-adic sheaves on $\fibGr^{\flat}_{X}$ (respectively, of $\mathcal{L}G^{+}_X$-equivariant $\mathcal{D}$-Modules on $\fibGr^{\flat}_{X}$), and then prove that the naive chiral product of \S $\,$ \ref{warmup} can be upgraded to monoidal structures on $\mathbf{Shv}^{\mathcal{L}G^{+}_X}(\fibGr^{\flat}_{X}; \mathbb{Q}_{\ell})$ and on $\mathcal{D}\mathbf{-Mod}^{\mathcal{L}G^{+}_X}(\fibGr^{\flat}_{X})$. These should be flags-analogs of the equivariant chiral monoidal structure considered in \cite[Ch. 2]{butson} for the Ran-space of a curve. These $\infty$-categories and their monoidal structures, together with their non-naive counterparts (obtained by developing an equivariant version of \S $\,$ \ref{chiralsection}), in the derived setting, will be investigated in a future work.

\bibliographystyle{plain}
\bibliography{dahema}

\end{document}